\documentclass[12pt]{amsart}

\usepackage[latin1]{inputenc}
\usepackage{amsmath,amsthm,amsfonts,amscd,amssymb,xspace,color,graphicx,mathtools}
\usepackage[english]{babel}
\usepackage{xr}
\usepackage{mathrsfs}

\pdfminorversion=7

\newtheorem{Prop}{Proposition}[section]
\newtheorem{Lem}[Prop]{Lemma}
\newtheorem{Cor}[Prop]{Corollary}
\newtheorem{Thm}[Prop]{Theorem}

\theoremstyle{remark}
\newtheorem{Rem}[Prop]{Remark}
\newtheorem{Ex}[Prop]{Example}

\numberwithin{equation}{section}

\newcommand{\td}[2]{\xrightarrow[#1\rightarrow #2]{}}
\newcommand{\de}{{\rm d}}

\begin{document}

\title{Directional counting for regular languages}

\author{Rostislav Grigorchuk, Jean-Fran\c cois Quint}

\begin{abstract}
We explain how certain tools from convex analysis and probability theory may be used in order to obtain counting results for the number of words with prescribed frequencies of letters in regular languages.
\end{abstract}

\maketitle

\section{Introduction}

\subsection{General objectives}

The goal of this article (that shares features of survey and research article) is to provide  tools coming from probability theory, dynamical systems and discrete mathematics (automata theory, computer science, theory of formal languages) applicable to the needs of asymptotic combinatorics.

The enumeration problems of combinatorics lead to the coding of combinatorial data by a multivariate power series that could represent a function which would be rational, meromorphic or of a more general type. In the case of one variable, the asymptotic of the coefficients of the corresponding power series could be easily gained from the polynomials representing the function in the rational case or from algebraic equations representing in the algebraic case. There are powerful methods for the asymptotic of coefficients for more general classes of functions (see \cite[Sect. 2.4 \& 2.5]{Melcz}).

In the case of two or more variables, a natural approach is to search for the asymptotic along a certain direction, sometimes called (generalized) diagonal (as is done in \cite{Melcz} for instance). That is, if $F(z)=\sum_{n\in\mathbb N^k} a_n z^n$, $z \in \mathbb C^k$, is a series in $k$ variables, one can fix a probability vector $p=(p_1,\ldots,p_k)$ and inspect asymptotic of coefficients $a_n$ when the index $n=(n_1,\ldots,n_k)$ tends to infinity along the direction given by the vector $p$, that is $n=mp$, $m \in \mathbb N$. This "naive" approach works only if the direction vector $p$ is rational (i.e. has rational coordinates). In this case one considers only those $mp$ that have integer values.

The monograph \cite{Melcz} (based on the work of many mathematicians) provides a bunch of powerful methods and strong results of this sort that serve in the rational case. The irrational case is mentioned time to time in the book but practically no rigorous result is stated.

Here we suggest our approach to the problem. The idea comes from the article \cite{Qui} of the second author and is based on the use of convexity, functional analysis,
and measure theory. To define correctly the directional growth we (using the coefficients of the series) create a Radon measure associated with the (open) cones around the direction vector, then consider the volume growth of this measure and its logarithmic version. This leads us to the definition of the rate of growth in arbitrary direction by means of a homogeneous function, called the indicator of growth. Then we use the condition from \cite{Qui} that we call concave growth  and that guarantees that the involved function is concave and therefore the powerful tools of the convex analysis are applicable.

Then using results of various authors in probability theory, large deviation theory, Perry-Frobenius operators, methods of symbolic dynamics, etc. we provide the results that could serve for a large number of situations. 

The problems of combinatorics that are in the focus of the authors interest have algebraic origin (growth, cogrowth, random walks on groups) and in many situations are transferred to the counting in formal languages over finite alphabet  that could represent a normal form of elements in a group, set of words representing the identity element etc.
In the Chomsky hierarchy of languages the regular languages are at the first level and they are the class of languages mostly used by algebraists. Such languages can be defined by finite automata-acceptors. The corresponding counting problem is closely related to the counting for subshifts of finite type, the first level of complexity in symbolic dynamics. A second level gives sofic systems that also lead to the use of regular languages. 
% but the automata generating them use as a set of states not the set of vertices but the set of edges of the graphs defining the subshift. 
The material of the last section is adapted to these cases.

This article is a continuation of studies initiated in the previous work \cite{GQS} of authors and A. Shaikh. Making comparison of the results presented here and in monograph \cite{Melcz}, we indicate that in the "nice" examples and for rational directions the function defined here coincides with the logarithm of the function presented in \cite[Definition 6.20]{Melcz}.
%But even in the rational case we can not claim that our results are stronger than presented in the book or vice versa.

At the same  time we have to emphasize that, among other results, we have the Corollary \ref{psiregular} claiming the analytic dependence of the directional growth from the direction.

\subsection{Counting for automata}
We now present the framework of our results.
Given a finite set $A$, we consider a language $W$ written with the alphabet $A$, that is, $W$ is a subset of the set $A^*=\bigcup_{n\geq 0} A^n$ of all finite words over the alphabet $A$. Counting words in $A$ means giving an asymptotic as $n\rightarrow\infty$
for the cardinality of the set $W_n=W\cap A^n$ of words with length $n$ in $W$. 

In what follows, we are interested in a more refined form of counting. Indeed, to each word $w$ in $W$, we can associate a vector whose coordinates describe the number of occurrences of each letter of $A$ in $w$. Formally, we represent this string of integral numbers by an integer valued function $\mathscr P(w)$ on $A$, that is, $\mathscr P(w)$ is an element of $\mathbb N^A$. Now, we raise the following directional counting problem: given a sequence $(f_n)_{n\geq 0}$ of integer valued functions on $A$, may we obtain an asymptotic as $n\rightarrow\infty$ of the number
$N_n=|\{w\in W_n| \mathscr P(w)=f_n\}|$ of words $w$ in $W_n$ with prescribed associated function $f_n$?

For languages associated with finite deterministic automata, it turns out that this is the case, under some natural assumptions on the sequence $(f_n)_{n\geq 0}$. This asymptotic is of the form $N_n\sim n^{-k/2}e^{\delta n}$ where $k$ is an integer which only depends on the automaton and the exponent $\delta>0$ depends on the limit of the direction of the vector $f_n$ in the space $\mathbb R^A$ of all real valued functions on the set $A$.
This is the final result of this article (see Theorem \ref{LLT} and Theorem \ref{LLT2}).
The latter form of the asymptotic will not surprise readers who are familiar with probability theory and in particular with the local limit theorem of Stone \cite{Sto} and the large deviation theory of Bahadur-Rao \cite{BR} and Petrov \cite{Pet}.

Indeed, most of our results on languages generated by automata are essentially known under a different presentation among specialists of the quantitative theory of hyperbolic dynamical systems. They rely on the adaptation to this framework of tools from classical probability theory, following a general principle due to Sinai \cite{Sin} who established a central limit theorem for H\"older continuous observables over such dynamical systems. This theory is presented in the book of Parry and Pollicott \cite{PP}. 

As mentioned above, our final result relies on an adaptation of the proof of the fine large deviation estimates in probability theory. 
Besides the work of Borovkov of Mogul'ski\u{\i} \cite{BoMo} in probability theory, 
we were not able to find a precise reference for such a result over hyperbolic systems. This is surely due to the fact that in the smooth dynamical systems community, people are mainly interested by real valued observables whereas in our case, we will focus on integer valued observables (this splitting appears in probability theory where one separates the study of random walks between the so called non-lattice and lattice cases, see \cite{Rev,Weiss}).

\subsection{Structure of the article}

The article is organized as follows. 

We begin by general ideas about counting. In Section \ref{hahnbanach}, we recall some facts of convex analysis. In Section \ref{expdiv}, we present some tools for studying directional counting in real vector spaces. In Section \ref{concdiv}, under some mild assumptions, we relate those tools with the convex analysis of Section \ref{hahnbanach}. In Section \ref{powerseries}, we draw a link between the previously introduced objects and the approach to counting through higher complex analysis used in the book by Melczer \cite{Melcz}.

Next, we study languages generated by automata. 
In Section \ref{secgraphs}, we introduce general notions for languages; we also define directed graphs and the associated language of paths whose alphabet is the set of edges of the graph. 
We relate the general objects associated to the counting problem for these languages in Section \ref{expdiv} with the spectral properties of certain transfer operator. These operators are a family of Perron-Frobenius operators acting on the space $\mathcal V$ of functions on vertices of the graph. They depend on the choice of a function on the set of edges of the graph. This analysis by means of transfer operators is a direct translation of the theory of \cite{PP} for subshifts of finite type. In Section \ref{secglobalcounting}, we use the previously introduced tools for giving global counting estimates for the language associated with a connected directed graph.

In the remainder of the article we establish our final fine counting results. In the technical Section \ref{seclambda}, we study a certain analytic function $\lambda$ on the space $\mathcal E$ of function on edges of the graph. Given $\theta$ in $\mathcal E$, $\lambda(\theta)$ is the leading eigenvalue of the associated transfer operator. It is a well-known fact in the study of hyperbolic dynamical systems that $\lambda(\theta)$ plays the role of the characteristic function of the law of a random walk in probability theory. Thus, the fine understanding of $\lambda(\theta)$ leads to an analogue of the Central Limit Theorem in Section \ref{seclambda}. Although this result does not come with a very comfortable interpretation in the framework of our original counting problem, the tools which are used in its proof will play a key role in Section \ref{secLLT}, where they are further developed in order to state and prove Theorem \ref{LLT} about directional counting estimates for languages generated by directed graphs. Unfortunately, the statement of the latter result is rather technical. As above, for a given sequence $(f_n)_{n\geq 0}$ of integer valued functions on the set of vertices of the graph, this result gives an asymptotic estimate of the number 
$N_n$ defined above. For this to hold, the sequence $(f_n)_{n\geq 0}$ is subject to a certain set of natural assumptions. For example, the sum of the values of the function $f_n$ must be equal to $n$, otherwise the number $N_n$ is $0$. A less obvious restriction, comes from the fact that, as soon as the graph has at least two vertices, the vectors $\mathscr P(w)$ remain at a bounded distance from a proper subspace of the space $\mathcal E$. Moreover, their projections on that subspace remain close to a proper convex cone which may be smaller than the cone of positive functions. All these objects are introduced precisely along the article and play a role in the final formulation of Theorem \ref{LLT}. In Section \ref{secLLT2}, we establish a version of the directional counting result for languages generated by finite automata: these languages are obtained from languages generated by directed graphs through forgetting part of the information.

\subsection{Acknowledgements} The authors thank the university of Ge\-ne\-va for offering them several occasions to meet at Les Diablerets. The first author was also supported by the Deutsche Forschungsgemeinschaft (DFG, German
Research Foundation) -- SFB-TRR 358/1 2023--491392403,  by  Humboldt  Foundation  and expresses his acknowledgement
to the University of Bielefeld. The authors also thank Nata\v sa Jonoska for helpful discussions on the relation between languages and subshifts.

\section{Convex subsets and homogeneous functions}
\label{hahnbanach}

Let $E$ be a finite-dimensional real vector space. When necessary, we will equip $E$ with a norm but our statements will not depend on the choice of this norm.

Let $\psi:E\rightarrow\mathbb R\cup\{-\infty\}$ be a positively homogeneous function, that is, for $x$ in $E$ and $t$ in $[0,\infty)$, we have $\psi(tx)=t\psi(x)$ (with the convention that $\psi(0)=0$).
If $\psi$ is concave and upper semicontinuous, we associate to $\psi$ the following non empty closed convex subset of the dual space $E^*$ of $E$:
\begin{equation}\label{defomega}\Omega=\{\varphi\in E^*|\forall x\in E\quad\varphi(x)\geq\psi(x)\}.\end{equation}
Conversely, if $\Omega$ is a closed non empty convex subset of $E^*$, we define a homogeneous concave upper semicontinuous function
$$\psi:E\rightarrow\mathbb R\cup\{-\infty\}$$ by setting, for $x$ in $E$,
\begin{equation}\label{relpsiomega}\psi(x)=\inf\{\varphi(x)|\varphi\in\Omega\}.\end{equation}
It follows from the geometric form of Hahn-Banach theorem (see \cite{Rudin}) that those two correspondances are reciprocal to each other.

If $\mathcal C\subset E$ is a closed convex cone, we define its {\em dual cone} $\mathcal C^*$ by
$$\mathcal C^*=\{\varphi\in E^*|\forall x\in E\quad \varphi(x)\geq 0\}.$$
It is a closed convex cone in $E^*$.
Again, the Hahn-Banach theorem implies that the dual cone of $\mathcal C^*$ is $\mathcal C$ (when $E$ is identified with the dual space of $E^*$).

\begin{Lem} \label{interiorcone} Let $\mathcal C\subset E$ be a closed convex cone. Then, an element $x$ in $E$ belongs to the interior $\overset{\circ}{\mathcal C}$ of $\mathcal C$ if and only if, for every $\varphi$ in $\mathcal C^*\smallsetminus\{0\}$, one has $\langle\varphi,x\rangle>0$.
\end{Lem}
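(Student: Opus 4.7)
The plan is to establish the two implications separately, using the Hahn-Banach machinery that was just recalled, and the cone structure of $\mathcal{C}$ to pass from ``$\geq$'' to membership in $\mathcal{C}^*$.

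For the forward direction, I would assume $x\in\overset{\circ}{\mathcal C}$, pick $\varphi\in\mathcal C^*\smallsetminus\{0\}$, and argue by contradiction. Since $x\in\mathcal C$, certainly $\langle\varphi,x\rangle\geq 0$. If equality held, then choosing $r>0$ so that the open ball $B(x,r)$ lies inside $\mathcal{C}$, every $y\in E$ with $\|y\|<r$ would satisfy $x\pm y\in\mathcal C$, hence $\langle\varphi,x\pm y\rangle\geq 0$, giving $|\langle\varphi,y\rangle|\leq \langle\varphi,x\rangle = 0$. This forces $\varphi$ to vanish on the ball and therefore on all of $E$, contradicting $\varphi\neq 0$.

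For the converse I would proceed by contraposition: assume $x\notin\overset{\circ}{\mathcal C}$ and produce $\varphi\in\mathcal C^*\smallsetminus\{0\}$ with $\langle\varphi,x\rangle\leq 0$. I would split according to whether $x\in\mathcal{C}$ or not. If $x\notin\mathcal{C}$, the geometric Hahn--Banach theorem (strict separation of a point from a closed convex set in finite dimensions) provides $\varphi\in E^*$ and $c\in\mathbb R$ with $\langle\varphi,x\rangle<c<\langle\varphi,y\rangle$ for all $y\in\mathcal C$. Taking $y=0\in\mathcal C$ shows $c<0$, so $\langle\varphi,x\rangle<0$, and the cone property (replacing $y$ by $ty$ and letting $t\to+\infty$) forces $\langle\varphi,y\rangle\geq 0$ on $\mathcal C$, so $\varphi\in\mathcal C^*\smallsetminus\{0\}$. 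If instead $x\in\mathcal{C}\smallsetminus\overset{\circ}{\mathcal C}$, i.e.\ $x$ lies on the boundary, the supporting hyperplane version of Hahn--Banach yields a nonzero $\varphi\in E^*$ with $\langle\varphi,x\rangle\leq\langle\varphi,y\rangle$ for every $y\in\mathcal C$. Again the cone property (applied to $y=tx$ with $t\in[0,\infty)$) pins down $\langle\varphi,x\rangle=0$, and then $\langle\varphi,y\rangle\geq 0$ for every $y\in\mathcal C$, giving $\varphi\in\mathcal C^*\smallsetminus\{0\}$ with $\langle\varphi,x\rangle=0$ as required.

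The only slightly subtle point is the boundary case: one has to use both the homogeneity of $\mathcal C$ (to deduce $\langle\varphi,x\rangle=0$ from the supporting inequality, rather than merely $\leq 0$) and the closedness of $\mathcal{C}$ (so that Hahn--Banach really applies). Everything else is bookkeeping. The lemma could also be phrased as the statement that $\overset{\circ}{\mathcal{C}}$ is exactly the interior of the bidual cone description of $\mathcal{C}$, but the direct approach above seems cleanest.
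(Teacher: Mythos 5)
Your proof is correct. The forward direction is essentially the same idea as the paper (ruling out $\langle\varphi,x\rangle=0$ by perturbing $x$ inside $\mathcal C$), though you phrase it directly while the paper phrases it contrapositively; these are cosmetically different presentations of one argument.

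The converse is where you genuinely diverge from the paper. You argue by contraposition: given $x\notin\overset{\circ}{\mathcal C}$, you split into the cases $x\notin\mathcal C$ (strict separation) and $x\in\partial\mathcal C$ (supporting hyperplane), in each case producing a nonzero $\varphi\in\mathcal C^*$ with $\langle\varphi,x\rangle\le 0$, using the cone structure to upgrade the separating functional to an element of $\mathcal C^*$. The paper instead proves this direction directly and quantitatively: it observes that $\varphi\mapsto\langle\varphi,x\rangle$ is positive and continuous on the compact set $\mathcal C^*\cap S$ ($S$ the unit sphere), hence bounded below by some $\alpha>0$, so that $\langle\varphi,x+y\rangle\ge(\alpha-\|y\|)\|\varphi\|\ge 0$ for $\|y\|\le\alpha$, and then invokes the bidual identity $\mathcal C^{**}=\mathcal C$ (stated just before the lemma) to conclude $x+y\in\mathcal C$. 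Both routes rest on Hahn--Banach; yours applies it via separation theorems and avoids invoking the bidual identity explicitly, at the cost of a case split, whereas the paper's route is shorter once the bidual identity is in hand and comes with the useful quantitative lower bound $\langle\varphi,x\rangle\ge\alpha\|\varphi\|$. Your flagging of the boundary case as the one needing care is exactly right.
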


\begin{proof} If $\langle \varphi ,x\rangle \leq 0$ for some $\varphi\neq 0$ in $\mathcal C^*$, we choose $y$ in $E$ with $\langle \varphi,y\rangle <0$. Then, for every $\varepsilon >0$, $\langle \varphi ,x+\varepsilon y\rangle< 0$, hence $x$ is not in $\overset{\circ}{\mathcal C}$.

If $\langle \varphi ,x\rangle > 0$ for any $\varphi\neq 0$ in $\mathcal C^*$, then the function
$\varphi\mapsto \langle \varphi ,x\rangle $ is everywhere $>0$ on the compact set $\mathcal C^*\cap S$ where $S$ is the unit sphere of a fixed norm. Therefore, it is bouded away from $0$, which is to say that there exists $\alpha>0$ such that, for any $\varphi$ in $\mathcal C^*$, one has $\langle \varphi ,x\rangle \geq\alpha\|\varphi\|$. In particular, for every $y$ in $E$ with $\|y\|\leq\alpha$, one has,
for $\varphi\in\mathcal C^*$,
$$\langle \varphi ,x+y\rangle \geq \alpha\|\varphi\|-\|y\|\|\varphi\|\geq 0,$$
hence $x+y\in\mathcal C$. Thus, $x$ is an interior points of $\mathcal C$ as required.
\end{proof}

Every concave upper semicontinuous function $\psi$ on $E$ defines a natural closed convex cone, 
namely the closure of the essential domain of definition of $\psi$: 
$$\mathcal C_\psi=\overline{\{x\in E|\psi(x)>-\infty\}}.$$
Every non empty closed convex subset $\Omega$ of $E^*$ defines an other natural closed convex cone, namely the cone
$$\mathcal C_\Omega=\{\varphi\in E^*|\varphi+\Omega\subset\Omega\}.$$

The following lemma tells us that these constructions are dual to each other.

\begin{Lem} \label{translation}
Let $\psi:E\rightarrow\mathbb R\cup\{-\infty\}$ be a homogeneous concave upper semicontinuous function and $\Omega\subset E^*$ be the associated closed convex subset given by \eqref{defomega}. Then we have
$$\mathcal C_\psi^*=\mathcal C_\Omega.$$
\end{Lem}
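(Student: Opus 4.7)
The plan is to prove the two inclusions $\mathcal{C}_\psi^*\supset\mathcal{C}_\Omega$ and $\mathcal{C}_\psi^*\subset\mathcal{C}_\Omega$ separately, exploiting the fact that $\mathcal{C}_\Omega$ is a cone (closed under nonnegative scalar multiplication) so iterating the defining inclusion gives $n\varphi+\Omega\subset\Omega$ for every $n\geq 0$. The key observation is that the relation $\psi(x)=\inf\{\varphi(x)\mid\varphi\in\Omega\}$ is immediate from Hahn--Banach as already used in the excerpt, so I may freely use both directions: $\varphi\in\Omega\iff\varphi\geq\psi$ pointwise.

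\smallskip

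\emph{Inclusion $\mathcal{C}_\Omega\subset\mathcal{C}_\psi^*$.} Suppose $\varphi\in\mathcal{C}_\Omega$ and fix some $\varphi_0\in\Omega$ (nonempty by hypothesis). For every integer $n\geq 0$, the cone property gives $\varphi_0+n\varphi\in\Omega$, so for any $x\in E$ with $\psi(x)>-\infty$ one has
$$\varphi_0(x)+n\varphi(x)\geq\psi(x).$$
Dividing by $n$ and letting $n\to\infty$ forces $\varphi(x)\geq 0$. Since this holds on the essential domain of $\psi$, continuity of $\varphi$ extends the inequality to its closure $\mathcal{C}_\psi$, which means $\varphi\in\mathcal{C}_\psi^*$.

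\smallskip

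\emph{Inclusion $\mathcal{C}_\psi^*\subset\mathcal{C}_\Omega$.} Conversely, take $\varphi\in\mathcal{C}_\psi^*$ and any $\varphi_0\in\Omega$. To show $\varphi_0+\varphi\in\Omega$, we must check $(\varphi_0+\varphi)(x)\geq\psi(x)$ for every $x\in E$. If $\psi(x)=-\infty$ this is trivial. Otherwise $x\in\mathcal{C}_\psi$, so $\varphi(x)\geq 0$; combining with $\varphi_0(x)\geq\psi(x)$ gives the desired inequality.

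\smallskip

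I do not expect any real obstacle here: once one notices that $\mathcal{C}_\Omega$ is stable under addition and hence under integer scaling, the first inclusion is a one-line asymptotic argument and the second is a direct computation. The only place where a little care is needed is the passage from the essential domain of $\psi$ to its closure in the first inclusion, which relies on the continuity (in fact linearity) of $\varphi$ and costs nothing.
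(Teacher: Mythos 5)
Your proof is correct and follows essentially the same approach as the paper's: the easy inclusion is the same case split, and for the hard inclusion both arguments hinge on iterating $\varphi+\Omega\subset\Omega$ to get $n\varphi+\Omega\subset\Omega$ and letting $n\to\infty$ (the paper phrases it as a proof by contradiction, you phrase it as a direct limit, but the mechanism is identical). Your added remark about passing from the essential domain to its closure by continuity of $\varphi$ is a small point the paper glosses over and is worth keeping.
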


\begin{proof} First suppose $\varphi$ belongs to $\mathcal C_\psi^*$. Let $\theta$ be in $\Omega$ and $x$ be in $E$. If $x$ is not in $\mathcal C_\psi$, we have $\psi(x)=-\infty$, hence $\varphi(x)+\theta(x)\geq\psi(x)$. If $x$ is in $\mathcal C_\psi$, we have 
$\varphi(x)\geq 0$, hence $\varphi(x)+\theta(x)\geq \theta(x)\geq\psi(x)$. Thus, $\varphi+\theta$ belongs to $\Omega$ as required.

Conversely let $\varphi$ not belong to $\mathcal C_\psi^*$. 
We will prove by contradiction that $\varphi+\Omega$ is not contained in $\Omega$. 
Indeed, suppose $\varphi+\Omega\subset\Omega$, hence $n\varphi+\Omega\subset\Omega$ for any $n\geq 0$.
Choose $x$ in $E$ with $\psi(x)>-\infty$ and $\varphi(x)<0$. Fix $\theta$ in $\Omega$. Then, for $n$ large, we have $n\varphi(x)+\theta(x)<\psi(x)$, which contradicts the fact that $n\varphi+\theta$ belongs to $\Omega$.
\end{proof} 

Note that we have another characterisation of the cone associated to $\Omega$. Define the {\em asymptotic cone} of $\Omega$ as the set of 
$\varphi$ in $E^*$ such that there exists a sequence $(t_n)_{n\geq 0}$ of non-negative real numbers and a sequence $(\varphi_n)_{n\geq 0}$ of elements of $\Omega$ such that 
\begin{equation}\label{defasympcone}t_n\td{n}{\infty}0\mbox{ and }t_n\varphi_n\td{n}{\infty} \varphi.\end{equation}

\begin{Lem} \label{asymptocone} Let $\Omega\subset E^*$ be a non empty closed convex subset. Then the cone 
$\mathcal C_\Omega$
is also the asymptotic cone of $\Omega$.
\end{Lem}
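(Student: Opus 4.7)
The plan is to establish the two inclusions $\mathcal C_\Omega\subset\{\text{asymptotic cone}\}$ and $\{\text{asymptotic cone}\}\subset\mathcal C_\Omega$ separately, both by elementary manipulations exploiting convexity and closedness of $\Omega$.

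For the first inclusion, I would start from $\varphi\in\mathcal C_\Omega$, so that $\varphi+\Omega\subset\Omega$. Iterating this inclusion gives $n\varphi+\Omega\subset\Omega$ for every integer $n\geq 0$. Since $\Omega$ is non empty, I can fix some $\theta\in\Omega$ and set $\varphi_n=n\varphi+\theta$, which belongs to $\Omega$, and $t_n=1/n$. Then $t_n\to 0$ and $t_n\varphi_n=\varphi+\theta/n\to\varphi$, which exhibits $\varphi$ as an element of the asymptotic cone.

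For the reverse inclusion, suppose that $\varphi$ lies in the asymptotic cone, with witnesses $t_n\to 0$ and $\varphi_n\in\Omega$ such that $t_n\varphi_n\to\varphi$. To show $\varphi\in\mathcal C_\Omega$, take an arbitrary $\theta\in\Omega$; the goal is $\varphi+\theta\in\Omega$. For $n$ large enough, $t_n\in[0,1]$, so by convexity of $\Omega$ the point
\[
t_n\varphi_n+(1-t_n)\theta
\]
belongs to $\Omega$. As $n\to\infty$, this converges to $\varphi+\theta$, and since $\Omega$ is closed we conclude $\varphi+\theta\in\Omega$, as desired.

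The argument is essentially mechanical once the right convex combination is spotted; I do not expect a real obstacle, only the minor point of restricting to indices $n$ with $t_n\leq 1$ so that the combination in the second step is genuinely convex. It is worth noting that both directions use only that $\Omega$ is non empty, closed and convex, exactly the standing hypotheses, and that the first direction additionally uses the semigroup property $n\varphi+\Omega\subset\Omega$, which is precisely the definition of $\mathcal C_\Omega$ applied repeatedly.
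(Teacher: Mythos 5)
Your proof is correct and follows the same route as the paper: the forward inclusion by iterating $\varphi+\Omega\subset\Omega$ to produce $n\varphi+\theta$ and scaling by $1/n$ (which the paper dismisses as ``clear''), and the reverse inclusion via the convex combination $t_n\varphi_n+(1-t_n)\theta$ together with closedness of $\Omega$. Nothing further to add.
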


\begin{proof} 
Clearly, the cone $\mathcal C_\Omega=\{\varphi\in E^*|\varphi+\Omega\subset\Omega\}$ is contained in the asymptotic cone. Conversely, if $\varphi$ is in the asymptotic cone, we choose a sequence $(t_n)_{n\geq 0}$ of non-negative real numbers and a sequence $(\varphi_n)_{n\geq 0}$ of elements of $\Omega$ such that 
$t_n\td{n}{\infty}\infty$ and $t_n\varphi_n\td{n}{\infty} \varphi$.
Then, for $n$ large, we have $t_n\leq 1$. In particular, if $\theta$ is in $\Omega$, we get
$(1-t_n)\theta+t_n\varphi_n\in\Omega$. Letting $n$ go to $\infty$ yields $\theta+\varphi\in\Omega $ as required.
\end{proof}

We also show that, in most cases, $\psi$ is defined by the boundary of $\Omega$. Say that a half space in $E^*$ is a convex subset of $E^*$ of the form
$$\{\varphi\in E^*|\varphi(x)\geq a\},$$
where $x$ is a non zero vector in $E$ and $a$ is a real number.

\begin{Lem} \label{halfspace}
Let $\psi:E\rightarrow\mathbb R\cup\{-\infty\}$ be a homogeneous concave upper semicontinuous function and $\Omega\subset E^*$ be the associated closed convex subset. Then either $\Omega=E^*$ or $\Omega$ is a half-space (and $\mathcal C_\psi$ is a half-line) or $\Omega$ is the convex closure of its boundary. In the last case, for $x$ in $E$, we have
$$\psi(x)=\inf_{\varphi\in\partial\Omega}\varphi(x).$$
\end{Lem}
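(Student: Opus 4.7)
The plan is to analyse the geometry of $\Omega$ through its asymptotic cone $\mathcal C_\Omega$, which by Lemmas \ref{translation} and \ref{asymptocone} coincides with $\mathcal C_\psi^*$. The three alternatives of the lemma will correspond respectively to $\mathcal C_\Omega=E^*$, $\mathcal C_\Omega$ a closed half-space through the origin, and the remaining case; by duality, these translate to $\mathcal C_\psi=\{0\}$, $\mathcal C_\psi$ a half-line, and all other situations.

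In the first case, fixing any $\theta\in\Omega$ (non-empty by assumption), one has $\varphi+\theta\in\Omega$ for every $\varphi\in E^*=\mathcal C_\Omega$, so $\Omega=E^*$. In the second, if $\mathcal C_\Omega=\{\varphi\in E^*|\varphi(x_0)\geq 0\}$ for some non-zero $x_0\in E$, the bounding hyperplane $H=\{\varphi(x_0)=0\}$ lies in $\mathcal C_\Omega\cap(-\mathcal C_\Omega)$, so $\Omega$ is translation-invariant along $H$ in both directions. Passing to the quotient $E^*/H\cong\mathbb R$, $\Omega$ becomes a closed convex subset with recession cone $[0,\infty)$, i.e.\ a half-line $[a,\infty)$; hence $\Omega=\{\varphi\in E^*|\varphi(x_0)\geq a\}$ is a closed half-space.

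Now suppose $\mathcal C_\Omega$ is neither $E^*$ nor a closed half-space. The crucial geometric fact is that $\mathcal C_\Omega\cup(-\mathcal C_\Omega)\neq E^*$. Indeed, if equality held, then since $\mathcal C_\Omega\neq E^*$, Hahn-Banach applied to the closed convex cone $\mathcal C_\Omega$ yields a non-zero $x\in E$ with $\mathcal C_\Omega\subset H^+:=\{\varphi|\varphi(x)\geq 0\}$; any $w\in H^+$ with $w(x)>0$ must then lie in $\mathcal C_\Omega$, for $-w\in\mathcal C_\Omega$ would force $w(x)\leq 0$, so by closedness $\mathcal C_\Omega\supset H^+$ and hence $\mathcal C_\Omega=H^+$, a contradiction. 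Pick $v\in E^*\setminus(\mathcal C_\Omega\cup -\mathcal C_\Omega)$. If $\Omega$ has empty interior in $E^*$, then $\partial\Omega=\Omega$ trivially. Otherwise, for $p$ in the interior of $\Omega$, the intersection of $\Omega$ with the affine line through $p$ in direction $v$ is a bounded closed segment (both rays being blocked since $\pm v\notin\mathcal C_\Omega$), whose two endpoints lie in $\partial\Omega$ and exhibit $p$ as a strict convex combination of boundary points. Hence $\Omega\subset\overline{\mathrm{conv}(\partial\Omega)}$, the reverse inclusion being obvious.

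For the final identity, evaluation at $x$ is a continuous linear functional on $E^*$, so its infimum over any set agrees with its infimum over the closed convex hull; using \eqref{relpsiomega},
$$\psi(x)=\inf_{\varphi\in\Omega}\varphi(x)=\inf_{\varphi\in\overline{\mathrm{conv}(\partial\Omega)}}\varphi(x)=\inf_{\varphi\in\partial\Omega}\varphi(x).$$
The main subtlety I anticipate is the cone-classification implication $\mathcal C_\Omega\cup(-\mathcal C_\Omega)=E^*\Rightarrow\mathcal C_\Omega$ is a half-space or all of $E^*$; the remainder is a direct unpacking of the recession structure combined with a single line-through-an-interior-point argument.
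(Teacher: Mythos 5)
Your proposal is correct, and the geometric heart is the same as the paper's: in the non-degenerate case one finds a direction along which every affine line meets $\Omega$ in a bounded segment, whose endpoints exhibit points of $\Omega$ as convex combinations of boundary points. The difference is where that direction comes from. The paper works directly with $\mathcal C_\psi\subset E$: in the third case it contains two non-zero, non-positively-colinear vectors $x,y$, and any $\theta\in E^*$ with $\theta(x)<0<\theta(y)$ does the job (for any $\varphi\in\Omega$, the interval $I=\{t\,|\,\varphi+t\theta\in\Omega\}$ is closed, contains $0$, and is compact because $\psi(x),\psi(y)>-\infty$). You instead work with the asymptotic cone $\mathcal C_\Omega=\mathcal C_\psi^*$, prove a small classification lemma (a closed convex cone $\mathcal C$ with $\mathcal C\cup(-\mathcal C)=E^*$ must be $E^*$ or a closed half-space), and take $v\notin\mathcal C_\Omega\cup(-\mathcal C_\Omega)$. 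The two are dual: such a $\theta$ automatically lies outside $\mathcal C_\Omega\cup(-\mathcal C_\Omega)$, and conversely any such $v$ must separate two rays of $\mathcal C_\psi$. Your route buys a self-contained statement about cones that you may find reusable, at the cost of an extra lemma; the paper's route is slightly shorter because it reads the direction off directly from two vectors in the domain of $\psi$. One cosmetic remark: you split the third case into "empty interior" and "non-empty interior," whereas the paper's argument treats an arbitrary $\varphi\in\Omega$ uniformly (including the degenerate case $I=\{0\}$, which forces $\varphi\in\partial\Omega$); either works. Your final inf computation is fine.
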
 
                                 
\begin{proof} Consider the closed convex cone $\mathcal C_\psi$. 

If $\mathcal C_\psi$ is $\{0\}$, we have $\mathcal C_\psi^*=E^*$, hence, by Lemma \ref{translation}, $\Omega=E$.

If $\mathcal C_\psi$ is a half-line, then there exists $x\neq 0$ in $E$ and $a$ in $\mathbb R$ such that, for $y$ in $E$,
\begin{align*}\varphi(y)&=-\infty&&\mbox{if }y\notin [0,\infty)x\\
\varphi(y)&=ta&&\mbox{if }y=tx,\quad t\geq 0.\end{align*}
Then, a direct computation shows that $\Omega$ is the half-space 
$$\{\varphi\in E^*|\varphi(x)\geq a\}.$$

In every other case, there exists two non zero and non positively colinear vectors $x$ and $y$ with $\psi(x)>-\infty$ and $\psi(y)>-\infty$. Let us show that this implies that $\Omega$ is the convex closure of its boundary. Indeed, as $x$ and $y$ are not positively colinear, there exists a $\theta$ in $E^*$ with $\theta(x)<0$ and $\theta(y)>0$. Pick $\varphi$ in $\Omega$ and consider the set 
$$I=\{t\in \mathbb R| \varphi+t\theta\in\Omega\}.$$
As $\Omega$ is a closed convex set which contains $\varphi$, $I$ is a closed interval in $\mathbb R$ which contains $0$. We claim that $I$ is compact. Indeed, since $\theta(x)<0$ and $\psi(x)>-\infty$, for large $t$, we have
$$\varphi(x)+t\theta(x)<\psi(x)$$
hence $\varphi+t\theta\notin \Omega$ and $t\notin I$. In the same way, for small $t$, we have
$$\varphi(y)+t\theta(y)<\psi(y),$$
hence $t\notin I$. Thus, $I=[a,b]$ for some $a\leq 0$ and $b\geq 0$. Then, $\varphi+a\theta$ and $\varphi+b\theta$ belong to $\partial\Omega$ and hence $\varphi$ belongs to the convex closure of $\partial\Omega$ as required. In particular, for $x$ in $E$ and $\varphi$ in $\Omega$, we have
$$\varphi(x)\geq \inf\{\theta(x)|\theta\in\partial\Omega\},$$
hence 
$$\psi(x)=\inf\{\theta(x)|\theta\in\partial\Omega\}$$
as required.
\end{proof}

We can describe the interior $\overset{\circ}{\Omega}$ of $\Omega$. Note that, as $\psi$ is homogeneous and concave, for any $x$ in $E$, we have $\psi(x)+\psi(-x)\leq 0$.

\begin{Lem}\label{interior} 
Let $\psi:E\rightarrow\mathbb R\cup\{-\infty\}$ be a homogeneous concave upper semicontinuous function and $\Omega\subset E^*$ be the associated closed convex subset. Then the following are equivalent:\\
{\em (i)} the set $\Omega$ has non empty interior.\\
{\em (ii)} for any $x\neq 0$ in $E$, we have $\psi(x)+\psi(-x)<0$.\\
If this holds, we have 
\begin{equation}\label{computeinterior}
\overset{\circ}{\Omega}=\{\varphi\in E^*|\forall x\in E\smallsetminus\{0\}\quad \varphi(x)>\psi(x)\}.
\end{equation}
\end{Lem}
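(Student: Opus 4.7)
The plan is to establish the equivalence (i) $\Leftrightarrow$ (ii) first and then extract \eqref{computeinterior} from the arguments.

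For (i) $\Rightarrow$ (ii), pick $\varphi_0\in\overset{\circ}{\Omega}$ and $\varepsilon>0$ with $\varphi_0+\varepsilon B\subset\Omega$, where $B$ is the closed unit ball of a fixed dual norm on $E^*$. Evaluating the defining inequality $\varphi\geq\psi$ on all perturbations $\varphi_0+\theta$ with $\|\theta\|\leq\varepsilon$, and taking the infimum over $\theta$ at a fixed $x$, yields $\varphi_0(x)\geq\psi(x)+\varepsilon\|x\|$ for every $x\in E$. Writing this bound for $x$ and for $-x$ and adding gives $\psi(x)+\psi(-x)\leq-2\varepsilon\|x\|<0$ for $x\neq 0$. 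The same pointwise inequality already establishes the inclusion $\overset{\circ}{\Omega}\subset\{\varphi\in E^*|\varphi(x)>\psi(x)\text{ for all }x\neq 0\}$, which is one half of \eqref{computeinterior} (the strict inequality is trivial at those $x\neq 0$ for which $\psi(x)=-\infty$).

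For (ii) $\Rightarrow$ (i) I would argue by contraposition: if $\overset{\circ}{\Omega}$ is empty then the non-empty closed convex set $\Omega\subset E^*$ has affine hull strictly smaller than $E^*$, so there exist $x_0\neq 0$ in $E$ and $b\in\mathbb R$ with $\varphi(x_0)=b$ for every $\varphi\in\Omega$. Passing to the infimum via \eqref{relpsiomega} gives $\psi(x_0)=b$ and $\psi(-x_0)=-b$, so $\psi(x_0)+\psi(-x_0)=0$, contradicting (ii).

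To close the proof of \eqref{computeinterior}, take any $\varphi_0$ with $\varphi_0(x)>\psi(x)$ for all $x\neq 0$ and show $\varphi_0\in\overset{\circ}{\Omega}$. Set $g:=\varphi_0-\psi$; the upper semicontinuity of $\psi$ makes $g$ lower semicontinuous, and $g$ is positively homogeneous with values in $(0,+\infty]$ away from the origin. On the compact unit sphere $g$ attains a strictly positive infimum $\varepsilon$, and homogeneity upgrades this to $\varphi_0(x)\geq\psi(x)+\varepsilon\|x\|$ on all of $E$. Any $\theta$ with $\|\theta\|\leq\varepsilon$ then satisfies $\varphi_0+\theta\geq\psi$, so $\varphi_0+\theta\in\Omega$ and $\varphi_0$ is interior. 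The main delicate point is the strict positivity of this infimum, since $g$ may take the value $+\infty$ on the sphere and continuity is unavailable; the decisive input is lower semicontinuity, which on the compact sphere forces the pointwise strictly positive function $g$ to attain a strictly positive minimum.
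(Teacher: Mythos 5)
Your proof is correct, and it diverges from the paper's argument in one instructive way. For (i)$\Rightarrow$(ii) the paper works by contraposition: if $\Omega$ has empty interior, then as a convex subset of the finite-dimensional space $E^*$ it is contained in a proper affine subspace, and one reads off from \eqref{relpsiomega} that $\psi(x)+\psi(-x)=0$ for the corresponding $x$. You instead argue directly: a ball $\varphi_0+\varepsilon B\subset\Omega$ gives, by minimizing over $\theta$ with $\|\theta\|\leq\varepsilon$, the quantitative margin $\varphi_0(x)\geq\psi(x)+\varepsilon\|x\|$, which both yields $\psi(x)+\psi(-x)\leq-2\varepsilon\|x\|$ and hands you the inclusion $\overset{\circ}{\Omega}\subset\{\varphi\mid\varphi>\psi\text{ off }0\}$ for free. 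This is a real simplification: the same estimate does double duty. Your (ii)$\Rightarrow$(i) via the affine hull is essentially the paper's argument read in the contrapositive direction. For the reverse inclusion in \eqref{computeinterior}, the paper asserts that the set $\{\varphi\mid\forall x\neq 0,\ \varphi(x)>\psi(x)\}$ is open "as $\psi$ is upper semicontinuous" and leaves it at that; you spell out the point the paper leaves implicit, namely that the lower semicontinuous positively homogeneous function $g=\varphi_0-\psi$ attains a strictly positive minimum on the compact unit sphere (even though $g$ may take the value $+\infty$ there), which then upgrades pointwise strictness to a uniform margin and hence to membership in the interior. Net effect: same conclusions, but your route is more quantitative and more self-contained, whereas the paper's is shorter by appealing to the standard structure theory of convex sets in finite dimension.
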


\begin{proof} We actually prove that the negation of the statements are equivalent.

Suppose $\Omega$ has empty interior. Then, as the convex closures of affine basis of $E^*$ have non empty interior, $\Omega$ is contained in an affine subspace of $E^*$. In other words, we may find $x$ in $E\smallsetminus\{0\}$ and $\theta$ in $E^*$ such that
$$\Omega\subset\{\varphi\in E^*|\varphi(x)=\theta(x)\}.$$
In particular, for $t$ in $\mathbb R$, we have
$$\psi(tx)=\inf_{\varphi\in\Omega}t\varphi(x)=t\theta(x),$$
which yields $\psi(x)+\psi(-x)=0$.

Conversely, suppose there exists $x$ in $E\smallsetminus\{0\}$ with $\psi(-x)=-\psi(x)$. If $\varphi$ is in $\Omega$, we get
$$\varphi(x)\geq \psi(x)\mbox{ and }-\varphi(x)=\varphi(-x)\geq \psi(-x)=-\psi(x).$$
Thus, $\varphi(x)=\psi(x)$ for any $\varphi$ in $\Omega$ and $\Omega$ has empty interior.

Now, let us show \eqref{computeinterior}. As $\psi$ is upper semicontinuous, the set 
$$\{\varphi\in E^*|\forall x\in E\smallsetminus\{0\}\quad \varphi(x)>\psi(x)\}$$
is open, hence it is contained in the interior of $\Omega$. Conversely, if $\varphi$ is in $\Omega$ and $\varphi(x)=\psi(x)$ for some $x\neq 0$ in $E$ with $\psi(x)>-\infty$, we choose $\theta$ in $E^*$ with $\theta(x)<0$. Then, for $\varepsilon>0$, $\varphi+\varepsilon\theta$ is not in $\Omega$ and $\varphi$ does not belong to the interior of $\Omega$.
\end{proof}

As a general principle, when a convex object in $E^*$ is smooth, the dual convex object in $E$ is strictly convex. In the following Lemma, we make this precise by assuming that $\Omega$ is real analytic and strictly convex and deducing that $\psi$ is then real analytic and 
strictly concave.

Recall that, if $V$ is a real vector space, a function $f$ defined on an open subset $U$ of $V$ is said to be real analytic if it may be written as the sum of a power series in the neighbourhood of every element of $U$. Equivalently, $f$ is real analytic if it is the restriction to $U$ of a holomorphic function defined on an open subset $U'\supset U$ of the complexification $V_{\mathbb C}$ of $V$. 

\begin{Lem} \label{local}
Let $\rho:E^*\rightarrow\mathbb R$ be a convex real analytic function. Assume that, for every $\theta$ in $E^*$, the derivative
$\de_\theta\rho$ of $\rho$ at $\theta$ does not vanish and that the second derivative $\de^2_\theta\rho$ is positive definite on the kernel of $\de_\theta\rho$. Set $\Omega=\rho^{-1}((-\infty,0])$ and assume that $\Omega$ is non empty. Then, 
$\Omega$ is a closed convex subset of $E^*$.
Let $\psi$ be the concave homogeneous function on $E$ that is dual to $\Omega$. 
Then the cone $\mathcal C_\psi$ has non empty interior and the function $\psi$ is strictly concave and real analytic
in the interior of $\mathcal C_\psi$.
\end{Lem}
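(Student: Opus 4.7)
The closedness and convexity of $\Omega$ are immediate from the continuity and convexity of $\rho$. Since $\de_\theta\rho\neq 0$ everywhere, the boundary of $\Omega$ is exactly $\rho^{-1}(\{0\})$, and $\Omega$ has non-empty interior (any boundary point is approached from the inside along a direction on which $\de_\theta\rho$ is positive); moreover $\Omega\neq E^*$, because a convex function bounded above on $E^*$ would have to be constant, contradicting the non-vanishing gradient, so $\partial\Omega$ is non-empty.

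To prove that $\mathcal C_\psi$ has non-empty interior, I combine Lemmas \ref{translation} and \ref{interiorcone}: $x\in\overset{\circ}{\mathcal C_\psi}$ iff $\langle\varphi,x\rangle>0$ for every $\varphi\in\mathcal C_\Omega\setminus\{0\}$. Pick any $\varphi_0\in\partial\Omega$ and set $x=-\de_{\varphi_0}\rho$. For $\varphi\in\mathcal C_\Omega$ and $t\geq 0$ the ray $\varphi_0+t\varphi$ lies in $\Omega$, so a first-order expansion at $t=0$ gives $\langle\de_{\varphi_0}\rho,\varphi\rangle\leq 0$, hence $\langle\varphi,x\rangle\geq 0$. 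Equality would force $\varphi\in\ker\de_{\varphi_0}\rho$, but then $\rho(\varphi_0+t\varphi)=\frac{t^2}{2}\de^2_{\varphi_0}\rho(\varphi,\varphi)+O(t^3)>0$ for small $t>0$ by the Hessian hypothesis, contradicting $\varphi_0+t\varphi\in\Omega$. Thus $x\in\overset{\circ}{\mathcal C_\psi}$.

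For regularity and strict concavity, the plan is to produce, for each $x\in\overset{\circ}{\mathcal C_\psi}$, a unique minimizer $\varphi^*(x)\in\partial\Omega$ of $\varphi\mapsto\langle\varphi,x\rangle$ on $\Omega$, then apply the implicit function theorem. Existence follows from coercivity: if $\varphi_n\in\Omega$ with $\|\varphi_n\|\to\infty$, any cluster point of $\varphi_n/\|\varphi_n\|$ lies in $\mathcal C_\Omega\setminus\{0\}$ by Lemma \ref{asymptocone}, so by Lemma \ref{interiorcone} the ratio $\langle\varphi_n,x\rangle/\|\varphi_n\|$ stays bounded below by a positive constant, forcing $\langle\varphi_n,x\rangle\to+\infty$. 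Uniqueness reduces to strict convexity of $\partial\Omega$: given $\varphi_1\neq\varphi_2$ in $\partial\Omega$, the convex function $f(t)=\rho((1-t)\varphi_1+t\varphi_2)$ satisfies $f(0)=f(1)=0$, $f\leq 0$; if $f\not\equiv 0$ then $f<0$ on $(0,1)$, while $f\equiv 0$ would force $\varphi_2-\varphi_1\in\ker\de_{\varphi_t}\rho$ for all $t$ together with $f''(t)=\de^2_{\varphi_t}\rho(\varphi_2-\varphi_1,\varphi_2-\varphi_1)=0$, contradicting positive definiteness on the kernel. The Lagrange condition at $\varphi^*(x)$ reads $x=-\mu(x)\de_{\varphi^*(x)}\rho$ with $\mu(x)>0$, so the parameterization $\Phi:(\varphi,\mu)\in\partial\Omega\times(0,\infty)\mapsto -\mu\,\de_\varphi\rho\in E$ admits $\varphi^*$ as its inverse on the appropriate image.

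The technical heart, and the place where the Hessian hypothesis is used exactly, is verifying that $\Phi$ is a real-analytic local diffeomorphism. Both sides have dimension $\dim E$, so it suffices to check that the differential of $\Phi$ at $(\varphi,\mu)$ is injective. It sends $(v,s)\in\ker\de_\varphi\rho\times\mathbb R$ to $-\mu\,\de^2_\varphi\rho(v)-s\,\de_\varphi\rho$; pairing with $v$ gives $\mu\,\de^2_\varphi\rho(v,v)=0$, whence $v=0$ by positive definiteness on the kernel, and then $s=0$ because $\de_\varphi\rho\neq 0$. The implicit function theorem now makes $\varphi^*$ and $\mu$ real analytic, and hence so is $\psi(x)=\langle\varphi^*(x),x\rangle$ on $\overset{\circ}{\mathcal C_\psi}$. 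Strict concavity finally follows: if $x,y\in\overset{\circ}{\mathcal C_\psi}$ are not positively proportional, then $x=-\mu(x)\de_{\varphi^*(x)}\rho$ and $y=-\mu(y)\de_{\varphi^*(y)}\rho$ force $\varphi^*(x)\neq\varphi^*(y)$, and the chain $\psi(x+y)=\langle\varphi^*(x+y),x\rangle+\langle\varphi^*(x+y),y\rangle\geq\psi(x)+\psi(y)$ cannot be an equality, since that would make $\varphi^*(x+y)$ a common minimizer for $x$ and $y$, violating uniqueness.
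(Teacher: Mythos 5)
Your proof is correct and follows the same underlying strategy as the paper: the key object is the outward normal $u(\theta)=-\de_\theta\rho$ on $\partial\Omega$, analyticity comes from the implicit/inverse function theorem, and the positivity of the Hessian restricted to $\ker\de_\theta\rho$ is exactly what makes the relevant differential injective. The differences are organizational but worth noting. Where the paper normalizes $u$ to a map $v:\partial\Omega\to S$ into the unit sphere and recovers $\psi(x)=\langle v^{-1}(\|x\|^{-1}x),x\rangle$, you parametrize directly by $\Phi:(\varphi,\mu)\mapsto -\mu\,\de_\varphi\rho$ on $\partial\Omega\times(0,\infty)$; both choices exploit the same dimension count and the same Hessian nondegeneracy. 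A more substantive difference is how the two arguments show that $\Phi$ (resp.\ $v$) hits all of $\overset{\circ}{\mathcal C}_\psi$: the paper assumes $\psi(x)>-\infty$ and derives a contradiction via Lemma \ref{halfspace} and a subsequence that escapes to the asymptotic cone, whereas you argue positively that the linear functional $\langle\cdot,x\rangle$ is coercive on $\Omega$ (using Lemmas \ref{asymptocone} and \ref{interiorcone}), so its minimum on $\Omega$ is attained, and uniqueness of the minimizer follows from your strict-convexity argument along segments in $\partial\Omega$. Your version of this step is arguably more transparent, and it also gives a crisp proof that $\overset{\circ}{\mathcal C}_\psi\neq\emptyset$ by directly verifying the criterion of Lemma \ref{interiorcone} for $x=-\de_{\varphi_0}\rho$. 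Two small points worth tightening: you should say that if the linear objective were minimized at an interior point of $\Omega$ it would be constant there, which forces the (unique) minimizer to lie on $\partial\Omega$ so the Lagrange condition is genuinely active; and injectivity of $\Phi$ should be noted explicitly (it follows because the Lagrange/KKT condition is sufficient for the convex problem, so any preimage of $x$ is a minimizer and therefore equal to $\varphi^*(x)$). Also, the parenthetical "along a direction on which $\de_\theta\rho$ is positive" should read negative, so that $\rho$ decreases into the set. These are minor and the argument as a whole is sound.
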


Saying that $\psi$ is strictly concave on $\overset{\circ}{\mathcal C}_\psi$ amounts to saying that, for any $x,y$ in $\overset{\circ}{\mathcal C}_\psi$ with 
$y\notin(0,\infty)x$, we have $\psi(x+y)>\psi(x)+\psi(y)$.

\begin{proof} This is a consequence of the implicit function theorem which we apply to the boundary of $\Omega$. Let us be more precise.
We fix a Euclidean norm on $E$, so that the norm is an analytic function outside of $0$. We assume that the dimension of $E$ is $\geq 2$, else the statement is trivial.

Note that, since $\rho$ is a globally defined convex function, it is not bounded, hence $\Omega$ is not equal to all of $E^*$. Pick $\theta$ in $\partial \Omega$. It follows from the implicit function theorem that there exists a neighbourhood $U$ of $\theta$ in $E^*$ such that $\Omega\cap U$ is contained in the half-space $\{\xi\in E^*|\de_\theta\rho(\xi-\theta)\leq 0\}$. As $\Omega$ is convex, all of $\Omega$ is contained in the half-space. In other words, setting $u(\theta)=-\de_\theta\rho$ (which is a linear functional on $E^*$, hence a vector in $E$), we get, for all $\xi$ in $\Omega$,
\begin{equation}\label{minimum}\langle \xi,u(\theta)\rangle\geq \langle \theta,u(\theta)\rangle,\end{equation}
hence $\psi(u(\theta))=\theta(u(\theta))$. In particular $\psi(u(\theta))>-\infty$. Thus, we have built a map 
$u:\partial\Omega\rightarrow E$ such that, for every $\theta$ in $\partial \Omega$, one has
$u(\theta)\in\mathcal C_\psi$. 

Let still $\theta$ be an element of $\partial \Omega$.
The assumption that $\de_\theta^2\rho$ is positive definite on the kernel of $\de_\theta^2\rho$ ensures that the inequality in 
\eqref{minimum} is strict for any $\xi\neq\theta$ close enough to $\theta$ and belonging to $\Omega$. Still as $\Omega$ is convex, the inequality is strict for any $\xi\neq\theta$ in $\Omega$. We set $v(\theta)=\|u(\theta)\|^{-1}u(\theta)$.  
Then, $\theta$ is the unique element of $\Omega$ with $\psi(v(\theta))=\theta(v(\theta))$, hence
the map $v:\partial\Omega\rightarrow E$ is injective. We will now consider this map as an analytic map from
$\partial\Omega$ towards the unit sphere $S$ of $E$ and show that its range is the intersection of 
$\overset{\circ}{\mathcal C}_\psi$ with $S$.

Still by the positivity assumption, for $\theta$ in $\partial\Omega$, the vector $u(\theta)$ does not belong to the 
space ${\de_\theta}u({\rm T}_\theta\partial \Omega)$, that is, to the range of the differential of $u$ on the tangent space to $\partial \Omega$ at $\theta$ (this is the infinitesimal version of the injectivity property above). This says that the differential of $v$ at 
$\theta$ maps this tangent space onto the tangent space of the sphere. Therefore, as $v$ is injective, it is an analytic diffeomorphism onto its image which is an open subset of $S$. 

We claim that this open subset of the sphere is given by 
$v(\partial \Omega)=\overset{\circ}{\mathcal C}_\psi\cap S$. Indeed, the set $(0,\infty)v(\partial\Omega)$ is open in $E$ and contained in $\mathcal C_\psi$. Conversely, let $x\neq 0$ be in $E\smallsetminus (0,\infty)v(S)$ an let 
us show that $x$ does not belong to the interior of $\mathcal C_\psi$. If $\psi(x)=-\infty$, this follows from the definition of $\mathcal C_\psi$. Thus, we now assume $\psi(x)>-\infty$. Note that, as $E$ has dimension $\geq 2$, the positivity property of the second derivative of $\rho$ implies that the convex set $\Omega$ is not a half-space. Therefore, by Lemma \ref{halfspace}, we have 
$$\psi(x)=\inf_{\theta\in\partial\Omega}\theta(x).$$
We fix a sequence $(\theta_n)_{n\geq 0}$ in $\partial\Omega$ with 
$$\theta_n(x)\td{n}{\infty}\psi(x).$$

We claim that $\|\theta_n\|\td{n}{\infty}\infty$. Indeed, if this was not the case, after extracting a subsequence, we could
assume that $(\theta_n)_{n\geq 0}$ converges to some $\theta$ in $\partial\Omega$, which would then satisfy $\theta(x)=\psi(x)$. 
But, then, as $\psi$ is concave and $\leq \theta$, for all $0\leq t\leq 1$, we would have 
\begin{equation}\label{convexequal}\theta(tx+(1-t)v(\theta))=\psi(tx+(1-t)v(\theta)).\end{equation}
As $(0,\infty)v(\partial\Omega)$ is open in $E$, for small $t$, we have $tx+(1-t)v(\theta)=sv(\xi)$ for some $s>0$ and $\xi$ in $\partial \Omega$. From \eqref{convexequal}, we get $\theta(v(\xi))=\psi(v(\xi))$, hence, from the construction of the map $v$, $\xi=\theta$. Thus, $x$ belongs to $\mathbb Rv(\theta)$. As by assumption, $x$ does not belong to $[0,\infty)v(\theta)$, $x$ belongs to $-(0,\infty)v(\theta)$. But then, from 
\eqref{convexequal}, we get $\psi(v(\theta))+\psi(-v(\theta))=0$, hence, by Lemma \ref{interior}, the set $\Omega$ has empty interior, which contradicts the local study of the function $\rho$. Hence, the sequence $(\theta_n)_{n\geq 0}$ goes to $\infty$ in $E^*$.

After extracting, we can assume 
that $\|\theta_n\|^{-1}\theta_n\td{n}{\infty}\varphi$ for some $\varphi\neq 0$ in $E^*$. This linear functional $\varphi$ belongs to the asymptotic cone of $\Omega$, which by Lemma \ref{translation} and Lemma \ref{asymptocone}, is the dual cone of $\mathcal C_\psi$.
As $\theta_n(x)\td{n}{\infty}\psi(x)>-\infty$, we get $\varphi(x)=0$. Thus, we have built an element $\varphi$ in $\mathcal C_\psi^*$ with $\varphi(x)=0$. By Lemma \ref{interiorcone}, this shows that $x$ does not belong to the interior of $\mathcal C_\psi$ as required.

We now have established that $v$ is an analytic diffeomorphism of $\partial\Omega$ onto $S\cap \overset{\circ}{\mathcal C}_\psi$. By construction, for $x$ in $\overset{\circ}{\mathcal C}_\psi$, we have
$$\psi(x)=\langle v^{-1}(\|x\|^{-1}x),x\rangle,$$
which shows that the function $\psi$ is strictly concave and analytic on $\overset{\circ}{\mathcal C}_\psi$.
\end{proof}

\begin{Ex} \label{freepsi}
Set $E=\mathbb R^k$, $k\geq 1$, and let us identify $E^*$ with $\mathbb R^k$ through the standard pairing, that is, 
$$\langle x,y\rangle=x_1y_1+\cdots+x_ky_k,\quad x,y\in\mathbb R^k.$$ 
Then, we claim that the convex subset
$$\Omega=\{\theta\in\mathbb R^k|e^{-\theta_1}+\cdots+e^{-\theta_k}\leq 1\}$$
is in duality with the concave homogeneous function $\psi$ defined by
\begin{align}\label{psientropy}\psi(x)&=\sum_{\substack{1\leq i\leq k\\ x_i>0}}x_i\log\frac{x_1+\cdots+x_k}{x_i}&&x_1,\ldots,x_k\geq 0\\
\nonumber&=-\infty&&\mbox{else}.
\end{align}
This will follow from applying the tools used in the proof of Proposition \ref{local}.

Indeed, we define the function 
\begin{align*}\rho:E&\rightarrow \mathbb R\\ \theta&\mapsto e^{-\theta_1}+\cdots+e^{-\theta_k}-1,\end{align*}
which is clearly real analytic, so that we have $\Omega=\rho^{-1}((-\infty,0])$.
The second derivative of $\rho$ is everywhere positive definite, hence we are in the setting of 
Proposition \ref{local}. We keep the notation of its proof. A direct computation gives, for $\theta$ in $E$, 
\begin{equation}\label{eqexplicitu}u(\theta)=(e^{-\theta_1},\ldots,e^{-\theta_k}).\end{equation}

Let $\psi:E\rightarrow[-\infty,\infty)$ be the upper semicontinuous concave homogeneous function which is dual to $\Omega$. The proof of Proposition \ref{local} shows that the interior of the cone $\mathcal C_\psi$ is the set $(0,\infty) u(\partial\Omega)$. Using \eqref{eqexplicitu}, we obtain 
$$\overset{\circ}{\mathcal C}_\psi=(0,\infty)\{(y_1^{-1},\ldots,y_k^{-1})|y_1,\ldots,y_k>0,y_1+\cdots+y_k=1\},$$
hence clearly, $\mathcal C_\psi=[0,\infty)^k$.

Finally, still by the proof of Proposition \ref{local}, for $x$ in $\overset{\circ}{\mathcal C}_\psi=(0,\infty)^k$, we have 
$\psi(x)=\langle \theta,x\rangle$, where $\theta$ is the unique element of $\partial\Omega$ with $u(\theta)\in (0,\infty)x$. 
A direct computation using \eqref{eqexplicitu} gives
$$\theta_i=\log\frac{x_1+\cdots+x_k}{x_i},\quad 1\leq i\leq k,$$
hence \eqref{psientropy} holds for $x$ in $(0,\infty)^k$. The case of $x$ in $[0,\infty)^k$ follows as $\psi$ is upper semicontinuous.

Note that the restriction of $\psi$ to the simplex 
$$\{x\in [0,\infty)^k| x_1+\cdots+x_k=1\}$$
is Shannon entropy function. 
\end{Ex}

We will later need the following lemma which tells us how the duality works when the objects are transported through linear maps. 

\begin{Lem} \label{concavetransport}
Let $F$ be an other finite-dimensional real vector space and $T:E\rightarrow F$ be a linear map, with adjoint map $T^*:F^*\rightarrow E^*$. Suppose $\psi:E\rightarrow \mathbb R\cup\{-\infty\}$ is a concave upper semicontinuous function and let $\Omega\subset E^*$ be the associated closed convex subset. Assume that $\psi<0$ on $\ker T\smallsetminus\{0\}$. Then, the set $\Omega'=(T^*)^{-1}\Omega\subset F^*$ is not empty and the associated homogeneous concave upper semicontinuous function
$\psi':F\rightarrow \mathbb R\cup\{-\infty\}$ is given by, for $y$ in $F$,
\begin{equation}\label{eqconcavetransport4}
\psi'(y)= \sup\{\psi(x)|x\in E\quad Tx=y\}.\end{equation}
\end{Lem}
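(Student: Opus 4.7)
The plan is to define the function $\psi'$ directly by formula \eqref{eqconcavetransport4}, to verify that it is homogeneous, concave and upper semicontinuous with values in $\mathbb R\cup\{-\infty\}$, and then to identify $\Omega'$ as the convex subset of $F^*$ dual to $\psi'$; both the equality \eqref{eqconcavetransport4} and the non-emptiness of $\Omega'$ will then drop out of the Hahn--Banach correspondence \eqref{defomega}--\eqref{relpsiomega}.

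The crucial analytic point is to show $\psi'(y)<+\infty$ for every $y\in F$. Suppose on the contrary that a sequence $(x_n)$ in the fiber $T^{-1}(y)$ satisfies $\psi(x_n)\to+\infty$. Upper semicontinuity of $\psi$ rules out the possibility that $(x_n)$ is bounded, so after extraction $\|x_n\|\to\infty$ and $x_n/\|x_n\|\to w$ with $Tw=0$ and $\|w\|=1$; by assumption $\psi(w)<0$, and positive homogeneity together with upper semicontinuity of $\psi$ yields
$$\limsup_n\frac{\psi(x_n)}{\|x_n\|}=\limsup_n\psi\!\left(\frac{x_n}{\|x_n\|}\right)\leq\psi(w)<0,$$
which forces $\psi(x_n)\to-\infty$, a contradiction. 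Homogeneity and concavity of $\psi'$ are then formal from the corresponding properties of $\psi$. Upper semicontinuity of $\psi'$ follows by essentially the same normalization trick: given $y_n\to y$ with $\limsup\psi'(y_n)$ finite, one picks almost-maximizers $x_n\in T^{-1}(y_n)$ with $\psi(x_n)\geq\psi'(y_n)-\tfrac1n$; the hypothesis $\psi|_{\ker T\smallsetminus\{0\}}<0$ forces $(x_n)$ to remain bounded along the relevant subsequence (else the argument above would drive $\psi(x_n)\to-\infty$, contradicting $\psi(x_n)\geq\psi'(y_n)-\tfrac1n$), and extracting a cluster point $x$ with $Tx=y$ yields $\limsup\psi'(y_n)\leq\limsup\psi(x_n)\leq\psi(x)\leq\psi'(y)$.

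Once $\psi'$ is homogeneous, concave and upper semicontinuous, the correspondence \eqref{defomega} produces a non empty closed convex subset
$$\Omega_1=\{\eta\in F^*\mid \eta(y)\geq\psi'(y)\ \forall y\in F\}$$
whose associated function via \eqref{relpsiomega} is $\psi'$ itself. I would then unfold the definitions: the condition $\eta\in\Omega_1$ is equivalent, using $\psi'(Tx)\geq\psi(x)$ in one direction and $\eta(y)\geq\sup_{Tx=y}\psi(x)$ in the other, to the condition $T^*\eta(x)=\eta(Tx)\geq\psi(x)$ for every $x\in E$, that is to $T^*\eta\in\Omega$, i.e.\ $\eta\in\Omega'$. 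Hence $\Omega'=\Omega_1$ is non empty, and \eqref{eqconcavetransport4} expresses precisely that $\psi'$ is the function dual to $\Omega'$.

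I expect the main obstacle to be the normalization argument controlling sequences in the fibers $T^{-1}(y)$: it is the single place where the hypothesis $\psi|_{\ker T\smallsetminus\{0\}}<0$ is used, and it powers both the finiteness of $\psi'$ and its upper semicontinuity. Everything else is formal manipulation with the duality recalled at the beginning of the section.
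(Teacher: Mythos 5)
Your proposal is correct and follows essentially the same strategy as the paper's proof: define $\psi'$ directly by formula \eqref{eqconcavetransport4}, establish that it is real-valued (never $+\infty$), homogeneous, concave and upper semicontinuous via a normalization and extraction argument in which the hypothesis $\psi<0$ on $\ker T\smallsetminus\{0\}$ is used to preclude escaping sequences in the fibers of $T$, and then verify that $\Omega'=(T^*)^{-1}\Omega$ is the dual set by unwinding the definitions. The only cosmetic difference is that the paper first extracts a uniform bound $\psi(z)\leq -\varepsilon\|z\|$ on $\ker T$ and normalizes by the kernel component $z$, whereas you normalize by $\|x_n\|$ directly; the underlying compactness argument is the same.
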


In the above formula, we took the convention that $\sup\emptyset=-\infty$.

\begin{proof} Set $G=\ker T$ and, for $y$ in $F$, define $\psi'(y)$ 
to be as in \eqref{eqconcavetransport4}, where for the moment, we consider $\psi'(y)$ as an element of $[-\infty,\infty]$. We will show that the function $\psi'$ actually takes values in $[-\infty,\infty)$ and is concave and upper semicontinuous. This will imply the result.

First, we show that $\psi'<\infty$ everywhere. As $\psi'$ is $-\infty$ on $F\smallsetminus TE$, it suffices to show that we have $\psi'<\infty$ on $TE$. 
We choose a norm on $E$ and we begin by noticing that, as $\psi$ is upper semicontinuous, there exists $\varepsilon>0$ such that, for every $z$ in $G$, we have $\psi(z)\leq-\varepsilon\|z\|$.
Pick $x$ in $E$; we have 
\begin{equation}\label{eqconcavetransport1}
\limsup_{\substack{z\rightarrow\infty\\ z\in G}}\frac{\psi(x+z)}{\|z\|}\leq-\varepsilon.\end{equation}
Indeed, if $(z_n)_{n\geq 0}$ is a sequence which goes to $\infty$ in $G$, up to extracting a subsequence, we can assume that it takes non zero values and that $\|z_n\|^{-1}z_n\td{n}{\infty}u$ for some $u$ in $G$. Then, we have 
$\|z_n\|^{-1}(x+z_n)\td{n}{\infty}u$ and therefore, as $\psi$ is upper semicontinuous,
$$\limsup_{n\rightarrow\infty}\frac{\psi(x+z_n)}{\|z_n\|}=\limsup_{n\rightarrow\infty}\psi(\|z_n\|^{-1}(x+z_n))\leq\psi(u)\leq-\varepsilon,$$
so that \eqref{eqconcavetransport1} holds. In particular we have 
\begin{equation}\label{eqconcavetransport2}
\psi(x+z)\xrightarrow[\substack{z\rightarrow\infty\\ z\in G}]{}-\infty.\end{equation}
Still as $\psi$ is upper semicontinuous, it is bounded on compact sets and therefore, by 
\eqref{eqconcavetransport2}, we have $\sup_{z\in G}\psi(x+z)<\infty$, that is, $\psi'(Tx)<\infty$ as required.

Concavity of $\psi'$ directy follows from the concavity of $\psi$ and we will now show that $\psi'$ is upper semicontinuous.
As above, it suffices to prove that $\psi'$ is upper semicontinuous on $TE$. Now, the proof of \eqref{eqconcavetransport2} shows that, if $K$ is a compact subset of $E$, we may find a compact subset $L$ of $G$ such that, for any $x$ in $K$, we have 
\begin{equation}\label{eqconcavetransport3}
\sup_{z\in G}\psi(x+z)=\sup_{z\in L}\psi(x+z)\end{equation}
(the latter supremum being attained by semicontinuity).
In particular, let $(y_n)_{n\geq 0}$ be a sequence in $TE$ which converges to some $y$ in $TE$. We may choose a sequence $(x_n)_{n\geq 0}$ in $E$, which converges to some $x$ in $E$, such that, for any $n\geq 0$, $y_n=Tx_n$. Then, applying \eqref{eqconcavetransport3}, we may assume that there exists a bounded sequence $(z_n)_{n\geq 0}$ in $G$, such that, for $n\geq 0$, $\psi'(y_n)=\psi(x_n+z_n)$. Up to extracting a subsequence, we may also assume that $z_n\td{n}{\infty}z$ for some $z$ in $G$. As $Tz=0$, we have $T(x+z)=Tx=y$ hence,
by upper semicontinuity,  
$$\limsup_{n\rightarrow\infty}\psi'(y_n)=\limsup_{n\rightarrow\infty}\psi(x_n+z_n)\leq \psi(x+z)\leq \psi'(y).$$
Therefore, the function $\psi'$ is upper semicontinuous. 

Now, for $\theta$ in $F^*$ we have 
$$(\theta\geq \psi')\Leftrightarrow (\forall x\in E\quad \psi(x)\leq \theta(Tx))\Leftrightarrow (T^*\theta\in \Omega)\Leftrightarrow (\theta\in \Omega').$$
This shows that $\Omega'$ is not empty and dual to $\psi'$.
\end{proof}

\section{Exponential divergence of measures}
\label{expdiv}

We recall some facts from \cite{Qui}.

Let $\mu$ be a Radon measure on $E$. Later, we will only be concerned with counting measures, but the formalism turns out to be easier to write this way. Fix a norm on $E$. For $x$ in $E$ and $r>0$, we denote by $B(x,r)$ the closed ball with radius $r$ and center at $x$ in $E$.
The following Lemma is independent on the particular choice of the norm.

\begin{Lem} Let $\mu$ be a Radon measure on $E$. The followig are equivalent:\\
{\em (i)} there exists some $s$ in $\mathbb R$ such that 
\begin{equation}\label{defsubexpdiv}\mathcal F(s)=\int_E\exp(-s \|x\|)\de\mu(x)<\infty.\end{equation}
{\em (ii)} there exists $a,C\geq 0$ such that, for any $r>0$,
$$\mu(B(0,r))\leq Ce^{aR}.$$
\end{Lem}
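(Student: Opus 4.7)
The plan is to prove the two implications by elementary estimates, handling the easy direction first and the slightly more delicate one via a dyadic (or integer annular) decomposition of $E$.

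For (i) $\Rightarrow$ (ii), I would split on the sign of $s$. If $s\leq 0$, then $\exp(-s\|x\|)\geq 1$ everywhere, so $\mu(B(0,r))\leq \mathcal F(s)$ for all $r>0$, giving (ii) with $a=0$ and $C=\mathcal F(s)$. If $s>0$, then on $B(0,r)$ the integrand $\exp(-s\|x\|)$ is bounded below by $e^{-sr}$, hence
\[\mathcal F(s)\geq \int_{B(0,r)}\exp(-s\|x\|)\,\de\mu(x)\geq e^{-sr}\mu(B(0,r)),\]
so $\mu(B(0,r))\leq \mathcal F(s)e^{sr}$ and (ii) holds with $a=s$, $C=\mathcal F(s)$.

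For (ii) $\Rightarrow$ (i), I would decompose $E$ into the concentric annuli $A_n=B(0,n+1)\smallsetminus B(0,n)$ for $n\geq 0$. Pick any $s>a$. On $A_n$, the integrand $\exp(-s\|x\|)$ is bounded above by $e^{-sn}$, while $\mu(A_n)\leq \mu(B(0,n+1))\leq Ce^{a(n+1)}$. Summing,
\[\mathcal F(s)\leq \sum_{n\geq 0}e^{-sn}\mu(A_n)\leq Ce^{a}\sum_{n\geq 0}e^{(a-s)n},\]
which converges since $a-s<0$. Hence $\mathcal F(s)<\infty$ for every $s>a$, proving (i).

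There is essentially no obstacle here: both directions reduce to pointwise comparison of $\exp(-s\|x\|)$ with a constant on each ball (or annulus). The only thing to check is that all statements are independent of the chosen norm, which is automatic because any two norms on the finite-dimensional space $E$ are equivalent, so replacing $\|\cdot\|$ by another norm only changes the constants $s$, $a$, $C$ by bounded factors.
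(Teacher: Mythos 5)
Your proof is correct. The direction (i)$\Rightarrow$(ii) is essentially the same as the paper's (both are a Chebyshev-type comparison of $\exp(-s\|x\|)$ with its minimum over $B(0,r)$; the paper streamlines the case split by observing that $\mathcal F$ is non-increasing and so one may as well take $s\geq 0$). For (ii)$\Rightarrow$(i) you take a genuinely different, though equivalent, route: you discretize $E$ into integer annuli $A_n=B(0,n+1)\smallsetminus B(0,n)$ and bound the integral by a geometric series, whereas the paper applies the layer-cake/Fubini identity $\int f\,\de\mu=\int_0^\infty\mu(\{f\geq u\})\,\de u$ to reduce to $\int_0^1 u^{-a/s}\,\de u$. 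The two computations contain the same information (a geometric series is the discretization of that power integral); yours is a bit more elementary in that it avoids Fubini, while the paper's is more compact. One tiny omission in your decomposition: $\bigcup_{n\geq 0}A_n=E\smallsetminus\{0\}$, so one should either add the finite term $\mu(\{0\})\cdot 1$ or take $A_0=B(0,1)$; this does not affect convergence since $\mu$ is Radon.
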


If these conditions are satisfied, we shall say that $\mu$ has {\em subexponential divergence}. 

\begin{proof} {\em (i)}$\Rightarrow${\em (ii)} As the function $s\mapsto \int_E\exp(-s \|x\|)\de\mu(x)$ is non increasing, we may find 
$s\geq 0$ such that \eqref{defsubexpdiv} holds. Then, by Chebyshev inequality, for $r>0$, we have
$$\mu(B(0,r))\leq e^{sr}\int_E\exp(-s \|x\|)\de\mu(x).$$
{\em (ii)}$\Rightarrow${\em (i)} Let $a$ and $C$ be as in the statement and choose $s>a$. By Fubini Theorem, we have
\begin{align*}\int_E\exp(-s \|x\|)\de\mu(x)<\infty
&=\int_0^\infty\mu(\{x\in E|\exp(-s \|x\|)\geq u\})\de u\\
&=\int_0^1\mu(B(0,-s^{-1}\log u))\de u\\
&\leq C\int_0^1u^{-\frac{a}{s}}\de u<\infty,
\end{align*}
where the latter follows from the assumption that $a<s$.
\end{proof}

When $\mu$ has subexponential divergence, we set $\tau\in\mathbb R\cup\{-\infty\}$ to be the infimum of all $s$ such that \eqref{defsubexpdiv} holds and we call $\tau$ the exponent of growth of $\mu$. For $s$ in $\mathbb R$, we have
\begin{equation}\label{convergence1}
s>\tau\Rightarrow \mathcal F(s)<\infty\mbox{ and }\mathcal F(s)<\infty\Rightarrow s\geq\tau.\end{equation}
The exponent $\tau$ depends on the choice of the norm. Below, we will introduce a more intrinsic notion.

First, if $\mathcal C\subset E$ is an open cone, we denote by $\tau_\mathcal C$ the exponent of growth of the restriction of $\mu$ to $\mathcal C$.
We set $\psi_\mu(0)=0$ and, for $x$ in $E$, $x\neq 0$, 
$$\psi_\mu(x)=\|x\|\inf\{\tau_\mathcal C|\mathcal C\mbox{ is an open cone and }x\in\mathcal C\}.$$

The function $\psi_\mu$ does not depend on the choice of the norm and is called the indicator of growth of $\mu$. 
It is upper semicontinuous and we have
$$\tau=\sup_{x\neq 0}\frac{\psi_\mu(x)}{\|x\|}.$$
We also get the following multi-dimensional analogue of \eqref{convergence1}. For $\varphi$ in $E^*$, we have 
\begin{equation}\label{phiomega1}(\forall x\neq 0\quad \varphi(x)>\psi_\mu(x))\Rightarrow \int_E\exp(-\varphi(x))\de\mu(x)<\infty\end{equation}
and
\begin{equation}\label{phiomega2}\int_E\exp(-\varphi(x))\de\mu(x)<\infty\Rightarrow(\forall x\neq 0\quad \varphi(x)\geq\psi_\mu(x)).\end{equation}

For counting measures, the function $\psi_\nu$ takes its values in the set $[0,\infty)\cup\{-\infty\}$ (appearance of the value $-\infty$ is justified in Lemma \ref{psicounting} below).
Recall that the asymptotic cone of a non-empty subset $X$ of $E$ is defined by \eqref{defasympcone}.
% the set $\mathcal C$ of $x$ in $E$ such that there exists a sequence $(t_n)_{n\geq 0}$ of non-negative real numbers and a sequence $(x_n)_{n\geq 0}$ of elements of $X$ such that 
%$$t_n\td{n}{\infty}0\mbox{ and }t_nx_n\td{n}{\infty} x.$$

\begin{Lem} \label{psicounting}
Let $A$ be a countable set and $w:A\rightarrow E$ be a proper map, that is, $w^{-1}(B)$ is finite for any bounded set $B$ in $E$. We set $\mu=\sum_{a\in A}\delta_{w(a)}$ and we assume that $\mu$ has subexponential divergence as in \eqref{defsubexpdiv}, 
that is, there exists $s$ in $\mathbb R$ with
$$\sum_{a\in A}\exp(-s\|w(a)\|)<\infty.$$
Let $\mathcal C$ be the asymptotic cone of $w(A)$. 
Then, we have 
$\psi_\mu\geq 0$ on $\mathcal C$ and $\psi_\mu=-\infty$ on $E\smallsetminus\mathcal C$.
\end{Lem}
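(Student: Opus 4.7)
The plan is to split into the two assertions of the Lemma, exploiting a convenient reformulation of membership in $\mathcal{C}$: by the definition of the asymptotic cone (read now for $w(A) \subset E$ rather than a subset of $E^*$), a nonzero vector $x$ lies in $\mathcal{C}$ if and only if there is a sequence $(a_n)_{n \geq 0}$ in $A$ with $\|w(a_n)\| \to \infty$ and $w(a_n)/\|w(a_n)\| \to x/\|x\|$. Indeed, the defining relations $t_n w(a_n) \to x$ with $t_n \to 0^+$ force $\|w(a_n)\| \to \infty$ once $x \neq 0$, and then pin down the direction. I will use throughout that properness of $w$ forces any sequence of $a_n$'s with $\|w(a_n)\| \to \infty$ to contain infinitely many distinct indices.

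For the bound $\psi_\mu \geq 0$ on $\mathcal{C} \setminus \{0\}$, I will fix such an $x$ and an arbitrary open cone $\mathcal{C}' \ni x$. Since $\mathcal{C}'$ contains a full spherical neighbourhood of $x/\|x\|$, the $w(a_n)$ constructed above eventually lie in $\mathcal{C}'$, producing infinitely many distinct indices $a \in A$ with $w(a) \in \mathcal{C}'$ and $\|w(a)\|$ arbitrarily large. For any $s < 0$ the sum $\sum_{a : w(a) \in \mathcal{C}'} \exp(-s \|w(a)\|)$ thus contains infinitely many terms tending to $+\infty$ and diverges, so $\tau_{\mathcal{C}'} \geq 0$. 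Taking the infimum over all such $\mathcal{C}'$ gives $\psi_\mu(x) \geq 0$, as required.

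For $\psi_\mu(x) = -\infty$ on $E \setminus \mathcal{C}$, it is enough to exhibit a single open cone $\mathcal{C}' \ni x$ with $w(A) \cap \mathcal{C}'$ bounded: properness will then make this intersection finite, so $\mu|_{\mathcal{C}'}$ has compact support and $\tau_{\mathcal{C}'} = -\infty$, whence $\psi_\mu(x) = -\infty$. Producing such a cone is the main obstacle, and I will argue by contradiction. Fix a decreasing basis $(\mathcal{C}'_n)_{n \geq 0}$ of open cones containing $x$ whose intersection is the closed ray $[0, \infty) x$; if every $\mathcal{C}'_n \cap w(A)$ were unbounded, I would choose $a_n \in A$ with $w(a_n) \in \mathcal{C}'_n$ and $\|w(a_n)\| \geq n$, and set $t_n = \|x\| / \|w(a_n)\|$. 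Any subsequential limit of $t_n w(a_n)$ would lie on the closed ray and have norm $\|x\|$, hence equal $x$, placing $x \in \mathcal{C}$ and contradicting the assumption. Some $\mathcal{C}'_n$ therefore has the required boundedness property, completing the proof.
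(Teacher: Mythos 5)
Your proof is correct and takes essentially the same approach as the paper's: both split on $x\in\mathcal C$ versus $x\notin\mathcal C$ and read off $\tau_{\mathcal D}$ from the cardinality of $w^{-1}(\mathcal D)$ for open cones $\mathcal D\ni x$; the paper treats the two cardinality facts (infinite for every cone when $x\in\mathcal C$, finite for some cone when $x\notin\mathcal C$) as immediate from the definition of the asymptotic cone, whereas you spell out the underlying topology. One small slip to note: a decreasing family of open cones around a nonzero $x$ cannot have intersection equal to the closed ray $[0,\infty)x$, since an open cone other than $E$ itself must miss the origin, so the intersection is at most the open ray $(0,\infty)x$; but what your compactness argument actually uses is that a subsequential limit of $t_n w(a_n)$ lies in $\bigcap_n \overline{\mathcal C'_n}$ and has norm $\|x\|>0$, and for a suitable nested family this does force the limit to equal $x$, so the conclusion is unaffected.
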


\begin{proof} Let $x\neq 0$ be in $E$.

If $x$ is not in $\mathcal C$, there exists an open cone $\mathcal D$ containing $x$ such that $w^{-1}(\mathcal D)$ is finite. Then, for every $s$ in $\mathbb R$, we have 
$$\sum_{\substack{a\in A\\ w(a)\in\mathcal D}}\exp(-s\|w(a)\|)<\infty,$$
hence $\tau_{\mathcal D}=-\infty$ and $\psi_\mu(x)=-\infty$.

If $x$ belongs to $\mathcal C$, then, for every open cone containing $x$, the set $w^{-1}(\mathcal D)$ is infinite. We get
$$\sum_{\substack{a\in A\\ w(a)\in\mathcal D}}1=|\{a\in A|w(a)\in\mathcal D\}|=\infty,$$
hence $\tau_{\mathcal D}\geq 0$. Thus, $\psi_\mu(x)\geq 0$ as required.
\end{proof}

We shall need to study the behaviour of the indicator of growth when mapping measures through a surjective linear map.

\begin{Prop}\label{projection}
Let $F$ be a subspace of $E$ and $\pi:E\rightarrow E/F$ be the quotient map. Let $\mu$ be a Radon measure with subexponential divergence on $E$ and suppose that $\psi_\mu(x)=-\infty$ for every $x\neq 0$ in $F$. Then, the measure 
$\nu=\pi_*\mu$ is finite on compact sets and has subexponential divergence. For every $y$ in $E/F$, we have
$$\psi_{\nu}(y)=\sup\{\psi_\mu(x)|x\in E\quad \pi(x)=y\}.$$
\end{Prop}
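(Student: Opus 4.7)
The plan is to first build an open cone $U\subset E$ containing $F\smallsetminus\{0\}$ on which $\mu$ has negligible mass, and then to exploit $U$ both to establish the analytic properties of $\nu$ and to analyse its indicator of growth. Since $\psi_\mu=-\infty$ on $F\smallsetminus\{0\}$, every $z$ in the compact unit sphere $F\cap S$ lies in an open cone $\mathcal C_z$ with $\tau_{\mathcal C_z}(\mu)=-\infty$, and a finite subcover gives an open cone $U\subset E$ with $F\smallsetminus\{0\}\subset U$ and $\tau_U(\mu)=-\infty$; in particular $\mu(U)<\infty$. Since $U^c\cap S$ is compact and disjoint from $F$, continuity of $\pi$ and homogeneity produce a constant $c>0$ with $\|\pi(x)\|\geq c\|x\|$ for every $x\in E\smallsetminus U$. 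This shows that $\pi^{-1}(K)\smallsetminus U$ is bounded in $E$ for every compact $K\subset E/F$, whence $\nu(K)<\infty$; splitting $\int_E\exp(-s\|\pi(x)\|)\de\mu(x)$ over $U$ (bounded by $\mu(U)$ for $s\geq 0$) and its complement (bounded by $\int_E\exp(-sc\|x\|)\de\mu(x)$ for large $s$) then yields the subexponential divergence of $\nu$.

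For the inequality $\psi_\nu(y)\geq\psi_\mu(x)$ whenever $\pi(x)=y$ with $x,y\neq 0$, set $\alpha=\|y\|/\|x\|>0$. Given $\epsilon>0$, continuity of $\pi$ provides an open cone $\mathcal C_0\ni x$ on which $|\|\pi(z)\|/\|z\|-\alpha|<\epsilon$, and for any open cone $\mathcal D\ni y$ the set $\mathcal C=\mathcal C_0\cap\pi^{-1}(\mathcal D)$ is an open cone containing $x$. The identity $\int_{\mathcal D}\exp(-s\|y'\|)\de\nu(y')=\int_{\pi^{-1}(\mathcal D)}\exp(-s\|\pi(z)\|)\de\mu(z)$ together with the comparison $\|\pi(z)\|\in((\alpha-\epsilon)\|z\|,(\alpha+\epsilon)\|z\|)$ on $\mathcal C$ shows that for $s$ slightly above $\tau_{\mathcal D}(\nu)$ one has $\int_{\mathcal C}\exp(-s'\|z\|)\de\mu(z)<\infty$, with $s'=\alpha(1+\epsilon)s$ if $s\geq 0$ and $s'=\alpha(1-\epsilon)s$ if $s<0$. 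Hence $\tau_{\mathcal C}(\mu)\leq\alpha\tau_{\mathcal D}(\nu)+O(\epsilon)$, and since $\psi_\mu(x)/\|x\|\leq\tau_{\mathcal C}(\mu)$, letting $\epsilon\to 0$ and then taking the infimum over $\mathcal D$ yields $\psi_\mu(x)\leq\psi_\nu(y)$. Passing to the supremum over $x$ gives $\psi_\nu(y)\geq\tilde\psi(y):=\sup\{\psi_\mu(x)|\pi(x)=y\}$.

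For the reverse inequality, fix $y\neq 0$ and $\epsilon>0$. Consider the compact set $K=\pi^{-1}(\mathbb R_{\geq 0}y)\cap S\smallsetminus U$; for each $\tilde x\in K$, homogeneity of $\tilde\psi$ combined with the pointwise inequality $\psi_\mu\leq\tilde\psi\circ\pi$ yields $\psi_\mu(\tilde x)\leq(\|\pi(\tilde x)\|/\|y\|)\tilde\psi(y)$. Cover $K$ by finitely many small open cones $\mathcal C_{\tilde x_1},\ldots,\mathcal C_{\tilde x_m}$ chosen so that $\tau_{\mathcal C_{\tilde x_i}}(\mu)\leq\psi_\mu(\tilde x_i)+\epsilon$ and so that $\|\pi(\cdot)\|/\|\cdot\|$ is within $\epsilon$ of $\|\pi(\tilde x_i)\|$ on $\mathcal C_{\tilde x_i}$. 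The open cone $V=U\cup\bigcup_i\mathcal C_{\tilde x_i}$ contains the compact set $\pi^{-1}(\mathbb R_{\geq 0}y)\cap S$; since $\pi(V^c\cap S)$ is compact and does not contain $y$, some open cone $\mathcal D\ni y$ satisfies $\pi^{-1}(\mathcal D)\subset V$. Splitting $\int_{\mathcal D}\exp(-s\|y'\|)\de\nu(y')$ over $U$ (finite for every $s$ by $\tau_U(\mu)=-\infty$) and each $\mathcal C_{\tilde x_i}$ (finite once $s\geq\tilde\psi(y)/\|y\|+O(\epsilon)$, via the local comparability) yields $\tau_{\mathcal D}(\nu)\leq\tilde\psi(y)/\|y\|+O(\epsilon)$, and letting $\epsilon\to 0$ concludes that $\psi_\nu(y)\leq\tilde\psi(y)$.

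The main obstacle is the $\epsilon$-bookkeeping in this last step: the sign of $\tilde\psi(y)$ determines whether the lower or the upper bound in $(\|\pi(\tilde x_i)\|-\epsilon)\|z\|\leq\|\pi(z)\|\leq(\|\pi(\tilde x_i)\|+\epsilon)\|z\|$ must be invoked to control the exponential integral, and one also has to verify that the finite cover of $K$ extends to a cover of $\pi^{-1}(\mathcal D)$ for some open cone $\mathcal D\ni y$ compatibly with the target threshold $\tilde\psi(y)/\|y\|+O(\epsilon)$; these two constraints must be arranged simultaneously.
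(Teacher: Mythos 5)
The structure of your argument matches the paper's closely (build an open cone $U\supset F\smallsetminus\{0\}$ of negligible mass, use it both for subexponential divergence of $\nu$ and for the cone constructions in the two inequalities), but there is a genuine gap in the very first step. You assert that for each $z$ in the compact unit sphere $F\cap S$ there is an open cone $\mathcal C_z\ni z$ with $\tau_{\mathcal C_z}(\mu)=-\infty$. This does not follow from $\psi_\mu(z)=-\infty$: the definition only gives $\inf\{\tau_{\mathcal C}\mid z\in\mathcal C\}=-\infty$, and the infimum need not be attained. For instance, take $E=\mathbb R^2$, $F=\mathbb R e_1$, and $\mu=\sum_{n\geq 2}\sum_{k\geq 1}e^{-nk}\delta_{k(\cos(1/n),\sin(1/n))}$. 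Every open cone around $e_1$ contains the ray at angle $1/n$ for all $n$ large enough, hence has $\tau\geq -\min\{n:\text{ray}\subset\mathcal C\}>-\infty$, yet $\psi_\mu(e_1)=-\infty$. Consequently the claim $\tau_U(\mu)=-\infty$ can fail, and the downstream uses of it — in particular, the assertion that $\int_U\exp(-s\|\pi z\|)\,d\mu<\infty$ for every $s$ in the final estimate, and the choice of cones with $\tau_{\mathcal C_{\tilde x_i}}\leq\psi_\mu(\tilde x_i)+\epsilon$ when $\psi_\mu(\tilde x_i)$ may itself equal $-\infty$ — are not justified.

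The paper handles exactly this difficulty in its Lemma \ref{negligible}: for every finite $a\in\mathbb R$ there is an open cone $\mathcal C_0\supset F\smallsetminus\{0\}$ with $\tau_{\mathcal C_0}\leq a$, and one then carries the parameter $a$ (chosen with $a\leq\|y\|^{-1}\sup_{x\in E_y}\psi_\mu(x)$) through the whole estimate so that the contribution of the cone around $F$ never dominates. Your proof becomes correct if you replace the claim $\tau_U=-\infty$ by $\tau_U\leq a$ for an $a$ fixed at the outset and bounded above by the target value $\tilde\psi(y)/\|y\|$, and then keep track of $a$ in the last paragraph; this is precisely the bookkeeping you correctly anticipated as the delicate point, but the need for $a$ is forced, not optional.
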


The proof uses the assumption on $F$ under the following form.

\begin{Lem} \label{negligible}
Let $F$ be subspace of $E$ and $\mu$ be a Radon measure with subexponential divergence on $E$ and 
$\psi_\mu(x)=-\infty$ for every $x\neq 0$ in $F$.
Then, for every $a$ in $\mathbb R$, there exists 
an open cone $\mathcal C\subset E$ with $F\smallsetminus \{0\}\subset \mathcal C$ and 
$\tau_{\mathcal C}\leq a$.
\end{Lem}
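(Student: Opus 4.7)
The plan is to exploit the pointwise hypothesis $\psi_\mu(x) = -\infty$ at every nonzero $x \in F$ to cover the unit sphere $S_F$ of $F$ by open cones of small exponent of growth, and then use compactness of $S_F$ (which is available since $F$ is finite-dimensional as a subspace of the finite-dimensional space $E$) to extract a finite subcover whose union is a single open cone of exponent at most $a$ containing $F \smallsetminus \{0\}$.

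More precisely, I would fix $a \in \mathbb R$ and, for each $x \in S_F$, use the defining formula
$$\psi_\mu(x) \;=\; \|x\|\, \inf\{\tau_{\mathcal D} \mid \mathcal D \text{ open cone in } E,\ x \in \mathcal D\}$$
together with $\|x\| = 1$ and $\psi_\mu(x) = -\infty$ to select an open cone $\mathcal C_x \ni x$ with $\tau_{\mathcal C_x} \leq a$. Compactness of $S_F$ then furnishes finitely many such cones $\mathcal C_{x_1}, \ldots, \mathcal C_{x_n}$ whose union already covers $S_F$. Setting $\mathcal C := \mathcal C_{x_1} \cup \cdots \cup \mathcal C_{x_n}$ produces an open subset of $E$ that is stable under multiplication by positive scalars, hence an open cone. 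Any $y \in F \smallsetminus \{0\}$ satisfies $y/\|y\| \in S_F$, so $y/\|y\|$ lies in some $\mathcal C_{x_j}$, and then $y \in \mathcal C_{x_j} \subset \mathcal C$ by the cone property; this yields $F \smallsetminus \{0\} \subset \mathcal C$.

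To control $\tau_{\mathcal C}$, I would invoke finite subadditivity of the Laplace-type integral \eqref{defsubexpdiv}: for every $s > a$,
$$\int_{\mathcal C} e^{-s\|x\|}\, \de\mu(x) \;\leq\; \sum_{j=1}^n \int_{\mathcal C_{x_j}} e^{-s\|x\|}\, \de\mu(x) \;<\; \infty,$$
where the right-hand finiteness follows from $\tau_{\mathcal C_{x_j}} \leq a < s$ and the characterisation \eqref{convergence1} of the exponent of growth. Hence $\tau_{\mathcal C} \leq s$ for every $s > a$, which gives $\tau_{\mathcal C} \leq a$, as required.

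The argument is essentially routine and I expect no serious obstacle. The only point deserving a moment's care is the choice to work on the unit sphere of $F$ rather than of $E$: it is precisely this that lets one apply the pointwise hypothesis on $F$ while still benefiting from a compact index set, so that a countable or uncountable union (which would not preserve the exponent bound a priori) is reduced to a finite one.
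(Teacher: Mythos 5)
Your proof is correct and follows essentially the same route as the paper: cover the compact unit sphere of $F$ by open cones on which the exponent of growth is small, extract a finite subcover, take the union, and use finite subadditivity of the Laplace-type integral to bound $\tau_{\mathcal C}$. The only inessential difference is that the paper selects cones $\mathcal C_x$ with $\int_{\mathcal C_x}\exp(-a\|z\|)\,\de\mu(z)<\infty$ directly, giving $\tau_{\mathcal C}\leq a$ in one step, whereas you select cones with $\tau_{\mathcal C_x}\leq a$ and then pass to the limit over $s>a$; both are immediate from $\psi_\mu\equiv-\infty$ on $F\smallsetminus\{0\}$. (Your phrasing in terms of the unit sphere rather than the unit ball is arguably slightly cleaner, since the cones don't contain the origin.)
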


\begin{proof} By assumption, for every unit vector $x$ in $F$, there exists an open cone $\mathcal C_x$ containing $x$ with 
$\int_{\mathcal C_x}\exp(-a\|z\|)\de\mu(z)<\infty$. By compactness of the unit ball of $F$, 
we can therefore find $x_1,\ldots,x_r$ in $F\smallsetminus\{0\}$ with 
$$F\smallsetminus\{0\}\subset \mathcal C=\mathcal C_{x_1}\cup\ldots\cup\mathcal C_{x_r}.$$
We get
$$\int_{\mathcal C}\exp(-a\|z\|)\de\mu(z)\leq \sum_{i=1}^r \int_{\mathcal C_{x_i}}\exp(-a\|z\|)\de\mu(z)<\infty,$$
hence $\tau_{\mathcal C}\leq a$ as required.
\end{proof}

\begin{proof}[Proof of Proposition \ref{projection}] 
We equip $E/F$ with the quotient norm defined by 
$$\|y\|=\inf\{\|x\||x\in E\quad\pi(x)=y\}.$$

We first show that $\nu$ has subexponential divergence. Indeed, by the assumption and Lemma \ref{negligible}, there exists an open cone $\mathcal C$ containing $F\smallsetminus\{0\}$ with $\mu(\mathcal C)<\infty$. As $\mathcal C$ is open, there exists $\varepsilon>0$ such that, for every $x$ in $E\smallsetminus \mathcal C$, one has $\|\pi(x)\|\geq \varepsilon\|x\|$. Now, we choose $s>0$ with
$\int_{E}\exp(-s\|x\|)\de\mu(x)<\infty$ and we dominate
\begin{multline*}\int_{E/F}\exp(-\varepsilon^{-1}s\|y\|)\de\nu(y)=\int_E\exp(-\varepsilon^{-1}s\|\pi(x)\|)\de\mu(x)
\\ \leq\mu(\mathcal C)+\int_{E\smallsetminus\mathcal C}\exp(-s\|x\|)\de\mu(x)<\infty,\end{multline*}
hence $\nu$ has subexponential growth.

Now, we prove the formula for $\psi_\nu$. Take $y$ in $E/F$. If $y=0$, the formula holds directly from the assumption. 

If $y\neq 0$, we take $x$ in $E$ with $\pi(x)=y$ and we show that $\psi_\mu(x)\leq\psi_\nu(y)$. Set $\alpha=\|x\|\|y\|^{-1}$ and fix $\mathcal D\subset E/F$ to be an open cone containing $y$.
For $\varepsilon>0$, the cone 
$$\mathcal C=\{x'\in\pi^{-1}\mathcal D|(1+\varepsilon)^{-1}\alpha\|\pi(x')\|<\|x'\| <(1+\varepsilon)\alpha\|\pi(x')\| \}$$
is open and contains $x$. 
For $s\geq 0$, we have 
$$\int_{\mathcal C} \exp(-s\|x'\|)\de\mu(x')\leq \int_{\mathcal D} \exp(-s(1+\varepsilon)^{-1}\alpha\|y'\|)\de\nu(y')$$
and, for $s\leq 0$,
$$\int_{\mathcal C} \exp(-s\|x'\|)\de\mu(x')\leq \int_{\mathcal D} \exp(-s(1+\varepsilon)\alpha\|y'\|)\de\nu(y').$$
This yields
$$\tau_{\mathcal C,\mu}\leq \alpha^{-1}\max((1+\varepsilon)\tau_{\mathcal D,\nu},(1+\varepsilon)^{-1}\tau_{\mathcal D,\nu}),$$
hence
$$\psi_\mu(x)\leq\|y\|\max((1+\varepsilon)\tau_{\mathcal D,\nu},(1+\varepsilon)^{-1}\tau_{\mathcal D,\nu}).$$
Letting $\varepsilon$ go to $0$, we get $\psi_\mu(x)\leq \|y\| \tau_{\mathcal D,\nu}$ and therefore, as this holds for any $\mathcal D$, 
$\psi_\mu(x)\leq \psi_\nu(y)$ as required. 

To prove the converse inequality, we start by fixing $a<0$ in $\mathbb R$ with 
$a\leq \|y\|^{-1}\sup_{x\in E_y}\psi_\mu(x)$, where 
$$E_y=\pi^{-1}(y)=\{x\in E|\pi(x)=y\}.$$ Then, we apply Lemma \ref{negligible} so that we get an open cone $\mathcal C_0$ with $F\smallsetminus\{0\}\subset\mathcal C_0$ and $\tau_{\mathcal C_0}\leq a$.

Note that the set $K=E_y\smallsetminus \mathcal C_0$ is compact. Indeed, as $x$ goes to $\infty$ in $E_y$, the vector 
$\|x\|^{-1}\pi(x)=\|x\|^{-1} y$ goes to $0$ in $E/F$, that is, the distance between $\|x\|^{-1} x$ and $F$ goes to $0$. Thus, there exists $M\geq 0$ such that, for any $x$ in $E_y$ with $\|x\|\geq M$, we have $x\in\mathcal C_0$. 

Therefore, for $\varepsilon>0$, we can find $x_1,\ldots,x_r$ in $K$ and open cones $\mathcal C_1,\ldots,\mathcal C_r$ containing
$x_1,\ldots,x_r$ such that
$$K\subset \mathcal C_1\cup\ldots\cup\mathcal C_r,$$
and, for $1\leq i\leq r$, $\|x_i\|\tau_{\mathcal C_i}\leq\psi_\mu(x_i)+\varepsilon$ and
$$\mathcal C_i\subset\{x\in E|(1+\varepsilon)^{-1}\alpha_i\|\pi(x)\|<\|x\| <(1+\varepsilon)\alpha_i\|\pi(x)\|\},$$
with $\alpha_i=\|y\|^{-1}\|x_i\|$.

We set $\mathcal C=\mathcal C_0\cup\mathcal C_1 \ldots\cup\mathcal C_r$, so that $E_y\subset\mathcal C$. We define 
$$\mathcal D=\{x\in E|x+F\subset \mathcal C\}$$ 
and we claim that $\mathcal D$ is open in $E$. 

Indeed, suppose this is not the case. Then, we may find $x$ is in $\mathcal D$ and sequences $(x_n)_{n\geq 0}$ in $E$ 
and $(z_n)_{n\geq 0}$ in $F$ with 
$x_n\td{n}{\infty} x$ but, for any $n\geq 0$, $x_n+z_n\notin \mathcal C$. If the sequence $(z_n)_{n\ge 0}$ would go to $\infty$, the distance between the vectors $\|x_n+z_n\|^{-1}(x_n+z_n)$ and the space $F$ would go to zero and hence, for $n$ large, we would have $x_n+z_n\in\mathcal C_0$, which we have assumed to be false. Hence, we can assume that $(z_n)_{n\ge 0}$ is bounded and even that it converges to some $z$ in $F$. Then by assumption, we have $x+z\in\mathcal C$ and therefore, as $\mathcal C$ is open, for $n$ large, $x_n+z_n$ also belongs to $\mathcal C$, a contradiction.

Therefore, the cone $\mathcal D$ is open. By construction, it is stable by translations by $F$, which amounts to say that 
$\mathcal D=\pi^{-1}\pi(\mathcal D)$. As $\mathcal D$ contains $E_y$, $\pi(\mathcal D)$ contains $y$ and, for $s$ in $\mathbb R$, we have
\begin{multline*}\int_{\pi(\mathcal D)}\exp(-s\|y'\|)\de\nu(y')=\int_{\mathcal D}\exp(-s\|\pi(x)\|)\de\mu(x)\\
\leq \sum_{i=0}^r \int_{\mathcal C_i}\exp(-s\|\pi(x)\|)\de\mu(x).\end{multline*}
For $i=0$, we have, if $s\geq 0$, 
$$\int_{\mathcal C_0}\exp(-s\|\pi(x)\|)\de\mu(x)\leq \mu(\mathcal C_0)<\infty$$
and, if $s\leq 0$,
$$\int_{\mathcal C_0}\exp(-s\|\pi(x)\|)\de\mu(x)\leq \int_{\mathcal C_0}\exp(-s\|x\|)\de\mu(x),$$
which is finite as soon as $s>a$.
In the same way, for $1\leq i\leq r$, we have, if $s\geq 0$,
$$\int_{\mathcal C_i}\exp(-s\|\pi(x)\|)\de\mu(x)\leq \int_{\mathcal C_i}\exp(-s(1+\varepsilon)^{-1}\alpha_i\|x\|)\de\mu(x)$$
and, if $s\leq 0$,
$$\int_{\mathcal C_i}\exp(-s\|\pi(x)\|)\de\mu(x)\leq \int_{\mathcal C_i}\exp(-s(1+\varepsilon)\alpha_i\|x\|)\de\mu(x).$$
This yields
$$\tau_{\mathcal D}\leq 
\max(a,\max_{1\leq i\leq r}(\alpha_i(1+\varepsilon)\tau_{\mathcal C_i}),\max_{1\leq i\leq r}(\alpha_i(1+\varepsilon)^{-1}\tau_{\mathcal C_i})),$$
hence
$$\psi_{\nu}(y)\leq \max(a\|y\|,\max_{1\leq i\leq r}(\|x_i\|(1+\varepsilon)\tau_{\mathcal C_i}),\max_{1\leq i\leq r}(\|x_i\|(1+\varepsilon)^{-1}\tau_{\mathcal C_i})).$$
In view of the assumption, we obtain
$$\psi_\nu(y)\leq \max((1+\varepsilon)\sup_{x\in E_y}\psi_\mu(x),(1+\varepsilon)^{-1}\sup_{x\in E_y}\psi_\mu(x))+\varepsilon(1+\varepsilon).$$
The result follows by letting $\varepsilon$ go to $0$.
\end{proof}

\section{Concave growth measures}
\label{concdiv}

As in \cite{Qui}, we say that a Radon measure $\mu$ on $E$ has concave growth if there exists $\alpha,\beta,\gamma>0$ such that, for every $x,y$ in $E$, one has
$$\mu(B(x+y,\alpha))\geq\gamma \mu(B(x,\beta))\mu(B(y,\beta)).$$
This notion actually does not depend on the choice of the norm.
We get from \cite[Th\'eor\`eme 3.2.1]{Qui}:

\begin{Prop} \label{concavegrth}
Let $\mu$ be a Radon measure with subexponential divergence and concave growth on $E$. The the indicator of growth $\psi_\mu$ is concave.
\end{Prop}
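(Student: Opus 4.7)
Since $\psi_\mu$ is positively homogeneous by construction, concavity is equivalent to super-additivity: $\psi_\mu(x+y)\geq\psi_\mu(x)+\psi_\mu(y)$ for all $x,y\in E$. The inequality is automatic whenever $\psi_\mu(x)$ or $\psi_\mu(y)$ equals $-\infty$, so I fix $x,y\in E$ with both values finite. I first treat the case $x+y\neq 0$. By the definition of $\psi_\mu$, super-additivity reduces to showing that, for every open cone $\mathcal{E}\ni x+y$ and every real number $s$ with $s\|x+y\|<\psi_\mu(x)+\psi_\mu(y)$, one has $\int_\mathcal{E} e^{-s\|v\|}\,\de\mu(v)=+\infty$.

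The strategy is to construct inside $\mathcal{E}$ an explicit sequence of pairwise disjoint balls of exponentially large mass. Since the concave-growth inequality also holds with $\beta$ replaced by $\max(\alpha,\beta)$, I may assume $\alpha\leq\beta$. Iterating concave growth then gives, for any $z_0,w_0\in E$ and $n\geq 1$,
\[
\mu\bigl(B(nz_0+nw_0,\alpha)\bigr)\geq\gamma^{2n-1}\mu(B(z_0,\beta))^n\mu(B(w_0,\beta))^n.
\]
I choose open cones $\mathcal{C}\ni x$ and $\mathcal{D}\ni y$ narrow enough that $B(z+w,\alpha)\subset\mathcal{E}$ for every $z\in\mathcal{C}, w\in\mathcal{D}$ of large norm with $\|z\|/\|w\|$ close to $\|x\|/\|y\|$.

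The key step is to produce points $z_0\in\mathcal{C}, w_0\in\mathcal{D}$ of large norm with exponentially large $\mu(B(\cdot,\beta))$. For any $\varepsilon>0$ and any sufficiently large $\lambda$, the characterisation $\tau_\mathcal{C}=\limsup_{R\to\infty}R^{-1}\log\mu(\mathcal{C}\cap B(0,R))\geq\psi_\mu(x)/\|x\|$, together with subexponential divergence (which ensures that the annulus $\mathcal{C}\cap\{\|v\|\in[\lambda\|x\|-1,\lambda\|x\|]\}$ is covered by only polynomially many balls of radius $\beta$), yields by pigeonhole a point $z_0\in\mathcal{C}$ with $\|z_0\|\sim\lambda\|x\|$ and $\log\mu(B(z_0,\beta))\geq\lambda(\psi_\mu(x)-\varepsilon)$; a symmetric argument gives $w_0\in\mathcal{D}$. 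Setting $v_n=n(z_0+w_0)$, so that $\|v_n\|\sim n\lambda\|x+y\|$, the iterated concave-growth bound gives $\log\mu(B(v_n,\alpha))\geq n\lambda(\psi_\mu(x)+\psi_\mu(y))-O(n\lambda\varepsilon)$. The balls $B(v_n,\alpha)$ are pairwise disjoint for $\lambda$ large (since $\|v_{n+1}-v_n\|=\|z_0+w_0\|\gg 2\alpha$) and lie in $\mathcal{E}$, so
\[
\int_\mathcal{E} e^{-s\|v\|}\,\de\mu(v)\geq c\sum_{n\geq 1}e^{n\lambda\left[\psi_\mu(x)+\psi_\mu(y)-s\|x+y\|-C\varepsilon\right]}=+\infty
\]
when $\varepsilon$ is small enough that the bracket is positive. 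The degenerate case $x+y=0$ is handled by the same construction with $z_0\approx\lambda x, w_0\approx-\lambda x$: the $v_n$ then have small norm, so subexponential divergence bounds $\mu(B(v_n,\alpha))$ from above by $Ce^{an\lambda\eta}$, where $\eta$ measures the cone narrowness; comparing with the iterated lower bound and letting $\eta\to 0$ forces $\psi_\mu(x)+\psi_\mu(-x)\leq 0$.

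The main obstacle is the pigeonhole step, which converts the $\limsup$ defining $\tau_\mathcal{C}$ (a statement about large balls) into an exponential lower bound on the mass of a single ball of fixed radius $\beta$. Subexponential divergence is essential: it guarantees that the relevant annulus is covered by only polynomially many $\beta$-balls, so the exponential mass $e^{\lambda\|x\|\tau_\mathcal{C}}$ concentrated in the annulus must be supported on at least one such ball, producing the point bound needed to feed the iteration.
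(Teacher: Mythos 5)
The paper does not actually prove this proposition; it simply cites \cite[Th\'eor\`eme 3.2.1]{Qui}, so your argument must stand on its own. Your overall strategy is the right one (reduce concavity to super-additivity via homogeneity, then force divergence of $\int_{\mathcal E}e^{-s\|v\|}\,\de\mu$ by producing exponentially many disjoint balls of exponentially large mass inside a cone around $x+y$), but two of the steps as written are genuinely wrong and need real repair.

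First, the reduction to $\alpha\leq\beta$ is false. Enlarging $\beta$ \emph{strengthens} the right-hand side of the concave-growth inequality: $\mu(B(x,\max(\alpha,\beta)))\geq\mu(B(x,\beta))$, so the inequality with $\beta$ replaced by $\max(\alpha,\beta)$ does not follow from the one you are given (one may only shrink $\beta$ or enlarge $\alpha$, neither of which helps). Without $\alpha\leq\beta$, the passage from $\mu(B((n-1)(z_0+w_0),\alpha))$ to $\mu(B((n-1)(z_0+w_0),\beta))$ in your induction fails. The iteration can still be run, but only by covering each $\alpha$-ball by a bounded number $N$ of $\beta$-balls and pigeonholing at every step; this costs a factor $\gamma/N$ per step and lets the centers drift by $O(\alpha)$ per step, a drift you must then absorb by taking $\|z_0+w_0\|$ large. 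Second, the pigeonhole step overclaims: the $\limsup$ characterization of $\tau_{\mathcal C}$ produces a heavy $\beta$-ball at radius $\sim R$ only along a \emph{subsequence} of radii $R$, not ``for any sufficiently large $\lambda$.'' Without further input (and subexponential divergence gives only an upper bound, so it does not fill in the gaps), the sets of good radii for $\mathcal C$ and for $\mathcal D$ may be arbitrarily sparse and never align in the ratio $\|x\|:\|y\|$, which is exactly what you need in order for $z_0+w_0$ to point in the direction of $x+y$. The standard fix is a two-parameter iteration: take a single good $z_0$ at some radius $R$, a single good $w_0$ at some (unrelated) radius $R'$, and consider $n z_0+m w_0$ with $n/m$ close to $\|x\|R'/(\|y\|R)$; then one must separately check disjointness of the balls and that they remain in $\mathcal E$. (Also, the ``polynomially many $\beta$-balls cover the annulus'' fact is purely geometric and has nothing to do with subexponential divergence; the latter enters elsewhere, in showing the heavy annulus must sit near radius $R$ and in the degenerate case $x+y=0$.) Both gaps are repairable, but as written the proof is incomplete.
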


Thus, we can apply the formalism of Section \ref{hahnbanach}. This yields

\begin{Cor} \label{projconcave}
Let $F$ be a subspace of $E$ and $\pi:E\rightarrow G=E/F$ be the quotient map. Let $\mu$ be a Radon measure with subexponential divergence and concave growth on $E$ and suppose that $\psi_\mu(x)=-\infty$ for every $x\neq 0$ in $F$. Then, the measure 
$\nu=\pi_*\mu$ is finite on compact sets and has subexponential divergence. The function $\psi_\nu$ is concave; 
more precisely, for $y$ in $G$, we have
$$\psi_\nu(y)=\sup_{\substack{x\in E\\ \pi(x)=y}}\psi_\mu(x).$$
If $\Omega_\nu\subset G^*$ is the associated non empty closed convex subset, or every $\varphi$ in $G^*$, we have 
$$\int_G\exp(-\varphi(y))\de\nu(y)<\infty\Rightarrow \varphi\in\Omega_\nu$$
and
$$\varphi\in\overset{\circ}{\Omega}_\nu\Rightarrow \int_G\exp(-\varphi(y))\de\nu(y)<\infty.$$
\end{Cor}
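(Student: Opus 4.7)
The plan is to assemble Proposition~\ref{projection}, Proposition~\ref{concavegrth}, and the duality formalism of Section~\ref{hahnbanach}. All hypotheses of Proposition~\ref{projection} are granted by the Corollary: $\mu$ has subexponential divergence and $\psi_\mu=-\infty$ on $F\smallsetminus\{0\}$. Consequently $\nu=\pi_*\mu$ is finite on compact sets, has subexponential divergence, and
$$\psi_\nu(y)=\sup\{\psi_\mu(x)\mid x\in E,\ \pi(x)=y\}.$$
Moreover, Proposition~\ref{concavegrth} applied to $\mu$ yields that $\psi_\mu$ is concave, and $\psi_\nu$ is automatically upper semicontinuous as an indicator of growth.

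The only substantive step is to verify concavity of $\psi_\nu$. I would not attempt to transfer concave growth from $\mu$ to $\nu$ (the pre-images $\pi^{-1}(B(y,r))$ are tubes rather than balls, which makes the direct transfer awkward); instead I would argue directly from the sup formula. Fix $y_1,y_2\in G$ and $t\in[0,1]$. If $\psi_\nu(y_1)=-\infty$ or $\psi_\nu(y_2)=-\infty$ the inequality is trivial, so I may assume both are finite and choose sequences $(x_i^{(n)})_n$ in $E$ with $\pi(x_i^{(n)})=y_i$ and $\psi_\mu(x_i^{(n)})\to\psi_\nu(y_i)$. Since $\pi$ is linear, $\pi(tx_1^{(n)}+(1-t)x_2^{(n)})=ty_1+(1-t)y_2$, and the sup formula combined with the concavity of $\psi_\mu$ gives
$$\psi_\nu(ty_1+(1-t)y_2)\geq\psi_\mu(tx_1^{(n)}+(1-t)x_2^{(n)})\geq t\psi_\mu(x_1^{(n)})+(1-t)\psi_\mu(x_2^{(n)}).$$
Letting $n\to\infty$ completes the proof of concavity. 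Hence the formalism of Section~\ref{hahnbanach} applies: the set $\Omega_\nu\subset G^*$ defined by \eqref{defomega} is non empty, closed and convex, and is in duality with $\psi_\nu$.

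The two convergence statements are then direct consequences of \eqref{phiomega1}, \eqref{phiomega2} and Lemma~\ref{interior}. If $\int_G\exp(-\varphi(y))\,\de\nu(y)<\infty$, then \eqref{phiomega2} applied to $\nu$ gives $\varphi(y)\geq\psi_\nu(y)$ for every $y\neq 0$; combined with $\varphi(0)=0=\psi_\nu(0)$, this extends to all $y\in G$, so $\varphi\in\Omega_\nu$ by \eqref{defomega}. Conversely, if the interior $\overset{\circ}{\Omega}_\nu$ is empty the second implication is vacuous; otherwise Lemma~\ref{interior} characterizes $\overset{\circ}{\Omega}_\nu$ by the strict inequality $\varphi(y)>\psi_\nu(y)$ for every $y\neq 0$, and \eqref{phiomega1} supplies the finiteness of the integral. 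The only genuine obstacle throughout is the concavity verification via approximating sequences, which becomes routine once one exploits the linearity of $\pi$ in concert with Proposition~\ref{concavegrth}.
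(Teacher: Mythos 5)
Your proposal is correct and follows essentially the same route as the paper: Proposition~\ref{projection} gives the sup formula for $\psi_\nu$, Proposition~\ref{concavegrth} gives concavity of $\psi_\mu$, concavity of $\psi_\nu$ is then read off the sup formula using the linearity of $\pi$, and the two integral statements come from \eqref{phiomega1}, \eqref{phiomega2}, and Lemma~\ref{interior}. The paper phrases the concavity step via superadditivity of a homogeneous concave function using two lifts $x$ and $x''-x$ of $y$ and $y'$ rather than your approximating-sequence/convex-combination formulation, but the two are interchangeable.
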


\begin{proof} In view of \eqref{phiomega1} and \eqref{phiomega2}, it suffices to show that $\psi_\nu$ is concave. This is actually a direct consequence of Proposition \ref{projection}. Indeed, in view of this result, for $y$ in $G$, we have
$\psi_\nu(y)=\sup_{\substack{x\in E\\ \pi(x)=y}}\psi_\mu(x)$. Take $y,y'$ in $G$ and set $y''=y+y'$. Fix $x$ in $E$ with $\pi(x)=y$. 
For any $x''$ in $E$ with $\pi(x'')=y''$, we have $\pi(x''-x)=y'$, hence, since $\psi_\mu$ is concave,
$$\psi_\mu(x'')\leq\psi_\mu(x)+\psi_\mu(x''-x)\leq \psi_\nu(y)+\psi_\nu(y').$$
We get $\psi_\nu(y'')\leq \psi_\nu(y)+\psi_\nu(y')$ as required.
\end{proof}

\section{Multivariate power series}
\label{powerseries}

We now relate the previously introduced formalism to the study of the domain of convergence of multivariate power series in the spirit of analytic combinatorics (see \cite{Comt, FlajSedg, GoulJac}).

Let $d\geq 1$ and suppose we are given a family $f=(f_n)_{n\in\mathbb N^d}$ of complex numbers. We associate to $f$ the domain of absolute convergence $D_f$ of the associate power series in $\mathbb C^d$,
$$D_f=\{z\in\mathbb C^d|\sum_{n\in\mathbb N^d}|f_n z_1^{n_1}\cdots z_d^{n_d}|<\infty\}.$$
The set $D_f$ is stable under multiplication of the coordinates by complex numbers of modulus $1$, hence it is completely determined by the set $D_f\cap[0,\infty)^d$. We note that the latter set satisfies the following logarithmic convexity property. 

\begin{Lem} Let $z$ and $w$ be in $D_f\cap[0,\infty)^d$ and $0\leq s\leq 1$, then $z^sw^{(1-s)}$ also belongs to $D_f$, where we have written
$$z^sw^{(1-s)}=(z_1^sw_1^{(1-s)},\ldots,z_d^sw_d^{(1-s)}).$$
\end{Lem}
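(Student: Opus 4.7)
The plan is to use Hölder's inequality termwise. First I would handle the degenerate cases $s=0$ and $s=1$, which are trivial since they give $z^s w^{1-s}=w$ or $z$ respectively. Assume then $0<s<1$ and set the conjugate exponents $p=1/s$, $q=1/(1-s)$, so that $1/p+1/q=1$.

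The key observation is the factorization, valid for every $n\in\mathbb N^d$ and every $z,w\in[0,\infty)^d$,
\begin{equation*}
|f_n|\,(z^sw^{1-s})^n \;=\; |f_n|\,(z^n)^s (w^n)^{1-s} \;=\; \bigl(|f_n|\,z^n\bigr)^{s}\bigl(|f_n|\,w^n\bigr)^{1-s},
\end{equation*}
where I write $z^n=z_1^{n_1}\cdots z_d^{n_d}$ and have split $|f_n|=|f_n|^s|f_n|^{1-s}$. Applying Hölder's inequality to the counting measure on $\mathbb N^d$ with the exponents $p,q$ above then gives
\begin{equation*}
\sum_{n\in\mathbb N^d}|f_n|\,(z^sw^{1-s})^n \;\leq\; \Bigl(\sum_{n\in\mathbb N^d}|f_n|\,z^n\Bigr)^{s}\Bigl(\sum_{n\in\mathbb N^d}|f_n|\,w^n\Bigr)^{1-s},
\end{equation*}
and the right-hand side is finite because by hypothesis $z,w\in D_f$. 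Hence $z^sw^{1-s}\in D_f\cap[0,\infty)^d$.

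There is no real obstacle here; the statement is essentially the multivariate analogue of the logarithmic convexity of $L^p$ norms, and the one substantive ingredient is Hölder's inequality for series. It may be worth remarking, as a consequence, that the set $\log(D_f\cap(0,\infty)^d)\subset\mathbb R^d$ (obtained by taking coordinatewise logarithms) is a convex subset of $\mathbb R^d$, which is the form in which this statement connects with the indicator of growth $\psi_\mu$ of the counting measure $\mu=\sum_n\delta_{(\log|f_n|,n)}$ studied in the preceding sections.
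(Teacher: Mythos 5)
Your proof is correct and matches the paper's own argument: both apply H\"older's inequality with conjugate exponents $p=1/s$, $q=1/(1-s)$ to bound the sum for $z^sw^{1-s}$ by $\bigl(\sum|f_n|z^n\bigr)^s\bigl(\sum|f_n|w^n\bigr)^{1-s}$; whether one splits $|f_n|=|f_n|^s|f_n|^{1-s}$ against counting measure, as you do, or treats $|f_n|$ as the reference measure, as the paper does, is an immaterial presentational choice. (One small aside: the measure the paper actually associates to $f$ in Lemma \ref{dualdomconv} is $\nu=\sum_n|f_n|\,\delta_n$ on $\mathbb R^d$, not the lifted measure $\sum_n\delta_{(\log|f_n|,\,n)}$ you mention in your closing remark, though this does not affect your proof.)
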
 

\begin{proof} This is a direct consequence of H\"older inequality: we set $p=s^{-1}$ and $q=(1-s)^{-1}$ and we get 
\begin{multline*}\sum_{n\in\mathbb N^d}|f_n| (z_1^{s}w_1^{(1-s)})^{n_1}\cdots (z_d^{s}w_d^{(1-s)})^{n_d}\\ 
\leq \left(\sum_{n\in\mathbb N^d}|f_n| z_1^{n_1ps}\cdots z_d^{n_d ps}\right)^s
\left(\sum_{n\in\mathbb N^d}|f_n| w_1^{n_1q(1-s)}\cdots w_d^{n_dq(1-s)}\right)^{(1-s)}
\\=\left(\sum_{n\in\mathbb N^d}|f_n| z_1^{n_1}\cdots z_d^{n_d }\right)^s
\left(\sum_{n\in\mathbb N^d}|f_n| w_1^{n_1}\cdots w_d^{n_d}\right)^{(1-s)}<\infty.
\end{multline*}
\end{proof}

From this lemma, we deduce that the set 
$$\{(\log|z_1|,\ldots,\log|z_d|)|z\in D_f\cap (\mathbb C^*)^d\}\subset \mathbb R^d$$
is convex. We denote by $\Omega_f$ its closure. It is non empty as soon as 
 $D_f$ contains at least one point all of whose coordinates are $\neq 0$, which is to say
that there exists $R,C> 0$. with 
$$\sum_{\substack{n\in\mathbb N^d\\ n_1+\ldots +n_d=k}} f_{n}\leq CR^{-k}, \quad k\geq 0.$$

We can relate this formalism to the one of measures with subexponential divergence.

\begin{Lem} \label{dualdomconv}
Let $d\geq 1$ and $f=(f_n)_{n\in\mathbb N^d}$ be in $\mathbb C^{\mathbb N^d}$. Assume that 
$D_f\cap (\mathbb C^*)^d$ is non empty. Then, the Radon measure $\nu$ on $\mathbb R^d$ defined by
$$\nu=\sum_{n\in\mathbb N^d}|f_n|\delta_n$$
has subexponential divergence. Identify $\mathbb R^d$ with its dual space through the standard pairing. Then, 
the set $\Omega_f$ is the set of linear functionals $\varphi$ on $\mathbb R^d$ with $\varphi\geq\psi_\nu$.
\end{Lem}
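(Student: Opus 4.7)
The approach is to handle the two conclusions in turn: first that $\nu$ has subexponential divergence, then that $\Omega_f$ is identified with the convex set $\Omega_\nu := \{\varphi \in (\mathbb{R}^d)^* : \varphi \geq \psi_\nu\}$ dual to $\psi_\nu$. The bridge between the two settings is the elementary identity
\[
\int_{\mathbb{R}^d} e^{-\varphi(x)}\,\de\nu(x) = \sum_{n \in \mathbb{N}^d} |f_n| \prod_{i=1}^d (e^{-\varphi_i})^{n_i},
\]
which shows that the Laplace transform of $\nu$ at $\varphi$ is finite exactly when the vector $z$ with coordinates $|z_i| = e^{-\varphi_i}$ lies in $D_f \cap (\mathbb{C}^*)^d$, i.e.\ when $-\varphi$ belongs to the defining set of $\Omega_f$.

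For subexponential divergence, the hypothesis furnishes a point $z \in D_f$ with $|z_i|>0$ for all $i$. Since the coefficients $|f_n|$ are non-negative, $D_f$ is stable under coordinate-wise shrinking of the moduli $|z_i|$, so one may assume $\delta := \min_i |z_i| \in (0,1)$. Then $\prod_i |z_i|^{n_i} \geq \delta^{n_1+\cdots+n_d}$, whence $\sum_n |f_n| \delta^{n_1+\cdots+n_d} < \infty$. Using equivalence of norms on $\mathbb{R}^d$ to bound $\|n\|$ above by a multiple of $n_1+\cdots+n_d$, one obtains $e^{-s\|n\|} \leq \delta^{n_1+\cdots+n_d}$ for $s$ large enough, and hence $\int_{\mathbb{R}^d} e^{-s\|x\|}\,\de\nu(x) < \infty$. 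Thus $\nu$ has subexponential divergence and $\psi_\nu$ is well-defined.

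For the duality assertion, the implications \eqref{phiomega1} and \eqref{phiomega2} of Section \ref{expdiv} immediately give the sandwich
\[
\{\varphi : \forall x\neq 0,\ \varphi(x) > \psi_\nu(x)\}
\;\subset\; \{\varphi : \textstyle\int e^{-\varphi}\,\de\nu <\infty\}
\;\subset\; \Omega_\nu .
\]
By Lemma \ref{interior}, whenever $\Omega_\nu$ has non-empty interior the left-most set is exactly $\overset{\circ}{\Omega}_\nu$, and a closed convex set with non-empty interior equals the closure of its interior; thus passing to closures yields $\overline{\{\varphi : \int e^{-\varphi}\,\de\nu <\infty\}} = \Omega_\nu$. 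Under the identification $\log|z_i| \leftrightarrow -\varphi_i$, this closure is exactly $\Omega_f$ by definition, giving the asserted equality.

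The principal obstacle lies in the closure step, and more specifically in the degenerate cases where $\Omega_\nu$ has empty interior. By Lemma \ref{halfspace} these are exactly the situations $\Omega_\nu = (\mathbb{R}^d)^*$ (when $\mathcal{C}_{\psi_\nu}=\{0\}$) and $\Omega_\nu$ a closed half-space (when $\mathcal{C}_{\psi_\nu}$ is a half-line); in each case the asserted identification reduces to a one-dimensional analysis of $\psi_\nu$ on its essential support, which is handled by direct inspection. A secondary bookkeeping issue is to track the sign in the substitution $\varphi_i = -\log|z_i|$ consistently across all three sets in the sandwich, in particular on the boundary, where the Laplace integral may diverge even though $\varphi$ still satisfies $\varphi \geq \psi_\nu$.
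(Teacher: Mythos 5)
Your proof follows essentially the same route as the paper's: both rest on \eqref{phiomega1}--\eqref{phiomega2} together with a ``push to the interior, then take closures'' argument, and your treatment of subexponential divergence is a correct (and more explicit) version of the one-line observation the paper makes. However, your handling of the degenerate case is wrong. You claim, via Lemma~\ref{halfspace}, that $\Omega_\nu$ has empty interior exactly when $\Omega_\nu=(\mathbb{R}^d)^*$ or $\Omega_\nu$ is a closed half-space; but both of those sets have \emph{non-empty} interior. By Lemma~\ref{interior}, the empty-interior case is instead characterized by the existence of $x\neq 0$ with $\psi_\nu(x)+\psi_\nu(-x)=0$, which means $\Omega_\nu$ is contained in a proper affine subspace --- a situation Lemma~\ref{halfspace} files under ``convex closure of its boundary.'' So the dichotomy you draw is not the one the paper provides, and the ``handled by direct inspection'' sentence does not actually close the case.

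Fortunately, this is a worry you could eliminate entirely. Since $\nu$ is supported in $\mathbb{N}^d$, Lemma~\ref{psicounting} places the essential domain of $\psi_\nu$ inside the proper cone $[0,\infty)^d$; hence for every $x\neq 0$ at least one of $\psi_\nu(x)$, $\psi_\nu(-x)$ equals $-\infty$ and $\Omega_\nu$ automatically has non-empty interior. Once you observe this, your sandwich argument goes through without caveats. You should also be aware that Lemma~\ref{interior} is stated for \emph{concave} $\psi$, while $\psi_\nu$ is not assumed concave in Lemma~\ref{dualdomconv}; the inclusion $\overset{\circ}{\Omega}_\nu\subset\{\varphi:\forall x\neq 0,\ \varphi(x)>\psi_\nu(x)\}$ that you actually use survives without concavity, but this deserves a remark. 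The paper's proof sidesteps both of these points by constructing an explicit functional $\theta=(M,\ldots,M)$ with $\theta>\psi_\nu$ on $\mathbb{R}^d\smallsetminus\{0\}$ and perturbing $\varphi$ to $\varphi+\varepsilon\theta$, which directly lands in the set where the Laplace integral converges; this is the more robust phrasing. Finally, your remark that the substitution is $\log|z_i|\leftrightarrow -\varphi_i$ is correct, which means what one actually shows is $\Omega_\nu=-\Omega_f$ rather than $\Omega_\nu=\Omega_f$ as the lemma's statement literally reads --- this is a sign slip in the paper itself, and you are right to flag it rather than paper over it with ``giving the asserted equality.''
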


When the measure $\nu$ has concave growth in the sense of Section \ref{concdiv}, Proposition \ref{concavegrth} says that the function $\psi_\nu$ is concave and therefore, by Lemma \ref{dualdomconv},
the convex set $\Omega_f$ and the function $\psi_\nu$ are dual to each other in the sense of Section \ref{hahnbanach}
(see \cite[Thm. 4.1]{GQS}).

\begin{proof}[Proof of Lemma \ref{dualdomconv}] The first part of the lemma directly follows from the definitions. For the second part, which is the reformulation of the definition of $\Omega_f$, we note that any $\varphi$ in $\{(\log|z_1|,\ldots,\log|z_d|)|z\in D_f\cap (\mathbb C^*)^d\}$, we have $\varphi\geq\psi_\nu$ in view of \eqref{phiomega2}. Conversely, the asumption implies that, for large $M>0$, the linear functional 
$\theta$ with coordinates $(M,\ldots,M)$ satisfy $\theta>\psi_\nu$ everywhere on $\mathbb R^d\smallsetminus\{0\}$, hence, if $\varphi\geq \psi_\nu$, then, for any $\varepsilon>0$, we have
$\varphi+\varepsilon\theta>\psi_\nu$ everywhere on $\mathbb R^d\smallsetminus\{0\}$ and the conclusion follows from \eqref{phiomega1}.
\end{proof}

\section{Languages and directed graphs} 
\label{secgraphs}

In this article, we deal with a multivariate counting problem for two special subclasses of the class of regular languages. Recall that a regular language is a language determined (accepted) by a finite automaton (as is explained for example in \cite{HU,Law}. The multivariate growth function of such a language is rational and can be computed for instance using Theorem 4.1 from \cite{GQS}. The latter article also provides a foundational tool for finding directional asymptotic of growth that is based on the theory developed in \cite{Qui}. An important role in this is played by the "ergodicity" of the language.

Here, we focus on two special types of regular languages, arising in the study of finite type subshifts and sofic subshifts. Not getting into details, we mention only here that finite type subshifts correspond to languages determined by directed graphs and that the set of edges plays the role of the alphabet. While a more general type of subshifts, the sofic systems, are determined by labelled finite directed graphs when different edges can be labelled by the same symbol. The "ergodicity" for such languages corresponds to irreducibility of the subshift. We call such languages "finite type" and "sofic"respectively. For more details on this, see \cite{LM}. Let now express things rigorously.

Fix a finite set $A$, 
which we will consider as an alphabet, and denote by $W$ a language written with the alphabet $A$, that is, 
$W$ is a subset of the set $A^*=\bigsqcup_{n\geq 0} A^n$ of all finite words. Denote by $W_n=W\cap A^n$ the set of words with length $n$ in $W$.

We associate to $W$ a counting measure $\nu_{W}$ on the space $\mathcal E=\mathbb R^A$ of all real valued functions on $A$ as follows. For any $n\geq 0$ and any $w=(b_1,\ldots,b_n)$ in $A^n$, we denote by $\mathscr P(w)$ the function on $A$ such that $\mathscr P(w)(a)$ is the number of occurences of the letter $a$ in the word $b$, that is,
$$\mathscr P(w)(a)=|\{1\leq k\leq n|b_k=a\}|.$$
Set $\nu_{W}$ to be the image measure of the counting measure of $W$ under the map $\mathscr P$, that is,
$$\nu_{W}=\sum_{w\in W}\delta_{\mathscr P(w)},$$
where $\delta_x$ stands for the Dirac mass at a point $x$.
Note that, as $A$ is finite, the measure $\nu_{W}$ has subexponential divergence. We denote its indicator of growth by $\psi_{W}$.

For languages with a reasonable connectivity property, the measure $\nu_{W}$ has concave growth.

\begin{Lem} \label{irreducibleconcave}
Assume that the language $W$ is irreducible in the following sense: there exists an integer $p\geq 0$ such that, for every $n,m\geq 0$, and every $w\in W_n$, $w'\in W_m$, there exists $0\leq q\leq p$ and $w''\in W_{n+m+q}$ such that $w$ is the beginning of $w''$ and $w'$ is the end of $w''$. Then, the function $\psi_{W}$ is concave. 
\end{Lem}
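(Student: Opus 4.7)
By Proposition \ref{concavegrth}, and since $\nu_W$ already has subexponential divergence (the measure is supported on $\mathbb N^A \subset [0,n]^A$ at level $n$), it suffices to exhibit constants $\alpha,\beta,\gamma>0$ such that, for all $x,y\in\mathcal E$,
$$\nu_W(B(x+y,\alpha))\geq\gamma\,\nu_W(B(x,\beta))\,\nu_W(B(y,\beta)).$$
I will choose the $\ell^1$ norm on $\mathcal E=\mathbb R^A$, which has the virtue that $\|\mathscr P(w)\|_1=|w|$ for every word $w$; the property of concave growth does not depend on the norm chosen.

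The plan is to use the irreducibility of $W$ to build, from pairs of words $(w,w')$ counted respectively by $\nu_W(B(x,\beta))$ and $\nu_W(B(y,\beta))$, a single word $w''\in W$ counted by $\nu_W(B(x+y,\alpha))$, and then to bound the multiplicity of this construction. Concretely, fix $\beta=1$ and set $\alpha=p+2$. Given $w\in W_n$ with $\|\mathscr P(w)-x\|_1\leq\beta$ and $w'\in W_m$ with $\|\mathscr P(w')-y\|_1\leq\beta$, irreducibility produces an integer $0\leq q\leq p$ and a word $w''\in W_{n+m+q}$ having $w$ as a prefix and $w'$ as a suffix. Then $\mathscr P(w'')=\mathscr P(w)+\mathscr P(w')+\mathscr P(u)$, where $u$ is the filler of length $q\leq p$, so
$$\|\mathscr P(w'')-(x+y)\|_1\leq \|\mathscr P(w)-x\|_1+\|\mathscr P(w')-y\|_1+\|u\|_1\leq 2\beta+p=\alpha.$$
Selecting one such $w''$ for each $(w,w')$ defines a map $\Phi$ from the set of admissible pairs into the words counted by $\nu_W(B(x+y,\alpha))$.

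The main, and really only, step requiring care is the multiplicity bound for $\Phi$. Fix $w''\in W_N$. If $(w,w')\in\Phi^{-1}(w'')$ satisfies $\|\mathscr P(w)-x\|_1\leq\beta$ and $\|\mathscr P(w')-y\|_1\leq\beta$, then $w$ is the prefix of $w''$ of length $n$ and $w'$ is the suffix of length $m$, so $(w,w')$ is determined by the pair $(n,m)$. The length constraint from the $\ell^1$ norm gives $|n-\|x\|_1|\leq\beta$ and $N-n-p\leq m\leq N-n$, so $n$ takes at most $2\beta+1$ values and, for each, $m$ takes at most $p+1$ values. Hence $|\Phi^{-1}(w'')|\leq(2\beta+1)(p+1)$, and summing over pairs yields the concave growth inequality with $\gamma=((2\beta+1)(p+1))^{-1}$. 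Proposition \ref{concavegrth} then gives the concavity of $\psi_W=\psi_{\nu_W}$.
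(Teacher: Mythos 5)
Your proof is correct and follows exactly the route the paper intends: the paper gives only a one-line justification, deferring the whole argument to Proposition \ref{concavegrth}, and your proposal supplies the missing concave-growth estimate for $\nu_W$ (gluing via irreducibility, controlling the multiplicity by the length bookkeeping in the $\ell^1$ norm). No gaps; the constants and the multiplicity bound $|\Phi^{-1}(w'')|\leq(2\beta+1)(p+1)$ are right.
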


The proof is a direct consequence of the assumption and the concavity statement for measures with concave growth, namely, the Proposition \ref{concavegrth}.

%Now, assume we are given a labelling on $A$, which is to say that we have another finite set $B$, the set of labels, and a surjective 
%map $\rho:A\rightarrow B$. Then, for $n\geq 0$, $\rho$ induces a surjective map $A^n\rightarrow B^n$ which we still denote by $\rho$, by abuse of language. We also denote by $\pi$ the associated surjective map $E\rightarrow F$ where $F$ is the space of real-valued functions on $B$. In other words, for $b$ in $B$ and $f$ in $E$, we set
%$$\pi f(b)=\sum_{\substack{a\in A\\ \rho(a)=b}}f(a).$$
%Let $X=\rho(W)$ be the language that we obtain by replacing the letters in $A$ by their labels in $B$. The measure $\nu_X$ on $F$ is the image of the measure $\nu_{W}$ on $E$ by the map $\pi$. Thus, we get
%
%\begin{Lem} \label{projlang}
%Let $W$ be an irreducible language in the alphabet $A$ and $X=\rho(W)$ be the associated language in the alphabet $B$.
%Then the indicator of growth $\psi_X$ of the measure $\nu_X$ is concave and, for $y$ in $F$, one has 
%$$\psi_X(y)=\sup\{\psi_{W}(x)|x\in E\quad \pi(x)=y\}.$$
%\end{Lem}
%
%\begin{proof} Thanks to Lemma \ref{psicounting}, we know that the cone $\{x\in E|\psi_{W}(x)>-\infty\}$ is the asymptotic cone to $F(W)$. By construction, $F(W)$ is contained in the set of all negative functions on $A$, hence $\psi_{W}$ is $-\infty$ on all functions which take at least one negative value. In particular, if $x\neq 0$ is in $\ker\pi$, we must have $\psi_{W}(x)=-\infty$. The remainder of the result now follows from Corollary \ref{projconcave}.
%\end{proof}

We will now study languages that are produced by a directed graph. By a directed graph, we shall mean the data of a list
$\mathcal G=(Q,A,\sigma,\gamma)$ where $Q$ and $A$ are finite sets and $\sigma$ and $\gamma$ are maps from $A$ to $Q$. We shall think to $Q$ as being a set of states or vertices and to $A$ as being a set of arrows between the states. The source of an arrow $a$ is $\sigma(a)$ and its goal is $\gamma(a)$. See Figure \ref{fibonaccifig} and Figure \ref{cyclicfig} for examples.

\begin{figure}\begin{center}
%% Creator: Inkscape 1.3 (0e150ed6c4, 2023-07-21), www.inkscape.org
%% PDF/EPS/PS + LaTeX output extension by Johan Engelen, 2010
%% Accompanies image file '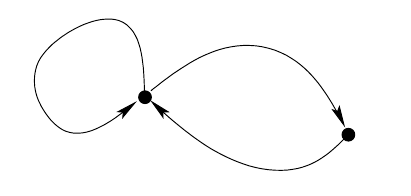' (pdf, eps, ps)
%%
%% To include the image in your LaTeX document, write
%%   \input{<filename>.pdf_tex}
%%  instead of
%%   \includegraphics{<filename>.pdf}
%% To scale the image, write
%%   \def\svgwidth{<desired width>}
%%   \input{<filename>.pdf_tex}
%%  instead of
%%   \includegraphics[width=<desired width>]{<filename>.pdf}
%%
%% Images with a different path to the parent latex file can
%% be accessed with the `import' package (which may need to be
%% installed) using
%%   \usepackage{import}
%% in the preamble, and then including the image with
%%   \import{<path to file>}{<filename>.pdf_tex}
%% Alternatively, one can specify
%%   \graphicspath{{<path to file>/}}
%% 
%% For more information, please see info/svg-inkscape on CTAN:
%%   http://tug.ctan.org/tex-archive/info/svg-inkscape
%%
\begingroup%
  \makeatletter%
  \providecommand\color[2][]{%
    \errmessage{(Inkscape) Color is used for the text in Inkscape, but the package 'color.sty' is not loaded}%
    \renewcommand\color[2][]{}%
  }%
  \providecommand\transparent[1]{%
    \errmessage{(Inkscape) Transparency is used (non-zero) for the text in Inkscape, but the package 'transparent.sty' is not loaded}%
    \renewcommand\transparent[1]{}%
  }%
  \providecommand\rotatebox[2]{#2}%
  \newcommand*\fsize{\dimexpr\f@size pt\relax}%
  \newcommand*\lineheight[1]{\fontsize{\fsize}{#1\fsize}\selectfont}%
  \ifx\svgwidth\undefined%
    \setlength{\unitlength}{197.25bp}%
    \ifx\svgscale\undefined%
      \relax%
    \else%
      \setlength{\unitlength}{\unitlength * \real{\svgscale}}%
    \fi%
  \else%
    \setlength{\unitlength}{\svgwidth}%
  \fi%
  \global\let\svgwidth\undefined%
  \global\let\svgscale\undefined%
  \makeatother%
  \begin{picture}(1,0.47528517)%
    \lineheight{1}%
    \setlength\tabcolsep{0pt}%
    \put(0,0){\includegraphics[width=\unitlength,page=1]{fibonacci.pdf}}%
    \put(0.32317608,0.16412644){\color[rgb]{0,0,0}\makebox(0,0)[lt]{\lineheight{1.25}\smash{\begin{tabular}[t]{l}$q_1$\end{tabular}}}}%
    \put(0.87780719,0.13158901){\color[rgb]{0,0,0}\makebox(0,0)[lt]{\lineheight{1.25}\smash{\begin{tabular}[t]{l}$q_2$\end{tabular}}}}%
    \put(0.06330141,0.38266178){\color[rgb]{0,0,0}\makebox(0,0)[lt]{\lineheight{1.25}\smash{\begin{tabular}[t]{l}$a_1$\end{tabular}}}}%
    \put(0.55777007,0.38450641){\color[rgb]{0,0,0}\makebox(0,0)[lt]{\lineheight{1.25}\smash{\begin{tabular}[t]{l}$a_2$\end{tabular}}}}%
    \put(0.59768416,0.02313303){\color[rgb]{0,0,0}\makebox(0,0)[lt]{\lineheight{1.25}\smash{\begin{tabular}[t]{l}$a_3$\end{tabular}}}}%
  \end{picture}%
\endgroup%

\caption{The Fibonacci graph}
\label{fibonaccifig}
\end{center}\end{figure}

Given such a data, we define a language $W$ on the alphabet $A$ which describes the set of admissible paths on the graph. Write 
$W=\bigsqcup_{n\geq 0}W_n$, where, for $n\geq 0$,
$$W_n=\{(a_1,\ldots,a_n)\in A^n|\forall 1\leq k\leq n-1\quad \gamma(a_k)=\sigma(a_{k+1})\},$$
that is, $W_n$ is the set of paths with length $n$ in the directed graph $\mathcal G$.
Then the language is irreducible if and only if the graph is connected that is, if and only if given $q,q'$ in $Q$, we may find $n\geq 0$ such that the set of paths of length $n$ from $q$ to $q'$,
$$W_n^{q,q'}=\{(a_1,\ldots,a_n)\in W_n|\sigma(a_1)=q,\gamma(a_n)=q'\},$$
is not empty. In other words, there exists t least one path from $q$ to $q'$.

Assume this holds. Then, if $\mathcal E=\mathbb R^A$ is the space of real-valued functions on $A$, 
by Lemma \ref{irreducibleconcave}, the function 
$\psi_{W}$ on $\mathcal E$ is concave. We shall now write $\psi_{\mathcal G}$ for $\psi_{W}$.

We will give a direct description of the set $\Omega_\mathcal G\subset\mathcal E^*$ which is dual to the function $\psi_{\mathcal G}$. 
Identify $\mathcal E^*$ with $\mathcal E$ by using the standard inner product 
$$\langle f,g\rangle=\sum_{a\in A}f(a)g(a),\quad f,g\in\mathcal E.$$

Denote the space of real valued functions on $Q$ by $V$, that is, $\mathcal V=\mathbb R^Q$, which we also identify to its dual space by means of the standard scalar product. 
Then, to every $\theta$ in $\mathcal E$, we associate a linear operator $L_\theta$ on $\mathcal V$ by the formula
$$(L_\theta f)(q)=\sum_{\substack{a\in A\\ \sigma(a)=q}}e^{-\theta(a)}f(\gamma(a)),\quad q\in Q,f\in\mathcal V.$$
This operator is an analogue of the transfer operator of hyperbolic dynamics (see \cite{PP}); it can also be thought of as a weighted Laplace operator on the directed graph $\mathcal G$.
Note that this operator is non-negative, or equivalently Perron-Frobe\-nius.

\begin{Prop} \label{omegatransfer}
Let $\mathcal G=(Q,A,\sigma,\gamma)$ be a connected directed graph.
Then, the set $\Omega_\mathcal G$ is the set of $\theta$ in $\mathcal E$ such that the operator $L_\theta$ has spectral radius $\leq 1$ in $\mathcal V$. 
\end{Prop}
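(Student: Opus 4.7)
The plan is to express the generating sum $\int e^{-\langle\theta,\cdot\rangle}\de\nu_{\mathcal G}$ in terms of iterates of $L_\theta$, so that the spectral radius controls its convergence and we may invoke \eqref{phiomega1} and \eqref{phiomega2}. Iterating the definition of $L_\theta$ yields, for every $n\geq 0$, $f\in\mathcal V$ and $q\in Q$,
$$(L_\theta^n f)(q)=\sum_{\substack{w=(a_1,\ldots,a_n)\in W_n\\ \sigma(a_1)=q}}e^{-\langle\theta,\mathscr P(w)\rangle}f(\gamma(a_n)).$$
Taking $f=\mathbf 1_Q$, the constant function $1$ on $Q$, and summing over $q$, I obtain
$$N_n(\theta):=\sum_{w\in W_n}e^{-\langle\theta,\mathscr P(w)\rangle}=\langle\mathbf 1_Q,L_\theta^n\mathbf 1_Q\rangle.$$
Since $L_\theta$ is a non-negative matrix on the finite-dimensional space $\mathcal V$, $N_n(\theta)$ is comparable, up to constants depending only on $|Q|$, to $\|L_\theta^n\|$; by Gelfand's formula, $N_n(\theta)^{1/n}\td{n}{\infty}\rho(L_\theta)$. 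In particular, $\int_\mathcal E e^{-\langle\theta,x\rangle}\de\nu_{\mathcal G}(x)=\sum_n N_n(\theta)$ is finite when $\rho(L_\theta)<1$ and infinite when $\rho(L_\theta)>1$.

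To prove $\rho(L_\theta)\leq 1\Rightarrow \theta\in\Omega_{\mathcal G}$, I introduce $\mathbf 1_A\in\mathcal E$, the constant function $1$ on $A$. Then $L_{\theta+\varepsilon\mathbf 1_A}=e^{-\varepsilon}L_\theta$ for every $\varepsilon\in\mathbb R$, so $\rho(L_{\theta+\varepsilon\mathbf 1_A})<1$ whenever $\varepsilon>0$. By the first paragraph, $\int_\mathcal E e^{-\langle\theta+\varepsilon\mathbf 1_A,x\rangle}\de\nu_{\mathcal G}(x)<\infty$, and \eqref{phiomega2} yields $\theta+\varepsilon\mathbf 1_A\in\Omega_{\mathcal G}$. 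As $\Omega_{\mathcal G}$ is closed, letting $\varepsilon\to 0$ gives $\theta\in\Omega_{\mathcal G}$.

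For the converse implication, I use Lemma \ref{interior} to push $\theta$ strictly inside $\Omega_{\mathcal G}$ after adding $\varepsilon\mathbf 1_A$. Since every $\mathscr P(w)$ lies in $[0,\infty)^A$, the asymptotic cone of $\mathscr P(W)$, hence by Lemma \ref{psicounting} also the essential cone $\mathcal C_{\psi_{\mathcal G}}$, is contained in $[0,\infty)^A$. Consequently, for every $x\neq 0$ with $\psi_{\mathcal G}(x)>-\infty$ the coordinates of $x$ are non-negative and not all zero, so $\langle\mathbf 1_A,x\rangle>0$. If $\theta\in\Omega_{\mathcal G}$, then for $\varepsilon>0$ and every $x\neq 0$ one has $(\theta+\varepsilon\mathbf 1_A)(x)>\psi_{\mathcal G}(x)$ (the case $\psi_{\mathcal G}(x)=-\infty$ being trivial), and Lemma \ref{interior} places $\theta+\varepsilon\mathbf 1_A$ in the interior of $\Omega_{\mathcal G}$. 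Then \eqref{phiomega1} gives $\sum_n e^{-n\varepsilon}N_n(\theta)<\infty$; the root test forces $\limsup_n N_n(\theta)^{1/n}\leq e^\varepsilon$, so by the first paragraph $\rho(L_\theta)\leq e^\varepsilon$ for every $\varepsilon>0$, and the claim follows. The main point to verify is this last strict-inequality step: once the positivity observation about the essential cone is in place, the rest is routine book-keeping.
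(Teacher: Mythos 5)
Your proof is correct, and it takes a genuinely different route from the paper's in the second half of the argument. Both proofs begin with the same key identity $\sum_{n\geq 0}\langle\mathbf 1,L_\theta^n\mathbf 1\rangle=\sum_{w\in W}e^{-\langle\theta,\mathscr P(w)\rangle}$, but then diverge. The paper deduces the precise asymptotics $\langle\mathbf 1,L_\theta^n\mathbf 1\rangle\sim c\,\lambda(\theta)^n$ from Corollary \ref{asymptotic1} (a full Perron--Frobenius decomposition), settling the borderline case $\lambda(\theta)=1$ directly; it then invokes Lemma \ref{lambdaconvex} to see that $\{\lambda<1\}$ is open, identifies both open sets $\overset{\circ}{\Omega}_{\mathcal G}$ and $\{\lambda<1\}$ via \eqref{phiomega1}, \eqref{phiomega2}, Lemma \ref{interior}, and finishes with a convexity argument (a closed convex set with nonempty interior is the closure of its interior). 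You instead rely only on Gelfand's formula — a much more elementary input — and sidestep the boundary case $\lambda(\theta)=1$ and the convexity of $\log\lambda$ entirely by perturbing with $\varepsilon\mathbf 1_A$, exploiting the clean scaling $L_{\theta+\varepsilon\mathbf 1_A}=e^{-\varepsilon}L_\theta$ together with the easily verified positivity $\langle\mathbf 1_A,x\rangle>0$ on $\mathcal C_{\psi_{\mathcal G}}\smallsetminus\{0\}$. Your version is therefore more self-contained: it avoids both Corollary \ref{asymptotic1} and Lemma \ref{lambdaconvex}, at the small cost of running the perturbation trick twice (once in each direction). The paper's version is more in line with the machinery it builds for later use.

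One small formal point: Lemma \ref{interior} as stated asserts \eqref{computeinterior} only under hypothesis (i)/(ii). You cite the lemma without checking that hypothesis, but the needed inclusion $\{\varphi\mid\forall x\neq 0,\ \varphi(x)>\psi_{\mathcal G}(x)\}\subset\overset{\circ}{\Omega}_{\mathcal G}$ is the unconditional, easy half of the proof of \eqref{computeinterior} (the set is open by upper semicontinuity of $\psi_{\mathcal G}$, and plainly contained in $\Omega_{\mathcal G}$). Alternatively, condition (ii) of Lemma \ref{interior} does hold here, by the very same observation you use: since $\mathcal C_{\psi_{\mathcal G}}\subset[0,\infty)^A$, for every $x\neq 0$ at least one of $\psi_{\mathcal G}(x),\psi_{\mathcal G}(-x)$ equals $-\infty$. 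Either remark closes the gap; it would be worth saying explicitly.
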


To prove this, we will need to study more closely the operators $L_\theta$. First, we recall some basic facts about Perron-Frobenius operators (see \cite{Meyer}). We state these facts in an abstract form. Recall that a closed convex cone $\mathcal C\subset \mathcal E$ is said to be proper if $\mathcal C\cap(-\mathcal C)=\{0\}$, that is, $\mathcal C$ does not contain any vector line.

\begin{Lem} \label{PF}
Let $H$ be a finite-dimensional real vector space and $\mathcal C\subset H$ be a proper closed convex cone with non empty interior. Let $T:H\rightarrow H$ be a linear map with $T\mathcal C\subset \mathcal C$. Then, the spectral radius $\lambda$ of $T$ is an eigenvalue of $T$ and there is an associated eigenvector in $\mathcal C\smallsetminus\{0\}$. If there exists $p\geq 1$ such that the operator $\sum_{j=0}^p T^j$ maps $\mathcal C\smallsetminus\{0\}$ into the interior $\overset{\circ}{\mathcal C}$ of $\mathcal C$, then $\lambda$ has multiplicity one and the associated eigenline is spanned by an element of $\overset{\circ}{\mathcal C}$. 
\end{Lem}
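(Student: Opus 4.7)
My plan is to adapt the classical Krein-Rutman strategy: perturb $T$ so that it sends $\mathcal{C}\smallsetminus\{0\}$ strictly into $\overset{\circ}{\mathcal{C}}$, produce a Perron eigenvector via Brouwer's fixed point theorem on a compact base of the cone, and then pass to the limit. I would first fix $\psi\in\overset{\circ}{\mathcal{C}^*}$, which exists because $\mathcal{C}$ is proper with non-empty interior, so $\mathcal{C}^*$ is as well; by Lemma \ref{interiorcone} applied to $\mathcal{C}^*$, the functional $\psi$ is strictly positive on $\mathcal{C}\smallsetminus\{0\}$. The base $K=\{x\in\mathcal{C}\,|\,\psi(x)=1\}$ is convex, closed, and bounded (a norm-unbounded sequence in $K$ would produce a limit direction in $\mathcal{C}\smallsetminus\{0\}$ annihilated by $\psi$), hence compact. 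Pick $u_0\in\overset{\circ}{\mathcal{C}}$ and, for $\varepsilon>0$, set $T_\varepsilon x=Tx+\varepsilon\psi(x)u_0$; then $T_\varepsilon$ maps $\mathcal{C}\smallsetminus\{0\}$ into $\overset{\circ}{\mathcal{C}}$, as the sum of an element of $\mathcal{C}$ and an element of $\overset{\circ}{\mathcal{C}}$ stays in $\overset{\circ}{\mathcal{C}}$. The continuous self-map $f_\varepsilon:K\to K$, $f_\varepsilon(x)=T_\varepsilon x/\psi(T_\varepsilon x)$, admits by Brouwer a fixed point $v_\varepsilon\in K\cap\overset{\circ}{\mathcal{C}}$ satisfying $T_\varepsilon v_\varepsilon=\lambda_\varepsilon v_\varepsilon$ with $\lambda_\varepsilon=\psi(T_\varepsilon v_\varepsilon)>0$. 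Since $v_\varepsilon$ is interior, every unit vector $x\in H$ admits a scalar $b$ with $\pm x+bv_\varepsilon\in\mathcal{C}$, so that $T_\varepsilon^n x$ is squeezed inside the order interval $[-b\lambda_\varepsilon^n v_\varepsilon,b\lambda_\varepsilon^n v_\varepsilon]$; because $\mathcal{C}$ is proper these intervals are norm-bounded, yielding $\|T_\varepsilon^n\|\leq C\lambda_\varepsilon^n$ and so $\rho(T_\varepsilon)=\lambda_\varepsilon$. Letting $\varepsilon\to 0$, compactness of $K$ gives an accumulation point $v\in\mathcal{C}\smallsetminus\{0\}$ of $(v_\varepsilon)$ and continuity of the spectrum in finite dimension yields $\lim\lambda_\varepsilon=\rho(T)$; passing to the limit in $T_\varepsilon v_\varepsilon=\lambda_\varepsilon v_\varepsilon$ proves the first assertion.

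For the multiplicity statement, write $\lambda=\rho(T)$. The hypothesis gives $Sv\in\overset{\circ}{\mathcal{C}}$ and the identity $Sv=\bigl(\sum_{j=0}^{p}\lambda^j\bigr)v$ together with $\sum_{j=0}^{p}\lambda^j\geq 1$ immediately force $v\in\overset{\circ}{\mathcal{C}}$. For geometric multiplicity one, I would pick $w\in\ker(T-\lambda I)$ not proportional to $v$ and consider $\varepsilon_+=\sup\{\varepsilon\geq 0\,|\,v-\varepsilon w\in\mathcal{C}\}$: since $w$ and $-w$ cannot both belong to $\mathcal{C}$ by properness, after possibly replacing $w$ by $-w$ this supremum is finite, and $v'=v-\varepsilon_+ w\in\partial\mathcal{C}\smallsetminus\{0\}$ is again a $\lambda$-eigenvector; then $Sv'=\bigl(\sum_j\lambda^j\bigr)v'\in\overset{\circ}{\mathcal{C}}$ contradicts $v'\in\partial\mathcal{C}$. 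For algebraic multiplicity, I would apply the previously established statements to the adjoint $T^*$: its hypotheses are inherited since, for $\eta\in\mathcal{C}^*\smallsetminus\{0\}$ and $x\in\mathcal{C}\smallsetminus\{0\}$, one has $(S^*\eta)(x)=\eta(Sx)>0$, as $Sx\in\overset{\circ}{\mathcal{C}}$ and any non-zero element of $\mathcal{C}^*$ is strictly positive on $\overset{\circ}{\mathcal{C}}$. This yields a left Perron eigenvector $\varphi\in\overset{\circ}{\mathcal{C}^*}$ with $T^*\varphi=\lambda\varphi$, hence $\varphi(v)>0$; if a Jordan vector $w$ with $(T-\lambda I)w=v$ existed, applying $\varphi$ would give the contradiction $\varphi(v)=(T^*\varphi)(w)-\lambda\varphi(w)=0$.

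The main technical hurdle is the passage $\varepsilon\to 0$, particularly the identification $\lambda_\varepsilon=\rho(T_\varepsilon)$: this relies on order intervals in the proper cone $\mathcal{C}$ being norm-bounded, together with a uniformization of the sandwich constants $b(x)$ over the unit sphere and the continuous dependence of the spectrum on the operator in finite dimension. Everything else is a geometric consequence of $\mathcal{C}$ being proper with non-empty interior and of the duality results established in Section \ref{hahnbanach}.
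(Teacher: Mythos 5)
Your proof is correct and complete. The paper itself does not prove Lemma \ref{PF}: it simply states it and refers the reader to Meyer \cite{Meyer} for these ``basic facts about Perron--Frobenius operators,'' so there is no in-paper argument to compare against. The self-contained argument you give is essentially the classical Krein--Rutman proof in finite dimension: Brouwer's fixed point theorem on a compact base $K$ of the cone for a perturbed, interiority-preserving operator $T_\varepsilon$; identification of $\rho(T_\varepsilon)$ with the fixed-point eigenvalue $\lambda_\varepsilon$ by sandwiching $T_\varepsilon^n x$ in order intervals $[-b\lambda_\varepsilon^n v_\varepsilon, b\lambda_\varepsilon^n v_\varepsilon]$ (norm-bounded precisely because $\mathcal C$ is proper, and with $b$ uniform over the unit sphere because $v_\varepsilon$ is interior); the limit $\varepsilon\to 0$ using compactness of $K$ and continuity of the spectral radius; and then the primitivity hypothesis $S(\mathcal C\smallsetminus\{0\})\subset\overset{\circ}{\mathcal C}$, first to push the eigenvector into $\overset{\circ}{\mathcal C}$ and kill a second independent eigenvector on the boundary, and second, via $S^*$, $\mathcal C^*$ and Lemma \ref{interiorcone}, to produce an interior left eigenvector and rule out a Jordan block. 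All steps check out, including the small technical points you flag at the end, so what you have produced is a valid replacement for the external citation rather than an alternative to an argument in the paper.
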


Recall that saying that $\lambda$ has multiplicity $1$ is saying that $\lambda$ is a simple root of the characteristic polynomial of $T$.

For $\theta$ in $\mathcal E$, we denote by $\lambda(\theta)$ the spectral radius of $L_\theta$ in $\mathcal V$. 
By Lemma \ref{PF}, up to a scalar multiple,
there exists a unique non zero function $f_\theta$ in $\mathcal V$ with $L_\theta f_\theta=\lambda(\theta)f_\theta$ and we may choose $f_\theta$ to be positive.
In the same way, applying Lemma \ref{PF} to the adjoint operator $L_\theta^*$ (acting on the dual space of $\mathcal V$, which we have identified with $\mathcal V$) yields a unique positive linear functional $\varphi_\theta$ such that $L_\theta^*\varphi_\theta= \lambda(\theta)\varphi_\theta$ and $\langle\varphi_\theta,{\bf 1}\rangle=1$, where ${\bf 1}$ is the constant function with value $1$ on $Q$. 
Since we have identified $\mathcal V$ and its dual space, we consider $\varphi_\theta$ as a positive function on $\mathcal V$.
We normalize $f_\theta$ by asking that $\langle\varphi_\theta,f_\theta\rangle=1$.
Below, we will show that the function $\log\lambda(\theta)$ is a convex function of $\theta$.
We will first describe the other eigenvalues of $L_\theta$ with modulus $\lambda(\theta)$. They are multiples of $\lambda(\theta)$ by roots of unity.

\begin{Lem} \label{modulus1} Let $\mathcal G=(Q,A,\sigma,\gamma)$ be a connected directed graph.
There exists $p\geq 1$ and a complex valued function $u$ on $Q$, with values of modulus $1$, with the following property. 
For every $a$ in $A$, one has
$u(\gamma(a))=e^{2\pi i p^{-1}}u(\sigma(a))$. Then, for every $\theta$ in $\mathcal E$, the eigenvalues of modulus $1$ of $L_\theta$ are the $e^{2\pi i kp^{-1}}$, $0\leq k\leq p-1$; they have multiplicity one and the associated eigenspaces in $\mathcal V_\mathbb C$ are the lines spanned by $u^k f_\theta$, $0\leq k\leq p-1$.
\end{Lem}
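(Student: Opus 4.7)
The plan is to first build the combinatorial data $(p,u)$ from the graph alone, then verify directly that the functions $u^k f_\theta$ are eigenvectors of $L_\theta$ with eigenvalues $\lambda(\theta)e^{2\pi ik/p}$, and finally to use the connectedness of $\mathcal G$ to show that these exhaust the peripheral spectrum and that each such eigenvalue is simple.

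For the first step, I would let $p$ denote the greatest common divisor of the lengths of all directed cycles in $\mathcal G$. Connectedness ensures that cycles exist and that $p$ is well defined. Fix $q_0\in Q$ and, for each $q\in Q$, choose a path $\pi_q$ from $q_0$ to $q$ of length $\ell(q)$; then set $u(q)=e^{2\pi i\ell(q)/p}$. If $\pi$ is another path from $q_0$ to $q$ of length $\ell'$ and $\pi''$ is any path from $q$ back to $q_0$, then the two concatenations produce cycles at $q_0$ of lengths $\ell(q)+|\pi''|$ and $\ell'+|\pi''|$, both divisible by $p$; hence $\ell(q)\equiv \ell'\pmod p$ and $u(q)$ is well defined. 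The relation $u(\gamma(a))=e^{2\pi i/p}u(\sigma(a))$ then follows by extending $\pi_{\sigma(a)}$ by the edge $a$. For the second step, using $u^k(\gamma(a))=e^{2\pi ik/p}u^k(\sigma(a))$, the definition of $L_\theta$ together with $L_\theta f_\theta=\lambda(\theta)f_\theta$ immediately yields $L_\theta(u^kf_\theta)(q)=e^{2\pi ik/p}\lambda(\theta)u^k(q)f_\theta(q)$, producing $p$ eigenvalues on the circle of radius $\lambda(\theta)$.

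For the third step, I reduce to a stochastic matrix: let $D$ be the diagonal operator of multiplication by $f_\theta$ and set $\tilde L=\lambda(\theta)^{-1}D^{-1}L_\theta D$. Then $\tilde L$ is non-negative with $\tilde L\mathbf{1}=\mathbf{1}$, so each row is a probability measure on $Q$ whose support is the out-neighbourhood of the corresponding vertex. Suppose $\tilde L g=\mu g$ with $|\mu|=1$. Picking $q_*$ where $|g|$ is maximal, the identity $|g(q_*)|=|\mu g(q_*)|=|\sum_a\tilde L(q_*,a)g(\gamma(a))|$ together with the triangle inequality forces $|g|$ to be constant and equal to $|g(q_*)|$ on every out-neighbour of $q_*$; iterating along directed edges and invoking the strong connectedness of $\mathcal G$ shows $|g|$ is constant on $Q$. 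After normalising $|g|\equiv 1$, the equality case of the triangle inequality for a convex combination of unit complex numbers gives $g(\gamma(a))=\mu g(\sigma(a))$ for every $a\in A$. Going around any cycle of length $n$ yields $\mu^n=1$, hence $\mu^p=1$. Moreover this recursion, together with the value $g(q_0)$, determines $g$ uniquely, and direct comparison with the construction of $u$ identifies $g$ as a scalar multiple of $u^k$ whenever $\mu=e^{2\pi ik/p}$. Translating back through $D$ recovers the eigenline $\mathbb{C}\cdot u^kf_\theta$ for $L_\theta$ associated with $\lambda(\theta)e^{2\pi ik/p}$. Simplicity (absence of a Jordan block) then follows from the fact that $\tilde L$ is power-bounded on $\ell^\infty(Q)$ — being row-stochastic gives $\|\tilde L^n\|_\infty=1$ for all $n$ — which rules out the polynomial growth of iterates that a non-trivial Jordan block on the unit circle would produce.

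The main obstacle will be the propagation step asserting that $|g|$ is constant throughout $Q$: the local equality at $q_*$ only spreads to the forward orbit of $q_*$, and one must use the two-sided nature of the graph-theoretic connectedness assumption to chain the argument back and cover every vertex. Once this is in place, the periodicity of the peripheral spectrum, the identification of eigenvectors with the $u^k f_\theta$, and the semisimplicity statement all flow from the standard Perron-Frobenius template applied to $\tilde L$.
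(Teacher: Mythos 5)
Your proof is correct and, at its core, follows the same route as the paper: conjugate by the diagonal operator of multiplication by $f_\theta$ (which is exactly the paper's substitution $g=f_\theta^{-1}f$), normalize to a row-stochastic matrix, and use the strict convexity of disks in $\mathbb C$ (equality in the triangle inequality for a convex combination of unit complex numbers) together with connectedness to force $|g|$ constant and to derive the cocycle relation $g(\gamma(a))=\mu g(\sigma(a))$. Where you deviate is in two peripheral places, and both choices are sound. First, you construct $(p,u)$ concretely from cycle lengths and the gcd, whereas the paper first defines an abstract finite group $G\subset\mathbb T$ of admissible multipliers $z$ with $U_z\neq 0$, shows $G$ is cyclic of some order $p$, and only afterwards (Corollary \ref{modulus12}) identifies $p$ with the gcd of cycle lengths; your route packages that corollary into the construction, which is arguably more direct, at the small cost of having to verify well-definedness of $u$ by hand (you do this correctly via the two-cycle argument). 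Second, for simplicity of the peripheral eigenvalues, the paper conjugates $L_\theta-\lambda(\theta)e^{2\pi i k/p}$ by multiplication by $u^k$ to reduce to the already-known simplicity of $\lambda(\theta)$ from the Perron--Frobenius Lemma \ref{PF}; you instead note that the cocycle relation pins down $g$ uniquely up to scalar (geometric multiplicity one) and rule out nontrivial Jordan blocks using power-boundedness $\|\tilde L^n\|_\infty=1$. Both are standard and correct; the paper's conjugation trick is slightly shorter given that Lemma \ref{PF} is already in hand, while your power-boundedness argument is self-contained. One small remark: the ``obstacle'' you flag about forward versus backward propagation of the maximum set is not really an obstacle here, since the paper's notion of connectedness is strong connectedness (a directed path from every $q$ to every $q'$), so the forward orbit of the maximizer already covers all of $Q$, exactly as you end up using.
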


We wrote $\mathcal V_\mathbb C$ for the complexification of $\mathcal V$, that is, the space of all complex valued functions on $Q$.

On Figure \ref{cyclicfig}, we have represented a directed graph for which $p=2$. Indeed, the space of states is partitioned into the set of black states and the set of white states. Every arrow with a black source has a white goal; every arrow with a white source has a black goal. 
The function $u$ can be chosen to take the value $1$ on black states and $-1$ on white states. 

\begin{figure}\begin{center}
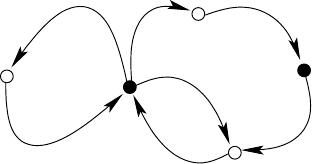
\caption{A directed graph with $p=2$}
\label{cyclicfig}
\end{center}\end{figure}

\begin{proof}[Proof of Lemma \ref{modulus1}] Let $\mathbb T\subset\mathbb C$ denote the set of complex numbers with modulus $1$. 
For $z$ in $\mathbb T$, define $U_z\subset \mathcal V_\mathbb C$ as the space of all functions $f$ such that, 
for every $a$ in $A$, one has
for $f(\gamma(a))=zf(\sigma(a))$. Set $G\subset\mathbb T$ to be the set of all $z$ such that $U_z$ is non zero. We claim that $G$ is a finite multiplicative subgroup. 

Indeed, one easily shows that the vector subspaces $U_z$, $z\in G$, are linearly independent. Besides, 
as $\mathcal G$ is connected, for $z$ in $G$, 
the modulus of a function $f$ in $U_z$ is necessarily constant.
Thus, if $z,z'$ are in $G$, then $U_{zz'}$ is non zero and equal to the set $U_zU_{z'}$ of functions that are products of a function in $U_z$ and a function in $U_{z'}$. Thus, $G$ is a finite subgroup of $\mathbb T$. Hence, it is of the form 
$$G=\{e^{2\pi i kp^{-1}}|0\leq k\leq p-1\}$$
for some $p\geq 1$.

Moreover, still as $\mathcal G$ is connected, $U_1$ is the line of constant functions, hence all $U_z$, $z\in G$, have dimension $1$. We choose a function $u$ with constant modulus $1$ in $U_{z}$ for $z=e^{2\pi i p^{-1}}$. Then, for $0\leq k\leq p-1$, the space $U_{z^k}$ is the line spanned by $u^k$.

Let now $\theta$ be in $\mathcal E$. For $0\leq k\leq p-1$, we have $L_\theta(u^k f_\theta)=\lambda(\theta)e^{2\pi i kp^{-1}}f_\theta$. 
The converse statement that all eigenvalues with modulus $\lambda_\theta$ of $L_\theta$ are obtained in this way will follow from an application of the maximum principle. 

Indeed, let $z$ be in $\mathbb T$ and $f\neq 0$ be in $\mathcal V_\mathbb C$ with $L_\theta f=z\lambda(\theta)f_\theta$. We set $g=f_\theta^{-1} f$.
For $q$ in $Q$, we have
$$\sum_{\substack{a\in A\\ \sigma(a)=q}}e^{-\theta(a)}\frac{f_\theta(\gamma(a))}{\lambda(\theta) f_\theta(q)}g(\gamma(a))
=\frac{1}{\lambda(\theta) f_\theta(q)}L_\theta(f)(q)=zg(q).$$
Set $Q'$ to be the set of $q$ in $Q$ such that $|g(q)|$ is maximal. For $q$ in $Q'$, we have
$$\left|\sum_{\substack{a\in A\\ \sigma(a)=q}}e^{-\theta(a)}\frac{f_\theta(\gamma(a))}{\lambda(\theta) f_\theta(q)}g(\gamma(a))\right|=|g(q)|$$
and all the $g(\gamma(a))$ have modulus $\leq g(q)$. Thus, since 
$$\sum_{\substack{a\in A\\ \sigma(a)=q}}e^{-\theta(a)}\frac{f_\theta(\gamma(a))}{\lambda(\theta) f_\theta(q)}=1,$$
as the disks of $\mathbb C$ are strictly convex,
for any $a$ with $\sigma(a)=q$, we have $g(\gamma(a))=zg(q)$ and in particular, $\gamma(a)$ belongs to $Q'$. 
Since $\mathcal G$ is connected, we obtain that $Q'=Q$ and that $g$ belongs to $U_z$. Thus, $g$ belongs to $\mathbb C u^k f_\theta$ for some 
$0\leq k\leq p-1$ and hence, the eigenvalues of modulus $\lambda_\theta$ of $L_\theta$ are the $e^{2\pi i kp^{-1}}$,
$0\leq k\leq p-1$. 

Since, for any such $k$, multiplication by $u^k$ conjugates the operators 
$L_\theta-\lambda_\theta e^{2\pi i kp^{-1}}$ and $e^{2\pi i kp^{-1}}(L_\theta-\lambda_\theta)$, the eigenvalue $e^{2\pi i kp^{-1}}\lambda_\theta$ has multiplicity $1$.
\end{proof}

In the sequel, we let $p$ be and $u$ be as in Lemma \ref{modulus1}.
As a Corollary of the proof, we get another definition of $p$. For $n\geq 0$ and $q,q'$ in $Q$, recall that $W_n^{q,q'}$ is the set of paths of length $n$ from $q$ to $q'$.

\begin{Cor} \label{modulus12}
Let $\mathcal G=(Q,A,\sigma,\gamma)$ be a connected directed graph.
Then, for every $q$ in $Q$, $p$ is the greatest common divisor of the set of integers
$$\{n\in\mathbb N| W_n^{q,q}\neq\emptyset\}.$$
\end{Cor}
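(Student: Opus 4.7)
The plan is to fix $q\in Q$, set $d=\gcd\{n\in\mathbb N: W_n^{q,q}\neq\emptyset\}$, and prove the two divisibilities $p\mid d$ and $d\mid p$ separately; since the argument works for any $q$, the claim that $d$ does not depend on $q$ will come out for free.

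The inclusion $p\mid d$ falls directly out of the function $u$ produced by Lemma \ref{modulus1}. If $(a_1,\ldots,a_n)$ is a loop at $q$, iterating the identity $u(\gamma(a))=e^{2\pi i/p}u(\sigma(a))$ along the loop gives $u(q)=e^{2\pi i n/p}u(q)$, and since $|u(q)|=1$ this forces $p\mid n$. Hence every generator of the set of loop-lengths at $q$ is divisible by $p$, so $p\mid d$.

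For the converse $d\mid p$, the idea is to build a non-zero function $v:Q\to\mathbb T$ satisfying $v(\gamma(a))=e^{2\pi i/d}v(\sigma(a))$ for every edge $a$: such a $v$ shows that $e^{2\pi i/d}$ belongs to the group $G$ that appears in the proof of Lemma \ref{modulus1}, and since $G$ was shown there to be the cyclic group $\mu_p$ of order $p$, the element $e^{2\pi i/d}$ must have order dividing $p$, giving $d\mid p$. To construct $v$, I use that $\mathcal G$ is connected: for each $q'\in Q$ choose a path from $q$ to $q'$ of some length $n(q')$, and set $v(q')=\exp(2\pi i\,n(q')/d)$. The edge identity is then immediate, because concatenating a chosen path from $q$ to $\sigma(a)$ with the single edge $a$ produces a path from $q$ to $\gamma(a)$ of length $n(\sigma(a))+1$, so $n(\gamma(a))\equiv n(\sigma(a))+1\pmod d$.

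The only genuine point to check is that $v(q')$ is well defined modulo $d$, i.e.\ independent of the chosen path. Given two paths from $q$ to $q'$ of respective lengths $n,n'$, pick (by connectedness) some path from $q'$ back to $q$ of length $m$; then $n+m$ and $n'+m$ are both elements of $\{k:W_k^{q,q}\neq\emptyset\}$, hence both divisible by $d$, so $n\equiv n'\pmod d$ as required. I do not anticipate any real obstacle here: the whole argument is a clean translation of the cyclic-group structure already established in the proof of Lemma \ref{modulus1}, plus the classical fact that periods of return times in a connected directed graph assemble into a well-defined invariant.
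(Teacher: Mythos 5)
Your proof is correct and follows essentially the same route as the paper: prove $p\mid d$ by iterating the relation $u(\gamma(a))=e^{2\pi i/p}u(\sigma(a))$ around a loop, and prove $d\mid p$ by exhibiting a nonzero function in the space $U_{e^{2\pi i/d}}$ from the proof of Lemma \ref{modulus1}, so that $e^{2\pi i/d}$ lies in the cyclic group $G$ of order $p$. The paper instead defines $f(q)=e^{-2i\pi k/d}$ via paths from $q$ into the base point $q_0$, whereas you use paths out of the base point; these are the same construction up to a sign. One small advantage of your write-up is the well-definedness argument: the paper asserts that two path-lengths $k\le\ell$ from $q$ to $q_0$ give $W_{\ell-k}^{q,q}\neq\emptyset$, which is not quite right as stated and in any case concerns loops at $q$ rather than at $q_0$, where $d$ is defined; your version closes the loop through $q_0$ by appending a return path of length $m$, so both $n+m$ and $n'+m$ are genuine loop-lengths at $q_0$ and the congruence $n\equiv n'\pmod d$ follows immediately.
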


\begin{proof} Fix $q_0$ in $A$ and let $d$ be the greatest common divisor of the set of $n$ in $\mathbb N$ with 
$W_n^{q_0,q_0}\neq\emptyset$. For every such $n$, we have $u(q_0)^n=1$, hence $u(q_0)^d=1$. This gives that $p$ divides $d$. Conversely, we will use the definition of $p$ in the proof of Lemma \ref{modulus1} to show that $d$ divides $p$.

For $q$ in $Q$ define $f(q)=e^{-2i\pi kd^{-1}}$ where $k$ is the least integer such that $W_k^{q,q_0}\neq\emptyset$. 
As $\mathcal G$ is connected, this function is well defined. Now, let $\ell\geq k$ be such that $W_\ell^{q,q_0}\neq\emptyset$. We claim that we also have $f(q)=e^{-2i\pi \ell d^{-1}}$. Indeed, if we set $n=\ell-k$, we have $W_n^{q,q}\neq\emptyset$, hence $d$ divides $n$, so that 
$e^{-2i\pi \ell d^{-1}}=e^{-2i\pi k d^{-1}}$ as required. In particular, pick $a$ is in $A$ with $\gamma(a)=q$ and set $\sigma(a)=q'$.
We have 
$W_{k+1}^{q',q_0}\neq\emptyset$, which gives 
$$f(q')=e^{-2i\pi (k+1)d^{-1}}=e^{-2i\pi d^{-1}}f(q).$$
Thus, by the definition of $p$, $p$ divides $d$ as required.
\end{proof}

These constructions allow us to describe the asymptotic behaviour as $n\rightarrow\infty$ of $L_\theta^n f$, for $f$ positive. For $q$ in $Q$ and $j$ in $\mathbb Z/p\mathbb Z$, denote by $Q_j^q$ the set of $q'$ in $Q$ for which there exists $n\geq 0$ with $n\in j+p\mathbb Z$ such that the set $W_n^{q,q'}$ is not empty. Equivalently, $Q_j^q$ is the set of $q'$ such that $u(q')=e^{2\pi ijp^{-1}}u(q)$ where $u$ is as in Lemma \ref{modulus1}.
For $n$ in $\mathbb Z$, write $Q_n^q=Q_j^q$ where $j$ is the class of $n$ in $\mathbb Z/p\mathbb Z$.

\begin{Cor} \label{asymptotic1}
Let $\mathcal G=(Q,A,\sigma,\gamma)$ be a connected directed graph.
Let $\theta$ be in $\mathcal E$. There exists 
$\varepsilon,C>0$ such that, for every $f$ in $\mathcal E$, $q$ in $Q$ and $n\geq 0$, one has
\begin{equation}\label{asymptotic}
|L_\theta^nf(q)-p\lambda(\theta)^n\langle \varphi_\theta,f{\bf 1}_{Q^q_n}\rangle f_\theta(q) |\leq C(\lambda(\theta)-\varepsilon)^n\|f\|.\end{equation}
\end{Cor}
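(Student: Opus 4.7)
The plan is to establish the estimate via spectral decomposition of $L_\theta$ acting on $\mathcal V_{\mathbb C}$ and then compute the peripheral contribution explicitly.

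First I would invoke Lemma \ref{modulus1}: the eigenvalues of $L_\theta$ of modulus $\lambda(\theta)$ are exactly the $p$ simple eigenvalues $\lambda(\theta)e^{2\pi i k p^{-1}}$, $0\le k\le p-1$, with right eigenline $\mathbb C u^k f_\theta$. All other eigenvalues have modulus strictly less than $\lambda(\theta)$, so there exists $\varepsilon>0$ and a decomposition
$$L_\theta = \lambda(\theta)\sum_{k=0}^{p-1}e^{2\pi i k p^{-1}}P_k + N,$$
where the $P_k$ are rank-one spectral projectors onto $\mathbb C u^k f_\theta$ with $P_k P_j=\delta_{kj}P_k$ and $P_kN=NP_k=0$, and where $N$ has spectral radius $\le\lambda(\theta)-\varepsilon$. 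Standard Gelfand formula then gives a constant $C>0$ with $\|N^n\|\le C(\lambda(\theta)-\varepsilon)^n$ for every $n\ge 0$.

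Next I would identify the projector $P_k$ explicitly. Since applying Lemma \ref{modulus1} to the transpose operator $L_\theta^t$ (which corresponds to reversing the arrows of $\mathcal G$, with the same period $p$ and periodicity function $\bar u$) shows that $\bar u^k \varphi_\theta$ is a left eigenvector of $L_\theta$ for the eigenvalue $\lambda(\theta)e^{2\pi i k p^{-1}}$, and since $\langle\bar u^k\varphi_\theta,u^k f_\theta\rangle=\langle\varphi_\theta,f_\theta\rangle=1$, we obtain
$$P_k f = \langle\varphi_\theta,\bar u^k f\rangle\,u^k f_\theta, \qquad f\in\mathcal V_{\mathbb C}.$$
Iterating the spectral decomposition yields, for every $n\ge 0$ and $q\in Q$,
$$L_\theta^n f(q) = \lambda(\theta)^n f_\theta(q)\sum_{k=0}^{p-1} e^{2\pi i k n p^{-1}} u(q)^k\langle\varphi_\theta,\bar u^k f\rangle + (N^n f)(q).$$

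Finally I would collapse the $k$-sum using the orthogonality of characters of $\mathbb Z/p\mathbb Z$. Substituting the definition of $\langle\varphi_\theta,\bar u^k f\rangle=\sum_{q'}\varphi_\theta(q')\bar u(q')^k f(q')$ and interchanging summations gives
$$\sum_{k=0}^{p-1} e^{2\pi i k n p^{-1}} u(q)^k\langle\varphi_\theta,\bar u^k f\rangle = \sum_{q'\in Q}\varphi_\theta(q')f(q')\sum_{k=0}^{p-1}\bigl(e^{2\pi i n p^{-1}}u(q)\bar u(q')\bigr)^k.$$
The inner geometric sum equals $p$ when $u(q')=e^{2\pi i n p^{-1}}u(q)$, i.e.\ when $q'\in Q_n^q$ (by the characterization of $Q_n^q$ via $u$ given just before the statement), and vanishes otherwise. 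Hence the sum equals $p\langle\varphi_\theta,f\mathbf 1_{Q_n^q}\rangle$, which together with the bound $|(N^n f)(q)|\le C(\lambda(\theta)-\varepsilon)^n\|f\|$ yields \eqref{asymptotic}.

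The only nontrivial step is the identification of the dual eigenvectors $\bar u^k\varphi_\theta$ together with the correct normalization, which as described above follows by applying Lemma \ref{modulus1} to the transpose graph; once this is in hand, the remainder of the argument is a short calculation with characters.
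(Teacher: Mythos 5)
Your proposal is correct and follows essentially the same route as the paper: peripheral spectral decomposition using Lemma \ref{modulus1}, then a character-orthogonality computation on $\mathbb Z/p\mathbb Z$ to collapse the sum over the $p$ peripheral eigenvalues into $p\langle\varphi_\theta,f{\bf 1}_{Q^q_n}\rangle f_\theta(q)$. The only refinement is that you explicitly justify the formula for the rank-one projectors $P_k$ by identifying $\bar u^k\varphi_\theta$ as normalized left eigenvectors via the reversed graph, a step the paper's proof leaves implicit when it writes down the peripheral projection.
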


In the above, we have denoted by ${\bf 1}_{Q^q_n}$ the characteristic function of the set $Q^q_n\subset Q$. 

\begin{proof} 
In view of Lemma \ref{modulus1}, all the eigenvalues of the operator 
$$f\mapsto f-
\sum_{k\in\mathbb Z/p\mathbb Z}\langle \varphi_\theta,u^{-k}f\rangle u^k f_\theta$$
have modulus $<\lambda(\theta)$ (note that this operator is actually real).
Thus, we can find $\varepsilon,C>0$ such that, for $f$ in $\mathcal E$ and $n$ in $\mathbb N$, we have 
\begin{equation}\label{asymptotic2}\|L_\theta^nf-\lambda(\theta)^n \Gamma_{\theta,n}f\|
\leq C(\lambda(\theta)-\varepsilon)^n\|f\|,\end{equation}
where, for $j$ in $\mathbb Z/p\mathbb Z$, we have set 
$$\Gamma_{\theta,j}f=
\sum_{k\in\mathbb Z/p\mathbb Z}e^{2\pi i jkp^{-1}}\langle \varphi_\theta,u^{-k}f\rangle u^k f_\theta.$$

To conclude, we will now compute these operators in a different way. 
This will be an application of the Fourier inversion formula on $\mathbb Z/p\mathbb Z$.
Fix $q$ in $Q$ and $j$ in $\mathbb Z/p\mathbb Z$.
For $k$ in $\mathbb Z/p\mathbb Z$, we have
$$u^{-k}f=u^{-k}(q)\sum_{\ell\in\mathbb Z/p\mathbb Z}e^{-2\pi i \ell kp^{-1}}f{\bf 1}_{Q^q_\ell},$$
which yields
\begin{align*}\Gamma_{\theta,j}f(q)&=
\sum_{k\in\mathbb Z/p\mathbb Z}e^{2\pi i jkp^{-1}}\sum_{\ell\in\mathbb Z/p\mathbb Z}e^{-2\pi i \ell kp^{-1}}
\langle \varphi_\theta,f{\bf 1}_{Q^q_\ell}\rangle f_\theta(q)\\
&=\sum_{\ell\in\mathbb Z/p\mathbb Z}\langle \varphi_\theta,f{\bf 1}_{Q^q_\ell}\rangle f_\theta(q)
\sum_{k\in\mathbb Z/p\mathbb Z} e^{2\pi i (j-\ell)kp^{-1}}.\end{align*}
For $\ell\neq j$, we have $\sum_{k\in\mathbb Z/p\mathbb Z} e^{2\pi i (j-\ell)kp^{-1}}=0$ and we get
$$\Gamma_{\theta,j}f(q)=p\langle \varphi_\theta,f{\bf 1}_{Q^q_j}\rangle f_\theta(q).$$
This together with \eqref{asymptotic2} implies \eqref{asymptotic}.
\end{proof}

\begin{Cor} \label{lambdaconvex}
Let $\mathcal G=(Q,A,\sigma,\gamma)$ be a connected directed graph.
Then, the function $\theta\mapsto\log\lambda(\theta)$ is convex on $\mathcal E$.
\end{Cor}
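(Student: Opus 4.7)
The plan is to realise $\log \lambda(\theta)$ as the limit, along a suitable arithmetic progression in $n$, of $\tfrac{1}{n}\log L_\theta^n \mathbf{1}(q_0)$ for a fixed vertex $q_0$, and then deduce convexity from Hölder's inequality applied to the explicit path-sum formula for $L_\theta^n \mathbf{1}(q_0)$.

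First, I would fix $q_0 \in Q$ and apply Corollary \ref{asymptotic1} with $f = \mathbf{1}$. By Corollary \ref{modulus12}, the set $\{n \geq 0\mid W_n^{q_0,q_0}\neq\emptyset\}$ has greatest common divisor $p$, so for every $n$ in $p\mathbb{N}$ sufficiently large we have $q_0 \in Q_n^{q_0}$. Since $\varphi_\theta$ is strictly positive on $Q$, this forces $\langle \varphi_\theta, \mathbf{1}_{Q_n^{q_0}}\rangle \geq \varphi_\theta(q_0) > 0$ for those $n$, and together with $f_\theta(q_0) > 0$ the estimate \eqref{asymptotic} gives
\[
\log \lambda(\theta) = \lim_{\substack{n \to \infty \\ n \in p\mathbb{N}}} \frac{1}{n} \log L_\theta^n \mathbf{1}(q_0).
\]

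Second, an easy induction on $n$ expands $L_\theta^n \mathbf{1}(q_0)$ as a sum over admissible paths of length $n$ with source $q_0$:
\[
L_\theta^n \mathbf{1}(q_0) = \sum_{w \in W_n^{q_0, \cdot}} e^{-\langle \theta, \mathscr{P}(w)\rangle},
\]
where $W_n^{q_0, \cdot} = \bigcup_{q' \in Q} W_n^{q_0, q'}$ and $\langle \theta, \mathscr{P}(w)\rangle = \sum_{a\in A} \theta(a) \mathscr{P}(w)(a)$, using the fact that $\theta(a_1)+\cdots+\theta(a_n) = \langle \theta,\mathscr{P}(w)\rangle$ for $w = (a_1,\ldots,a_n)$. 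For $\theta_0, \theta_1 \in \mathcal{E}$ and $s \in [0,1]$, Hölder's inequality applied termwise to this sum with conjugate exponents $1/(1-s)$ and $1/s$ yields
\[
L^n_{(1-s)\theta_0 + s \theta_1} \mathbf{1}(q_0) \leq \bigl(L_{\theta_0}^n \mathbf{1}(q_0)\bigr)^{1-s} \bigl(L_{\theta_1}^n \mathbf{1}(q_0)\bigr)^{s}.
\]

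Finally, taking $\tfrac{1}{n}\log$ of both sides and passing to the limit along $n \in p\mathbb{N}$, using the first step for each of $\theta_0$, $\theta_1$ and $(1-s)\theta_0+s\theta_1$, one obtains
\[
\log \lambda\bigl((1-s)\theta_0 + s \theta_1\bigr) \leq (1-s) \log \lambda(\theta_0) + s \log \lambda(\theta_1),
\]
which is the desired convexity. There is no substantive obstacle here: once the spectral asymptotic of Corollary \ref{asymptotic1} is available, the argument reduces to the standard log-convexity of sums of exponentials. The only small point requiring care is the choice of an arithmetic progression compatible with the period $p$, so that the leading term in \eqref{asymptotic} does not vanish.
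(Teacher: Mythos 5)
Your proof is correct and follows essentially the same route as the paper's: both realise $\log\lambda(\theta)$ via Corollary \ref{asymptotic1} as a limit of normalised logarithms of path sums, and both deduce convexity of those finite sums from log-convexity of sums of exponentials (the paper packages this as Lemma \ref{logconvex}, proved by Cauchy--Schwarz, which is just the Hölder inequality you invoke). The only difference is cosmetic: the paper pairs $L_\theta^n f$ against an arbitrary positive linear functional $\rho$, which makes the leading coefficient $p\sum_{q}\rho(q)\langle\varphi_\theta,f\mathbf 1_{Q^q_n}\rangle f_\theta(q)$ manifestly positive for every $n$, so no restriction on $n$ is needed; your restriction to $n\in p\mathbb N$ to force $q_0\in Q^{q_0}_n$ is in fact unnecessary, since $Q^{q_0}_n$ is always nonempty (the sets $Q^{q_0}_j$, $j\in\mathbb Z/p\mathbb Z$, partition $Q$ and each class is inhabited because every vertex of a connected graph has an outgoing edge), so $\langle\varphi_\theta,\mathbf 1_{Q^{q_0}_n}\rangle>0$ for all $n$. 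Either way the argument is sound.
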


The proof uses

\begin{Lem} \label{logconvex}
Let $r\geq 1$ and $\alpha=(\alpha_1,\ldots,\alpha_r)$ and $\beta=(\beta_1,\ldots,\beta_r)$ be in $\mathbb b R^r$. Then, the function 
$$h_{\alpha,\beta}:x\mapsto\log\left(\sum_{i=1}^re^{\alpha_i x+\beta_i}\right)$$
is convex on $\mathbb R$.
\end{Lem}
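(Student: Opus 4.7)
My plan is to use Hölder's inequality, which gives a direct one-line proof of convexity of the log-sum-exp function (sometimes this is packaged as a computation of the second derivative using Cauchy--Schwarz, but the Hölder version avoids introducing two derivatives).

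Fix $x,y$ in $\mathbb R$ and $t$ in $[0,1]$. The key observation is that $\beta_i = t\beta_i + (1-t)\beta_i$, so for each $1 \leq i \leq r$,
$$e^{\alpha_i(tx+(1-t)y)+\beta_i} = \bigl(e^{\alpha_i x+\beta_i}\bigr)^t \bigl(e^{\alpha_i y+\beta_i}\bigr)^{1-t}.$$
Summing over $i$ and applying Hölder's inequality with conjugate exponents $p=1/t$ and $q=1/(1-t)$ (the degenerate cases $t=0,1$ being trivial), one obtains
$$\sum_{i=1}^r e^{\alpha_i(tx+(1-t)y)+\beta_i} \leq \left(\sum_{i=1}^r e^{\alpha_i x+\beta_i}\right)^{t} \left(\sum_{i=1}^r e^{\alpha_i y+\beta_i}\right)^{1-t}.$$
Taking logarithms yields $h_{\alpha,\beta}(tx+(1-t)y) \leq t\,h_{\alpha,\beta}(x) + (1-t)\,h_{\alpha,\beta}(y)$, which is exactly convexity.

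There is no real obstacle here: the only thing to verify is the algebraic identity for the exponents, and then one invokes a standard inequality. An alternative route would be to compute $h_{\alpha,\beta}''$ directly: writing $f(x) = \sum_i e^{\alpha_i x + \beta_i}$, one finds $h''f^2 = f''f-(f')^2$, and the non-negativity of this quantity amounts to the Cauchy--Schwarz inequality applied to the vectors with components $\alpha_i e^{(\alpha_i x + \beta_i)/2}$ and $e^{(\alpha_i x + \beta_i)/2}$. Both routes are short; the Hölder argument is preferable because it makes the link with the convexity of the logarithmic convergence domain already used in Section~\ref{powerseries} transparent.
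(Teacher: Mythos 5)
Your Hölder argument is correct and gives a clean, first-principles proof of convexity. It is, however, a genuinely different route from the paper's: the paper proves the lemma by differentiating twice and observing that the sign of $h_{\alpha,\beta}''$ reduces to the Cauchy--Schwarz inequality applied to $\bigl(\alpha_i e^{(\alpha_i x+\beta_i)/2}\bigr)_i$ and $\bigl(e^{(\alpha_i x+\beta_i)/2}\bigr)_i$, exactly the alternative you sketch at the end. The Hölder route buys you a proof that does not require smoothness and is visibly parallel to the logarithmic-convexity lemma for domains of absolute convergence in Section~\ref{powerseries}, so it makes that connection transparent. The second-derivative route buys a little more local information (one sees when $h_{\alpha,\beta}''$ is strictly positive: precisely when the $\alpha_i$ are not all equal), which is the kind of quantitative control the paper later needs when analysing $\de^2_\theta\log\lambda$ in Proposition~\ref{lambdaderivative}; that is presumably why the authors chose it. Either proof is fine here, and your identification of the two as equivalent up to packaging is accurate.
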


\begin{proof} Since $h_{\alpha,\beta}$ is smooth, it suffices to check that its second deriative is everywhere nonnegative. Indeed, for $x$ in $\mathbb R$, we have 
$$h_{\alpha,\beta}'(x)=\frac{\sum_{i=1}^r\alpha_ie^{\alpha_i x+\beta_i}}{\sum_{i=1}^re^{\alpha_i x+\beta_i}}$$
and
$$h_{\alpha,\beta}''(x)=\frac{\left(\sum_{i=1}^r\alpha_i^2e^{\alpha_i x+\beta_i}\right)\left(\sum_{i=1}^re^{\alpha_i x+\beta_i}\right)-
\left(\sum_{i=1}^r\alpha_ie^{\alpha_i x+\beta_i}\right)^2}
{\left(\sum_{i=1}^re^{\alpha_i x+\beta_i}\right)^2}.$$
By Cauchy Schwarz inequality, the denominator of the latter fraction is always $\geq 0$. The Lemma follows.
\end{proof}

\begin{proof}[Proof of Corollary \ref{lambdaconvex}] We fix a positive function $f$ in $\mathcal V$ and a positive linear functional $\rho$ in $\mathcal V^*$. By Corollary \ref{asymptotic1}, for $\theta$ in $\mathcal E$, we have 
$$\log\lambda(\theta)=\lim_{n\rightarrow\infty}\frac{1}{n}\log \langle \rho,L_\theta^n f\rangle.$$
In view of Lemma \ref{logconvex} and of the definition of $L_\theta$, for any $n\geq 0$, the function 
$\theta\mapsto\log \langle \rho,L_\theta^n f\rangle$ is convex. The conclusion follows.
\end{proof}

We can now conclude.

\begin{proof}[Proof of Proposition \ref{omegatransfer}] For $q$ in $A$ and $n\geq 0$, define $W_n^q\subset W_n$ as the set of 
all words $w=(w_1,\ldots,w_n)$ such that $\sigma(w_1)=q$. Set also $W^q=\bigcup_{n\geq 0}W_n^q$.
We still let $\mathscr P:W\rightarrow \mathcal E$ be the natural map that counts the number of occurences of a given letter in a word.
Then, a direct computation gives, for $\theta$ in $\mathcal E$, 
$$L^n_\theta f(q)=\sum_{w\in W^q_n}e^{-\langle\theta,\mathscr P(w)\rangle}f(\gamma(w_n)),\quad f\in \mathcal V,n\geq 0,q\in Q,$$
where $w_n$ stands for the last letter on the right of the word $w$ (and by convention, $\gamma(w_n)=q$ if $n=0$).
We get 
\begin{multline*}
\sum_{n=0}^\infty\langle {\bf 1},L_\theta^n{\bf 1}\rangle=
\sum_{n=0}^\infty\sum_{q\in Q} L_\theta^n{\bf 1}(q)=\sum_{n=0}^\infty\sum_{q\in Q}\sum_{w\in W^q_n}e^{-\langle\theta,\mathscr P(w)\rangle}\\
=\sum_{w\in W} e^{-\langle\theta,\mathscr P(w)\rangle}.\end{multline*}
Therefore, by Corollary \ref{asymptotic1},
we have $\sum_{w\in W} e^{-\langle\theta,\mathscr P(w)\rangle}<\infty$ if and only if $\lambda(\theta)<1$.
By Lemma \ref{lambdaconvex}, the function $\log\lambda$ is convex on $\mathcal E$, hence it is continuous and the set where it is $<0$ is open. Thus, in view of Lemma \ref{interior} and \eqref{phiomega1} and \eqref{phiomega2}, we obtain
$$\theta\in \overset{\circ}{\Omega}_{\mathcal G}\Leftrightarrow \lambda(\theta)<1.$$
Still as $\log\lambda$ is convex, the set $\{\theta\in \mathcal E|\lambda(\theta)<1\}$ is the interior of the set 
$\{\theta\in \mathcal E|\lambda(\theta)\leq 1\}$. As $A$ is finite, the set $\overset{\circ}{\Omega}_{\mathcal G}$ is non empty, hence both closed convex sets 
$\Omega_{\mathcal G}$ and $\{\theta\in \mathcal E^*|\lambda(\theta)\leq 1\}$ are the closures of their interior and are therefore equal to each other.
\end{proof}

\begin{Ex} \label{fibonacci1}
We use the characterization of Proposition \ref{omegatransfer} to compute the objects in a simple example, 
which is called the Fibonacci graph (see Figure \ref{fibonaccifig}).

In this example, the set $A$ is written as $\{a_1,a_2,a_3\}$ and we identify $\mathcal E$ with $\mathbb R^3$. In the same way, the set $Q$ is written as $\{q_1,q_2\}$ and we identify $\mathcal V$ with $\mathbb R^2$. Note that, as the arrow $a_1$ has source and goal the state $q_1$, we have $p=1$ in view of Corollary \ref{modulus12}.
For $\theta=(\theta_1,\theta_2,\theta_3)$, the matrix of the operator $L_\theta$ is
$$L_\theta=\begin{pmatrix}e^{-\theta_1}&e^{-\theta_2}\\ e^{-\theta_3}&0\end{pmatrix}.$$
The eigenvalues of this matrix are the roots of the equation
$$\chi_\theta(x)=x^2-e^{-\theta_1}x-e^{-\theta_2-\theta_3}=0.$$
This equation has exactly one positive root, which is 
$$\lambda(\theta)=\frac{1}{2}(e^{-\theta_1}+\sqrt{e^{-2\theta_1}+4e^{-\theta_2-\theta_3}}).$$
As $\chi_\theta(x)\geq 0$ for $x$ large, saying that the positive root $\lambda(\theta)$ is $\leq 1$ amounts to saying that one has
$\chi_\theta(1)\geq 0$. From Proposition \ref{omegatransfer}, we get
$$\Omega_\mathcal G=\{\theta\in \mathbb R^3|e^{-\theta_1}+e^{-\theta_2-\theta_3}\leq 1\}.$$
This allows to determine the function $\psi_\mathcal G$. Indeed, as $\psi_\mathcal G$ is concave, we get from \eqref{relpsiomega}, 
for $x$ in $\mathcal E$,
$$\psi_\mathcal G(x)=\inf_{\theta\in\Omega_\mathcal G}\theta(x).$$
By using the formula in Example \ref{freepsi} together with Lemma \ref{concavetransport}, we obtain, for $x$ in $\mathbb R^3$,
\begin{align*}\psi_\mathcal G(x)&=x_1\log\frac{x_1+x_2}{x_1}+x_2\log\frac{x_1+x_2}{x_2}&&x_1,x_2,x_3\geq 0,x_2=x_3\\
&=-\infty&&\mbox{else}.\end{align*}
%
%On this example, one can also consider a labelling. We set $B=\{1,2\}$ and we consider the labelling given by the map $\rho:A\rightarrow B$ with $\rho(1)=1$ and $\rho(2)=\rho(3)=2$. Then, for the language $X=\rho(W)$, we get, in view of Lemma \ref{projlang}, for $x$ in $\mathbb R^2$,
%\begin{align*}\psi_X(x)&=x_1\log\frac{x_1+\frac{1}{2}x_2}{x_1}+\frac{1}{2}x_2\log\frac{x_1+\frac{1}{2}x_2}{\frac{1}{2}x_2}&&x_1,x_2\geq 0\\
%&=-\infty&&\mbox{else}.\end{align*}
\end{Ex}

\section{Global asymptotic estimates}
\label{secglobalcounting}

We continue the study of asymptotics of languages defined by directed graphs and we keep the notation of the previous Section. We will now use the previously introduced formalism to establish an estimate of the number of words of length $n$. 

Say that the connected directed graph $\mathcal G=(Q,A,\sigma,\gamma)$ is cyclic if, for every $q$ in $Q$, there exists a unique $a$ in $A$ with $\sigma(a)=q$. Then, the sets $A$ and $Q$ may be identified with $\mathbb Z/p\mathbb Z$ while the source map $\sigma$ is the identity map and the goal map $\gamma$ is the map $j\mapsto j+1$.

For a non cyclic connected directed graph, the spectral radius $\lambda(0)$ of the operator $L_0$ is $>1$.

\begin{Lem} \label{purcycle}
Let $\mathcal G=(Q,A,\sigma,\gamma)$ be a connected directed graph. Then $\lambda(0)\geq 1$ with equality if and only if 
$\mathcal G$ is cyclic. 
\end{Lem}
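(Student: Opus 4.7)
The plan is to analyze $L_0$ as a non-negative matrix. Its matrix entries are $(L_0)_{q,q'} = |\{a \in A : \sigma(a)=q,\ \gamma(a)=q'\}|$, so the $q$-th row sum equals the out-degree $d(q) := |\sigma^{-1}(q)|$. Since $\mathcal G$ is connected, every vertex is the source of at least one arrow (otherwise no path could leave it), hence $d(q) \geq 1$ for all $q \in Q$.

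To get a precise expression for $\lambda(0)$ I would apply Lemma \ref{PF} to the adjoint $L_0^*$ acting on the cone $[0,\infty)^Q$. The reversed graph of $\mathcal G$ is still strongly connected, so for $N$ large enough the operator $\sum_{j=0}^N (L_0^*)^j$ maps every nonzero element of $[0,\infty)^Q$ to a strictly positive function; Lemma \ref{PF} therefore yields a strictly positive left eigenvector $\varphi > 0$ with $L_0^* \varphi = \lambda(0)\varphi$. Summing the identity $(L_0^*\varphi)(q')=\lambda(0)\varphi(q')$ over all $q'\in Q$ and using $\sum_{q'}(L_0)_{q,q'}=d(q)$ gives
\[
\lambda(0)\sum_{q}\varphi(q) \;=\; \sum_{q}d(q)\varphi(q),
\]
so $\lambda(0)$ is a convex combination of the out-degrees $d(q)$ with strictly positive weights $\varphi(q)/\sum_{q'}\varphi(q')$. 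Since each $d(q)\geq 1$, we obtain $\lambda(0)\geq 1$, with equality if and only if $d(q)=1$ for every $q \in Q$.

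It remains to match the equality case with cyclicity. If $d(q)=1$ for every $q$, then sending $q$ to the goal of its unique outgoing arrow defines a self-map $T\colon Q\to Q$; strong connectedness forces the orbit $q,T(q),T^2(q),\dots$ to cover all of $Q$, which makes $T$ a single cyclic permutation of $Q$. Identifying $Q$ and $A$ with $\mathbb Z/|Q|\mathbb Z$ then matches the definition of cyclicity given in the text. The converse is immediate: for a cyclic graph, $L_0$ is a permutation matrix on $\mathbb R^{\mathbb Z/p\mathbb Z}$ whose eigenvalues are the $p$-th roots of unity, hence of spectral radius $1$. The only point requiring care is the invocation of Lemma \ref{PF} to ensure that $\varphi$ is \emph{strictly} positive; without this, the convex-combination identity would only yield the non-strict inequality and would not detect the cyclic case.
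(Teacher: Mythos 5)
Your proof is correct, but it takes a genuinely different route from the paper's. The paper argues as follows: if $\mathcal G$ is not cyclic, then (since connectedness already forces every out-degree to be $\geq 1$) some vertex $q$ has two distinct outgoing arrows $a\neq a'$; connectedness provides paths from $\gamma(a)$ and from $\gamma(a')$ back to $q$ of a common length $k$, whence $L_0^{n(k+1)}(\mathbf 1_q)(q)\geq 2^n$ and $\lambda(0)\geq 2^{1/(k+1)}>1$; the cyclic case is immediate. You instead invoke Lemma~\ref{PF} for the adjoint $L_0^*$ to produce a strictly positive left Perron eigenvector $\varphi$, and sum the eigenvalue identity over $Q$ to write $\lambda(0)$ as the convex combination $\sum_q d(q)\varphi(q)/\sum_q\varphi(q)$ of the out-degrees. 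This buys a cleaner, more structural characterization of the equality case (and works without manufacturing a common return length $k$, which in the paper requires a small argument about lengths modulo the period $p$); what it costs is the explicit quantitative lower bound $\lambda(0)\geq 2^{1/(k+1)}$ that the paper's direct counting gives for free. Your care to check strict positivity of $\varphi$ via the irreducibility hypothesis of Lemma~\ref{PF} is indeed the crucial point for the equality case, and your identification of the equality case with cyclicity (out-degree one everywhere forces a single-cycle permutation by surjectivity on a finite set plus connectedness) is correct. One tacit assumption you share with the paper: connectedness plus the implicit exclusion of the trivial one-vertex-no-edge graph is what makes ``every vertex has out-degree $\geq 1$'' valid.
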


\begin{proof} 
Clearly, if $\mathcal G$ is cyclic, we have $\lambda(0)=1$. Conversely, suppose $\mathcal G$ is not cyclic. Then, there exists $q$ in $Q$ and $a\neq a'$ in $A$ with $\sigma(a)=\sigma(a')=q$. As $\mathcal G$ is connected, we may find $k\geq 0$ with
$W_k^{\gamma(a),q}\neq\emptyset$ and $W_k^{\gamma(a'),q}\neq\emptyset$.
Then, we have $L^{k+1}_{0}({\bf 1}_q)(q)\geq 2$, hence, for any $n\geq 0$, $L^{n(k+1)}_{0}({\bf 1}_q)(q)\geq 2^n$.
Thus, $L_0$ has spectral radius $2^{\frac{1}{k+1}}$.
\end{proof}

Set
$$%\label{defgrrate}
\delta_\mathcal G=\log\lambda(0).$$
This number will describe the growth rate of the sets $W_n^q$.

\begin{Ex}\label{fibonacci2} For the Fibonacci graph of Example \ref{fibonacci1}, we compute $\delta_{\mathcal G}$. By definition, we have
$\lambda(\delta_{\mathcal G}{\bf 1})=1$ which amounts to $1-e^{-\delta_\mathcal G}-e^{-2\delta_\mathcal G}=0$ or equivalently to 
$e^{2\delta_\mathcal G}=e^{\delta_\mathcal G}+1$. Thus, $e^{\delta_\mathcal G}$ is the golden ratio 
$\frac{1+\sqrt{5}}{2}$.
\end{Ex}

Recall that, for $n\geq 0$ and $q$ in $Q$, $W_n^q$ stands for the set of words $w=(w_1,\ldots,w_n)$ with length $n$ in $W$ such that $\sigma(w_1)=q$. We will estimate the size of these sets. 
If $(a_n)_{n\geq 0}$ and $(b_n)_{n\geq 0}$ are sequences of positive real numbers, we write $a_n \underset{n\rightarrow\infty}{\sim}b_n$ to say that $\frac{a_n}{b_n}\td{n}{\infty}1$.

\begin{Prop} \label{globequiv}
Let $\mathcal G=(Q,A,\sigma,\gamma)$ be a connected directed graph.
Then, if $p=1$, for $q$ in $Q$, 
$$|W_n^q|\underset{n\rightarrow\infty}{\sim}e^{\delta_\mathcal G n}
\langle\varphi_{\delta_{\mathcal G}\bf 1},{\bf 1}\rangle f_{\delta_{\mathcal G}\bf 1}(q).
$$
In general, for $q$ in $Q$, 
$$|W_n^q|\underset{n\rightarrow\infty}{\sim}
p e^{\delta_{\mathcal G} n} \langle\varphi_{\delta_{\mathcal G}\bf 1},{\bf 1}_{Q_{n}^q}\rangle f_{\delta_{\mathcal G}\bf 1}(q).$$
\end{Prop}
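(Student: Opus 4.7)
The plan is to reduce everything to a direct application of Corollary \ref{asymptotic1} at the distinguished parameter $\theta=\delta_\mathcal G\mathbf{1}$. The key observation is the formula for $L_\theta^n$ in terms of paths that was recorded in the proof of Proposition \ref{omegatransfer}: for every $\theta\in\mathcal E$, $q\in Q$, $n\geq 0$, $f\in\mathcal V$,
$$L_\theta^n f(q)=\sum_{w\in W_n^q}e^{-\langle\theta,\mathscr P(w)\rangle}f(\gamma(w_n)).$$
Plugging in $\theta=\delta_\mathcal G\mathbf{1}$ and $f=\mathbf{1}$, and using that $\langle\mathbf{1},\mathscr P(w)\rangle=n$ whenever $w$ has length $n$, one gets the clean identity
$$L_{\delta_\mathcal G\mathbf{1}}^n\mathbf{1}(q)=e^{-\delta_\mathcal G n}|W_n^q|.$$

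Next, I would invoke Corollary \ref{asymptotic1} with $\theta=\delta_\mathcal G\mathbf{1}$ and $f=\mathbf{1}$. By the very definition of $\delta_\mathcal G$ we have $\lambda(\delta_\mathcal G\mathbf{1})=1$, so the corollary provides constants $C,\varepsilon>0$ such that, uniformly in $q$ and $n$,
$$\bigl|L_{\delta_\mathcal G\mathbf{1}}^n\mathbf{1}(q)-p\langle\varphi_{\delta_\mathcal G\mathbf{1}},\mathbf{1}_{Q_n^q}\rangle f_{\delta_\mathcal G\mathbf{1}}(q)\bigr|\leq C(1-\varepsilon)^n.$$
Multiplying by $e^{\delta_\mathcal G n}$ and using the identity above yields
$$|W_n^q|=p\,e^{\delta_\mathcal G n}\langle\varphi_{\delta_\mathcal G\mathbf{1}},\mathbf{1}_{Q_n^q}\rangle f_{\delta_\mathcal G\mathbf{1}}(q)+O\!\bigl(e^{\delta_\mathcal G n}(1-\varepsilon)^n\bigr),$$
which is the second formula, provided one checks that the main term dominates.

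The remaining point is a uniform positive lower bound on the leading coefficient. Since $\mathcal G$ is connected, Lemma \ref{PF} applied to $L_{\delta_\mathcal G\mathbf{1}}$ and to its adjoint guarantees that $f_{\delta_\mathcal G\mathbf{1}}$ and $\varphi_{\delta_\mathcal G\mathbf{1}}$ are strictly positive everywhere. By the definition of $p$ in Lemma \ref{modulus1} the sets $Q_j^q$, $j\in\mathbb Z/p\mathbb Z$, form a partition of $Q$ each of whose pieces is non-empty, so $\langle\varphi_{\delta_\mathcal G\mathbf{1}},\mathbf{1}_{Q_n^q}\rangle>0$ for every $n,q$. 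There are only finitely many pairs $(j,q)\in(\mathbb Z/p\mathbb Z)\times Q$, so
$$\min_{j,q}p\langle\varphi_{\delta_\mathcal G\mathbf{1}},\mathbf{1}_{Q_j^q}\rangle f_{\delta_\mathcal G\mathbf{1}}(q)>0,$$
and the error term is exponentially smaller than the main term. This gives $\sim$ in the general case. Finally, when $p=1$ the residue class is trivial, so $Q_n^q=Q$ for all $n,q$, $\mathbf{1}_{Q_n^q}=\mathbf{1}$, and the formula collapses to the first one.

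There is no serious obstacle: Corollary \ref{asymptotic1} is what does all the work, and the only thing to verify by hand is strict positivity, which is standard Perron--Frobenius once connectivity is in place.
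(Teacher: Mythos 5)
Your proof is correct and takes essentially the same route as the paper: fix $\theta=\delta_{\mathcal G}{\bf 1}$, observe the identity $L_{\delta_{\mathcal G}{\bf 1}}^n{\bf 1}(q)=e^{-\delta_{\mathcal G}n}|W_n^q|$, and apply Corollary \ref{asymptotic1} using $\lambda(\delta_{\mathcal G}{\bf 1})=1$. You go one small but welcome step beyond the paper's terse argument by explicitly noting the uniform positive lower bound on $p\langle\varphi_{\delta_{\mathcal G}\bf 1},{\bf 1}_{Q_n^q}\rangle f_{\delta_{\mathcal G}\bf 1}(q)$ (via Perron--Frobenius positivity and the non-emptiness of each $Q_j^q$), which is indeed needed to pass from the additive error estimate to the asymptotic equivalence $\sim$.
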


\begin{proof}%[Proof of Proposition \ref{globequiv}] 
We fix $q$ in $Q$. By definition, for $n\geq 0$, we have 
$$L_{\delta_{\mathcal G}{\bf 1}}^n{\bf 1}(q)=e^{-\delta_{\mathcal G}n}|W_n^q|.$$
As $\lambda(\delta_{\mathcal G}{\bf 1})=1$, the conclusion follows from Corollary \ref{asymptotic1}.
\end{proof}

\section{The function $\lambda$}
\label{seclambda}

We now aim at studying the number of elements $w$ in $W_n$ such that $\mathscr P(w)$ takes a prescribed value. This is closely related to large deviation theory of random walks on Euclidean spaces and will require us to have a better understanding of the function $\lambda$. This function will play the role played by the Laplace transform of the law of the random walk in large deviation theory.

The fine study of the function $\lambda$ requires us to introduce a new subspace of $\mathcal E=\mathbb R^A$. 
For $f$ in $\mathcal V=\mathbb R^Q$, we let $\nabla f$ be the function on $A$ defined by, for $a$ in $A$,
$$\nabla f(a)=f(\gamma(a))-f(\sigma(a)).$$
The space $\nabla \mathcal V\subset \mathcal E$ will play a role in the statements of the next results. 

\begin{Lem} \label{injective} Let $\mathcal G=(Q,A,\sigma,\gamma)$ be a connected directed graph.
For $f$ in $\mathcal V$ and $c$ in $\mathbb R$, 
we have $\nabla f=c{\bf 1}$ if and only if $c=0$ and $f$ is constant.
\end{Lem}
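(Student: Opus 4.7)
The ``if'' direction is immediate: if $f$ is constant, then $f(\gamma(a)) - f(\sigma(a)) = 0$ for every $a \in A$, so $\nabla f = 0 = 0 \cdot \mathbf{1}$.

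For the ``only if'' direction, suppose $\nabla f = c\mathbf{1}$, i.e., $f(\gamma(a)) - f(\sigma(a)) = c$ for every $a \in A$. The plan is to first show $c=0$ by telescoping along a closed loop, then deduce that $f$ is constant by telescoping along arbitrary paths.

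First I would pick any vertex $q \in Q$ (we may assume $Q$ is nonempty, else there is nothing to prove). By the connectivity assumption on $\mathcal{G}$ introduced just before Proposition \ref{omegatransfer}, there exists some $n \geq 1$ with $W_n^{q,q} \neq \emptyset$; let $w = (a_1,\ldots,a_n)$ be such a closed path. Telescoping the identity $f(\gamma(a_k)) - f(\sigma(a_k)) = c$ along $w$ gives
\[
0 = f(q) - f(q) = \sum_{k=1}^n \bigl(f(\gamma(a_k)) - f(\sigma(a_k))\bigr) = nc,
\]
and since $n \geq 1$ this forces $c = 0$. (If $Q$ has a single vertex, the existence of some arrow $a \in A$ together with $\nabla f(a) = c$ already gives $c = 0$; if $A$ is empty the statement is vacuous.)

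Having shown $c = 0$, I would then pick any two vertices $q, q' \in Q$ and use connectivity to choose a path $(a_1,\ldots,a_m)$ from $q$ to $q'$. The same telescoping argument gives $f(q') - f(q) = mc = 0$, so $f(q) = f(q')$. As $q,q'$ were arbitrary, $f$ is constant.

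I do not anticipate any real obstacle here; the only thing to be careful about is handling the degenerate cases (a single vertex, or $A$ empty) and making sure to invoke the correct sense of connectedness (namely the strong connectedness definition given just after Proposition \ref{omegatransfer}: for every $q,q' \in Q$ there is at least one path from $q$ to $q'$), which is precisely what allows both the closed-loop argument for $c=0$ and the path argument for constancy of $f$.
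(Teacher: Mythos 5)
Your proof is essentially identical to the paper's: fix a vertex $q$, use connectedness to obtain a closed path of positive length from $q$ to itself, telescope to get $nc = 0$ and hence $c = 0$, then use connectedness again to conclude $f$ is constant. The extra remarks about degenerate cases (single vertex, empty $A$) are a reasonable addition but not part of the paper's argument, and both you and the paper rely on the (implicit, standard) convention that a connected directed graph supports a cycle of length at least $1$ through every vertex.
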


\begin{proof} Suppose $\nabla f=c{\bf 1}$. This amounts to saying that, for every $a$ in $A$, we have $f(\gamma(a))-f(\sigma(a))=c$. Fix $q$ in $Q$. As $\mathcal G$ is connected, we may find $n\geq 1$ with $W_n^{q,q}\neq\emptyset$. We get $0=f(q)-f(q)=nc$, hence $c=0$. In particular, 
for every $a$ in $A$, we have $f(\sigma(a))=f(\gamma(a))$. Using again the connectedness of $\mathcal G$, we get that $f$ is constant as required. 
\end{proof}

\begin{Ex} \label{fibonacci3}
For the Fibonacci graph of Example \ref{fibonacci1} and Example \ref{fibonacci2}, the space $\nabla \mathcal V\subset \mathcal E$ is the space of all $\theta=(\theta_1,\theta_2,\theta_3)$ in $\mathbb R^3$ with $\theta_1=0$ and $\theta_2+\theta_3=0$. Note that this space is the orthogonal subspace to the space spanned by the domain of definition of the function $\psi_{\mathcal G}$ (see Example \ref{fibonacci1}). This is general phenomenon, which will be explained in Corollary \ref{psiregular} below.
\end{Ex}

\begin{Prop} \label{lambdaderivative} Let $\mathcal G=(Q,A,\sigma,\gamma)$ be a connected directed graph.
Then the function $\lambda$ is real analytic on $\mathcal E$. 

For $\theta$ in $\mathcal E$, the derivative of $\lambda$ at $\theta$ is given by
\begin{equation}\label{derivative6}
\de_\theta\lambda(\xi)=-\sum_{a\in A}\varphi_\theta(\sigma(a))e^{-\theta(a)}\xi(a)f_\theta(\gamma(a)),\quad\xi \in \mathcal E.\end{equation}

The second derivative $\de^2_\theta\log\lambda$ of $\log\lambda$ at $\theta$ is a non-negative symmetric bilinear form on $\mathcal E$ whose kernel is the space $\mathbb R{\bf 1}\oplus\nabla \mathcal V$ of functions that may be written as the sum of a constant function and an element of $\nabla \mathcal V$.
Additionally, for $c$ in $\mathbb R$ and $\varphi$ in $\mathcal V$, we have
\begin{equation}\label{invariance}\lambda(\theta+c+\nabla\varphi)=e^{-c}\lambda(\theta).\end{equation}
\end{Prop}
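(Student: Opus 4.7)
The plan is to attack the four parts in order: real analyticity of $\lambda$, the first derivative formula \eqref{derivative6}, the invariance identity \eqref{invariance}, and the description of the Hessian of $\log\lambda$.

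\textbf{Analyticity and first derivative.} The map $\theta\mapsto L_\theta$ is entire into the endomorphism space of $\mathcal V_{\mathbb C}$, since its matrix entries are $e^{-\theta(a)}$. By Lemma \ref{modulus1}, $\lambda(\theta)$ is a simple eigenvalue at every $\theta$, so Kato's analytic perturbation theory for isolated simple eigenvalues supplies real analyticity of $\lambda$ on $\mathcal E$ together with local analytic selections of $f_\theta$ and $\varphi_\theta$ normalized by $\langle\varphi_\theta,f_\theta\rangle=1$. Differentiating $L_\theta f_\theta=\lambda(\theta)f_\theta$ in direction $\xi$ and pairing with $\varphi_\theta$, the $\de_\theta f$ terms cancel by $L_\theta^*\varphi_\theta=\lambda(\theta)\varphi_\theta$ and the normalization, leaving $\de_\theta\lambda(\xi)=\langle\varphi_\theta,(\de_\theta L)(\xi)f_\theta\rangle$; expanding this from the definition of $L_\theta$ yields \eqref{derivative6}.

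\textbf{Invariance.} For $c\in\mathbb R$, $L_{\theta+c{\bf 1}}=e^{-c}L_\theta$ is immediate. For $\varphi\in\mathcal V$, writing $D_\varphi$ for multiplication by $e^\varphi$ on $\mathcal V$, a short unwinding gives $L_{\theta+\nabla\varphi}=D_\varphi\,L_\theta\,D_\varphi^{-1}$, a similarity, so the spectra coincide and in particular $\lambda(\theta+\nabla\varphi)=\lambda(\theta)$. Combining the two identities produces \eqref{invariance}. Differentiating \eqref{invariance} twice at $\theta$ along directions in $\mathbb R{\bf 1}$ and in $\nabla\mathcal V$ (direct sum, by Lemma \ref{injective}) shows that the Hessian $\de^2_\theta\log\lambda$ vanishes on $\mathbb R{\bf 1}\oplus\nabla\mathcal V$.

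\textbf{Hessian --- non-negativity and core of the proof.} Non-negativity is exactly Corollary \ref{lambdaconvex}. The delicate point is that the kernel of $\de^2_\theta\log\lambda$ is no larger than $\mathbb R{\bf 1}\oplus\nabla\mathcal V$; the plan is to give $\log\lambda$ a probabilistic reading. For fixed $\theta$ and $n\geq 1$, define a probability measure $\mu^\theta_n$ on $W_n$ by
\[\mu^\theta_n(w)=\lambda(\theta)^{-n}\,\varphi_\theta(\sigma(w_1))\,e^{-\langle\theta,\mathscr P(w)\rangle}\,f_\theta(\gamma(w_n)),\]
total mass $1$ coming from $\langle\varphi_\theta,L_\theta^n f_\theta\rangle=\lambda(\theta)^n$. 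Combined with the formula $L_\theta^n f(q)=\sum_{w\in W_n^q}e^{-\langle\theta,\mathscr P(w)\rangle}f(\gamma(w_n))$ from the proof of Proposition \ref{omegatransfer}, this gives
\[\lambda(\theta)^{-n}\langle\varphi_\theta,L^n_{\theta+t\xi}f_\theta\rangle=\mathbb E_{\mu^\theta_n}\bigl[e^{-t\langle\xi,\mathscr P(w)\rangle}\bigr],\]
and Corollary \ref{asymptotic1} lets us pass to the limit:
\[\log\lambda(\theta+t\xi)-\log\lambda(\theta)=\lim_{n\to\infty}\tfrac{1}{n}\log\mathbb E_{\mu^\theta_n}\bigl[e^{-t\langle\xi,\mathscr P(w)\rangle}\bigr].\]
Differentiating twice in $t$ at $0$ identifies $\de^2_\theta\log\lambda(\xi,\xi)$ with the asymptotic variance $\lim_n n^{-1}\operatorname{Var}_{\mu^\theta_n}(\langle\xi,\mathscr P(\cdot)\rangle)$.

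Suppose this variance vanishes. The plan is then to invoke a Livsic-style rigidity result for the irreducible finite-state Markov chain underlying $\mu^\theta_n$ (irreducibility coming from connectedness of $\mathcal G$): there exists $c\in\mathbb R$ such that $\langle\xi,\mathscr P(w)\rangle=cn$ for every closed path $w$ of length $n$ based at a fixed vertex $q_0$. Once this is in hand, for each $q\in Q$ pick (by connectedness) a path $\alpha(q)$ from $q_0$ to $q$ and set $\phi(q)=\langle\xi,\mathscr P(\alpha(q))\rangle-c|\alpha(q)|$; independence of $\phi(q)$ from the choice of $\alpha(q)$ follows by closing up two competing paths with any return path from $q$ to $q_0$ and applying the closed-loop identity. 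Computing $\phi(\gamma(a))$ via $\alpha(\sigma(a))\cdot a$ instead of $\alpha(\gamma(a))$ yields $\phi(\gamma(a))-\phi(\sigma(a))=\xi(a)-c$, i.e.\ $\xi=c{\bf 1}+\nabla\phi\in\mathbb R{\bf 1}\oplus\nabla\mathcal V$. The main obstacle is precisely the Livsic step --- passing from vanishing asymptotic variance to the exact closed-loop identity $\langle\xi,\mathscr P(w)\rangle=cn$ --- which is the discrete counterpart of Livsic's cohomological theorem and the only genuinely dynamical input needed beyond the Perron--Frobenius machinery of Section \ref{secgraphs}.
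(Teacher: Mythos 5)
For the analyticity, the first derivative formula \eqref{derivative6}, and the invariance \eqref{invariance}, your route is essentially the paper's: Lemmas \ref{root} and \ref{residue} are exactly the elementary ingredients of the ``Kato perturbation theory'' you invoke, and the first-derivative computation (differentiate $L_\xi g_\xi=\lambda(\xi)g_\xi$, pair with $\varphi_\theta$, kill the eigenvector derivative via $L_\theta^*\varphi_\theta=\lambda(\theta)\varphi_\theta$) is the same.

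For the kernel of the Hessian your route is genuinely different, but it has a gap you yourself flag and do not close. You reinterpret $\de_\theta^2\log\lambda(\xi,\xi)$ as the asymptotic variance of $\langle\xi,\mathscr P(\cdot)\rangle$ under the tilted path measures $\mu_n^\theta$, and then need: vanishing asymptotic variance implies $\langle\xi,\mathscr P(w)\rangle=c|w|$ for all closed paths $w$. The cohomological construction you build from this closed-loop identity is fine. But the rigidity step is not an importable black box. It is not even the usual Liv\v{s}ic theorem, which goes from vanishing periodic-orbit data to a coboundary; what you need is the converse-flavoured ``variance rigidity'' for a finite irreducible Markov chain, and a proof of it (via the Poisson equation, or via an explicit formula for the asymptotic variance as a quadratic form on the complement of coboundaries) essentially reproduces the spectral computation you were hoping to avoid. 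As written, the step you call ``the main obstacle'' is exactly the content of the proposition, and leaving it unproved leaves the proposal incomplete.

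The paper does this part with linear algebra and no dynamics. Lemma \ref{injective2} produces the direct sum $\mathcal E=\mathbb R{\bf 1}\oplus\nabla\mathcal V\oplus\mathcal E^\theta$ where $\mathcal E^\theta$ is the space of $\theta$-balanced functions; the invariance \eqref{invariance} shows $\mathbb R{\bf 1}\oplus\nabla\mathcal V$ lies in the kernel of $\de_\theta^2\log\lambda$; and for $\xi,\eta\in\mathcal E^\theta$ one checks that $h_\xi=0$, so that the second-order perturbation identity collapses to
\begin{equation*}
\de^2_\theta\lambda(\xi,\eta)=\sum_{a\in A}\varphi_\theta(\sigma(a))\,e^{-\theta(a)}\,\xi(a)\eta(a)\,f_\theta(\gamma(a)),
\end{equation*}
a manifestly positive-definite weighted inner product on $\mathcal E^\theta$. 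That immediately pins the kernel down to $\mathbb R{\bf 1}\oplus\nabla\mathcal V$ and is both shorter and more elementary than the probabilistic detour. If you prefer to keep the variance picture, you should prove the rigidity lemma in place (solve $(I-P)h=\mathbb E[\xi\mid X_0]-\mathbb E\xi$ for the underlying chain and show zero asymptotic variance forces $\xi$ to coincide with $c{\bf 1}+\nabla h$), rather than cite Liv\v{s}ic.
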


The proof of this result will last until the end of the section.

Recall from Section \ref{secgraphs} that $f_\theta$ is an eigenvector associated to the eigenvalue $\lambda(\theta)$ for $L_\theta$ acting on $\mathcal V$ and that $\varphi_\theta$ is an eigenvector associated to the eigenvalue $\lambda(\theta)$ for the adjoint operator $L_\theta^*$. They are normalized by the relations $\langle \varphi_\theta,{\bf 1}\rangle=1=\langle \varphi_\theta,f_\theta\rangle$. 

Note that Proposition \ref{lambdaderivative} implies that the kernel of $\de^2_\theta\log\lambda$ does not depend on $\theta$. In view of Lemma \ref{injective}, its dimension is the dimension of $\mathcal V$, that is, the cardinality of $Q$. 

The fact that $\lambda(\theta)$ is a real analytic function of $\theta$ is a direct consequence of the following classical phenomenon from complex analysis, when applied to the characteristic polynomial of $L_\theta$.

\begin{Lem}\label{root} Let $d\geq 1$ and $\mathbb C_d[z]$ be the set complex polynomial with degree $\leq d$ and $P$ be in $\mathbb C_d[z]$. Assume $0\leq k\leq d$ and $U$ is an open subset of $\mathbb C$ such that $P$ has $k$ roots (counted with multiplicities) in $U$. Then, any polynomial $Q$ which is close enough to $P$ in $\mathbb C_d[z]$ has exactly $k$ roots (counted with multiplicities) in $U$. If $k=1$, the unique root of $Q$ in $U$ is a holomorphic function of $Q$ in a neighborhood of $P$.
\end{Lem}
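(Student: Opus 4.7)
The plan is to deduce the lemma from the argument principle of complex analysis. Let $z_1,\ldots,z_m$ be the distinct roots of $P$ lying in $U$, with respective multiplicities $k_1,\ldots,k_m$ satisfying $k_1+\cdots+k_m=k$. Since the zero set of $P$ is finite and $U$ is open, I would choose $r>0$ so small that the closed disks $\overline{D_i}=\{z\in\mathbb{C}\mid |z-z_i|\leq r\}$ are pairwise disjoint, all contained in $U$, and contain no zero of $P$ other than $z_i$. In particular $P$ does not vanish on the compact set $K=\bigcup_{i=1}^m\partial D_i$.

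First I would apply the argument principle to $P$, giving, for each $i$,
\begin{equation*}
\frac{1}{2\pi i}\int_{\partial D_i}\frac{P'(z)}{P(z)}\,\de z=k_i.
\end{equation*}
Set $\varepsilon=\min_K|P|>0$. Since all norms on the finite-dimensional space $\mathbb{C}_d[z]$ are equivalent and dominate the sup norm on any compact set, for $Q$ sufficiently close to $P$ one has $|Q-P|<\varepsilon/2$ uniformly on $K$; hence $Q$ is nonvanishing on $K$ and the integer
\begin{equation*}
n_i(Q)=\frac{1}{2\pi i}\int_{\partial D_i}\frac{Q'(z)}{Q(z)}\,\de z
\end{equation*}
is well-defined, depends continuously on $Q$, and counts with multiplicity the zeros of $Q$ inside $D_i$. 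By local constancy, $n_i(Q)=k_i$ for $Q$ close to $P$, so $Q$ has exactly $k$ zeros in $\bigcup_i D_i\subset U$. To rule out additional zeros of $Q$ in $U\smallsetminus\bigcup_iD_i$, one applies the same logic on a contour surrounding all $k$ roots of $P$ inside $U$ (after possibly shrinking $U$ slightly so that $P$ has no root on $\partial U$; in the intended application, $U$ will be a small disk around an isolated eigenvalue, so no shrinking is needed).

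For the holomorphic dependence in the case $k=1$, I would extract the root via the standard Cauchy-type formula
\begin{equation*}
z_Q=\frac{1}{2\pi i}\int_{\partial D_1}z\,\frac{Q'(z)}{Q(z)}\,\de z,
\end{equation*}
whose value is $z_Q$ by the residue theorem since $zQ'(z)/Q(z)$ has a single simple pole in $D_1$, at the unique root $z_Q$ of $Q$ in $D_1$, with residue $z_Q$. The integrand is a holomorphic function of the coefficients of $Q$ throughout the open set of polynomials in $\mathbb{C}_d[z]$ which do not vanish on $\partial D_1$, and integration on a fixed compact contour preserves holomorphy in a parameter, so $Q\mapsto z_Q$ is holomorphic in a neighborhood of $P$.

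The only delicate point is the one flagged in the middle paragraph: showing that no extra root of $Q$ sneaks into $U\smallsetminus\bigcup_iD_i$. Everything else reduces to a direct application of the argument principle and to the continuity of contour integrals in their integrand, so that step will be where a proof must be written carefully; on the other hand, the downstream application of the lemma to the characteristic polynomial of $L_\theta$ near an isolated eigenvalue makes this point essentially automatic.
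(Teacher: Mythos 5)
The paper does not prove Lemma \ref{root}; it is cited as a ``classical phenomenon from complex analysis'' and used without proof, so there is no argument in the text to compare with. Your route through the argument principle is the standard one: the winding-number integrals $n_i(Q)$ are integer-valued and continuous in $Q$, hence locally constant, and the Cauchy-type formula $z_Q=\frac{1}{2\pi i}\int_{\partial D_1}z\,Q'(z)/Q(z)\,\de z$ gives holomorphic dependence of the simple root on the coefficients. Both parts are correct.

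You are right to single out the step ruling out extra zeros of $Q$ in $U\smallsetminus\bigcup_iD_i$: the lemma \emph{as stated} is actually false without further hypotheses on $U$. Take $d=1$, $P\equiv 1$, $k=0$, $U=\mathbb C$; then $Q(z)=1+\varepsilon z$ is as close to $P$ as one likes in $\mathbb C_1[z]$ but always has a root $-1/\varepsilon$ in $U$. Similarly, a root of $P$ lying on $\partial U$ can be perturbed into $U$. To make the statement correct one should assume $U$ bounded and $P$ nonvanishing on $\partial U$; then $|P|$ is bounded below on the compact set $\overline{U}\smallsetminus\bigcup_iD_i$, hence so is $|Q|$ for $Q$ close to $P$, which closes the gap. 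As you observe, this is automatic in the paper's only use of the lemma (Corollary \ref{analytic}), where $U$ is a small disk around the isolated simple eigenvalue $\lambda(\theta)$ of $L_\theta$, so the issue is benign there; but a careful reader should note the missing hypotheses, and you deserve credit for flagging exactly the right point.
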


We will actually need to know a bit more, namely that the projections on each summand of the direct sum 
$\mathcal V=\mathbb R f_\theta\oplus (L_\theta-\lambda(\theta))\mathcal V$ depend analytically of $\theta$. This follows from the rather classical

\begin{Lem}\label{residue} Let $H$ be a finite-dimensional complex vector space, $T$ be an endomorphism of $H$ and $\lambda$ be an eigenvalue of $T$. Let $L\subset H$ be the associated characteristic subspace and $M$ be the unique $T$-invariant complementary subspace of $L$, that is, we have $L=\ker(T-\lambda)^d$ and $M=(T-\lambda)^d H$ where $d$ is the dimension of $H$. Then, the residue at $\lambda$ of the endomorphism valued meromorphic function on $\mathbb C$,
$$z\mapsto(z-T)^{-1}$$
is the projection on $L$ in the direct sum $H=L\oplus M$.
\end{Lem}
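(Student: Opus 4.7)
The plan is to exploit the block decomposition of $T$ relative to the direct sum $H = L \oplus M$ and then compute the resolvent on each summand separately.

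First I would observe that $L$ and $M$ are both $T$-invariant by construction, so in a basis adapted to $H = L \oplus M$ the operator $T$ is block diagonal with blocks $T_L = T|_L$ and $T_M = T|_M$. Since $(T-\lambda)^d$ vanishes on $L$ by definition, $T_L - \lambda$ is nilpotent; and since $\lambda$ is the only eigenvalue of $T$ in $L$ (so the eigenvalues of $T_M$ are precisely the eigenvalues of $T$ different from $\lambda$), the operator $T_M - \lambda$ is invertible. Consequently $(z-T)^{-1}$ is block diagonal with blocks $(z - T_L)^{-1}$ on $L$ and $(z-T_M)^{-1}$ on $M$, and the second block is holomorphic in $z$ in a neighbourhood of $\lambda$, so it contributes nothing to the residue.

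The heart of the argument is therefore the computation of the residue of $(z-T_L)^{-1}$. I would write $z - T_L = (z-\lambda)\mathrm{Id}_L - (T_L - \lambda)$ and, using that $N := T_L - \lambda$ is nilpotent with $N^d = 0$, expand formally
\begin{equation*}
(z - T_L)^{-1} = \frac{1}{z-\lambda}\bigl(\mathrm{Id}_L - (z-\lambda)^{-1}N\bigr)^{-1} = \sum_{j=0}^{d-1} \frac{N^j}{(z-\lambda)^{j+1}}.
\end{equation*}
The identity can be verified directly by multiplying both sides by $z - T_L$ and using $N^d = 0$. This Laurent expansion has a simple pole at $\lambda$ with residue equal to the $j = 0$ coefficient, namely $\mathrm{Id}_L$.

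Combining both blocks, the residue of $z \mapsto (z - T)^{-1}$ at $\lambda$ is the endomorphism of $H$ which is the identity on $L$ and zero on $M$; this is exactly the projection onto $L$ parallel to $M$ in the decomposition $H = L \oplus M$, as required. The only subtle point is justifying that $M$ is $T$-invariant and contains no vector in the kernel of any power of $T - \lambda$, but this is built into the definition $M = (T-\lambda)^d H$ together with the standard fact that $H = \ker(T-\lambda)^d \oplus (T-\lambda)^d H$ for the generalized eigenspace decomposition, so no real obstacle arises.
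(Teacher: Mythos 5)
Your proof is correct and follows essentially the same route as the paper's: reduce to the $L$-block by noting the $M$-block of the resolvent is holomorphic near $\lambda$, then use the finite Neumann series $(z-T_L)^{-1}=\sum_{j\ge 0}(z-\lambda)^{-j-1}N^j$ with $N=T_L-\lambda$ nilpotent to read off the residue as $\mathrm{Id}_L$. Your write-up is slightly more explicit than the paper's about why the $M$-block contributes nothing, but the argument is the same.
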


\begin{proof} As $T$ preserves both $L$ and $M$, it suffices to deal with the case where $H=L$. Then, we need to show that the residue 
of $(z-T)^{-1}$ at $\lambda$ is the identity map. Indeed, since we have assumed that $\lambda$ is the unique eigenvalue of $T$, we may write $T$ as $T=\lambda+N$ where $N$ is a nilpotent operator. Then, for $z\neq 0$, we have 
$$(z-N)^{-1}=\sum_{k=0}^\infty z^{-k-1}N^k$$
(the sum being actually finite).
This gives, for $z\neq \lambda$,
$$(z-T)^{-1}=\sum_{k=0}^\infty (z-\lambda)^{-k-1}N^k.$$
The conclusion follows.
\end{proof}

\begin{Cor}\label{analytic} Let $\mathcal G=(Q,A,\sigma,\gamma)$ be a connected directed graph.
Then, the maps $\theta\mapsto\lambda(\theta)$,
$\theta\mapsto f_\theta$ and $\theta\mapsto \varphi_\theta$ are real analytic on $\mathcal E$.
\end{Cor}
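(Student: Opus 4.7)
The plan is to combine Lemma \ref{root} for the eigenvalue $\lambda(\theta)$ with Lemma \ref{residue} (holomorphic functional calculus) for the associated eigenvectors, exploiting the fact that $\lambda(\theta)$ is always a simple root of the characteristic polynomial of $L_\theta$.

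First I would observe that the operator-valued map $\theta \mapsto L_\theta$ is itself real analytic, since, in the standard basis of $\mathcal V$, each entry of $L_\theta$ is either $0$ or of the form $e^{-\theta(a)}$ for some $a \in A$. Consequently the characteristic polynomial $\chi_\theta(z) = \det(z - L_\theta)$ depends analytically on $\theta$ as an element of $\mathbb C_d[z]$, where $d = |Q|$. By Lemma \ref{modulus1} (applied with $k=0$), the eigenvalue $\lambda(\theta)$ has multiplicity one, i.e.\ it is a simple root of $\chi_\theta$. Hence Lemma \ref{root} applies: around any point $\theta_0$, for $\theta$ close to $\theta_0$ there is a neighbourhood of $\lambda(\theta_0)$ in $\mathbb C$ containing exactly one root of $\chi_\theta$, depending holomorphically on $\chi_\theta$ and therefore analytically on $\theta$. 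Since $\lambda(\theta)$ is characterized as the largest positive root, this holomorphic root must coincide with $\lambda(\theta)$, proving analyticity of $\lambda$.

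Next, to handle $f_\theta$, I would fix $\theta_0 \in \mathcal E$ and choose a positively oriented circle $\Gamma \subset \mathbb C$ around $\lambda(\theta_0)$ whose interior meets the spectrum of $L_{\theta_0}$ only at $\lambda(\theta_0)$. By the analyticity of $\theta \mapsto L_\theta$ and continuity of spectra, $\Gamma$ has the same separation property for $\theta$ in some neighbourhood $U$ of $\theta_0$, and on $U \times \Gamma$ the resolvent $(z - L_\theta)^{-1}$ is a real-analytic function of $\theta$ with values in $\mathrm{End}(\mathcal V_{\mathbb C})$. Set
$$\pi_\theta = \frac{1}{2\pi i}\oint_\Gamma (z - L_\theta)^{-1}\, \de z, \qquad \theta \in U.$$
Then $\pi_\theta$ is real analytic in $\theta$, and by Lemma \ref{residue} it equals the projection onto the characteristic subspace of $L_\theta$ at $\lambda(\theta)$ along the complementary $L_\theta$-invariant subspace. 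Since $\lambda(\theta)$ is a simple eigenvalue, this characteristic subspace is exactly the line $\mathbb R f_\theta$. The vector $\tilde f_\theta := \pi_\theta \mathbf{1}$ is therefore a scalar multiple of $f_\theta$, is analytic in $\theta$, and does not vanish on $U$ (after possibly shrinking $U$) because $\tilde f_{\theta_0}$ is a positive multiple of $f_{\theta_0}$.

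Applying the same construction to the adjoint $L_\theta^*$ — which has the same simple eigenvalue $\lambda(\theta)$ because $L_\theta$ and $L_\theta^*$ have the same characteristic polynomial — I obtain an analytic spectral projection $\pi_\theta^{\sharp} = \pi_\theta^*$ onto $\mathbb R \varphi_\theta$, and hence an analytic vector $\tilde \varphi_\theta := \pi_\theta^{\sharp}\mathbf{1}$ collinear to $\varphi_\theta$. It remains to match the normalizations $\langle \varphi_\theta, \mathbf 1\rangle = 1$ and $\langle \varphi_\theta, f_\theta\rangle = 1$. The scalars $\langle \tilde \varphi_\theta, \mathbf 1\rangle$ and $\langle \tilde \varphi_\theta, \tilde f_\theta\rangle$ are analytic in $\theta$ and nonzero at $\theta_0$ (both are positive since $\tilde\varphi_{\theta_0}$ and $\tilde f_{\theta_0}$ are positive multiples of $\varphi_{\theta_0}$, $f_{\theta_0}$), so after shrinking $U$ they are nonzero on $U$. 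The definitions
$$\varphi_\theta = \frac{\tilde \varphi_\theta}{\langle \tilde \varphi_\theta, \mathbf 1\rangle}, \qquad f_\theta = \frac{\langle \tilde \varphi_\theta, \mathbf 1\rangle}{\langle \tilde \varphi_\theta, \tilde f_\theta\rangle}\,\tilde f_\theta$$
then give real-analytic vectors satisfying the required normalizations, and by uniqueness up to scalar of the Perron-Frobenius eigenvectors (Lemma \ref{PF}) they agree with the $\varphi_\theta, f_\theta$ defined earlier. The only delicate point is verifying the analyticity of the spectral projections, but this is handled uniformly on $U$ by the resolvent integral once the separation of $\lambda(\theta)$ from the rest of the spectrum is guaranteed; the rest reduces to dividing by non-vanishing analytic scalars.
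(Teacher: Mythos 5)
Your argument is correct and follows essentially the same route as the paper: analyticity of $\lambda$ via Lemma \ref{root} applied to the characteristic polynomial (using the simplicity from Lemma \ref{modulus1}), analyticity of the rank-one spectral projection via the Cauchy-integral form of Lemma \ref{residue}, then evaluating the projection at $\mathbf 1$ and treating the adjoint symmetrically. The paper avoids your explicit renormalization step by observing that the chosen normalizations $\langle\varphi_\theta,\mathbf 1\rangle=\langle\varphi_\theta,f_\theta\rangle=1$ make $\Pi_\theta(\mathbf 1)=f_\theta$ directly, but this is only a cosmetic simplification of the same idea.
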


\begin{proof} We will apply Lemma \ref{residue} in the complexification $\mathcal V_\mathbb C$ of $\mathcal V$, which may be considered as the space of complex valued functions on $Q$. For $\theta$ in $\mathcal E_\mathbb C$, we still define $L_\theta$ by the same formula as in the real case, that is, for $f$ in $\mathcal V_\mathbb C$ and $q$ in $Q$,
$$L_\theta(f)(q)=\sum_{\substack{a\in A\\ \sigma(a)=q}}e^{-\theta(a)}f(\gamma(a)).$$
The map $\theta\mapsto L_\theta$ is holomorphic on $\mathcal E_\mathbb C$. For $\theta$ a real function, we now that the eigenvalue $\lambda(\theta)$ has multiplicity $1$. Therefore, by Lemma \ref{root}, for $\xi$ in $\mathcal E_\mathbb C$ close to $\theta$, the operator $L_\xi$ has a unique eigenvalue $\lambda(\xi)$ which is close to $\lambda(\theta)$, this eigenvalue has multiplicity $1$ and is a holomorphic function of $\xi$. 
Thus, the function $\theta\mapsto\lambda(\theta)$ is well defined and holomorpic in a neighbourhood $U$ of $\mathcal E$ in $\mathcal E_\mathbb C$.

For $\theta$ in $U$, we let $\Pi_\theta$ be the projection on $\ker(L_\theta-\lambda(\theta))$ in the direct sum 
$\mathcal V_\mathbb C=\ker(L_\theta-\lambda(\theta))\oplus (L_\theta-\lambda(\theta))\mathcal V_\mathbb C$. Note that by construction, $\Pi_\theta$ has rank one.
Besides, let us fix $\theta$ in $U$. Choose a closed disk $D$ in $\mathbb C$ whose interior contains $\lambda(\theta)$ but such that $D$ contains no other value of $L_\theta$. Then, for $\xi$ close enough to $\theta$, we have, by Lemma \ref{residue},
$$\Pi_\xi=\frac{1}{2i\pi}\int_{\partial D}(z-L_\xi)^{-1}\de z,$$
hence $\Pi_\xi$ is a holomorphic function of $\xi$.
By construction, for $\theta$ in $\mathcal E$, we have 
$$\Pi_\theta({\bf 1})=\frac{\langle \varphi_\theta ,{\bf 1}\rangle}{\langle \varphi_\theta ,f_\theta\rangle}f_\theta=f_\theta,$$
hence $f_\theta$ is an analytic function of $\theta$.

By reasoning in the same way for the adjoint operator of $L_\theta$, we obtain that $\varphi_\theta$ is also an analytic function of $\theta$.
\end{proof} 

In order to describe the second derivative of $\lambda$ in Proposition \ref{lambdaderivative}, we will use certain
complementary subspaces of $\nabla \mathcal V\oplus \mathbb R{\bf 1}$ in $\mathcal E$.
Given $\theta$ in $\mathcal E$, we say that an element $\varphi$ of $\mathcal E$ is $\theta$-balanced if, for every $q$ in $Q$, we have
$$\sum_{\substack{a\in A\\ \sigma(a)=q}}e^{-\theta(a)}\varphi(a)f_\theta(\gamma(a))=0.$$
In other words, $\varphi$ is $\theta$-balanced if we have $\de_\theta L(\varphi)f_\theta=0$ where $\de_\theta L$ is the differential at $\theta$ of the map $L:\xi\mapsto L_\xi$.
The space of $\theta$-balanced functions in $\mathcal E$ is denoted by $\mathcal E^\theta$.

\begin{Lem}\label{injective2} Let $\mathcal G=(Q,A,\sigma,\gamma)$ be a connected directed graph. Fix $\theta$ in $\mathcal E$. Then, we have 
$$\mathcal E=\mathbb R{\bf 1}\oplus \nabla \mathcal V\oplus \mathcal E^{\theta},$$
that is, every $\varphi$ in $\mathcal E$ may be written as $\varphi=c+\nabla g+\psi$ where $c$ is a real number, $g$ is in $\mathcal V$ and $\psi$ is $\theta$-balanced. The constant $c$ and the function $\psi$ are uniquely defined; the function $g$ is defined up to the addition of a constant function.
\end{Lem}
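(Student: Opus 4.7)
The plan is to reduce the decomposition to the spectral structure of $L_\theta$ at its simple leading eigenvalue $\lambda(\theta)$. Introduce the auxiliary linear map $F\colon\mathcal E\to\mathcal V$ by
$$F(\varphi)(q)=\sum_{\substack{a\in A\\\sigma(a)=q}}e^{-\theta(a)}\varphi(a)f_\theta(\gamma(a)),$$
so that $\mathcal E^\theta=\ker F$ by definition. The strategy is to identify the image under $F$ of the two prescribed summands $\mathbb R\mathbf 1$ and $\nabla\mathcal V$ and to match the decomposition of $\varphi$ against a spectral decomposition of $F(\varphi)$ in $\mathcal V$.

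The eigenvector equation $L_\theta f_\theta=\lambda(\theta)f_\theta$ gives immediately $F(\mathbf 1)=\lambda(\theta)f_\theta$. For $g$ in $\mathcal V$, splitting $\nabla g(a)=g(\gamma(a))-g(\sigma(a))$ and regrouping the sum yields
$$F(\nabla g)=L_\theta(gf_\theta)-\lambda(\theta)gf_\theta=(L_\theta-\lambda(\theta))(gf_\theta).$$
Consequently, writing $\varphi=c\mathbf 1+\nabla g+\psi$ with $\psi$ in $\mathcal E^\theta$ is equivalent to solving
$$F(\varphi)=c\lambda(\theta)f_\theta+(L_\theta-\lambda(\theta))(gf_\theta)$$
in $\mathcal V$ for $c\in\mathbb R$ and $g\in\mathcal V$.

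Now the fact that $\lambda(\theta)$ is a simple eigenvalue of $L_\theta$, which follows from Lemma \ref{PF} combined with the analysis of the top eigenvalues carried out in Lemma \ref{modulus1}, yields the direct sum decomposition $\mathcal V=\mathbb R f_\theta\oplus(L_\theta-\lambda(\theta))\mathcal V$, with projection onto the first summand given by $v\mapsto\langle\varphi_\theta,v\rangle f_\theta$ thanks to the normalization $\langle\varphi_\theta,f_\theta\rangle=1$. Writing $F(\varphi)=\langle\varphi_\theta,F(\varphi)\rangle f_\theta+w$ with $w$ in $(L_\theta-\lambda(\theta))\mathcal V$, I set $c=\lambda(\theta)^{-1}\langle\varphi_\theta,F(\varphi)\rangle$, pick any $h\in\mathcal V$ with $(L_\theta-\lambda(\theta))h=w$, define $g=h/f_\theta$ (well-defined and real analytic in $\theta$ since $f_\theta>0$), and finally $\psi=\varphi-c\mathbf 1-\nabla g$, which then lies in $\ker F=\mathcal E^\theta$ by construction.

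For uniqueness, the decomposition of $F(\varphi)$ in $\mathcal V$ is unique, so $c$ is uniquely determined and $h=gf_\theta$ is determined modulo $\ker(L_\theta-\lambda(\theta))=\mathbb R f_\theta$; equivalently, $g$ is determined up to an additive constant, so $\nabla g$ is unique by Lemma \ref{injective}, which in turn pins down $\psi$. I do not anticipate a serious obstacle: the only step that calls for care is the identity $F(\nabla g)=(L_\theta-\lambda(\theta))(gf_\theta)$, which serves as the algebraic bridge between the combinatorial gradient $\nabla$ and the transfer operator $L_\theta$, converting the $\theta$-balance condition into an affine problem inside the spectral decomposition of $L_\theta$.
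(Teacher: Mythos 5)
Your proof is correct and follows essentially the same route as the paper: both reduce the decomposition to solving $F(\varphi)=c\lambda(\theta)f_\theta+(L_\theta-\lambda(\theta))(gf_\theta)$ in the spectral splitting $\mathcal V=\mathbb Rf_\theta\oplus(L_\theta-\lambda(\theta))\mathcal V$, with $\mathcal E^\theta$ identified as the kernel of $F=-\de_\theta L(\cdot)f_\theta$. You merely make explicit the intermediate identities $F(\mathbf 1)=\lambda(\theta)f_\theta$ and $F(\nabla g)=(L_\theta-\lambda(\theta))(gf_\theta)$, which the paper leaves to the reader to unwind from equation \eqref{eqbalanced}.
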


\begin{proof} Fix $\varphi$ in $\mathcal E$ and define a function $f$ on $Q$ by setting, for $q$ in $Q$,
$$f(q)=\sum_{\substack{a\in A\\ \sigma(a)=q}}e^{-\theta(a)}\varphi(a)f_\theta(\gamma(a))=-d_\theta L(\varphi)f_\theta(q).$$
Then, since $\lambda(\theta)$ is a simple eigenvalue of $L_\theta$ and $f_\theta$ is a positive function, we may write
\begin{equation}\label{eqbalanced}f=c\lambda f_\theta+L_\theta (gf_\theta)-\lambda(\theta)gf_\theta\end{equation}
for some $c$ in $\mathbb R$ and $g$ in $\mathcal V$. The number $c$ is uniquely determined by this equation; the function $g$ is unique up to a constant. We set $\psi=\varphi-c-\nabla g$ and \eqref{eqbalanced} directly says that $\psi$ is $\theta$-balanced. Uniqueness follows 
the uniqueness properties of the solutions of \eqref{eqbalanced}.
\end{proof}

We can now compute the first derivatives of $\lambda$. This is inspired by standard computations in the theory of finite state space Markov chains.

\begin{proof}[Proof of Proposition \ref{lambdaderivative}]
We fix $\theta$ in $\mathcal E$ and we will study the function $\lambda$ in the neighbourhood of $\theta$. We change the normalization of the eigenvector, so that we set, for $\xi$ in $\mathcal E$, $g_\xi=\frac{1}{\langle \varphi_\theta ,f_\xi\rangle}f_\xi$ in order to ensure that 
\begin{equation}\label{derivative1}\langle \varphi_\theta ,g_\xi\rangle=1.\end{equation} 
Note that $g_\xi$ is an analytic function of $\xi$ by Corollary \ref{analytic}.

We introduce some notation.
For $\xi$ in $\mathcal E$, we set $h_\xi=f_\theta^{-1}\de_\theta g(\xi)$ where $ \de_\theta g$ is the differential at $\theta$ of the map $\xi\mapsto g_\xi$. Note that $h_\xi$ is a function on $Q$ which depends linearly on $\xi$. 
By derivating \eqref{derivative1}, we obtain 
\begin{equation}\label{derivative2}\langle\varphi_\theta,h_\xi f_\theta\rangle=0.\end{equation}

We study the first derivative of $\lambda$. For $\xi$ in $\mathcal E$, we have
\begin{equation}\label{derivative5}L_\xi g_\xi=\lambda(\xi)g_\xi.\end{equation}
Derivating this equation at $\theta$ yields
\begin{equation}\label{derivative3}\de_\theta L(\xi) f_\theta+L_\theta (h_\xi f_\theta)
=\de_\theta\lambda(\xi)f_\theta+\lambda(\theta)h_\xi f_\theta.\end{equation}
We evaluate $\varphi_\theta$ on the latter. From \eqref{derivative2} and using that 
$L_\theta^*\varphi_\theta=\lambda(\theta)\varphi_\theta$, we get 
$\de_\theta\lambda(\xi)=\langle \varphi_\theta,\de_\theta L(\xi) f_\theta\rangle$. Using the definition of the operator $L_\theta$ yields
\begin{multline*}
\de_\theta\lambda(\xi)=-\sum_{q\in Q}\varphi_\theta(q)\sum_{\substack{a\in A\\ \sigma(a)=q}}e^{-\theta(a)}\xi(a)f_\theta(\gamma(a))
\\=-\sum_{a\in A}\varphi_\theta(\sigma(a))e^{-\theta(a)}\xi(a)f_\theta(\gamma(a))\end{multline*}
as required.

Now, we prove \eqref{invariance}. For $c$ in $\mathbb R$, we have $L_{\theta+c}=e^{-c}L_\theta$, hence $\lambda(\theta+c)=e^{-c}\lambda(\theta)$. Fix $\varphi$ in $\mathcal V$. For $f$ in $\mathcal V$ and $q$ in $Q$, we have
$$L_{\theta+\nabla\varphi}(f)=e^{\varphi(q)}\sum_{\substack{a\in A\\ \sigma(a)=q}}e^{-\theta(\gamma(a))-\varphi(\gamma(a))}f(\gamma(a))
=e^{\varphi(q)}L_\theta(e^{-\varphi}f)(q).$$
Thus, the operators $L_{\theta+\nabla\varphi}$ and $L_\theta$ are conjugated. Therefore, they have the same spectral radius, that is, we have $\lambda(\theta+\nabla\varphi)=\lambda(\theta)$.

Finally, we study the second derivative of $\lambda$. We notice that, in view of \eqref{invariance}, for $\xi$ in $\mathcal E$ and $\eta$ in 
$\nabla \mathcal V\oplus\mathbb R{\bf 1}$, we have $\de^2_\theta \log\lambda(\xi,\eta)=0$. In other words, the space $\nabla \mathcal V\oplus\mathbb R{\bf 1}$ is contained in the kernel of the symmetric bilinear form $\de^2_\theta \log\lambda(\xi,\eta)$. 
Therefore, by Lemma \ref{injective2}, it suffices to compute this bilinear form on the space $\mathcal E^\theta$ of $\theta$-balanced elements. Note also that, in view of \eqref{derivative6}, if $\xi$ and $\eta$ are $\theta$-balanced, we have 
$\de_\theta\lambda(\xi)=\de_\theta\lambda(\eta)=0$, hence
\begin{equation}\label{derivative7}\de^2_\theta\log\lambda(\xi,\eta)=\lambda(\theta)^{-1}\de^2\lambda(\xi,\eta).\end{equation}

Notice also that
\eqref{derivative3} says that, for $\xi$ in $\mathcal E$, the function 
$$\xi+\lambda(\theta)^{-1}\de_\theta\lambda(\xi)-\nabla h_\xi$$
is $\theta$-balanced. In particular, if $\xi$ is already $\theta$-balanced, from \ref{injective2}, 
we get that $\de_\theta\lambda(\xi)=0$ (which we already knew) and $h_\xi$ is a constant function on $Q$. In view of \eqref{derivative2}, this implies $h_\xi=0$.

Now, we derivate \eqref{derivative5} twice and we get, for $\xi,\eta$ in $\mathcal E$,
\begin{multline*}\de^2_\theta L(\xi,\eta) f_\theta+ \de_\theta L(\xi) (h_\eta f_\theta)+\de_\theta L(\eta) (h_\xi f_\theta)
+L_\theta (\de^2_\theta g(\xi,\eta))\\=\de^2_\theta\lambda(\xi,\eta)f_\theta+
\de_\theta \lambda(\xi) h_\eta f_\theta+\de_\theta \lambda(\eta) h_\xi f_\theta
+\lambda(\theta)\de^2_\theta g(\xi,\eta).\end{multline*}
Assume $\xi$ and $\eta$ are $\theta$-balanced. Then, we get
$$\de^2_\theta L(\xi,\eta) f_\theta
+L_\theta (\de^2_\theta g(\xi,\eta))=\de^2_\theta\lambda(\xi,\eta)f_\theta+
\lambda(\theta)\de^2_\theta g(\xi,\eta).$$
From \eqref{derivative1}, we get $\langle\varphi_\theta,\de^2_\theta g(\xi,\eta)\rangle=0$. Thus, applying $\varphi_\theta$ to the above 
and using the definition of $L_\theta$ yields, still when $\xi$ and $\eta$ are $\theta$-balanced,
$$\de^2_\theta\lambda(\xi,\eta)=\langle \varphi_\theta,\de^2_\theta L(\xi,\eta) f_\theta\rangle
=\sum_{a\in A}\varphi_\theta(a)e^{-\theta(a)}\xi(a)\eta(a)f_\theta(a).$$
Thus, by \eqref{derivative7}, the symmetric bilinear form $\de^2_\theta\log\lambda$ is positive definite on $\mathcal E^\theta$. The conclusion follows.
\end{proof}

\section{Regularity of $\psi_{\mathcal G}$}

We can use the constructions above to say a bit more about the function $\psi_{\mathcal G}$.

\begin{Cor} \label{psiregular} Let $\mathcal G=(Q,A,\sigma,\gamma)$ be a connected directed graph. 
The asymptotic cone $\mathcal C_{\mathcal G}$ of the set $\mathscr P(W)$ is also the essential definition cone of the function $\psi_{\mathcal G}$ in $\mathcal E$. This cone $\mathcal C_{\mathcal G}$ is contained in the orthogonal subspace $(\nabla \mathcal V)^\perp$ of $\nabla \mathcal V$ and $\mathcal C_{\mathcal G}$ has non empty interior in 
$(\nabla \mathcal V)^\perp$. The function $\psi_{\mathcal G}$ is analytic, positive and strictly concave in the interior of $\mathcal C_{\mathcal G}$ in 
$(\nabla \mathcal V)^\perp$.
\end{Cor}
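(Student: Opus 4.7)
The statement breaks into three assertions, which I would tackle in turn. For the first—that the asymptotic cone $\mathcal C_{\mathcal G}$ of $\mathscr P(W)$ coincides with the essential definition cone of $\psi_{\mathcal G}$—I would simply invoke Lemma \ref{psicounting} applied to the counting measure $\nu_{\mathcal G}=\sum_{w\in W}\delta_{\mathscr P(w)}$: on the asymptotic cone $\psi_{\mathcal G}\geq 0$, outside of it $\psi_{\mathcal G}=-\infty$, so the closure of $\{\psi_{\mathcal G}>-\infty\}$ is exactly this cone. For the inclusion $\mathcal C_{\mathcal G}\subset(\nabla\mathcal V)^\perp$ the key observation is that for any path $w=(a_1,\ldots,a_n)\in W$ and any $\varphi\in\mathcal V$, the pairing
$$\langle\mathscr P(w),\nabla\varphi\rangle=\sum_{k=1}^n\bigl(\varphi(\gamma(a_k))-\varphi(\sigma(a_k))\bigr)=\varphi(\gamma(a_n))-\varphi(\sigma(a_1))$$
telescopes and is uniformly bounded by $2\|\varphi\|_\infty$; therefore any limit $x=\lim_n t_n\mathscr P(w_n)$ with $t_n\to 0$ satisfies $\langle x,\nabla\varphi\rangle=0$.

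For the remaining assertions, I would set $V=\nabla\mathcal V$ and identify $\mathcal E/V$ with $V^\perp$ via orthogonal projection $\pi_{V^\perp}$. The invariance identity \eqref{invariance} shows that $\lambda$, and hence $\Omega_{\mathcal G}$, is $V$-translation invariant, so $\Omega_{\mathcal G}$ is the preimage of $\tilde\Omega:=\{\eta\in V^\perp\mid\tilde\lambda(\eta)\leq 1\}$, where $\tilde\lambda=\lambda|_{V^\perp}$, and consequently, for $x\in V^\perp$, $\psi_{\mathcal G}(x)=\inf_{\theta\in\Omega_{\mathcal G}}\langle\theta,x\rangle=\inf_{\eta\in\tilde\Omega}\langle\eta,x\rangle$. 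The central step of the proof is the application of Lemma \ref{local} to $\tilde\rho=\log\tilde\lambda$ on $V^\perp$. I would verify its hypotheses as follows: analyticity comes from Corollary \ref{analytic} and convexity from Corollary \ref{lambdaconvex}; for non-vanishing of $\de_\eta\tilde\rho$, note that ${\bf 1}\notin V$ by Lemma \ref{injective}, so $\pi_{V^\perp}({\bf 1})\neq 0$, and $\de_\eta\log\lambda(\pi_{V^\perp}({\bf 1}))=\de_\eta\log\lambda({\bf 1})=-1$ by \eqref{invariance} (using that $\de_\eta\log\lambda$ vanishes on $V$); for positive-definiteness of $\de^2_\eta\tilde\rho$ on $\ker\de_\eta\tilde\rho$, Proposition \ref{lambdaderivative} gives $\ker\de^2_\eta\log\lambda=\mathbb R{\bf 1}\oplus V$, whose intersection with $V^\perp$ is the line $\mathbb R\pi_{V^\perp}({\bf 1})$, which the first-derivative computation shows is \emph{not} contained in $\ker\de_\eta\tilde\rho$, so the two subspaces meet trivially; finally \eqref{invariance} gives $\tilde\lambda(c\pi_{V^\perp}({\bf 1}))=e^{-c}\lambda(0)\leq 1$ for $c$ large, so $\tilde\Omega$ is non-empty. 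Lemma \ref{local} then delivers the claim: $\mathcal C_{\tilde\psi}=\mathcal C_{\mathcal G}$ has non-empty interior in $V^\perp$ and $\tilde\psi=\psi_{\mathcal G}|_{V^\perp}$ is real analytic and strictly concave on that interior.

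It remains to establish positivity in the interior. In the non-cyclic case Lemma \ref{purcycle} gives $\lambda(0)>1$, hence $0\notin\tilde\Omega$. Fix $x$ in the interior of $\mathcal C_{\mathcal G}$ in $V^\perp$: a minimizing sequence $(\eta_n)\subset\tilde\Omega$ for $\langle\cdot,x\rangle$ must stay bounded, for otherwise, after rescaling, it would converge to a non-zero element $\varphi$ of the asymptotic cone of $\tilde\Omega$, which by Lemmas \ref{translation} and \ref{asymptocone} equals $\mathcal C_{\mathcal G}^*$, and Lemma \ref{interiorcone} would force $\langle\varphi,x\rangle>0$, contradicting the finiteness of $\psi_{\mathcal G}(x)$. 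The infimum is therefore attained at some $\eta^*\in\tilde\Omega\setminus\{0\}$; since by definition of $\tilde\Omega$ one has $\eta^*\geq\tilde\psi\geq 0$ on $\mathcal C_{\mathcal G}$, in fact $\eta^*\in\mathcal C_{\mathcal G}^*$, and Lemma \ref{interiorcone} yields $\psi_{\mathcal G}(x)=\langle\eta^*,x\rangle>0$. The cyclic case reduces to a one-dimensional $V^\perp$ where $\mathcal C_{\mathcal G}$ is a half-line and all assertions are degenerate (positivity reduces to the $\geq 0$ statement of Lemma \ref{psicounting}). The main technical obstacle I anticipate is the bookkeeping of the reduction from $\mathcal E$ to $V^\perp$: one must check that the entire degeneracy of $\log\lambda$ is absorbed by the $V$-invariance, so that restriction to $V^\perp$ leaves only a one-dimensional Hessian kernel that is disjoint from the vanishing locus of $\de_\eta\tilde\rho$, making Lemma \ref{local} genuinely applicable.
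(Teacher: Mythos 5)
Your proof is correct and follows essentially the same route as the paper's: reduce to $(\nabla\mathcal V)^\perp$ using the $\nabla\mathcal V$-invariance from \eqref{invariance} (or the telescoping identity, which is also the paper's secondary argument), verify the hypotheses of Lemma \ref{local} for $\log\lambda$ restricted there (analyticity via Corollary \ref{analytic}, convexity via Corollary \ref{lambdaconvex}, non-vanishing of the derivative in the direction of ${\bf 1}$, and definiteness of the Hessian on the kernel via Proposition \ref{lambdaderivative}), and read off analyticity and strict concavity. The one genuine supplement in your write-up is the positivity argument: the paper records only $\psi_{\mathcal G}\geq 0$ from Lemma \ref{psicounting} and then says the ``analyticity properties follow'' from Lemma \ref{local}, leaving strict positivity in the interior tacit, while you close that gap cleanly by observing $0\notin\tilde\Omega$ (Lemma \ref{purcycle}), extracting an attained minimizer $\eta^*\in\mathcal C_{\mathcal G}^*\setminus\{0\}$, and invoking Lemma \ref{interiorcone}. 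Your caveat about the cyclic case is also warranted and unmentioned in the paper: there $|A|=|Q|$ forces $\mathcal E=\mathbb R{\bf 1}\oplus\nabla\mathcal V$, so $\log\lambda$ is affine, the Hessian on $(\nabla\mathcal V)^\perp$ vanishes identically, Lemma \ref{local}'s hypotheses fail, and indeed $\psi_{\mathcal G}\equiv 0$ on the half-line $\mathcal C_{\mathcal G}$, so the ``positive'' in the statement should be read as excluding this degenerate case.
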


Recall that the asymptotic cone of a subset $X$ of $\mathcal E$ is the set of limit points of sequences of the form $(t_nx_n)_{n\geq 0}$ where $(x_n)_{n\geq 0}$ is a sequence in $X$ and $(t_n)_{n\geq 0}$ is a sequence of non-negative real numbers converging to $0$.

\begin{proof} By Proposition \ref{lambdaderivative}, the function $\lambda$ is invariant under translations by vectors from $\nabla \mathcal V$. By Proposition
\ref{omegatransfer}, the set $\Omega_{\mathcal G}$ is also invariant by these translations. Therefore, by Lemma \ref{translation}, 
the essential definition cone $\mathcal C_{\mathcal G}$ of $\psi_{\mathcal G}$ is contained in $(\nabla \mathcal V)^\perp$. 
By Lemma \ref{psicounting}, $\mathcal C_{\mathcal G}$ is also the asymptotic cone of $\mathscr P(W)$ and $\psi_{\mathcal G}$ is $\geq 0$ on $\mathcal C_{\mathcal G}$.

Note that the fact that the asymptotic cone of $\mathscr P(W)$ is contained in $(\nabla \mathcal V)^\perp$ can be obtained directly: indeed, for $f$ in $\mathcal V$ and $w$ in $W$, a word with length $n$, one has 
$$\langle \mathscr P(w),\nabla f\rangle=\sum_{k=1}^nf(\gamma (w_k))-f(\sigma(w_k))=f(\gamma(w_n))-f(\sigma(w_1)),$$
where we have used that, for $2\leq k\leq n$, we have $\sigma(w_k)=\gamma(w_{k-1})$.
Therefore, the vectors $\mathscr P(w)$ are at a bounded distance from the space $(\nabla \mathcal V)^\perp$.

Finally, we note that since by Proposition \ref{lambdaderivative}, the function $\lambda$ is invariant under translations by vectors from $\nabla \mathcal V$, we can consider it as a function on the quotient space $\mathcal E/\nabla \mathcal V$. Besides, the derivative of $\lambda$ never vanishes on constant functions. Then, by Proposition \ref{lambdaderivative},
the assumption of Lemma \ref{local} is satisfied and the analycity properties of $\psi_{\mathcal G}$ follow.
\end{proof}

This leads to the introduction of a new object asocciated with the language $W$.

\begin{Cor} Let $\mathcal G=(Q,A,\sigma,\gamma)$ be a connected directed graph. 
Then, there exists a unique vector $x_{\mathcal G}$ in $\mathcal E$ with
$\langle{\bf 1},x_{\mathcal G}\rangle=1$ and 
$$\psi_{\mathcal G}(x_{\mathcal G})=\sup_{\substack{x\in \mathcal E\\ \langle{\bf 1},x\rangle=1}}\psi_{\mathcal G}(x).$$
One has $\psi_{\mathcal G}(x_{\mathcal G})=\delta_{\mathcal G}$ and $x_{\mathcal G}$ belongs to $(\nabla \mathcal V)^\perp$.
\end{Cor}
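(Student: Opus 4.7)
The plan is to combine the duality between $\psi_{\mathcal G}$ and $\Omega_{\mathcal G}$ (Proposition~\ref{omegatransfer}) with the explicit derivative formula for $\lambda$ (Proposition~\ref{lambdaderivative}) and the strict concavity of $\psi_{\mathcal G}$ from Corollary~\ref{psiregular}.

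For the upper bound, observe that $L_{\delta_{\mathcal G}{\bf 1}} = e^{-\delta_{\mathcal G}} L_0$, so $\lambda(\delta_{\mathcal G}{\bf 1}) = 1$ and $\theta_0 := \delta_{\mathcal G}{\bf 1}$ lies in $\partial\Omega_{\mathcal G}$. Then formula \eqref{relpsiomega} gives $\psi_{\mathcal G}(x) \leq \theta_0(x) = \delta_{\mathcal G}$ for every $x$ with $\langle {\bf 1}, x\rangle = 1$, so the supremum is at most $\delta_{\mathcal G}$.

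To exhibit a maximizer, I follow the construction from the proof of Lemma~\ref{local}, applied to $\log\lambda$ viewed as a function on the quotient $\mathcal E/\nabla\mathcal V$ (as was done in Corollary~\ref{psiregular}). The point dual to $\theta_0$ is $x_{\mathcal G} := -\de_{\theta_0}\lambda$, which by Proposition~\ref{lambdaderivative} is given by
$$x_{\mathcal G}(a) = \varphi_{\theta_0}(\sigma(a))\, e^{-\delta_{\mathcal G}}\, f_{\theta_0}(\gamma(a)), \qquad a \in A.$$
A direct computation using $L_{\theta_0} f_{\theta_0} = f_{\theta_0}$ and $\langle \varphi_{\theta_0}, f_{\theta_0}\rangle = 1$ gives $\langle {\bf 1}, x_{\mathcal G}\rangle = 1$; using in addition $L_{\theta_0}^* \varphi_{\theta_0} = \varphi_{\theta_0}$ one checks that $\langle \nabla f, x_{\mathcal G}\rangle = 0$ for every $f \in \mathcal V$, so $x_{\mathcal G} \in (\nabla\mathcal V)^\perp$. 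Since the Perron-Frobenius eigenvectors $\varphi_{\theta_0}$ and $f_{\theta_0}$ are strictly positive, $x_{\mathcal G}$ has positive coordinates and lies in the relative interior of $\mathcal C_{\mathcal G}$ inside $(\nabla\mathcal V)^\perp$. The local duality identity $\psi(u(\theta)) = \theta(u(\theta))$ from the proof of Lemma~\ref{local} then yields $\psi_{\mathcal G}(x_{\mathcal G}) = \theta_0(x_{\mathcal G}) = \delta_{\mathcal G}$, matching the upper bound.

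For uniqueness, let $x'$ be another maximizer with $\langle {\bf 1}, x'\rangle = 1$; necessarily $x' \in \mathcal C_{\mathcal G}$. The midpoint $y = (x' + x_{\mathcal G})/2$ lies in the relative interior (an interior point plus a closure point stays in the interior) and satisfies $\langle {\bf 1}, y\rangle = 1$, so by concavity and the upper bound $\psi_{\mathcal G}(y) = \delta_{\mathcal G}$. Assuming $x' \neq x_{\mathcal G}$ forces $y \neq x_{\mathcal G}$, and since both have the same ${\bf 1}$-pairing equal to $1$ they cannot be positively proportional. Strict concavity on the relative interior (Corollary~\ref{psiregular}) then gives $\psi_{\mathcal G}(y + x_{\mathcal G}) > \psi_{\mathcal G}(y) + \psi_{\mathcal G}(x_{\mathcal G}) = 2\delta_{\mathcal G}$, whence by homogeneity $\psi_{\mathcal G}((y + x_{\mathcal G})/2) > \delta_{\mathcal G}$ with $\langle {\bf 1}, (y+x_{\mathcal G})/2\rangle = 1$, contradicting the upper bound. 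Hence $x' = x_{\mathcal G}$. The only slightly delicate step is verifying that $\psi_{\mathcal G}(x_{\mathcal G})$ actually equals $\theta_0(x_{\mathcal G})$ rather than being strictly smaller; this rests on applying Lemma~\ref{local} after descent to the quotient $\mathcal E/\nabla\mathcal V$, together with the two adjoint identities that appear in the computation $\langle \nabla f, x_{\mathcal G}\rangle = 0$, all of which are in place by Proposition~\ref{lambdaderivative} and Corollary~\ref{psiregular}.
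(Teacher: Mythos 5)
Your proof is correct and is an expanded version of the paper's one-line argument, which simply invokes Corollary~\ref{psiregular}; the ingredients you deploy (the duality $\Omega_{\mathcal G}=\{\lambda\leq 1\}$ from Proposition~\ref{omegatransfer}, the derivative formula from Proposition~\ref{lambdaderivative}, and the machinery of Lemma~\ref{local} via Corollary~\ref{psiregular}) are exactly the ones the authors intend the reader to supply. One small imprecision: the inference ``$x_{\mathcal G}$ has positive coordinates, therefore it lies in the relative interior of $\mathcal C_{\mathcal G}$ inside $(\nabla\mathcal V)^\perp$'' is not a valid implication on its own (the cone $\mathcal C_{\mathcal G}$ could a priori be strictly smaller than $[0,\infty)^A\cap(\nabla\mathcal V)^\perp$); the correct justification is the one you invoke in the next sentence anyway, namely that in the proof of Lemma~\ref{local} the vector $u(\theta_0)$ is shown to lie in $\overset{\circ}{\mathcal C}_\psi$, so the positivity remark is best treated as motivation rather than as the proof of interiority.
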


\begin{proof} This directly follows from the strict concavity property of $\psi_{\mathcal G}$ established in  Corollary \ref{psiregular}.\end{proof}

\begin{Ex} \label{fibonacci4} We go back to the Fibonacci graph of Examples \ref{fibonacci1}, \ref{fibonacci2} and \ref{fibonacci3} and we will compute the vector $x_\mathcal G$. We look for vectors $x=(x_1,x_2,x_3)$ in $(\nabla \mathcal V)^\perp$ with $x_1+x_2+x_3=1$, that is, vectors of the form $x=(1-2u,u,u)$. For such a vector the function $\psi_\mathcal G(1-2u,u,u)$ is infinite if $u\notin [0,\frac{1}{2}]$ and the value of the interval is given by
$$\rho(u)=\psi_\mathcal G(1-2u,u,u)=(1-u)\log(1-u)-(1-2u)\log(1-2u)-u\log u.$$
The derivative is
$$\rho'(u)=-\log(1-u)+2\log(1-2u)-\log(u)=\log\frac{(1-2u)^2}{u(1-u)},$$
which vanishes at a solution of the equation
$(1-2u)^2=u(1-u)$. We rewrite the equation as
$5u^2-5u+1=0$. The only solution of this equation in $[0,\frac{1}{2}]$
is $\frac{1}{2}(1-\frac{1}{\sqrt{5}})$. We get
$$x_\mathcal G=\left(\frac{1}{\sqrt{5}},\frac{1}{2}\left(1-\frac{1}{\sqrt{5}}\right),\frac{1}{2}\left(1-\frac{1}{\sqrt{5}}\right)\right).$$
\end{Ex}

\section{A Central Limit Theorem}
\label{secCLT}
We now aim at counting the number of elements of $w$ such that $\mathscr P(w)$ belongs to certain subsets of $\mathcal E$. This will rely on developing a formalism which is an adaptation of the limit theorems of probability theory for finite state space Markov chains. As a first step, we begin by writing a version of the Central Limit Theorem.

For $\theta$ in $\mathcal E$, we set $x_\theta=-\de_\theta\log\lambda$. Note that \eqref{invariance} implies that $\langle {\bf 1},x_\theta\rangle=1$.
We use the notation introduced in Section \ref{secgraphs} and Section \ref{secglobalcounting}.

\begin{Thm} \label{CLT} Let $\mathcal G=(Q,A,\sigma,\gamma)$ be a connected directed graph. 
Choose $\theta$ in $\mathcal E$ with $\lambda(\theta)=1$. 
Fix $q$ in $Q$. Then, we have
$$\frac{1}{n}\sum_{w\in W_n^q}e^{-\langle\theta,\mathscr P(w)\rangle}(\mathscr P(w)-nx_\theta) \td{n}{\infty}0$$
and 
there exists a positive definite quadratic form $\chi_\theta$ on the space $\mathcal E_0=(\mathbb R{\bf 1}+\nabla \mathcal V)^\perp$ 
such that, for any continuous compactly supported function $g$ on $\mathcal E$, we have
\begin{multline*}\sum_{w\in W_n^q}e^{-\langle\theta,\mathscr P(w)\rangle}g\left(\frac{1}{\sqrt{n}}(\mathscr P(w)-nx_\theta)\right)\\
-\frac{\langle \varphi_\theta,{\bf 1}_{Q^q_n}\rangle f_\theta(q)}{(2\pi)^{r}}
\int_{\mathcal E_0} \exp\left(-\frac{1}{2}\chi_\theta(y)\right)g(y)\de y
\td{n}{\infty}0.
\end{multline*}
\end{Thm}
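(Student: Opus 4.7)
The proof adapts the classical Fourier-analytic derivation of the CLT for additive functionals of a finite Markov chain, with the transfer operator $L_\theta$ substituted for the characteristic function. The key identity is
$$e^{-i\sqrt{n}\langle\xi,x_\theta\rangle}L^n_{\theta-i\xi/\sqrt{n}}{\bf 1}(q)=\sum_{w\in W_n^q}e^{-\langle\theta,\mathscr P(w)\rangle}e^{i\langle\xi,(\mathscr P(w)-nx_\theta)/\sqrt{n}\rangle},$$
reducing the asymptotic to a Taylor expansion of $\log\lambda$ at $\theta$ in the imaginary direction.

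\textbf{First assertion.} I differentiate the partition-function identity $\sum_{w\in W_n^q}e^{-\langle\theta,\mathscr P(w)\rangle}=L^n_\theta{\bf 1}(q)$ in $\theta$; the left side yields $-\sum_w\mathscr P(w)e^{-\langle\theta,\mathscr P(w)\rangle}$. By Corollary \ref{asymptotic1} with $\lambda(\theta)=1$, $L^n_\theta{\bf 1}(q)=p\langle\varphi_\theta,{\bf 1}_{Q^q_n}\rangle f_\theta(q)+O(\rho^n)$ for some $\rho<1$, and Corollary \ref{analytic} plus a Cauchy estimate keep the error $O(\rho^n)$ after $\theta$-differentiation. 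Since $-\de_\theta\log\lambda=x_\theta$, the leading term differentiates to $nx_\theta\cdot p\langle\varphi_\theta,{\bf 1}_{Q^q_n}\rangle f_\theta(q)+O(1)$, so $\sum_w\mathscr P(w)e^{-\langle\theta,\mathscr P(w)\rangle}=nx_\theta\,L^n_\theta{\bf 1}(q)+O(1)$; subtracting and dividing by $n$ concludes.

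\textbf{Second assertion, spectral step.} The main work is to extend the spectral decomposition of Corollary \ref{asymptotic1} to small complex perturbations $L_{\theta+\eta}$ with $\eta=-i\xi/\sqrt{n}$. Applying Lemmas \ref{root} and \ref{residue} to the complexified operator (as in the proof of Corollary \ref{analytic}), the simple eigenvalue $\lambda(\theta+\eta)$ and the vectors $f_{\theta+\eta}$, $\varphi_{\theta+\eta}$ depend holomorphically on $\eta$. The intertwining from Lemma \ref{modulus1}---multiplication by $u^k$ conjugates $L_{\theta+\eta}$ with $e^{2\pi ik/p}L_{\theta+\eta}$, a purely algebraic identity that extends verbatim to complex $\eta$---forces the remaining boundary eigenvalues to form the $p$-orbit $e^{2\pi ik/p}\lambda(\theta+\eta)$, with rank-one spectral projections $f\mapsto\langle\varphi_{\theta+\eta},u^{-k}f\rangle u^k f_{\theta+\eta}$ and a uniform spectral gap for small $\eta$. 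Taylor expanding $\log\lambda$ at $\theta$ via Proposition \ref{lambdaderivative} gives
$$n\log\lambda(\theta-i\xi/\sqrt{n})=i\sqrt{n}\langle\xi,x_\theta\rangle-\tfrac12\de^2_\theta\log\lambda(\xi,\xi)+O(n^{-1/2})$$
uniformly for $\xi$ in compact subsets of $\mathcal E$, while the Fourier rearrangement already used in the proof of Corollary \ref{asymptotic1} turns the $k$-sum into $p\langle\varphi_\theta,{\bf 1}_{Q^q_n}\rangle f_\theta(q)$. Combining yields, uniformly on compacts in $\xi$,
$$e^{-i\sqrt{n}\langle\xi,x_\theta\rangle}L^n_{\theta-i\xi/\sqrt{n}}{\bf 1}(q)\td{n}{\infty}p\langle\varphi_\theta,{\bf 1}_{Q^q_n}\rangle f_\theta(q)\cdot e^{-\frac12\de^2_\theta\log\lambda(\xi,\xi)}.$$

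\textbf{Passage to the integral formula and main obstacle.} By Proposition \ref{lambdaderivative} the kernel of $\de^2_\theta\log\lambda$ is $\mathbb R{\bf 1}+\nabla\mathcal V$, so its restriction $\chi_\theta$ to $\mathcal E_0$ is positive definite. Decompose $\mathscr P(w)-nx_\theta=a_n(w)+b_n(w)$ along the orthogonal splitting $\mathcal E=(\mathbb R{\bf 1}+\nabla\mathcal V)\oplus\mathcal E_0$; the observations from the proof of Corollary \ref{psiregular} (namely $\langle{\bf 1},\mathscr P(w)-nx_\theta\rangle=0$ and $|\langle\nabla f,\mathscr P(w)\rangle|\leq 2\|f\|$) give a uniform bound on $\|a_n(w)\|$, so by uniform continuity of the compactly supported $g$ one may replace $g((\mathscr P(w)-nx_\theta)/\sqrt{n})$ by $g(b_n(w)/\sqrt{n})$ at an $o(1)$ cost in the weighted sum. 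The probability measures $\nu_n=L^n_\theta{\bf 1}(q)^{-1}\sum_w e^{-\langle\theta,\mathscr P(w)\rangle}\delta_{b_n(w)/\sqrt{n}}$ on $\mathcal E_0$ have characteristic functions equal (for $\xi\in\mathcal E_0$, since then $\langle\xi,a_n(w)\rangle=0$) to the expression studied in the spectral step divided by $L^n_\theta{\bf 1}(q)$, hence converging pointwise to $e^{-\frac12\chi_\theta(\xi)}$. Lévy's continuity theorem delivers weak convergence of $\nu_n$ to the centred Gaussian on $\mathcal E_0$ with that characteristic function, and multiplication by $L^n_\theta{\bf 1}(q)\sim p\langle\varphi_\theta,{\bf 1}_{Q^q_n}\rangle f_\theta(q)$ gives the stated limit, the normalisation of $\chi_\theta$ being fixed so that the Gaussian density matches the prefactor $(2\pi)^{-r}$. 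The main obstacle is the uniform spectral analysis in the spectral step, including the holomorphic dependence on the complex parameter and the uniform gap for the perturbed operator; once this is in place, the Taylor expansion of $\log\lambda$ and Lévy's theorem are standard.
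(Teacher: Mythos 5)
Your proof follows the paper's approach essentially step by step: both the differentiation of the spectral decomposition of $L_\theta^n\mathbf{1}(q)$ for the Law of Large Numbers statement, and the Fourier/characteristic-function method (expanding $\log\lambda$ at $\theta$ along imaginary directions and invoking Lévy's continuity theorem / Lemma \ref{characteristic}) for the Gaussian statement. The two points where you differ from the paper are presentational rather than substantive. First, in the LLN step the paper explicitly writes out the derivative of $\Pi_\xi L_\xi^n\mathbf{1}(q)$ as $\de_\theta\Pi(\xi)L_\theta^n\mathbf{1}(q)+\Pi_\theta\sum_{k=1}^n L_\theta^{k-1}\de_\theta L(\xi)L_\theta^{n-k}\mathbf{1}(q)$ and bounds each term (the telescoping sum is controlled by the spectral gap $\|\Pi_\theta L_\theta^{k-1}\|\leq C\alpha^{k-1}$), whereas you appeal to a Cauchy estimate; this works, but you should make explicit that the bound $\|\Pi_\xi L_\xi^n\|\leq C\rho^n$ must be obtained uniformly for $\xi$ in a complex neighbourhood of $\theta$ — e.g. via the contour-integral representation of $\Pi_\xi L_\xi^n$ from Lemma \ref{residue} and continuity of the resolvent — and your error is in fact only $O(1)$ after differentiating the leading term, not $O(\rho^n)$, which is still ample. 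Second, you reduce the CLT to a probability measure on $\mathcal E_0$ by the orthogonal splitting $\mathscr P(w)-nx_\theta=a_n(w)+b_n(w)$ and the observation that $\|a_n(w)\|$ is uniformly bounded (using $\langle\mathbf{1},\mathscr P(w)\rangle=n$ and the bound on $\langle\nabla f,\mathscr P(w)\rangle$ from Corollary \ref{psiregular}); the paper instead applies Lemma \ref{characteristic} directly to the measures on $\mathcal E$ and lets the degeneracy of $\de^2_\theta\log\lambda$ on $\mathbb R\mathbf{1}+\nabla\mathcal V$ produce a limit supported on $\mathcal E_0$. Your reduction is a tidy and arguably cleaner way to see why the limiting law lives on $\mathcal E_0$, but it is the same underlying idea. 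In short, no genuine gap; this is the paper's proof with a couple of small, valid alternative technical devices.
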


We have denoted by $r$ the dimension of $\mathcal E_0$. We have equipped the space $\mathcal E_0$ with the Lebesgue measure associated with $\chi_\theta$, that is, the unit hypercubes with respect to $\chi_\theta$ have measure $1$. See \cite{BCGL} for a related result.

The first statement of the Proposition says that, when the set $W_n^q$ is equipped with the measure associated to the weights 
$e^{-\langle\theta,\mathscr P(w)\rangle}$, $w\in W_n^q$, the random vector $\mathscr P(w)$ behaves as a random walk on $\mathcal E$ which satisfies a Law of Large Numbers with expectation $x_\theta$. The second statement says that, when taking out this drift, this random walk satisfies a Central Limit Theorem, that is, after being normalized in the right way, it admits an asymptotic Gaussian distribution supported on $(\nabla \mathcal V)^\perp\subset \mathcal E$.

To prove this second statement, we shall use the so called method of characteristic functions from probability theory, which is to say the following 

\begin{Lem} \label{characteristic}
Let $H$ be a real finite-dimensional vector space and $(\mu_n)_{n\geq 0}$ and $\mu$ be finite Borel measures on $H$. Then, the following are equivalent\\
{\em (i)} One has $\mu_n(V)\td{n}{\infty}\mu(V)$ and, for any continuous compactly supported function $g$ on $H$, 
$\int_Hg\de\mu_n\td{n}{\infty}\int_H g\de\mu$.\\
{\em (ii)} For any linear functional $\varphi$ on $H$, one has
$$\int_H\exp(i\varphi(x))\de\mu_n(x)\td{n}{\infty}\int_H \exp(i\varphi(x))\de\mu(x).$$
\end{Lem}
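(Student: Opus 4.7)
The plan is to prove this as a finite-dimensional version of L\'evy's continuity theorem. The direction (i)$\Rightarrow$(ii) will follow by a truncation argument (the function $x\mapsto e^{i\varphi(x)}$ is bounded continuous but not compactly supported), while the direction (ii)$\Rightarrow$(i) will rely on Fourier inversion together with an approximation of compactly supported continuous functions by Schwartz functions, with uniform bounds coming from convergence of the total masses.

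For (i)$\Rightarrow$(ii), I would fix a linear functional $\varphi$ and, for each $R>0$, choose a continuous compactly supported $\chi_R:H\to[0,1]$ equal to $1$ on the ball of radius $R$. Writing
$$\int_H e^{i\varphi(x)}\de\mu_n(x)=\int_H \chi_R e^{i\varphi}\de\mu_n+\int_H(1-\chi_R) e^{i\varphi}\de\mu_n,$$
and similarly for $\mu$, the first terms match in the limit by (i) applied to the continuous compactly supported function $\chi_R e^{i\varphi}$. The remaining error terms are bounded in modulus by $\mu_n(H)-\int\chi_R\de\mu_n$ and $\mu(H)-\int\chi_R\de\mu$, which by (i) are controlled by $\int(1-\chi_R)\de\mu$; taking $R$ large makes this arbitrarily small.

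For (ii)$\Rightarrow$(i), the total-mass statement is immediate: taking $\varphi=0$ in (ii) gives $\mu_n(H)\to \mu(H)$. In particular the sequence $(\mu_n(H))_{n\ge 0}$ is bounded, so the characteristic functions $\hat\mu_n(\varphi)=\int e^{i\varphi(x)}\de\mu_n(x)$ are uniformly bounded. After identifying $H$ with $\mathbb R^d$ via a linear isomorphism and equipping it with Lebesgue measure, Fourier inversion gives, for any Schwartz function $g$,
$$\int_H g\,\de\mu_n=(2\pi)^{-d}\int_{H^*}\hat g(\xi)\hat\mu_n(\xi)\,\de\xi,$$
by Fubini (which is justified since $\hat g$ is integrable and $\mu_n$ is finite). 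Since $\hat g$ is rapidly decreasing and $|\hat\mu_n|$ is uniformly bounded, dominated convergence combined with (ii) yields $\int g\,\de\mu_n\to\int g\,\de\mu$ for every $g\in\mathcal S(H)$.

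To extend this to arbitrary $g\in C_c(H)$, I would approximate $g$ uniformly by functions $g_k\in C_c^\infty(H)\subset\mathcal S(H)$, all supported in a common compact set containing the support of $g$. The approximation error satisfies $|\int g\,\de\mu_n-\int g_k\,\de\mu_n|\le\|g-g_k\|_\infty\mu_n(H)$, which is controlled uniformly in $n$ by the boundedness of $\mu_n(H)$; together with the corresponding bound for $\mu$ and the convergence for each $g_k$, this yields (i). The main obstacle is the (ii)$\Rightarrow$(i) direction: the crucial input that makes both the dominated convergence and the uniform approximation work is the uniform bound on the total masses $\mu_n(H)$, which is obtained freely by evaluating (ii) at $\varphi=0$; without this one would need an additional tightness argument, but in finite dimension it is automatic here.
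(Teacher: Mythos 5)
The paper does not supply a proof of this lemma: it is stated as a known fact (the ``method of characteristic functions'' from probability theory, i.e.\ a version of L\'evy's continuity theorem adapted to finite, not necessarily probability, measures), and the text proceeds directly to the proof of the CLT in which it is used. So there is no internal proof to compare against.

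Your argument is correct and is the standard one. A few small points worth checking off. In (i)$\Rightarrow$(ii), the function $\chi_R e^{i\varphi}$ is complex-valued; you apply (i) to its real and imaginary parts separately, which is harmless. Your bound $\bigl|\int(1-\chi_R)e^{i\varphi}\de\mu_n\bigr|\leq \mu_n(H)-\int\chi_R\de\mu_n$ is what one needs, and the convergence $\mu_n(H)-\int\chi_R\de\mu_n\to \mu(H)-\int\chi_R\de\mu\le\int(1-\chi_R)\de\mu$ together with $\int(1-\chi_R)\de\mu\to 0$ as $R\to\infty$ closes the three-epsilon estimate; this correctly interprets the ``$\mu_n(V)\to\mu(V)$'' in the statement as $\mu_n(H)\to\mu(H)$. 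In (ii)$\Rightarrow$(i), evaluating at $\varphi=0$ does give both the mass convergence and, crucially, the uniform bound $\sup_n\mu_n(H)<\infty$; the Fourier inversion identity $\int g\,\de\mu_n=(2\pi)^{-d}\int\hat g\,\hat\mu_n$ is justified by Fubini since $\hat g\in L^1$ and $\mu_n$ is finite; dominated convergence (with dominating function $\|\hat g\|_\infty$-free, namely $C\,|\hat g|$ with $C=\sup_n\mu_n(H)$) together with pointwise convergence from (ii) gives the claim for Schwartz $g$, and the mollification step extending to $C_c$ is controlled uniformly in $n$ by the same mass bound. You correctly identify that, in this finite-measure setting, the tightness input that L\'evy's theorem normally needs is replaced by the presupposed existence of the limit $\mu$ and the free mass bound from $\varphi=0$. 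The proof is complete and fills in a step the paper leaves to the reader.
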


\begin{proof}[Proof of Proposition \ref{CLT}] We first establish the first statement. The main idea is that the quantity 
$G(n,q,\theta)=-\sum_{w\in W_n^q}e^{-\langle\theta,\mathscr P(w)\rangle}\mathscr P(w)$ may be interpreted 
as the derivative at $\theta$ of $L_\theta^n{\bf 1}(q)$. We then use the precise structure of the operator $L_\theta$ to conclude.

More precisely, by Lemma \ref{modulus1} and Corollary \ref{asymptotic1} we may write, for $\xi$ in $\mathcal E^*$ and $n\geq 0$,
\begin{equation}\label{eqLn}L_\xi^n{\bf 1}(q)=
\lambda(\xi)^np\langle \varphi_\xi,{\bf 1}_{Q^q_n}\rangle f_\xi(q)
+\Pi_\xi L_\xi^n {\bf 1}(q),\end{equation}
where $\Pi_\xi$ is a projection of $\mathcal V$ onto a subspace of codimension $p$, which is stable under $L_\xi$ and where the spectral radius of $L_\xi$ is $<\lambda(\xi)$. Using Lemma \ref{root} and Lemma \ref{residue} as in the proof of Proposition \ref{lambdaderivative} shows that $\Pi_\xi$ is an analytic function of $\xi$. As $\lambda(\theta)=1$, derivating \eqref{eqLn} at $\theta$ yields
\begin{multline}\label{eqderivLn}
\xi(G(n,a,\theta))=
pn\de_\theta\lambda(\xi)\langle \varphi_\theta,{\bf 1}_{Q^q_n}\rangle f_\theta(q)+
p\langle \de_\theta\varphi(\xi),{\bf 1}_{Q^q_n}\rangle f_\theta(q)
\\+p\langle \varphi_\theta,{\bf 1}_{Q^q_n}\rangle \de_\theta f(\xi)(q)+
\de_\theta\Pi(\xi) L_\theta^n {\bf 1}(q)
+\Pi_\theta\sum_{k=1}^n L_\theta^{k-1} \de_\theta L(\xi) L_{\theta}^{n-k} {\bf 1}(q).\end{multline} 
The conclusion will follow from the fact that, in the above equation, all terms but the first are bounded and hence go to $0$ when divided by $n$. Indeed, as $\lambda(\theta)=1$, the operator $L_\theta$ has spectral radius $1$ and, by Lemma \ref{modulus1}, all eigenvalues of $L_\theta$ with modulus $1$ are simple. Hence, the operator norm of $L_\theta^n$ is uniformly bounded. Besides, all eigenvalues of $L_\theta$ in the range of $\Pi_\theta$ have modulus $<1$, 
hence, there exists $C>0$ and $0<\alpha<1$ such that, for any $n\geq 0$, the operator $\Pi_\theta L_\theta^n$ has norm $\leq \alpha^n C$. Thus, we get, up to enlarging $C$,
$$\|\de_\theta\Pi(\xi) L_\theta^n {\bf 1}\|\leq C\|{\bf 1}\|\|\xi\|$$
and
$$\|\Pi_\theta\sum_{k=1}^n L_\theta^{k-1} \de_\theta L(\xi) L_{\theta}^{n-k} {\bf 1}\|
\leq C^2\sum_{k=1}^n\alpha^{k-1}\|{\bf 1}\|\|\xi\|\leq \frac{C^2}{1-\alpha}\|{\bf 1}\|\|\xi\|.$$
Plugging this in \eqref{eqderivLn} yields
$$\frac{1}{n}G(n,q,\theta)+\langle \varphi_\theta,{\bf 1}_{Q^q_n}\rangle f_\theta(q)x_\theta\td{n}{\infty}0.$$
Besides, by Lemma \ref{modulus1} and Corollary \ref{asymptotic1} we have 
$$\sum_{w\in W_n^q}e^{-\langle\theta,\mathscr P(w)\rangle}-p\langle \varphi_\theta,{\bf 1}_{Q^q_n}\rangle f_\theta(q)
=L_\theta^n{\bf 1}(q)-p\langle \varphi_\theta,{\bf 1}_{Q^q_n}\rangle f_\theta(q)\td{n}{\infty}0.$$
The conclusion follows.

Now, we will prove the second statement. The proof is the same as the standard proof of the Central Limit Theorem, which relies on 
Lemma \ref{characteristic}. Thus, for $\xi$ in $\mathcal E^*$ and $n\geq 0$, we compute
\begin{multline*}
\sum_{w\in W_n^q}\exp\left(-\langle\theta,\mathscr P(w)\rangle+i\xi\left(\frac{1}{\sqrt{n}}(\mathscr P(w)-nx_\theta)\right)\right)\\
=\exp(-i\sqrt{n}\xi(x_\theta))
L_{\theta-i\frac{\xi}{\sqrt{n}}}^n {\bf 1}(q).\end{multline*}
Again, we will use the structure of the operator $L_{\theta-i\frac{\xi}{\sqrt{n}}}$ to conclude.
When $n$ is large, $\theta-i\frac{\xi}{\sqrt{n}}$ is close to $\theta$, hence we can write as in \eqref{eqLn},
\begin{multline*}L_{\theta-i\frac{\xi}{\sqrt{n}}}^n {\bf 1}(q)
=\\ \lambda\left(\theta-i\frac{\xi}{\sqrt{n}}\right)^n
p\langle \varphi_{\theta-i\frac{\xi}{\sqrt{n}}},{\bf 1}_{Q^q_n}\rangle f_{\theta-i\frac{\xi}{\sqrt{n}}}(q)
+\Pi_{\theta-i\frac{\xi}{\sqrt{n}}} L_{\theta-i\frac{\xi}{\sqrt{n}}}^n {\bf 1}(q).\end{multline*}
As above, the spectral radius of $L_\theta$ in the range of $\Pi_\theta$ is $<1$ and we get
$$ \Pi_{\theta-i\frac{\xi}{\sqrt{n}}} L_{\theta-i\frac{\xi}{\sqrt{n}}}^n \td{n}{\infty}0$$
and, by continuity,
$$\langle \varphi_{\theta-i\frac{\xi}{\sqrt{n}}},{\bf 1}_{Q^q_n}\rangle f_{\theta-i\frac{\xi}{\sqrt{n}}}
\td{n}{\infty}\langle \varphi_{\theta},{\bf 1}_{Q^q_n}\rangle f_{\theta}.$$
The study of the remaining term is the same as in the classical proof of the Central Limit Theorem. Indeed, 
from the computation of the derivatives of $\lambda$ in Proposition \ref{lambdaderivative}, we can write, for $n$ large,
$$\exp\left(-i\frac{1}{\sqrt{n}}\xi(x_\theta)\right)\lambda\left(\theta-i\frac{\xi}{\sqrt{n}}\right)
=\exp\left(-\frac{1}{2n}\de_\theta^2\log\lambda(\xi,\xi)+\frac{a_n}{n^{\frac{3}{2}}}\right),
$$
where $(a_n)_{n\geq 0}$ is a bounded sequence of complex numbers.
We obtain
\begin{multline*}\sum_{w\in W_n^q}\exp\left(-\langle\theta,\mathscr P(w)\rangle+i\xi\left(\frac{1}{\sqrt{n}}(\mathscr P(w)-nx_\theta)\right)\right)
\\ -\exp\left(-\frac{1}{2}\de_\theta^2\log\lambda(\xi,\xi)\right)
p\langle \varphi_{\theta},{\bf 1}_{Q^q_n}\rangle f_{\theta}(q)
\td{n}{\infty}0\end{multline*}
and the result is a consequence of the description of the null space of $\de_\theta^2\log\lambda$ in Proposition \ref{lambdaderivative},
Lemma \ref{characteristic} and standard computations of Fourier transforms of Gaussian measures.
\end{proof}

\section{Local asymptotic estimates for languages of finite type}\label{secLLT}
We continue to translate results from the theory of finite state space Markov chains into our language. We will now show a strong version of the Local Limit Theorem in the spirit of \cite{BoMo}.

More precisely, for $q,q'$ in $Q$ and $n\geq 0$, we will consider the set $W_n^{q,q'}$ and give an estimate as $n\rightarrow\infty$ 
of the number of words $w$ in $W_n^{q,q'}$ such that $\mathscr P(w)$ takes a fixed value $x$ in $\mathcal E$. 
Roughly speaking, this estimate will say that this number behaves as $C_x n^{-\frac{r}{2}}e^{-\psi_{\mathcal G}(x)}$, where $r$ is an integer and $C_x>0$ only depends on the direction of the vector $x$ (that is, $C_{tx}=C_x$ for $t>0$).
This can be seen as a strong large deviation principle in the sense of \cite{DZ}. In stating such estimates precisely, we encounter several difficulties. 

First, if we want the above mentioned number to be non zero, we have to assume that $\langle {\bf 1},x\rangle=n$, since $\langle {\bf 1},\mathscr P(w)\rangle=n$ for every $w$ in $W^{q,q'}_n$. In other words, we will assume that $x$ belongs to a fixed affine hyperplane of the space $\mathcal E$. 

In the same spirit, for $w$ in $W$, the vector $\mathscr P(w)$ is integer valued when seen as a function on $A$. Therefore, we will need to assume that $x$ belongs to the lattice $\Lambda\subset \mathcal E$ of functions on $A$ with values in $\mathbb Z$.

Finally, as we have seen in the proof of Corollary \ref{psiregular}, the set $\mathscr P(W)$ stays at a bounded distance from the vector space $(\nabla \mathcal V)^\perp$. Nevertheless, in general $\mathscr P(W)$ is not contained in $(\nabla \mathcal V)^\perp$. The next Lemma will show us that the elements of the set $\mathscr P(W^{q,q'})$ all belong to a translate of $(\nabla \mathcal V)^\perp$ that only depends on $q$ and $q'$.

\begin{Lem} \label{straight} Let $\mathcal G=(Q,A,\sigma,\gamma)$ be a connected directed graph. 
There exists a map $\mathscr R:Q\rightarrow \Lambda$ with the following properties. \\
{\em (i)} For any $a$ in $A$, we have 
$${\bf 1}_{a}-\mathscr R(\gamma(a))+\mathscr R(\sigma(a))\in (\nabla \mathcal V)^\perp.$$\\
{\em (ii)} For any $q$ in $Q$ and $q'$ in $Q^q_0$ we have
$\langle {\bf 1},\mathscr R(q)-\mathscr R(q')\rangle=0$.\\
In particular, for $n\geq 0$, $q,q'$ in $Q$ and $w$ in $W_{pn}^{q,q'}$, we have 
$$\mathscr P(w)-\mathscr R(q')+\mathscr R(q)\in (\nabla \mathcal V)^\perp\cap\Lambda \mbox{ and } \langle {\bf 1},\mathscr P(w)-\mathscr R(q')+\mathscr R(q)\rangle=n.$$
\end{Lem}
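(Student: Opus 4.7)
The plan is to construct $\mathscr R$ by fixing a base vertex $q_0 \in Q$ and, for each $q \in Q$, selecting a path $\pi_q$ in $\mathcal G$ from $q_0$ to $q$ and setting $\mathscr R(q) = \mathscr P(\pi_q) \in \mathbb N^A \subset \Lambda$. The single computational tool used throughout is the telescoping identity already exploited in the proof of Corollary \ref{psiregular}: for any path $\pi = (a_1, \ldots, a_n)$ and any $f \in \mathcal V$,
$$\langle \mathscr P(\pi), \nabla f \rangle = \sum_{k=1}^n \bigl( f(\gamma(a_k)) - f(\sigma(a_k)) \bigr) = f(\gamma(a_n)) - f(\sigma(a_1)),$$
so this pairing depends only on the endpoints of $\pi$.

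Given this, (i) holds for \emph{any} choice of the $\pi_q$: for $a \in A$ with $\sigma(a)=q_1$, $\gamma(a)=q_2$, the three terms ${\bf 1}_a$, $-\mathscr R(\gamma(a))$ and $\mathscr R(\sigma(a))$ pair with $\nabla f$ to give $f(q_2)-f(q_1)$, $-(f(q_2)-f(q_0))$ and $f(q_1)-f(q_0)$, which sum to $0$. The content of the lemma lies in (ii), which requires $\langle {\bf 1}, \mathscr R(q) \rangle$ (equal to the length of $\pi_q$) to depend only on the class $Q^{q_0}_j$ containing $q$. I would arrange this by choosing the $\pi_q$ so that all vertices in the same class $Q^{q_0}_j$ are reached by paths of a common length $L_j$. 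This is where I expect the main obstacle. It reduces to the following combinatorial fact: for each $j \in \mathbb Z/p\mathbb Z$ there exists $L_j \equiv j \pmod p$ such that every $q \in Q^{q_0}_j$ is reachable from $q_0$ by a path of length \emph{exactly} $L_j$. Its proof combines two elementary ingredients: by Corollary \ref{modulus12}, for each $q$ the return set $\{n : W_n^{q,q} \neq \emptyset\}$ is a submonoid of $\mathbb N$ with gcd equal to $p$, hence contains every sufficiently large multiple of $p$ by the classical numerical-semigroup (``Schur'') lemma; and concatenation of one fixed path $q_0 \to q$ (existing by connectedness) with such a return loop at $q$ produces paths $q_0 \to q$ of every sufficiently large length $\equiv j \pmod p$. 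Finiteness of $Q$ then lets one pick a single $L_j$ that works uniformly over $q \in Q^{q_0}_j$.

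Granting (i) and (ii), the ``in particular'' claim is immediate. If $w \in W_{pn}^{q,q'}$ then $q' \in Q^q_{pn} = Q^q_0$, so $q$ and $q'$ lie in a common class and (ii) yields $\langle {\bf 1}, \mathscr R(q) - \mathscr R(q') \rangle = 0$; hence $\langle {\bf 1}, \mathscr P(w) - \mathscr R(q') + \mathscr R(q) \rangle = \langle {\bf 1}, \mathscr P(w) \rangle = pn$ (the printed ``$n$'' in the statement appears to be a typographical slip for $pn$). Membership in $(\nabla \mathcal V)^\perp$ is a third application of the telescoping identity: the three paths $w$, $\pi_{q'}$, $\pi_q$ go respectively from $q$ to $q'$, $q_0$ to $q'$, and $q_0$ to $q$, so pairing with $\nabla f$ gives $(f(q')-f(q)) - (f(q')-f(q_0)) + (f(q)-f(q_0)) = 0$; integrality holds since each term already lies in $\Lambda$.
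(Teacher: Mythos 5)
Your proof is correct, but it takes a genuinely different route from the paper's. The paper proceeds by linear algebra: it builds a section $S:\nabla\mathcal V\rightarrow\mathcal V$ of the map $\nabla$, carefully extends it to $\nabla\mathcal V\oplus\mathbb R{\bf 1}$ by identifying an explicit generator $f_0=\nabla g_0-p^{-1}{\bf 1}$ of a complement of $\Lambda\cap\nabla\mathcal V$ in $\Lambda\cap(\nabla\mathcal V\oplus\mathbb R{\bf 1})$ (the subtlety being that for $p\geq 2$ the constant $c$ in $f=\nabla g+c{\bf 1}\in\Lambda$ need only lie in $p^{-1}\mathbb Z$), further extends $S$ to all of $\mathcal E$ with $S\Lambda\subset\Delta$, and finally defines $\mathscr R(q)$ as the adjoint functional $\theta\mapsto S\theta(q)$. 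Your construction is instead purely combinatorial: $\mathscr R(q)=\mathscr P(\pi_q)$ for paths $\pi_q$ from a base vertex $q_0$, with (i) automatic by the telescoping identity and (ii) arranged by choosing the lengths $|\pi_q|$ to depend only on the class of $q$ in $\mathbb Z/p\mathbb Z$; the feasibility of that choice is exactly the numerical-semigroup argument you sketch, which is correct (the return lengths to $q$ form an additive submonoid of $p\mathbb N$ with gcd $p$, hence contain all large multiples of $p$, and finiteness of $Q$ gives a uniform $L_j$). One concrete thing your construction buys that the paper's does not: your $\mathscr R$ takes values in $\mathbb N^A$, whereas the paper's can have negative entries (cf.\ Example \ref{fibonacci5}, where $\mathscr R(q_2)=(1,0,-1)$); the lemma only asks for values in $\Lambda$, so both are fine. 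You are also right that the displayed ``$=n$'' in the conclusion is a misprint for ``$=pn$'': with either construction $\langle{\bf 1},\mathscr R(q)-\mathscr R(q')\rangle=0$ when $q'\in Q_0^q$, so the pairing equals $\langle{\bf 1},\mathscr P(w)\rangle=pn$, consistent with how the quantity is used in Lemma \ref{LLTp=1}.
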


The sets $Q_j^q$, $q\in Q$, $j\in\mathbb Z/p\mathbb Z$, were introduced in Section \ref{secgraphs}. We have denoted by ${\bf 1}_a$ the indicator function of the set $\{a\}\subset A$.

\begin{Ex} \label{fibonacci5}
In the Fibonacci graph of Examples \ref{fibonacci1}, \ref{fibonacci2}, \ref{fibonacci3} and \ref{fibonacci4}, 
we can take $\mathscr R(1)=(0,0,0)$ and $\mathscr R(2)=(1,0,-1)$.
\end{Ex}

We denote by $\Delta\subset \mathcal V$ the lattice of functions on $Q$ with values in $\mathbb Z$. 

\begin{proof}[Proof of Lemma \ref{straight}] 
The construction relies on elementary properties of linear algebra. We fix $q_0$ in $Q$.

First, we build a section $S:\nabla \mathcal V\rightarrow \mathcal V$ of the linear map $\nabla$. By Lemma \ref{injective}, for $f$ in $\nabla \mathcal V$, there exists a unique $g$ in $\mathcal V$ with $g(q_0)=0$ and $\nabla g=f$. We set $Sf=g$. We claim that if $f$ belongs to $\Lambda$, then $g=Sf$ belongs to $\Delta$. Indeed, for such a function $f$, let $Q'\subset Q$ be the set of those $q$ in $Q$ such that $g(q)$ is an integer. We have $q_0\in Q$. Besides, for $q$ in $Q'$ and $a$ in $A$ wth $\sigma(a)=q$, by construction, we have 
$f(a)=g(\gamma(a))-g(q)$ hence $g(\gamma(a))=f(a)+g(q)$ is also an integer. As $\mathcal G$ is connected, we obtain $Q'=Q$.

We have just shown that 
\begin{equation}\label{grpdecompo1}
S(\Lambda\cap \nabla \mathcal V)\subset \Delta.\end{equation}
Note that $\Lambda\cap \nabla \mathcal V=S(\Delta)$ is a lattice in $\nabla \mathcal V$ and that $\Lambda\cap \mathbb R{\bf 1}$ is a lattice in (the line!) $\mathbb R{\bf 1}$. Therefore, $\Lambda\cap (\nabla \mathcal V\oplus\mathbb R{\bf 1})$ is a lattice in $\nabla \mathcal V\oplus\mathbb R{\bf 1}$.
We will extend the definition of $S$ in order to get a linear map 
$\Lambda\cap (\nabla \mathcal V\oplus \mathbb R{\bf 1})\rightarrow \Delta$. This requires us to find a complementary subgroup of 
$\Lambda\cap \nabla \mathcal V$ in $\Lambda\cap (\nabla \mathcal V\oplus \mathbb R{\bf 1})$. The situation is a bit tricky when $p\geq 2$. 

Recall that by definition, the group $\Lambda\cap (\nabla \mathcal V\oplus \mathbb R{\bf 1})$ is the set of integer valued functions $f$ on $A$
which are of the form $f=\nabla g+c{\bf 1}$ for some $g$ in $\mathcal V$ and $c$ in $\mathbb R$. The difficulty will come from he fact that, when $p\geq 2$, this does not imply that we may choose $c$ to be an integer and $g$ to be integer valued, that is, to belong to $\Delta$. Nevertheless, 
we claim that for such a function, the number $c$ must belong to $p^{-1}\mathbb Z$. Indeed, let $n\geq 0$ be such that $W_n^{q_0,q_0}\neq \emptyset$, and choose a word $w=(a_1,\ldots,a_n)$ in $W_n^{q_0,q_0}$. Then, for any $1\leq i\leq n$, we get
$f(a_i)=g(\gamma(a_i))-g(\sigma(a_i))+c$, hence
$$\sum_{i=0}^{n-1}f(a_i)=g(\gamma(a_n))-g(\sigma (a_0))+nc=nc,$$
so that, by assumption, $nc$ is an integer. By Lemma \ref{modulus12}, we obtain $pc\in\mathbb Z$ as required.

Conversely, we set $g_0=p^{-1}\sum_{j=0}^{p-1}j{\bf 1}_{Q^{q_0}_j}$ and we obtain that $f_0=\nabla g_0-p^{-1}\bf 1$ is integer valued.
Thus, we have the decomposition
\begin{equation}
\label{grpdecompo2}\Lambda\cap (\nabla \mathcal V\oplus \mathbb R{\bf 1})=\mathbb Zf_0\oplus (\Lambda\cap \nabla \mathcal V).\end{equation}
We extend the linear map $S:\nabla \mathcal V\rightarrow \mathcal V$ into a linear map 
$\nabla \mathcal V\oplus \mathbb R{\bf 1}\rightarrow \mathcal V$ by setting $S(f_0)=0$.
From \eqref{grpdecompo1} and \eqref{grpdecompo2}, we obtain
$$S(\Lambda\cap (\nabla \mathcal V\oplus \mathbb R{\bf 1}))\subset \Delta.$$
Besides, since ${\bf 1}=\nabla(pg_0)-pf_0$ and $g(q_0)=0$, we have
\begin{equation}\label{sigma1}S({\bf 1})=pg_0=\sum_{j=0}^{p-1}j{\bf 1}_{Q^{q_0}_j}.\end{equation}

Finally, since $\Lambda\cap (\nabla \mathcal V\oplus \mathbb R{\bf 1})$ is a lattice in $\nabla \mathcal V\oplus \mathbb R{\bf 1}$, we can extend $S$ into a linear map $\mathcal E\rightarrow \mathcal V$ such that $S(\Lambda)\subset \Delta$. 

We will use the adjoint map of $S$ to define $\mathscr R(q)$ for $q$ in $Q$. More precisely, for $q$ in $Q$, we let $\mathscr R(q)$ be the unique element 
of $\mathcal E$ such that, for any $\theta$ in $\mathcal E$, we have 
$$\langle \theta,\mathscr R(q)\rangle=\langle S\theta,{\bf 1}_q\rangle=S\theta(q).$$
Note that, since $S\Lambda\subset\Delta$, for any $\theta$ in $\Lambda$, we have $\langle \theta,\mathscr R(q)\rangle\in\mathbb Z$, hence $\mathscr R(q)$ belongs to $\Lambda$.
Let us check that $\mathscr R$ satisfies the required properties.

Fix $f$ in $\nabla \mathcal V$ and set $g=Sf$, so that, by construction, we have $f=\nabla g$. Then, 
for $a$ in $A$, we get
$$f(a)=g(\gamma(a))-g(\sigma(a))$$
which we rewrite as
$$\langle f,{\bf 1}_a\rangle=\langle Sf,{\bf 1}_{\gamma(a)}-{\bf 1}_{\sigma(a)}\rangle=\langle f,\mathscr R(\gamma(a))-\mathscr R(\sigma(a))\rangle,$$
that is, ${\bf 1}_a-\mathscr R(\gamma(a))+\mathscr R(\sigma(a))\in (\nabla \mathcal V)^\perp$.

Besides, from \eqref{sigma1}, for any $0\leq j\leq p-1$ and 
$q,q'$ in $Q^{q_0}_{j}$, 
$$\langle {\bf 1},\mathscr R(q)\rangle=\langle S{\bf 1},{\bf 1}_q\rangle=S{\bf 1}(q)=j=\langle S{\bf 1},{\bf 1}_{q'}\rangle
=\langle {\bf 1},\mathscr R(q')\rangle,$$
as required.
\end{proof}

We fix a function $\mathscr R$ as in Lemma \ref{straight}.
For $x$ in $\mathcal E$, $n\geq 0$ and $q,q'$ in $Q$, we set 
\begin{equation}\label{localcard}N_n(x,q,q')=|\{w\in W^{q,q'}_{n}|\mathscr P(w)-\mathscr R(q')+\mathscr R(q)=x\}|.\end{equation}
Theorem \ref{LLT} below will tell us that, when $x$ belongs to the intersection 
$\overset{\circ}{\mathcal C}_{\mathcal G}\cap \Lambda$ of the interior of the cone $\mathcal C_{\mathcal G}$ in the space $(\nabla \mathcal V)^\perp$ with the lattice $\Lambda\subset\mathcal E$, 
we can give a very precise estimate of the number $N_n(x,q,q')$.

Let $x$ be in $\mathcal E$ with $\psi_{\mathcal G}(x)>0$. By Corollary \ref{psiregular}, there exists $\theta$ in $\mathcal E$ 
with $\lambda(\theta)=1$ and $\psi_{\mathcal G}(x)=\langle\theta,x\rangle$ and $\theta$ is unique up to translation by an element of $\nabla \mathcal V$. Consider the quadratic form $\chi_\theta$ on the space $\mathcal E_0=(\mathbb R{\bf 1}\oplus \nabla \mathcal V)^\perp$ which is defined in Theorem \ref{CLT}. Note that $\Lambda\cap \mathcal E_0$ is a lattice in $\mathcal E_0$. We set $\varsigma(x)$ to be $|\det \chi_\theta|^{\frac{1}{2}}$ where the determinant is evaluated with respect to a basis of the lattice 
$\Lambda\cap \mathcal E_0$ (and hence does not depend on the choice of this basis).

We use these constructions to state our Local Limit Theorem. We still let $r$ be the dimension of the space $\mathcal E_0=(\mathbb R{\bf 1}\oplus \nabla \mathcal V)^\perp$.

\begin{Thm} \label{LLT}
Let $\mathcal G=(Q,A,\sigma,\gamma)$ be a connected directed graph. 
Fix a compact subset $K$ of 
$\{y\in (\nabla \mathcal V)^\perp|\psi_{\mathcal G}(y)>0\}$.
Then, there exists a sequence $(\varepsilon_n)_{n\geq 0}$ of positive real numbers such that $\varepsilon_n\td{n}{\infty}0$ with the following property. For any $n\geq 0$, $q,q'$ in $Q$ with $q'\in Q_n^q$ and $x$ in $\Lambda\cap (\nabla \mathcal V)^\perp$ with $\langle {\bf 1},x-\mathscr R(q)+\mathscr R(q')\rangle=n$ and 
$n^{-1}x\in K$, we have 
$$\left|(2\pi n)^{\frac{r}{2}}\varsigma(x)e^{-\psi_{\mathcal G}(x)}N_{n}(x,q,q')-
pe^{\langle \theta,\mathscr R(q')-\mathscr R(q)\rangle}\varphi_\theta(q')f_\theta(q)
\right|\leq\varepsilon_n,$$
where $\theta$ is any element of $\mathcal E$ such that $\lambda(\theta)=1$ and $\psi_\mathcal G(x)=\langle \theta,x\rangle$.
\end{Thm}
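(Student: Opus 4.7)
The plan is to follow the classical route of Stone's local limit theorem (and its hyperbolic adaptation \cite{BoMo}), combining exponential tilting with Fourier inversion on the lattice $\Lambda_0 = \Lambda \cap \mathcal E_0$, using the spectral analysis of $L_\theta$ from Sections \ref{secgraphs} and \ref{seclambda}. Fix $x$ and $\theta$ as in the statement, set $y = x + \mathscr R(q') - \mathscr R(q)$, and define the tilted measure
\begin{equation*}
\mu_\theta^{n,q,q'} = \sum_{w \in W_n^{q,q'}} e^{-\langle \theta, \mathscr P(w)\rangle} \delta_{\mathscr P(w)},
\end{equation*}
so that $N_n(x,q,q') = e^{\langle \theta, y\rangle} \mu_\theta^{n,q,q'}(\{y\}) = e^{\psi_{\mathcal G}(x)} e^{\langle \theta, \mathscr R(q')-\mathscr R(q)\rangle} \mu_\theta^{n,q,q'}(\{y\})$. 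By Lemma \ref{straight}, the support of $\mu_\theta^{n,q,q'}$ lies in a single coset of $\Lambda_0$ inside the affine subspace $y + \mathcal E_0$, and Fourier inversion on that coset expresses $\mu_\theta^{n,q,q'}(\{y\})$ as an integral over a fundamental domain $T_0$ of the dual lattice $\Lambda_0^\perp \subset \mathcal E_0^*$, with integrand $L_{\theta-i\xi}^n \mathbf 1_{q'}(q)\, e^{-i\langle \xi, y\rangle}$. The entire proof reduces to the asymptotics of this integral.

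Near $\xi = 0$, the spectral structure of $L_\theta$ extends analytically to complex perturbations. By Corollary \ref{analytic} and the proof of Lemma \ref{modulus1}, the identity $L_{\theta-i\xi}(u^k g)(q) = e^{2\pi i k/p} u^k(q)\, L_{\theta-i\xi}(g)(q)$ shows that the $p$ modulus-one eigenvalues of $L_\theta$ perturb into $e^{2\pi i k/p}\lambda(\theta-i\xi)$, each simple. Summing over $k$ as in Corollary \ref{asymptotic1}, using $\sum_k e^{2\pi i kn/p} u^k(q) u^{-k}(q') = p\, \mathbf 1_{q' \in Q^q_n}$, produces, uniformly on a neighbourhood $U$ of $0$,
\begin{equation*}
L_{\theta-i\xi}^n \mathbf 1_{q'}(q) = p\, \lambda(\theta-i\xi)^n\, \varphi_{\theta-i\xi}(q')\, f_{\theta-i\xi}(q)\, \mathbf 1_{q' \in Q^q_n} + O(\rho^n),
\end{equation*}
for some $\rho < 1$.

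The main obstacle is the analysis on $T_0 \setminus U$, where one must establish a uniform spectral gap $|\lambda(\theta-i\xi)| \leq \rho' < 1$. Suppose for contradiction that $L_{\theta-i\xi}$ has an eigenvalue of modulus one. The maximum-principle argument used in the proof of Lemma \ref{modulus1}, applied via the entrywise domination $|L_{\theta-i\xi} g| \leq L_\theta |g|$, forces the perturbed eigenvector to be of the form $e^{ih} f_\theta$ with $\xi \equiv c\mathbf 1 - \nabla h \pmod{2\pi \mathbb Z^A}$ for some $c \in \mathbb R$ and $h \in \mathcal V$. Pairing with any $\lambda \in \Lambda_0 \subset (\mathbb R\mathbf 1 \oplus \nabla \mathcal V)^\perp$ yields $\langle \xi, \lambda\rangle \in 2\pi \mathbb Z$, i.e.\ $\xi \in \Lambda_0^\perp$, hence $\xi = 0$ in $T_0$. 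Continuity of the spectrum in $\xi$ and compactness of $T_0 \setminus U$ then supply the uniform gap, making the contribution of this region to the Fourier integral $O((\rho')^n)$ and hence negligible.

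On $U$, the change of variables $\xi = \eta/\sqrt n$ performs the saddle-point analysis. The choice of $\theta$ with $\psi_{\mathcal G}(x) = \langle \theta, x\rangle$ is precisely that $x$ is proportional to $-d_\theta \lambda = x_\theta$; the normalisations $\langle \mathbf 1, x\rangle = n$ and $\langle \mathbf 1, x_\theta\rangle = 1$ force $x = nx_\theta$. Consequently the phase $e^{-i\langle \eta, y\rangle/\sqrt n + i\sqrt n \langle x_\theta, \eta\rangle}$ collapses to $e^{-i\langle \eta, \mathscr R(q')-\mathscr R(q)\rangle/\sqrt n}$, which tends pointwise to $1$, while the quadratic term in the expansion of $\log\lambda$ given by Proposition \ref{lambdaderivative} produces $\exp(-\tfrac12 \chi_\theta(\eta,\eta))$ — positive definite on $\mathcal E_0$, hence providing a Gaussian dominator. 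Dominated convergence, continuity of $f_\theta,\varphi_\theta$ at $\xi = 0$, and the Gaussian integral evaluated through the lattice $\Lambda_0$ in terms of $\varsigma(x)$ yield the claimed equivalent. Finally, uniformity for $n^{-1}x \in K$ follows from Corollary \ref{psiregular} and Lemma \ref{local}: on the compact $K$ a representative $\theta(x) \in \mathcal E/\nabla \mathcal V$ can be chosen real-analytically, so that all quantities in the previous steps (the spectral projectors, the gap $\rho'$, the form $\chi_\theta$ and all error terms) depend continuously on $n^{-1}x$, upgrading the pointwise asymptotics to the uniform estimate claimed.
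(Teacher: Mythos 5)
Your proposal follows the same high-level strategy as the paper: exponential tilting, Fourier inversion on the relevant lattice, the spectral-gap result of Lemma \ref{cplxsptctral} to dismiss the integral away from the origin, and a second-order expansion of $\log\lambda$ (Proposition \ref{lambdaderivative}) to produce the Gaussian factor near the origin. Where you differ from the paper is structural: the paper proves Lemma \ref{LLTp=1} only for path lengths $pn$ with $q'\in Q^q_0$, so that the tilted measure sits squarely on $\Lambda_0=\Lambda\cap\mathcal E_0$, and then obtains the full Theorem by induction on the residue $j$ of $n$ modulo $p$, peeling off one edge at a time. You instead do the Fourier inversion directly for every $n$. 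This works: summing Lemma \ref{straight}(i) along an arbitrary path of length $n$ gives $\mathscr P(w)-\mathscr R(q')+\mathscr R(q)\in\Lambda\cap(\nabla\mathcal V)^\perp$, and combined with $\langle\mathbf 1,\mathscr P(w)\rangle=n=\langle\mathbf 1,y\rangle$ one sees the support does lie on $y+\Lambda_0$ for all $n$, so inversion on the torus $\mathcal E/(\Lambda+\nabla\mathcal V+\mathbb R\mathbf 1)$ (whose Pontryagin dual is precisely $\Lambda_0$) applies. This is a modest but genuine simplification, removing one induction layer.

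One slip to fix: you assert $\langle\mathbf 1,x\rangle=n$ and hence $x=nx_\theta$, but the hypothesis of the theorem is $\langle\mathbf 1,x-\mathscr R(q)+\mathscr R(q')\rangle=n$, so $\langle\mathbf 1,x\rangle=n-\langle\mathbf 1,\mathscr R(q')-\mathscr R(q)\rangle=n+O(1)$. Thus $x=(n+O(1))\,x_\theta$, not $nx_\theta$. The upshot is harmless — the leftover linear phase after the $\eta=\sqrt n\,\xi$ substitution is still $O(\|\eta\|/\sqrt n)$, so it still tends to $1$ under the Gaussian dominator — but as written the cancellation is not exactly the one you describe, and the corrected bookkeeping is needed to see that the surviving factor is $pe^{\langle\theta,\mathscr R(q')-\mathscr R(q)\rangle}\varphi_\theta(q')f_\theta(q)$ as claimed. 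Also, your integration domain is better phrased as the torus $\mathcal E/(\Lambda+\nabla\mathcal V+\mathbb R\mathbf 1)$ rather than ``a fundamental domain of $\Lambda_0^\perp\subset\mathcal E_0^*$'': the integrand $L_{\theta+2\pi i\xi}^n\mathbf 1_{q'}(q)\,e^{2\pi i\langle\xi,y\rangle}$ is naturally a function of $\xi\in\mathcal E$, and one must verify (as the paper does implicitly) its invariance under $\Lambda+\nabla\mathcal V+\mathbb R\mathbf 1$ before descending to the quotient.
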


\begin{Rem} Note that the quantity $e^{\langle \theta,\mathscr R(q')-\mathscr R(q)\rangle}\varphi_\theta(q')f_\theta(q)$ which appears in Theorem \ref{LLT}
does not depend on the choice of $\theta$. Indeed, given a $\theta$ such that $\lambda(\theta)=1$ and $\psi_\mathcal G(x)=\langle \theta,x\rangle$, the other choices are of the form $\theta+\nabla\xi$, where $\xi$ is in $\mathcal V$. Then, since $L_\theta$ and $L_{\theta+\nabla\xi}$ are conjugated by a multiplication operator (see the proof of Proposition \ref{lambdaderivative}), we get, for $q,q'$ in $Q$,
$$\varphi_{\theta+\nabla\xi}(q')f_{\theta+\nabla\xi}(q)=e^{\xi(q)-\xi(q')}\varphi_\theta(q')f_\theta(q).$$ 
On the other hand, by Lemma \ref{straight}, we have
$$\langle \nabla \xi,\mathscr R(q')-\mathscr R(q)\rangle=\xi(q')-\xi(q).$$
Thus, we get
$$e^{\langle \theta+\nabla\xi,\mathscr R(q')-\mathscr R(q)\rangle}\varphi_{\theta+\nabla\xi}(q')f_{\theta+\nabla \xi}(q)
=e^{\langle \theta,\mathscr R(q')-\mathscr R(q)\rangle}\varphi_\theta(q')f_\theta(q).$$
\end{Rem}

The statement of Theorem \ref{LLT} is complicated by the fact that the integer $p$ of Lemma \ref{modulus1} can be $\neq 1$, so that there are obstructions to the numbers $N_{n}(x,q,q')$ being $\neq 0$ coming from the geometry of the graph.
Also, we have stated Theorem \ref{LLT} in order to describe the uniformity properties with respect to $x$
 of the asymptotic estimate of  $N_{n}(x,q,q')$ as $n\rightarrow \infty$. 
When assuming $p=1$, fixing an $x$ in $\Lambda\cap (\nabla \mathcal V)^\perp$ and only caring for what happens on the half line $\mathbb N x$, that is, for the values of $N_{n\langle {\bf 1},x\rangle}(nx,q,q')$, $n\in\mathbb N$, this estimate can be written in the simpler following form. 

\begin{Cor} \label{LLT3}
Let $\mathcal G=(Q,A,\sigma,\gamma)$ be a connected directed graph with $p=1$. 
Fix $q,q'$ in $Q$ and $x$ in $\Lambda\cap (\nabla \mathcal V)^\perp$ with $\psi_{\mathcal G}(x)>0$.
Then, we have
$$N_{n\langle {\bf 1},x\rangle}(nx,q,q')\underset{n\rightarrow\infty}\sim 
\frac{\varphi_\theta(q')f_\theta(q)}{(2\pi \langle {\bf 1},x\rangle)^{\frac{r}{2}}\varsigma(x)}n^{-\frac{r}{2}}e^{n\psi_{\mathcal G}(x)},$$
where $\theta$ is any element of $\mathcal E$ such that $\lambda(\theta)=1$ and $\psi_\mathcal G(x)=\langle \theta,x\rangle$.
\end{Cor}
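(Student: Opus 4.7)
The plan is to deduce Corollary \ref{LLT3} as a direct specialization of Theorem \ref{LLT} evaluated along the arithmetic progression $N \mapsto Nx$. I set $n := N\langle {\bf 1}, x\rangle$ and $y_N := Nx$, so that $N_{N\langle {\bf 1}, x\rangle}(Nx, q, q') = N_n(y_N, q, q')$, and then I check the hypotheses of Theorem \ref{LLT} are satisfied for the pair $(n, y_N)$ uniformly in $N$, before passing to the limit.

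First I would verify the hypotheses of Theorem \ref{LLT} for $(n, y_N, q, q')$. Since $x \in \Lambda \cap (\nabla \mathcal V)^\perp$ and $N \in \mathbb N$, the vector $y_N$ again lies in $\Lambda \cap (\nabla \mathcal V)^\perp$. The assumption $p=1$ makes the congruence $q' \in Q_n^q$ automatic and, through Lemma \ref{straight}(ii), forces $\langle {\bf 1}, \mathscr R(q') - \mathscr R(q)\rangle = 0$; this gives the required identity $\langle {\bf 1}, y_N - \mathscr R(q) + \mathscr R(q')\rangle = N\langle {\bf 1}, x\rangle = n$. For the compact set $K$, I observe that $n^{-1} y_N = x/\langle {\bf 1}, x\rangle$ is independent of $N$ and that, by positive homogeneity of $\psi_\mathcal G$, $\psi_\mathcal G(n^{-1} y_N) = \psi_\mathcal G(x)/\langle {\bf 1}, x\rangle > 0$; hence any compact neighbourhood of this fixed point inside the open set $\{y \in (\nabla \mathcal V)^\perp \,|\, \psi_\mathcal G(y) > 0\}$ (open by Corollary \ref{psiregular}) serves as $K$.

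Next I record two scaling identities. Positive homogeneity of $\psi_\mathcal G$ yields $\psi_\mathcal G(y_N) = N\psi_\mathcal G(x)$, so any $\theta$ with $\lambda(\theta) = 1$ and $\psi_\mathcal G(x) = \langle \theta, x\rangle$ also satisfies $\psi_\mathcal G(y_N) = \langle \theta, y_N\rangle$; the quadratic form $\chi_\theta$ therefore does not change when $x$ is replaced by $y_N$, whence $\varsigma(y_N) = \varsigma(x)$. Inserting these identities and $p = 1$ into Theorem \ref{LLT} produces
\begin{equation*}
(2\pi N\langle {\bf 1}, x\rangle)^{r/2}\, \varsigma(x)\, e^{-N\psi_\mathcal G(x)}\, N_n(Nx, q, q') \td{N}{\infty} e^{\langle \theta, \mathscr R(q')-\mathscr R(q)\rangle}\,\varphi_\theta(q')\, f_\theta(q),
\end{equation*}
and dividing by the prefactor yields the asymptotic equivalent.

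There is no substantive obstacle once Theorem \ref{LLT} is in hand: the deduction is pure bookkeeping. The one subtlety worth flagging is the gauge factor $e^{\langle \theta, \mathscr R(q')-\mathscr R(q)\rangle}$ which emerges in the limit constant; by the remark following Theorem \ref{LLT}, this factor combines with $\varphi_\theta(q') f_\theta(q)$ into a quantity invariant under $\theta \mapsto \theta + \nabla \xi$, so that the asymptotic prefactor depends only on the direction of $x$ and on the endpoint pair $(q, q')$, and not on the particular admissible $\theta$ chosen to represent the supporting hyperplane at $x$.
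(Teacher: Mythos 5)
Your strategy---applying Theorem \ref{LLT} along the sequence $n = N\langle\mathbf 1,x\rangle$, $y_N = Nx$, with the singleton $K=\{x/\langle\mathbf 1,x\rangle\}$---is the right one, and the scaling identities $\psi_{\mathcal G}(Nx)=N\psi_{\mathcal G}(x)$ and $\varsigma(Nx)=\varsigma(x)$ (since the same supporting $\theta$ serves for all $Nx$) are correctly identified. (A side remark: you do not need the set $\{y\in(\nabla\mathcal V)^\perp\mid\psi_{\mathcal G}(y)>0\}$ to be open for this, and Corollary \ref{psiregular} does not obviously yield openness at points of $\partial\mathcal C_{\mathcal G}$; the singleton $K$ already satisfies the hypotheses of Theorem \ref{LLT}, so the appeal to openness is unnecessary.)

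However, carrying out the substitution faithfully, as you do, produces the limit constant $e^{\langle\theta,\mathscr R(q')-\mathscr R(q)\rangle}\varphi_\theta(q')f_\theta(q)$, and this does \emph{not} match the constant $\varphi_\theta(q')f_\theta(q)$ appearing in the statement of Corollary \ref{LLT3}. You flag the extra factor in your final paragraph, observing that the combined expression is invariant under $\theta\mapsto\theta+\nabla\xi$, but that gauge-invariance does not make the factor disappear: $\mathscr R(q')-\mathscr R(q)$ is in general not an element of $\nabla\mathcal V$ (in the Fibonacci example $\mathscr R(q_2)-\mathscr R(q_1)=(1,0,-1)$ while $\nabla\mathcal V=\mathbb R(0,1,-1)$), so for a generic admissible $\theta$ the factor is $\neq 1$. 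Writing ``dividing by the prefactor yields the asymptotic equivalent'' silently glosses over this mismatch. To actually land on the Corollary's formula you would need to either (a) select a specific $\theta$ for which $\langle\theta,\mathscr R(q')-\mathscr R(q)\rangle=0$---which is arrangeable when $q\neq q'$, since adding $\nabla\xi$ shifts the pairing by $\xi(q')-\xi(q)$, a surjection onto $\mathbb R$---and then note that the ``for any $\theta$'' of the Corollary cannot be taken at face value since $\varphi_\theta(q')f_\theta(q)$ alone depends on the choice; or (b) conclude that the Corollary's constant should carry the additional factor $e^{\langle\theta,\mathscr R(q')-\mathscr R(q)\rangle}$, which is in fact the gauge-invariant quantity produced by Theorem \ref{LLT}. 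As written, your proposal proves a correct asymptotic but does not reconcile it with the stated one.
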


Compare with \cite[Thm. 5.2]{Melcz}.

We begin the proof of Theorem \ref{LLT} by studying the spectral radius of $L_\theta$ when $\theta$ is not real. In that case, we consider $L_\theta$ as an endomorphism of the complexification $\mathcal V_{\mathbb C}$ of $\mathcal V$, that is, the space of complex valued functions on $Q$.

\begin{Lem}\label{cplxsptctral} Let $\mathcal G=(Q,A,\sigma,\gamma)$ be a connected directed graph. 
Fix $\theta$ in $\mathcal E$. Then, for $\xi$ in $\mathcal E$, the operator $L_{\theta+2i\pi\xi}$ has spectral radius $\leq \lambda(\theta)$ and equality holds if and only if $\xi$ belongs to $\nabla \mathcal V+\mathbb R{\bf 1}+\Lambda$.
\end{Lem}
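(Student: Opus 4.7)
The plan is to combine a straightforward triangle-inequality domination for the upper bound with a maximum-principle argument for the equality case, closely mimicking the one used in the proof of Lemma \ref{modulus1}. For the inequality, I would observe that for every $f\in\mathcal V_{\mathbb C}$ and $q\in Q$,
\[
|L_{\theta+2i\pi\xi}f(q)|\leq\sum_{a:\sigma(a)=q}e^{-\theta(a)}|f(\gamma(a))|=L_\theta(|f|)(q).
\]
Iterating gives $|L_{\theta+2i\pi\xi}^n f|\leq L_\theta^n|f|$ pointwise. Taking $n$-th roots of operator norms yields the spectral radius bound $\leq\lambda(\theta)$. The easy direction of the equivalence is the conjugation trick already exploited in the proof of \eqref{invariance}: if $\xi=\nabla h+c{\bf 1}+\eta$ with $h\in\mathcal V$, $c\in\mathbb R$ and $\eta\in\Lambda$, then $e^{-2i\pi\eta(a)}=1$ for every $a\in A$ (as $\eta(a)\in\mathbb Z$), and a direct computation shows that multiplication by $e^{-2i\pi h}$ conjugates $L_{\theta+2i\pi\xi}$ to $e^{-2i\pi c}L_\theta$, so both operators have spectral radius $\lambda(\theta)$.

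The substantive direction is the converse. Assuming the spectral radius of $L_{\theta+2i\pi\xi}$ equals $\lambda(\theta)$, finite-dimensionality supplies some $z\in\mathbb C$ with $|z|=\lambda(\theta)$ and a non-zero $f\in\mathcal V_{\mathbb C}$ with $L_{\theta+2i\pi\xi}f=zf$. Setting $g=f/f_\theta$, which is meaningful since $f_\theta$ is everywhere positive, the eigenvalue equation becomes
\[
\sum_{a:\sigma(a)=q}\pi_a\,e^{-2i\pi\xi(a)}g(\gamma(a))=\frac{z}{\lambda(\theta)}\,g(q),\qquad q\in Q,
\]
where $\pi_a=\frac{e^{-\theta(a)}f_\theta(\gamma(a))}{\lambda(\theta)f_\theta(\sigma(a))}$ is non-negative and $\sum_{a:\sigma(a)=q}\pi_a=1$, thanks to $L_\theta f_\theta=\lambda(\theta)f_\theta$. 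Applying the strict convexity of disks in $\mathbb C$ to the set $Q'=\{q\in Q:|g(q)|=\max|g|\}$, exactly as in the proof of Lemma \ref{modulus1}, combined with the connectedness of $\mathcal G$, forces $Q'=Q$, that $|g|$ is constant on $Q$, and that
\[
e^{-2i\pi\xi(a)}g(\gamma(a))=\frac{z}{\lambda(\theta)}g(\sigma(a)),\qquad a\in A.
\]

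To conclude, I would normalize so that $|g|\equiv 1$ and write $g(q)=e^{-2i\pi h(q)}$ with $h:Q\to\mathbb R$, together with $z/\lambda(\theta)=e^{-2i\pi c}$ for some $c\in\mathbb R$ (possible since $|z/\lambda(\theta)|=1$). The cocycle identity above then reduces modulo $\mathbb Z$ to $\xi(a)+\nabla h(a)-c\in\mathbb Z$ for every $a\in A$, i.e.\ $\xi+\nabla h-c{\bf 1}\in\Lambda$, which places $\xi$ in $\nabla\mathcal V+\mathbb R{\bf 1}+\Lambda$. The main obstacle is precisely this last extraction of an integer-valued cocycle from a complex eigenvector attaining the spectral radius; once the Perron--Frobenius/maximum-principle mechanism of Lemma \ref{modulus1} is in place, however, what remains is just rewriting the resulting identity on phases and taking logarithms modulo $\mathbb Z$.
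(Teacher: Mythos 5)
Your proof matches the paper's argument step for step: the pointwise domination $|L_{\theta+2i\pi\xi}^n f|\le L_\theta^n|f|$ for the spectral radius bound, the conjugation by $e^{-2i\pi h}$ for the easy direction, and the normalization by $f_\theta$ followed by the strict-convexity/maximum-principle argument and connectedness, exactly as in Lemma \ref{modulus1}, to force $|g|$ constant and extract the integer-valued cocycle in the converse direction. Up to sign conventions in the final logarithmic rewriting, this is precisely the paper's proof.
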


\begin{proof} First suppose that we may write $\xi\in \nabla\varphi+c{\bf 1}+\Lambda$ where $c$ is in $\mathbb R$ and $\varphi$ is in $\mathcal V$.
Then, as in the proof of Proposition \ref{lambdaderivative}, we get, for $f$ in $\mathcal V_\mathbb C$ and $q$ in $Q$,
$$L_{\theta+2i\pi\xi} f(q)=e^{-2i\pi c}e^{2i\pi\varphi(q)}L_\theta(e^{-2i\pi\varphi} f)(q),$$
hence $L_{\theta+2i\pi\xi}$ is conjugated to $e^{-2i\pi c}L_\theta$ and the spectral radius of $L_{\theta+2i\pi\xi}$ is $\lambda(\theta)$. 

Conversely, first note that, for $f$ in $\mathcal V_\mathbb C$ and $q$ in $Q$, we have 
$$|L_{\theta+2i\pi\xi}f(q)|\leq L_\theta |f|(q),$$ which, by iteration, yields, for $n\geq 0$, 
$|L_{\theta+2i\pi\xi}^nf|\leq L_\theta^n |f|$. Thus, for the operator norm associated with the sup norm, we have 
$\|L_{\theta+2i\pi\xi}^n\|\leq \|L_\theta^n\|$ and $L_{\theta+2i\pi\xi}$ has spectral radius $\leq\lambda(\theta)$.

Suppose now that $L_{\theta+2i\pi\xi}$ admits an eigenvalue of the form $w\lambda(\theta)$ where $w$ is a complex number with modulus $1$. 
Let $\rho\neq 0$ be an eigenvector, that is, we have $L_{\theta+2i\pi\xi}\rho=w\lambda(\theta)\rho$. For $q$ in $Q$, we set 
$\psi(q)=f_\theta(q)^{-1}\rho(q)$, so that we obtain
\begin{equation}\label{eqcplxsptctral1}
\sum_{\substack{a\in A\\ \sigma(a)=q}}\frac{e^{-\theta(a)}f_\theta(\gamma(a))}{\lambda(\theta)f_\theta(q)}e^{-2i\pi\xi(a)}\psi(\gamma(a))
=w\psi(q).
\end{equation}
We denote by $Q'\subset Q$ the set of $q$ in $Q$ such that the modulus $|\psi(q)|$ is maximal. For $q$ in $Q'$, 
in view of \eqref{eqcplxsptctral1}, since 
$$\sum_{\substack{a\in A\\ \sigma(a)=q}}\frac{e^{-\theta(a)}f_\theta(\gamma(a))}{\lambda(\theta)f_\theta(q)}=1$$
and since the disks of $\mathbb C$ are strictly convex, we obtain that, for any $a$ in $A$ with $\sigma(a)=q$, the element $\gamma(a)$ 
also belongs to $Q'$ and satisfies 
\begin{equation}\label{eqcplxsptctral2}\psi(\gamma(a))=we^{2i\pi\xi(a)}\psi(q)=we^{2i\pi\xi(a)}\psi(\sigma(a)).\end{equation}
Thus, as $\mathcal G$ is connected, we get $Q'=Q$ and the function $\psi$ has constant modulus. 
Choose $c$ in $\mathbb R$ with $e^{2i\pi c}=w$ and
a function $\varphi$ on $\mathcal V$ with $e^{2i\pi\varphi}=\psi$. Then, from \eqref{eqcplxsptctral2}, we get, 
for $a$ in $A$, 
$$\xi(a)-\varphi(\gamma(a))+\varphi(\sigma(a))+c\in \mathbb Z,$$
that is, $\xi-\nabla\varphi+c{\bf 1}\in\Lambda$. The conclusion follows.
\end{proof}

The main step in the proof of Theorem \ref{LLT} is the next lemma which essentially deals with the case $p=1$.

\begin{Lem} \label{LLTp=1}
Let $\mathcal G=(Q,A,\sigma,\gamma)$ be a connected directed graph. 
Fix a compact subset $K$ of 
$\{y\in (\nabla \mathcal V)^\perp|\psi_{\mathcal G}(y)>0\}$.
Then, there exists a sequence $(\varepsilon_n)_{n\geq 0}$ of positive real numbers such that $\varepsilon_n\td{n}{\infty}0$ with the following property. For any $q$ in $Q$, $q'$ in $Q_0^q$, $n\geq 0$, $x$ in $\Lambda\cap (\nabla \mathcal V)^\perp$ with $\langle {\bf 1},x\rangle=pn$ and 
$(pn)^{-1}x\in K$, we have 
$$\left|(2\pi pn)^{\frac{r}{2}}\varsigma(x)e^{-\psi(x)}N_{pn}(x,q,q')-
pe^{\langle \theta,\mathscr R(q')-\mathscr R(q)\rangle}\varphi_\theta(q')f_\theta(q)
\right|\leq\varepsilon_n,$$
where $\theta$ is any element of $\mathcal E$ such that $\lambda(\theta)=1$ and $\psi(x)=\langle \theta,x\rangle$.
\end{Lem}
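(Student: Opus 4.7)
The plan is to use Fourier inversion on the lattice $\Lambda = \mathbb Z^A$ combined with a saddle-point analysis, adapted to the fact that the ``bad set'' coming from the invariance \eqref{invariance} of $\lambda$ has positive dimension in the dual torus. Setting $y_0 = x + \mathscr R(q') - \mathscr R(q) \in \Lambda$ and using the identity $L_{\theta - 2i\pi\xi}^{pn}({\bf 1}_{q'})(q) = \sum_{w \in W_{pn}^{q,q'}} e^{-\langle\theta, \mathscr P(w)\rangle} e^{2i\pi\langle\xi, \mathscr P(w)\rangle}$, I ``tilt'' by $e^{-\langle\theta,\mathscr P(w)\rangle}$ and apply Fourier inversion on the torus $\mathcal E/\Lambda$ to get
$$N_{pn}(x,q,q') = e^{\psi_\mathcal G(x) + \langle\theta, \mathscr R(q') - \mathscr R(q)\rangle} \int_{\mathcal E/\Lambda} L_{\theta - 2i\pi\xi}^{pn}({\bf 1}_{q'})(q)\, e^{-2i\pi\langle\xi, y_0\rangle}\, \de\xi.$$
Here the factor $e^{\langle\theta, y_0\rangle} = e^{\psi_\mathcal G(x) + \langle\theta, \mathscr R(q') - \mathscr R(q)\rangle}$ has been extracted, and the discussion at the end of Section \ref{seclambda} identifies $\theta$ as the saddle point of the phase: the conditions $\lambda(\theta) = 1$ and $\psi_\mathcal G(x) = \langle\theta,x\rangle$ are equivalent (modulo $\nabla\mathcal V$) to $x = pn\cdot x_\theta$.

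Next I exploit the decomposition $\mathcal E = \mathcal E^\theta \oplus \nabla\mathcal V \oplus \mathbb R{\bf 1}$ from Lemma \ref{injective2}. For $\xi = \xi_0 - \nabla\varphi - c{\bf 1}$, a direct conjugation computation in the spirit of the proof of \eqref{invariance} yields $L_{\theta - 2i\pi\xi}^{pn}({\bf 1}_{q'})(q) = e^{-2i\pi cpn}\, e^{2i\pi(\varphi(q) - \varphi(q'))}\, L_{\theta - 2i\pi\xi_0}^{pn}({\bf 1}_{q'})(q)$. Combined with the identities $\langle\nabla\varphi, y_0\rangle = \varphi(q') - \varphi(q)$ (using $x \in (\nabla\mathcal V)^\perp$ and Lemma \ref{straight}(i)) and $\langle{\bf 1}, y_0\rangle = pn$ (using Lemma \ref{straight}(ii) and the hypothesis $q' \in Q_0^q$), all phase factors cancel and the integrand reduces to $L_{\theta - 2i\pi\xi_0}^{pn}({\bf 1}_{q'})(q)\, e^{-2i\pi\langle\xi_0, y_0\rangle}$; in particular it depends only on $\xi_0 \in \mathcal E^\theta$. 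By Fubini the integral then factors as (volume of the fiber subtorus $(\nabla\mathcal V + \mathbb R{\bf 1})/((\nabla\mathcal V + \mathbb R{\bf 1}) \cap \Lambda)$) times an integral over the quotient torus $T = \mathcal E/(\nabla\mathcal V + \mathbb R{\bf 1} + \Lambda)$. The explicit description $(\nabla\mathcal V + \mathbb R{\bf 1}) \cap \Lambda = \mathbb Z f_0 \oplus (\nabla\mathcal V \cap \Lambda)$ with $f_0 = \nabla g_0 - p^{-1}{\bf 1}$, established in the proof of Lemma \ref{straight}, will give this fiber volume as exactly $p$ (under the normalization that $\mathcal E/\Lambda$ has total mass $1$), producing the combinatorial factor in the statement.

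The integral on $T$ is then estimated by saddle point. Lemma \ref{cplxsptctral} together with compactness of $T$ provides a uniform spectral gap: outside any fixed neighborhood $U$ of $0$ in $T$, the spectral radius $\rho(L_{\theta - 2i\pi\xi_0})$ is bounded by $1 - \delta < 1$, so this contribution is $O((1-\delta)^{pn})$. On $U$, the spectral projection (Lemma \ref{root} and Lemma \ref{residue}, as invoked in Corollary \ref{analytic}) gives $L_{\theta - 2i\pi\xi_0}^{pn}({\bf 1}_{q'})(q) = \lambda(\theta - 2i\pi\xi_0)^{pn}\, \varphi_\theta(q') f_\theta(q) + O(\alpha^{pn})$ for some $\alpha < 1$, with the leading factor depending analytically on $\xi_0$. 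The Taylor expansion $\log\lambda(\theta - 2i\pi\xi_0) = 2i\pi\langle\xi_0, x_\theta\rangle - 2\pi^2\chi_\theta(\xi_0,\xi_0) + O(\|\xi_0\|^3)$ from Proposition \ref{lambdaderivative}, together with $pn\,x_\theta = x$, shows that $\lambda(\theta - 2i\pi\xi_0)^{pn}\, e^{-2i\pi\langle\xi_0, y_0\rangle}$ equals $e^{-2\pi^2 pn\,\chi_\theta(\xi_0, \xi_0) - 2i\pi\langle\xi_0, \mathscr R(q') - \mathscr R(q)\rangle}$ up to a remainder of order $O(pn\|\xi_0\|^3)$. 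The substitution $\xi_0 = \eta/\sqrt{pn}$ kills both the residual linear phase and the cubic remainder, leaving a classical Gaussian integral that evaluates to $(2\pi pn)^{-r/2}/\varsigma(x)$ by the very definition of $\varsigma(x)$.

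The main obstacle will be the measure bookkeeping in the Fubini step: since the bad set $S = (\nabla\mathcal V + \mathbb R{\bf 1} + \Lambda)/\Lambda$ on which the integrand fails to decay has positive dimension $|Q|$ (in contrast with the classical one-point bad set in the i.i.d.\ local limit theorem), verifying that the fiber volume combines with the lattice-index formula of Lemma \ref{straight} to yield exactly the factor $p$ requires careful tracking of the measure isomorphisms between $(\nabla\mathcal V + \mathbb R{\bf 1})/((\nabla\mathcal V + \mathbb R{\bf 1}) \cap \Lambda)$, the quotient torus $T$, and the section $\mathcal E^\theta$. Uniformity in $x$ over the compact set $K$ is then straightforward: the minimizer $\theta = \theta(x)$ depends analytically on $x$ by Corollary \ref{psiregular}, so the spectral gap $\delta$, the projector error exponent $\alpha$, and the cubic Taylor remainder can all be bounded uniformly on $K$.
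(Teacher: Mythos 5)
Your outline coincides with the paper's: Fourier inversion of the tilted counting measure, a spectral--gap estimate from Lemma \ref{cplxsptctral} to localize near the saddle, and a Gaussian approximation. The one structural difference is that you invert on $\mathcal E/\Lambda$ and then pass to the small torus $T=\mathcal E/(\Lambda+\nabla\mathcal V+\mathbb R{\bf 1})$ by Fubini, whereas the paper applies the discrete inversion formula directly on $T$. Your preliminary computations --- the operator identity, the cancellation of phase factors under translation by $\nabla\mathcal V+\mathbb R{\bf 1}$, the relation $pn\,x_\theta=x$, and the Taylor expansion of $\log\lambda$ --- are all correct. There are, however, two genuine gaps, and both bear on where the factor $p$ really comes from.

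First, when $p>1$ the expansion $L_{\theta-2i\pi\xi_0}^{pn}({\bf 1}_{q'})(q)=\lambda(\theta-2i\pi\xi_0)^{pn}\varphi_\theta(q')f_\theta(q)+O(\alpha^{pn})$ with $\alpha<1$ is false. By Lemma \ref{modulus1} the operator $L_{\theta-2i\pi\xi_0}$ has $p$ simple eigenvalues of maximal modulus, namely $\lambda(\theta-2i\pi\xi_0)e^{2\pi ik/p}$ with eigenvectors $u^kf_{\theta-2i\pi\xi_0}$; projecting onto the $k=0$ eigenline alone leaves a complementary invariant subspace on which the spectral radius is still $\approx 1$, so the remainder is not exponentially small. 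Redoing the $\mathbb Z/p\mathbb Z$ Fourier sum as in the proof of Corollary \ref{asymptotic1}, using $pn\equiv 0\pmod p$ and $q'\in Q^q_0$, yields the correct leading term $p\,\lambda(\theta-2i\pi\xi_0)^{pn}\varphi_{\theta-2i\pi\xi_0}(q')f_{\theta-2i\pi\xi_0}(q)$. This is where the factor $p$ in the statement comes from.

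Second, the fiber volume is not $p$. With all Haar measures normalized, Weil's integration formula gives $\int_{\mathcal E/\Lambda}f=\int_T\bar f$ with no extra constant; if instead you factor Lebesgue measure along the orthogonal decomposition $\mathcal E=\mathcal E_0\oplus(\nabla\mathcal V+\mathbb R{\bf 1})$, the fiber contributes $\mathrm{covol}\bigl(\Lambda\cap(\nabla\mathcal V+\mathbb R{\bf 1})\bigr)$, which is generically not an integer. (Already for a single vertex with two self-loops one has $p=1$, yet $\Lambda\cap\mathbb R{\bf 1}=\mathbb Z(1,1)$ has covolume $\sqrt 2$.) The lattice index $[\Lambda\cap(\nabla\mathcal V+\mathbb R{\bf 1}):(\nabla\mathcal V\cap\Lambda)+\mathbb Z{\bf 1}]=p$ that you read off from the basis $f_0=\nabla g_0-p^{-1}{\bf 1}$ is an index \emph{inside} the fiber, not the fiber's Haar volume, and it does not emerge from Fubini as a multiplicative constant. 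Your two errors compensate numerically for the $p$, but the reasoning is wrong; you also still need to reconcile the Lebesgue measure on $\mathcal E^\theta$, where you perform the Gaussian integral, with the normalization of $\varsigma(x)$, which the paper defines via the lattice $\Lambda\cap\mathcal E_0\subset\mathcal E_0$.
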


\begin{proof} We fiw $x$ as in the statement and we set $y=(pn)^{-1}x$ which is an element of $K$.
Let $\theta$ be an element of $\mathcal E$ with $\lambda(\theta)=1$ and $\psi(x)=\langle\theta,x\rangle$.

For $n\geq 0$, $q$ in $Q$ and $q'$ in $Q^q_0$, consider the finite measure 
$$\nu_{n,x}^{q,q'}=\sum_{w\in W_{pn}^{q,q'}}e^{-\langle\theta,\mathscr P(w)\rangle}\delta_{\mathscr P(w)-\mathscr R(q')+\mathscr R(q)-x}.$$
This measure is supported on $\Lambda\cap \mathcal E_0=\Lambda\cap (\nabla \mathcal V+\mathbb R{\bf 1})^\perp$. 
We will prove the lemma by evaluating the number
\begin{equation}\label{smartmeasure}
\nu_{n,x}^{q,q'}(0)=
e^{-\psi(x)+\langle \theta,\mathscr R(q)-\mathscr R(q')\rangle}N_{pn}(x,q,q').\end{equation}
We will use the Fourier inversion formula.

Indeed, for $\xi$ in $\mathcal E$, define the characteristic function
$$\widehat{\nu}_{n,x}^{q,q'}(\xi)=\int_{\mathcal E}e^{-2i\pi\langle\xi,z\rangle}\de\nu^{q,q'}_{n,x}(z)
=\sum_{z\in \Lambda\cap (\nabla \mathcal V\oplus{\bf 1})^\perp}e^{-2i\pi\langle\xi,z\rangle}\nu^{q,q'}_{n,x}(z).$$
Note that this function is invariant under translations by $\Lambda+\nabla \mathcal V+\mathbb {\bf 1}$.
Then, the Fourier inversion formula (which in this discrete case is elementary) says that 
\begin{equation}\label{Fourierinv1}
\nu_{n,x}^{q,q'}(0)=\int_{\mathcal E/(\Lambda+\nabla \mathcal V+\mathbb {\bf 1})}\widehat{\nu}_{n,x}^{q,q'}(\xi)\de\xi,\end{equation}
where we have equipped the compact abelian group $\mathcal E/(\Lambda+\nabla \mathcal V+\mathbb {\bf 1})$ 
with its unique translation invariant Borel probability measure.

To conclude, we will use the language of transfer operators to give an asymptotic expansion of $\widehat{\nu}_{n,x}^{q,q'}(\xi)$. 
Indeed, for $\xi$ in $\mathcal E$, we have
$$\widehat{\nu}_{n,x}^{q,q'}(\xi)=e^{2i\pi\langle \xi,x-\mathscr R(q)+\mathscr R(q')\rangle}L_{\theta+2i\pi\xi}^{pn}({\bf 1}_{q'})(q).$$

First, let us use this formula for replacing the integral in \eqref{Fourierinv1} by an integral over a small neighbourhood of $0$.
By Lemma \ref{cplxsptctral}, for any $\xi$ in $\mathcal E\smallsetminus (\Lambda+\nabla \mathcal V+\mathbb {\bf 1})$, the operator 
$L_{\theta+2i\pi\xi}$ has spectral radius $<1$. Therefore, since by Lemma \ref{root}, the spectral radius is a continuous function, for any neighbourhood $U$ of $0$ in $\mathcal E/(\Lambda+\nabla \mathcal V+\mathbb {\bf 1})$, we may find $0\leq \alpha_U<1$ and $C_U>0$ such that, unformly for $y$ in $K$, we have, from \eqref{Fourierinv1},
\begin{equation}\label{Fourierinv2}
\left|\nu_{n,x}^{q,q'}(0)-\int_{U}\widehat{\nu}_{n,x}^{q,q'}(\xi)\de\xi\right|\leq C_U\alpha_U^n.\end{equation}

Now, by Corollary \ref{asymptotic1}, we can find $C,\varepsilon>0$ such that,
$$|L_\theta^{pn}({\bf 1}_{q'})(q)-\lambda(\theta)^{pn}p\varphi_\theta(q')f_\theta(q)|\leq C\varepsilon^n.$$ 
Again, $\varepsilon$ can be chosen to be uniform when $y$ varies in $K$.
Besides, thanks to Lemma \ref{root} and Lemma \ref{residue}, the objects in the inequality above are analytic functions of $\theta$ and we may find a neighbourhood $W$ of $\theta$ in $\mathcal E$ such that, up to lowering $\varepsilon$,
for $\xi$ in $W$, we have
$$|\widehat{\nu}_{n,x}^{q,q'}(\xi)-\lambda(\theta+2i\pi\xi)^{pn}
e^{2i\pi\langle \xi,x-\mathscr R(q)+\mathscr R(q')\rangle}p\varphi_{\theta+2i\pi\xi}(q')f_{\theta+2i\pi\xi}(q)|\leq 
C\varepsilon^n.$$
Up to shrinking $U$, we may assume that $U$ is of the form $U'+\Lambda+\nabla \mathcal V+\mathbb {\bf 1}$, where $U'$ is a neighbourhood of $\{0\}$ in $(\nabla \mathcal V+\mathbb {\bf 1})^\perp$ such that $U'\subset W$. Thus, from \eqref{Fourierinv2},
for some $C'>0$ and $0<\beta<1$, we get 
\begin{multline}\label{Fourierinv3}
\left|\nu_{n,x}^{q,q'}(0)-\int_{U'}\lambda(\theta+2i\pi\xi)^{pn}
e^{2i\pi\langle \xi,x-\mathscr R(q)+\mathscr R(q')\rangle}p\varphi_{\theta+2i\pi\xi}(q')f_{\theta+2i\pi\xi}(q)\de\xi\right|\\
\leq C'\beta^n,\end{multline}
where we have equipped $(\nabla \mathcal V+\mathbb {\bf 1})^\perp$ with the Lebesgue measure such that $\Lambda\cap (\nabla \mathcal V+\mathbb {\bf 1})^\perp$ has covolume $1$.

In this formula, we can eliminate the dependence in $\xi$ of certain terms. Indeed, by using the computation of the derivatives of $\lambda$ from Proposition \ref{lambdaderivative}, if we take $U'$ small enough, for $\xi$ in $U'$, we get 
\begin{equation}\label{pdctbdd}\left|\lambda(\theta+2i\pi\xi)
e^{2i\pi\langle \xi,y\rangle}\right|\leq \exp(-\pi^2\de^2_\theta\log\lambda(\xi,\xi)),\end{equation}
hence, by using the change of variable $\eta=\sqrt{n}\xi$ in the integral, we obtain
$$\int_{U'}\|\xi\|\left|\lambda(\theta+2i\pi\xi)^{n}
e^{2i\pi n\langle \xi,y\rangle}\right|\de\xi\ll n^{-\frac{r+1}{2}}.$$
Thus, from \eqref{Fourierinv3}, using that all objects below the integral are analytic functions,
\begin{equation}\label{Fourierinv4}
\left|\nu_{n,x}^{q,q'}(0)-p\varphi_{\theta}(q')f_\theta(q)\int_{U'}\lambda(\theta+2i\pi\xi)^{pn}
e^{2i\pi pn\langle \xi,y\rangle}\de\xi \right| \ll n^{-\frac{r+1}{2}},\end{equation}
uniformly for $y$ in $K$.

We will conclude by replacing the integrand of the above by a Gaussian law. First, we lower the size of the domain of integration. Keeping in mind that \eqref{pdctbdd} holds and using again the change of variable $\eta=\sqrt{n}\xi$, we obtain
\begin{multline*}\int_{\substack{\xi\in U'\\ \|\xi\|\geq \left(\frac{\log n}{n}\right)^{\frac{1}{2}}}}\left|\lambda(\theta+2i\pi\xi)^{n}
e^{2i\pi n\langle \xi,y\rangle}\right|\de\xi\\
\leq n^{-\frac{r}{2}}\int_{\|\eta\|\geq (\log n)^{\frac{1}{2}}} \exp(-\pi^2 \de^2_\theta\log\lambda(\eta,\eta))\de\eta
\ll (\log n)^r n^{-\frac{r+1}{2}}.\end{multline*}
Now, we are reduced to studying the quantity
$$\int_{\|\xi\|\leq \left(\frac{\log n}{n}\right)^{\frac{1}{2}}}\lambda(\theta+2i\pi\xi)^{n}
e^{2i\pi n\langle \xi,y\rangle}\de\xi.$$
For $\xi$ as under the integral, by Proposition \ref{lambdaderivative}, we have 
$$|\log(\lambda(\theta+2i\pi\xi)
e^{2i\pi\langle \xi,y\rangle})+2\pi^2\de^2_\theta\log\lambda(\xi,\xi)|\ll \|\xi\|^3\leq \left(\frac{\log n}{n}\right)^{\frac{3}{2}}$$
(where we have used the principal determination of the logarithm).
This gives
$$|n\log(\lambda(\theta+2i\pi\xi)
e^{2i\pi\langle \xi,y\rangle})+2n\pi^2\de^2_\theta\log\lambda(\xi,\xi)|\ll (\log n)^{\frac{3}{2}}n^{-\frac{1}{2}},$$
hence, as the exponential function is Lipshitz continuous in the neighbourhood of $0$,
$$\int_{\|\xi\|\leq \left(\frac{\log n}{n}\right)^{\frac{1}{2}}}|\lambda(\theta+2i\pi\xi)^{n}
e^{2i\pi n\langle \xi,y\rangle}-e^{-2n\pi^2\de^2_\theta\log\lambda(\xi,\xi)}|\de\xi
\ll (\log n)^{\frac{r+3}{2}}n^{-\frac{r+1}{2}}.$$
By standard integral computations, we have 
$$\int_{\mathcal E_0} e^{-2\pi^2\de^2_\theta\log\lambda(\eta,\eta)}\de\eta=\frac{1}{(2\pi)^{\frac{r}{2}}\varsigma(x)}.$$
Thus, using again the change of variable $\eta=\sqrt{n}\xi$ gives 
$$\left|\int_{\|\xi\|\leq \left(\frac{\log n}{n}\right)^{\frac{1}{2}}}
e^{-2n\pi^2\de^2_\theta\log\lambda(\xi,\xi)}
\de\xi-\frac{1}{(2\pi n)^{\frac{r}{2}}\varsigma(x)}\right| \ll (\log n)^{r}n^{-\frac{r+1}{2}}.$$
Therefore, from \eqref{Fourierinv4}, we obtain 
$$
\left|\nu_{n,x}^{q,q'}(0)-
\frac{p\varphi_{\theta}(q')f_\theta(q)}{(2\pi pn)^{\frac{r}{2}}\varsigma(x)}\right|
\ll (\log n)^{\max(r,\frac{r+3}{2})}n^{-\frac{r+1}{2}}.$$
In view of \eqref{smartmeasure}, the lemma follows.
\end{proof}

The general case of Theorem \ref{LLT} will now follow from the latter case and an elementary induction argument.

\begin{proof}[Proof of Theorem \ref{LLT}]
We will prove the result for $n$ in $j+p\mathbb Z$ by induction on $0\leq j\leq p-1$.

For $j=0$, this is Lemma \ref{LLTp=1}.

Assume $j\geq 1$ and the result is true for $j-1$. 
For $n\geq 1$, $q,q'$ in $Q$ and $x$ in $\mathcal E$, the definition in \eqref{localcard} gives
$$N_{n}(x,q,q')=\sum_{\substack{a\in A\\ \sigma(a)=q}}N_{n-1}(x-{\bf 1}_a+\mathscr R(\gamma(a))-\mathscr R(q),\gamma(a),q').$$
Assume that $n$ is in $j+p\mathbb Z$ and that $q'$ is in $Q_j^{q}$. Then, for $a$ as above, we have
$q'\in A_{j-1}^{\gamma(a)}$. Besides, if $\langle {\bf 1},x-\mathscr R(q)+\mathscr R(q')\rangle=n$, we have
\begin{multline*}\langle {\bf 1},x-{\bf 1}_a+\mathscr R(\gamma(a))-\mathscr R(q)-\mathscr R(\gamma(a))+\mathscr R(q')\rangle=\langle {\bf 1},x-{\bf 1}_a-\mathscr R(q)+\mathscr R(q')\rangle\\
=n-1.\end{multline*}
Also, the construction in Lemma \ref{straight} warrants that, if $x$ belongs to $(\nabla \mathcal V)^\perp\cap \Lambda$, so does 
$x-{\bf 1}_a+\mathscr R(\gamma(a))-\mathscr R(q)$; if $n^{-1} x$ lives in a compact subset of $\{y\in (\nabla \mathcal V)^\perp|\psi_{\mathcal G}(y)>0\}$, so does 
$(n-1)^{-1}(x-{\bf 1}_a+\mathscr R(\gamma(a))-\mathscr R(q))$. Therefore, the induction assumption together with the continuity of the function $\sigma$ tells us that $N_n(x,q,q')$ is uniformly equivalent to 
$$\frac{p\varphi_\theta(q')e^{\langle \theta,\mathscr R(q')\rangle}}{(2\pi n)^{\frac{r}{2}}\varsigma(x)}
\sum_{\substack{a\in A\\ \sigma(a)=q}}e^{\psi(x-{\bf 1}_a+\mathscr R(\gamma(a))-\mathscr R(q))-\langle \theta,\mathscr R(\gamma(a))\rangle} f_\theta(\gamma(a)).$$
Since $\theta$ is the derivative of $\psi$ at $x$, in view of the assumption, for $a$ as above, we have
$$|\psi(x-{\bf 1}_a+\mathscr R(\gamma(a))-\mathscr R(q))-\psi(x)-\langle \theta,-{\bf 1}_a+\mathscr R(\gamma(a))-\mathscr R(q)\rangle|\ll n^{-1},$$
so that $N_n(x,q,q')$ is uniformly equivalent to 
$$e^{\psi(x)}\frac{p\varphi_\theta(q')e^{\langle \theta,\mathscr R(q')-\mathscr R(q)\rangle}}{(2\pi n)^{\frac{r}{2}}\varsigma(x)}
\sum_{\substack{a\in A\\ \sigma(a)=q}}e^{-\theta(a)} f_\theta(\gamma(a)).$$
The conclusion follows as by definition, $\sum_{\substack{a\in A\\ \sigma(a)=q}}e^{-\theta(a)} f_\theta(\gamma(a))=f_\theta(q)$.
\end{proof}

\section{The use of sofic languages}
\label{secLLT2}

We will now introduce languages defined by automata which are an extension of the notion of languages defined by directed graphs. The counting result established in Theorem \ref{LLT} can be extended to this framework.

Assume now we are given a directed graph $\mathcal G=(Q,A,\sigma,\gamma)$. We define a labelled graph as the additional data of a finite set $B$ (which we view as a new alphabet) together with a surjective map $\pi:A\rightarrow B$. We will think to $B$ as being a set of labels on the edges of the directed graph $\mathcal G$. 
We will always assume the labelling to be deterministic, meaning that, for every $q$ in $Q$, the map $\pi$ is injective on the set of arrows starting from $q$, that is, the set $\{a\in A|\sigma(a)=q\}$. In other words, different arrows emanating from the same vertex have different labels. Such a graph is called right resolving in \cite[Def. 3.3.1]{LM} and it is known that every sofic subshift has a right resolving presentation \cite[Th. 3.3.2]{LM}.

\begin{Ex} \label{fibonacci6}
The Fibonacci labelled graph is obtained by labelling the Fibonacci graph of Figure \ref{fibonaccifig} with two colors $b_1$ and $b_2$ as in Figure \ref{fibonaccifig2}.
\begin{figure}\begin{center}
%% Creator: Inkscape 1.3 (0e150ed6c4, 2023-07-21), www.inkscape.org
%% PDF/EPS/PS + LaTeX output extension by Johan Engelen, 2010
%% Accompanies image file '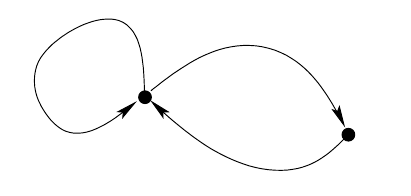' (pdf, eps, ps)
%%
%% To include the image in your LaTeX document, write
%%   \input{<filename>.pdf_tex}
%%  instead of
%%   \includegraphics{<filename>.pdf}
%% To scale the image, write
%%   \def\svgwidth{<desired width>}
%%   \input{<filename>.pdf_tex}
%%  instead of
%%   \includegraphics[width=<desired width>]{<filename>.pdf}
%%
%% Images with a different path to the parent latex file can
%% be accessed with the `import' package (which may need to be
%% installed) using
%%   \usepackage{import}
%% in the preamble, and then including the image with
%%   \import{<path to file>}{<filename>.pdf_tex}
%% Alternatively, one can specify
%%   \graphicspath{{<path to file>/}}
%% 
%% For more information, please see info/svg-inkscape on CTAN:
%%   http://tug.ctan.org/tex-archive/info/svg-inkscape
%%
\begingroup%
  \makeatletter%
  \providecommand\color[2][]{%
    \errmessage{(Inkscape) Color is used for the text in Inkscape, but the package 'color.sty' is not loaded}%
    \renewcommand\color[2][]{}%
  }%
  \providecommand\transparent[1]{%
    \errmessage{(Inkscape) Transparency is used (non-zero) for the text in Inkscape, but the package 'transparent.sty' is not loaded}%
    \renewcommand\transparent[1]{}%
  }%
  \providecommand\rotatebox[2]{#2}%
  \newcommand*\fsize{\dimexpr\f@size pt\relax}%
  \newcommand*\lineheight[1]{\fontsize{\fsize}{#1\fsize}\selectfont}%
  \ifx\svgwidth\undefined%
    \setlength{\unitlength}{197.25bp}%
    \ifx\svgscale\undefined%
      \relax%
    \else%
      \setlength{\unitlength}{\unitlength * \real{\svgscale}}%
    \fi%
  \else%
    \setlength{\unitlength}{\svgwidth}%
  \fi%
  \global\let\svgwidth\undefined%
  \global\let\svgscale\undefined%
  \makeatother%
  \begin{picture}(1,0.47528517)%
    \lineheight{1}%
    \setlength\tabcolsep{0pt}%
    \put(0,0){\includegraphics[width=\unitlength,page=1]{fibonacci2.pdf}}%
    \put(0.32317608,0.16412644){\color[rgb]{0,0,0}\makebox(0,0)[lt]{\lineheight{1.25}\smash{\begin{tabular}[t]{l}$q_1$\end{tabular}}}}%
    \put(0.87780719,0.13158901){\color[rgb]{0,0,0}\makebox(0,0)[lt]{\lineheight{1.25}\smash{\begin{tabular}[t]{l}$q_2$\end{tabular}}}}%
    \put(0.06330141,0.38266178){\color[rgb]{0,0,0}\makebox(0,0)[lt]{\lineheight{1.25}\smash{\begin{tabular}[t]{l}$b_1$\end{tabular}}}}%
    \put(0.55777007,0.38450641){\color[rgb]{0,0,0}\makebox(0,0)[lt]{\lineheight{1.25}\smash{\begin{tabular}[t]{l}$b_2$\end{tabular}}}}%
    \put(0.59768416,0.02313303){\color[rgb]{0,0,0}\makebox(0,0)[lt]{\lineheight{1.25}\smash{\begin{tabular}[t]{l}$b_1$\end{tabular}}}}%
  \end{picture}%
\endgroup%

\caption{The Fibonacci labelled graph}
\label{fibonaccifig2}
\end{center}\end{figure}
\end{Ex} 

Given such a labelling, for $n\geq 0$, we still write $\pi$ for the associated surjective map $A^n\rightarrow B^n$. We denote by $X$ the language obtained from $W$ by replacing letters in $A$ with their images by $\pi$, that is $X=\bigcup_{n\geq 0} X_n$ where, for $n\geq 0$, $X_n=\pi(W_n)$. In the same way, for $q,q'$ in $Q$ and $n\geq 0$, we write $X_n^{q,q'}=\pi(W_n^{q,q'})$ for the set of words in $X$ which may be obtained through path starting from $q$ and terminating at $q'$. Note that the assumption that the labelling is deterministic directly gives

\begin{Lem}\label{piinjective} Let $\mathcal A=(Q,A,B,\sigma,\gamma,\pi)$ be a deterministic labelled graph. Then, for any $q,q'$ in $Q$ and $n\geq 0$ the map $\pi$ induces a bijection $W_n^{q,q'}\rightarrow X_n^{q,q'}$.
\end{Lem}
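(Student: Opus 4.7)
The plan is to verify that $\pi\colon W_n^{q,q'}\to X_n^{q,q'}$ is both surjective and injective. Surjectivity is essentially built into the definition: $X_n^{q,q'}=\pi(W_n^{q,q'})$, so every element in the target is by construction the image of some path in $W_n^{q,q'}$.

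For injectivity, I will argue by induction on the position within the word. Suppose $w=(a_1,\ldots,a_n)$ and $w'=(a_1',\ldots,a_n')$ are two elements of $W_n^{q,q'}$ with $\pi(w)=\pi(w')$, that is, $\pi(a_k)=\pi(a_k')$ for every $1\leq k\leq n$. For the base case $k=1$, both paths start at $q$, so $\sigma(a_1)=\sigma(a_1')=q$; the deterministic labelling hypothesis says that $\pi$ is injective on the set $\{a\in A\mid\sigma(a)=q\}$, hence $a_1=a_1'$. For the inductive step, assuming $a_k=a_k'$, we obtain $\gamma(a_k)=\gamma(a_k')$. Since $w$ and $w'$ are admissible paths, this forces $\sigma(a_{k+1})=\gamma(a_k)=\gamma(a_k')=\sigma(a_{k+1}')$, and the deterministic labelling assumption applied at this common source vertex together with $\pi(a_{k+1})=\pi(a_{k+1}')$ yields $a_{k+1}=a_{k+1}'$.

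There is no serious obstacle here: the whole argument is a direct unwinding of the definition of ``deterministic'' (right-resolving) labelling combined with the fact that in a directed graph the source of the $(k+1)$-th edge of a path is determined by the goal of the $k$-th edge. The two degenerate cases $n=0$ (in which $W_0^{q,q'}$ is empty unless $q=q'$, in which case both sides are singletons containing the empty word) and $W_n^{q,q'}=\emptyset$ are handled trivially by the convention on empty sets.
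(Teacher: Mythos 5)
Your proof is correct and matches the intended reasoning: the paper states this lemma without proof, treating it as an immediate consequence of the definition of a deterministic (right-resolving) labelling, and your induction on the position in the word is exactly the obvious way to spell that out.
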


Let $F$ be the space of all real valued functions on the set $B$. 
The language $X$ comes with a natural map $\mathscr Q:X\rightarrow F$ which counts the number of occurences of every letter in a given word. Still denote by $\pi:\mathcal E\rightarrow F$ the natural map defined by, for $f$ in $\mathcal E$ and $b$ in $B$, 
$$\pi f(b)=\sum_{\substack{a\in A\\ \pi(a)= b}}f(a).$$
Then, by construction, for $w$ in $W$, we have $\pi \mathscr P(w)=\mathscr Q(\pi w)$. 

Denote by $\psi_\mathcal A$ the indicator of growth of the image under $\mathscr Q$ of the counting measure on set $\mathscr Q(X)=\pi \mathscr P(w)$ in $F$. 
We obtain

\begin{Lem} \label{psiautomaton}
Let $\mathcal A=(Q,A,B,\sigma,\gamma,\pi)$ be a deterministic labelled graph such that the directed graph $\mathcal G=(Q,A\sigma,\gamma)$ is connected. Then the function $\mathcal \psi_{\mathcal A}$ is concave and, for every $y$ in $F$, we have 
$$\psi_{\mathcal A}(y)=\sup_{\substack{x\in \mathcal E\\ \pi(x)= y}}\psi_{\mathcal G}(x).$$
If $\psi_{\mathcal A}(y)>0$, this supremum is attained exactly once.
\end{Lem}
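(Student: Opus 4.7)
The plan is to deduce this lemma from Corollary \ref{projconcave}, applied to the linear projection $\pi : \mathcal E \to F$ and the counting measure $\nu_W$ on $\mathcal E$. First I would identify $\psi_{\mathcal A}$ with the indicator of growth of the pushforward $\pi_*\nu_W$. By the deterministic labelling assumption, Lemma \ref{piinjective} applied componentwise to the decomposition $W_n = \bigsqcup_{q,q' \in Q} W_n^{q,q'}$ shows that every word of $X_n$ lifts to at most $|Q|$ paths in $W_n$ (one per possible starting vertex); consequently $\nu_X \leq \pi_*\nu_W \leq |Q|\nu_X$ as Radon measures on $F$, and the two measures share the same indicator of growth.

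Next I would verify the two hypotheses of Corollary \ref{projconcave}. Connectedness of $\mathcal G$ makes the language $W$ irreducible, so by Lemma \ref{irreducibleconcave} the measure $\nu_W$ has concave growth. The second hypothesis requires $\psi_{\mathcal G}(x) = -\infty$ for every non-zero $x \in \ker\pi$; this follows because every $\mathscr P(w)$ has non-negative entries, so $\mathcal C_{\mathcal G} \subset [0,\infty)^A$, and any non-negative $f$ in $\ker\pi$ has all its fiber sums $\sum_{a \in \pi^{-1}(b)} f(a)$ equal to zero, forcing $f = 0$. Corollary \ref{projconcave} then delivers both the concavity of $\psi_{\mathcal A}$ and the supremum formula $\psi_{\mathcal A}(y) = \sup_{\pi(x)=y} \psi_{\mathcal G}(x)$.

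For uniqueness when $\psi_{\mathcal A}(y) > 0$, I would exploit the strict concavity of $\psi_{\mathcal G}$ on the interior $\overset{\circ}{\mathcal C}_{\mathcal G}$ within $(\nabla\mathcal V)^\perp$ provided by Corollary \ref{psiregular}. Given two distinct maximizers $x_1, x_2 \in \pi^{-1}(y)$, both achieving $\psi_{\mathcal A}(y)$, the condition $y \neq 0$ forbids them from being positively collinear; if both lie in $\overset{\circ}{\mathcal C}_{\mathcal G}$, the strict concavity inequality at the midpoint $(x_1+x_2)/2 \in \pi^{-1}(y)$ immediately contradicts the definition of $\psi_{\mathcal A}(y)$ as a supremum. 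The main obstacle is ruling out that the entire set of maximizers lies on the boundary $\partial \mathcal C_{\mathcal G}$. My plan here is to select a sequence $x_n \in \overset{\circ}{\mathcal C}_{\mathcal G} \cap \pi^{-1}(y)$ of near-maximizers of $\psi_{\mathcal G}$, whose existence rests on showing that $y$ lies in $\pi(\overset{\circ}{\mathcal C}_{\mathcal G})$ whenever $\psi_{\mathcal A}(y) > 0$, and then to use the analyticity of $\psi_{\mathcal G}$ in the interior together with the positive definiteness of the second derivative of $\log\lambda$ modulo $\mathbb R\mathbf{1}\oplus\nabla\mathcal V$ from Proposition \ref{lambdaderivative} to extract an interior limit point of $(x_n)$; this produces an interior maximizer and reduces the problem to the case already handled.
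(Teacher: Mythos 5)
Your overall strategy matches the paper's: both deduce the concavity and the supremum formula from Corollary \ref{projconcave}, applied to the projection $\pi$ and the pushed-forward counting measure, and both verify the hypothesis $\psi_{\mathcal G}|_{\ker\pi\smallsetminus\{0\}}=-\infty$ from the positivity of $\mathscr P(W)$. Your treatment of the pushforward differs mildly from the paper's: the paper fixes a pair $q,q'$ and uses Lemma \ref{piinjective} to get the exact identity $\pi_*\mu=\nu$ for the measures coming from $W^{q,q'}$ and $X^{q,q'}$ (invoking connectedness to identify these with $\psi_{\mathcal G}$ and $\psi_{\mathcal A}$), whereas you work with the full language and obtain the multiplicative sandwich $\nu_X\le\pi_*\nu_W\le|Q|\nu_X$. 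Both routes are sound, since indicators of growth are insensitive to bounded multiplicative factors.

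The gap is in the uniqueness part. You correctly note that the midpoint strict-concavity argument needs both maximizers in $\overset{\circ}{\mathcal C}_{\mathcal G}$ (relative to $(\nabla\mathcal V)^\perp$), and you rightly flag the boundary case as the main obstacle. But your proposed fix does not close the gap: choosing a sequence of near-maximizers in $\overset{\circ}{\mathcal C}_{\mathcal G}\cap\pi^{-1}(y)$ and "extracting an interior limit point" is not guaranteed to work, because such a sequence will converge to the boundary precisely when all genuine maximizers lie there; analyticity and positive-definiteness of $\de^2_\theta\log\lambda$ in the interior cannot, by themselves, rule out a boundary limit. To be fair, the paper's own proof is also terse here ("the uniqueness statement follows from the strict concavity properties of $\psi_{\mathcal G}$"), so you have identified a real subtlety rather than introduced one. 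What is actually needed is an argument that $\psi_{\mathcal A}(y)>0$ forces every maximizer into $\overset{\circ}{\mathcal C}_{\mathcal G}$ — for example, by passing to the dual picture via Lemma \ref{concavetransport}, noting that $\Omega_{\mathcal A}=(\pi^*)^{-1}\Omega_{\mathcal G}$ inherits the smoothness and strict convexity from Proposition \ref{lambdaderivative}, and then applying the machinery of Lemma \ref{local} to $\Omega_{\mathcal A}$ to locate the unique dual $\theta$ and its interior primal point — rather than hoping a limit of interior near-maximizers stays interior.
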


\begin{proof} Fix $q,q'$ in $Q$. As the graph $\mathcal G$ is connected, the function $\psi_{\mathcal G}$ is the indicator of growth of the measure $\mu$ which is the image under $\mathscr P$ of the counting measure on the set $W^{q,q'}$. In the same way, the function $\psi_{\mathcal A}$ is the indicator of growth of the measure $\nu$ which is the image under $\mathscr P$ of the counting measure on the set $X^{q,q'}$. In view of Lemma \ref{piinjective} above, we have $\pi_*\mu=\nu$. Thus, we will aim at applying Corollary \ref{projconcave}.

Indeed, thanks to Lemma \ref{psicounting}, we know that the cone $\{x\in \mathcal E|\psi_{\mathcal G}(x)>-\infty\}$ is the asymptotic cone to $\mathscr P(W)$. By construction, $\mathscr P(W)$ is contained in the set of all nonnegative functions on $A$, hence $\psi_{\mathcal G}$ is $-\infty$ on all functions which take at least one negative value. In particular, if $x\neq 0$ is in $\ker\pi$, we must have $\psi_{\mathcal G}(x)=-\infty$. The formula for $\psi_{\mathcal A}$ now directly follows from Corollary \ref{projconcave} and this formula implies that $\psi_{\mathcal A}$ is concave. The uniqueness statement follows from the strict concavity properties of $\psi_{\mathcal G}$ from Corollary \ref{psiregular}. 
\end{proof}

\begin{Ex}\label{fibonacci7}
For the Fibonacci labelled graph of Example \ref{fibonacci6}, the map $\pi$ may be written as 
$$\pi:\mathbb R^3\rightarrow\mathbb R^2,(x_1,x_2,x_2)\mapsto y=(x_1+x_3,x_2),$$
hence, by using the formula from Example \ref{fibonacci1},
\begin{align*}\psi_\mathcal A(y)&=(y_1-y_2)\log\frac{y_1}{y_1-y_2}+y_2\log\frac{y_1}{y_2}&&y_1\geq y_2\geq 0\\
&=-\infty&&\mbox{else}.\end{align*}
\end{Ex}

The analysis of Section \ref{secLLT} can be extended to the study of the language $X$. Keeping the notation of this Section, we set, as in \eqref{localcard}, for $y$ in $F$, $n\geq 0$ and $q,q'$ in $Q$,
\begin{multline*}\overline{N}_n(y,q,q')=|\{w\in W^{q,q'}_{n}|\pi (\mathscr P(w)-\mathscr R(q')+\mathscr R(q))=y\}|
\\=|\{w\in W^{q,q'}_{n}|\mathscr Q(\pi w)-\pi \mathscr R(q')+\pi \mathscr R(q)=y\}|.\end{multline*}

To state an analogue of Theorem \ref{LLT}, we also need to introduce a new version of the function $\varsigma$. 
Denote by $\pi^*:F\rightarrow \mathcal E$ the adjoint map of $\pi$, that is, for $f$ in $F$ and $a$ in $A$,
$$\pi^*f(a)=f(\pi a).$$

Let $y$ be in $F$ with $\psi_{\mathcal A}(y)>0$. By Lemma \ref{psiautomaton}, there exists a unique $x$ in $\mathcal E$ with $\pi(x)=y$ and $\psi_\mathcal G(x)=\psi_\mathcal A(y)$.
By Corollary \ref{psiregular}, there exists $\theta$ in $\mathcal E$ 
with $\lambda(\theta)=0$ and $\psi_{\mathcal G}(x)=\langle\theta,x\rangle$ and $\theta$ is unique up to translation by an element of $\nabla \mathcal V$. Note that the construction of $x$ implies that $\theta$ actually belongs to $\pi^*F$. 

As in Section \ref{secLLT}, consider the quadratic form $\chi_\theta$ on the space $\mathcal E_0=(\mathbb R{\bf 1}\oplus \nabla \mathcal V)^\perp$ which is defined in Theorem \ref{CLT}. Now, the group $\Lambda\cap \mathcal E_0\cap \pi^*F$ is a lattice in $\mathcal E_0\cap \pi^* F$. We set $\overline{\varsigma}(y)$ to be $|\det (\chi_\theta)_{|\mathcal E_0\cap \pi^*F}|^{\frac{1}{2}}$ where the determinant is evaluated with respect to a basis of the lattice 
$\Lambda\cap \mathcal E_0\cap \pi^* F$.
We let $s$ be the dimension of the latter vector space $\mathcal E_0\cap \pi^* F$.

\begin{Thm} \label{LLT2}
Let $\mathcal A=(Q,A,B,\sigma,\gamma,\pi)$ be a deterministic labelled graph such that 
$\mathcal G=(Q,A,\sigma,\gamma)$ is a connected directed graph. 
Fix a compact subset $K$ of 
$\{z\in F|\psi_{\mathcal G}(z)>0\}$.
Then, there exists a sequence $(\varepsilon_n)_{n\geq 0}$ of positive real numbers such that $\varepsilon_n\td{n}{\infty}0$ with the following property. For any $n\geq 0$, $q,q'$ in $Q$ with $q'\in Q_n^q$ and $y$ in $\pi(\Lambda\cap (\nabla \mathcal V)^\perp)$ 
with $\langle {\bf 1},y-\pi \mathscr R(q)+\pi \mathscr R(q')\rangle=n$ and 
$n^{-1}y\in K$, we have 
$$\left|(2\pi n)^{\frac{s}{2}}\overline{\varsigma}(y)e^{-\psi_{\mathcal A}(y)}\overline{N}_{n}(y,q,q')-
pe^{\langle \theta,\mathscr R(q')-\mathscr R(q)\rangle}\varphi_\theta(q')f_\theta(q)
\right|\leq\varepsilon_n,$$
where $\theta$ is defined as above.
\end{Thm}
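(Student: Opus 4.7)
The plan is to derive Theorem \ref{LLT2} from Theorem \ref{LLT} by summing $N_n(x,q,q')$ over the integer fiber $\pi^{-1}(y)$ and recognizing the resulting sum as a Gaussian Riemann sum centered at the unique maximizer of $\psi_{\mathcal G}$ on the fiber.

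First I would rewrite, using Lemma \ref{piinjective} and Lemma \ref{straight},
$$\overline N_n(y,q,q')=\sum_{x\in x^\ast+\Lambda_M} N_n(x,q,q'),$$
where $x^\ast$ is any integer preimage of $y$ in $\Lambda\cap(\nabla\mathcal V)^\perp$ (which exists by assumption), and $\Lambda_M=\Lambda\cap M$ with $M=\ker\pi\cap(\nabla\mathcal V)^\perp=\ker\pi\cap\mathcal E_0$ (the last equality using $\pi^\ast\mathbf 1_F=\mathbf 1$, whence $\ker\pi\subset\mathbf 1^\perp$). The constraint $\langle\mathbf 1,x\rangle=n+\langle\mathbf 1,\mathscr R(q)-\mathscr R(q')\rangle$ is then automatic throughout the sum.

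Next, by Lemma \ref{psiautomaton}, for $y/n$ in the compact set $K$ the maximum of $\psi_{\mathcal G}$ on $\pi^{-1}(y)$ is attained at a unique point $x_0(y)\in\overset{\circ}{\mathcal C}_{\mathcal G}$; the strict concavity of $\psi_{\mathcal G}$ from Corollary \ref{psiregular} and convex duality applied to $\psi_{\mathcal A}=\psi_{\mathcal G}\circ\pi$ force the associated multiplier $\theta\in\mathcal E$ (with $\lambda(\theta)=1$ and $\langle\theta,x_0(y)\rangle=\psi_{\mathcal A}(y)$) to lie in $\pi^\ast F$. By homogeneity and by the analytic dependence in Corollary \ref{psiregular}, one has $x_0(y)=nx_0(y/n)$, and $\theta$ as well as the Hessian of $\psi_{\mathcal G}$ at $x_0(y/n)$ depend analytically on $y/n\in K$. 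For $x=x_0(y)+z$ with $z\in M$, a uniform Taylor expansion gives
$$\psi_{\mathcal G}(x_0(y)+z)=\psi_{\mathcal A}(y)-\frac{1}{2n}Q(z)+O\!\left(\frac{\|z\|^3}{n^2}\right),$$
where the linear term $\langle\theta,z\rangle$ vanishes because $z\in M\subset\ker\pi$ and $\theta\in\pi^\ast F$, and where $Q=-d^2\psi_{\mathcal G}(x_0(y/n))|_M$ is positive definite by strict concavity. Inserting this into Theorem \ref{LLT}, summing over $z\in (x^\ast-x_0(y))+\Lambda_M$ with cutoff $\|z\|\le\sqrt{n\log n}$ (the tail being negligible by Gaussian decay), and approximating the resulting Gaussian lattice sum by its Riemann integral, I obtain
$$\overline N_n(y,q,q')=\frac{p\,e^{\langle\theta,\mathscr R(q')-\mathscr R(q)\rangle}\varphi_\theta(q')f_\theta(q)\,e^{\psi_{\mathcal A}(y)}}{(2\pi n)^{r/2}\varsigma(x_0(y))}\cdot\frac{(2\pi n)^{(r-s)/2}}{\sqrt{\det Q}\,\mathrm{covol}(\Lambda_M)}\,(1+o(1)),$$
uniformly in $y/n\in K$.

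The main obstacle is to verify the determinantal identity
$$\varsigma(x_0(y))\sqrt{\det Q}\,\mathrm{covol}(\Lambda_M)=\overline\varsigma(y),$$
which converts the above asymptotic into the one claimed. This identity rests on Legendre duality between $\chi_\theta=d^2_\theta\log\lambda$ restricted to $\mathcal E_0$ and the Hessian of $\psi_{\mathcal G}$ at $x_0(y)$ (the two quadratic forms being mutually inverse modulo the $1$-homogeneity direction, a standard fact implicit in Proposition \ref{lambdaderivative}), combined with the Schur complement formula for the direct-sum decomposition $\mathcal E_0=M\oplus(\mathcal E_0\cap\pi^\ast F)$ (direct because $\ker\pi\cap\pi^\ast F=0$), with the lattice normalizations matched through the compatible integral bases of $\Lambda_M$, $\Lambda\cap\mathcal E_0\cap\pi^\ast F$ and $\Lambda\cap\mathcal E_0$. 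The uniform error sequence $(\varepsilon_n)_{n\ge 0}$ for $y/n\in K$ is then inherited from the corresponding uniformity in Theorem \ref{LLT} together with the analytic dependence of $x_0$, $\theta$, $\chi_\theta$ and $Q$ on $y/n$.
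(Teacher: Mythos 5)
Your summation-over-the-fiber strategy is reasonable, but the key algebraic step is wrong. The direct sum $\mathcal E_0 = M \oplus (\mathcal E_0 \cap \pi^*F)$ that underlies your determinantal identity does not hold in general: the observation that $\ker\pi \cap \pi^*F = \{0\}$ only shows that the two summands meet trivially, not that they span $\mathcal E_0$. The Fibonacci labelled graph (Examples \ref{fibonacci1} and \ref{fibonacci6}) is a counterexample: there $\mathcal E_0 = \mathbb R(-2,1,1)$ is one-dimensional, while $\ker\pi = \mathbb R(1,0,-1)$ meets $\mathcal E_0$ only in $0$ and $\pi^*F = \{(u,v,u)\}$ also meets $\mathcal E_0$ only in $0$, so the putative direct sum is $\{0\} \neq \mathcal E_0$ and $\dim M = 0$ while $r - s = 1 - 0 = 1$. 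Consequently the Gaussian lattice sum you evaluate contributes a factor $(2\pi n)^{\dim M/2}$, not $(2\pi n)^{(r-s)/2}$, so what your argument legitimately yields is an asymptotic of order $n^{-(r-\dim M)/2}e^{\psi_{\mathcal A}(y)}$ rather than $n^{-s/2}e^{\psi_{\mathcal A}(y)}$. In the example this is $n^{-1/2}e^{\psi_{\mathcal A}(y)}$, and a direct count confirms it: $\overline N_n((n-m_2,m_2),q_1,q_1) = \binom{n-m_2}{m_2}$. This also shows the stated theorem cannot hold verbatim with $s = \dim(\mathcal E_0\cap\pi^*F)$; the paper prints no proof of Theorem \ref{LLT2}, and the natural correction is to take $s = \dim\pi(\mathcal E_0) = r - \dim M$ with $\overline\varsigma$ adjusted accordingly, which is exactly what your Gaussian-Riemann-sum argument produces once $\dim M$ replaces $r-s$.

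A secondary gap: you dismiss the summands $x = x_0(y) + z$ with $\|z\| > \sqrt{n\log n}$ by invoking Gaussian decay, but Theorem \ref{LLT} is a two-sided estimate only for $n^{-1}x$ lying in a fixed compact subset of $\{\psi_{\mathcal G}>0\}$. For $x$ outside that range you still need an a priori upper bound on $N_n(x,q,q')$; the simple bound $N_n(x,q,q') \le C\,e^{\langle\theta, x\rangle}$ does not decay in $z \in \ker\pi$ (because $\theta \in \pi^*F$), so a genuine Chernoff-type argument, perturbing $\theta$ in a direction pairing positively with $z$ while keeping $\lambda \le 1$, is required and should be spelled out.
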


\end{document}